\documentclass{amsart}
\usepackage{graphicx} 
\usepackage{amsmath}
\usepackage{amsthm}
\usepackage{amssymb}
\usepackage{comment}
\usepackage{nameref}
\usepackage{cite}
\usepackage{xcolor}
\usepackage{dsfont}
\usepackage{appendix}
\usepackage{comment}
\numberwithin{equation}{section}

\newtheorem{theo}{Theorem}[section]
\newtheorem{lemm}[theo]{Lemma}
\newtheorem{coro}[theo]{Corollary}
\newtheorem{prop}[theo]{Proposition}
\newtheorem{assum}[theo]{Assumption}
\newtheorem{property}{Property}
\newtheorem{rema}[theo]{Remark}

\newcommand{\vs}{v_*}
\newcommand{\ga}{\gamma}
\newcommand{\al}{\alpha}

\newcommand{\rn}{\rho_\nu}
\newcommand{\rhn}{\rho^\nu}
\newcommand{\vn}{v^\nu}
\newcommand{\hn}{h^\nu}
\newcommand{\un}{u^\nu}
\newcommand{\s}{\sigma}

\newcommand{\de}{\delta}
\newcommand{\la}{\lambda}
\newcommand{\La}{\Lambda}
\newcommand{\e}{\epsilon}

\newcommand{\td}{\tilde}
\newcommand{\um}{U_-}
\newcommand{\up}{U_+}

\newcommand{\urn}{U^{R,\nu}}

\newcommand{\ru}{U^{R}}
\newcommand{\rv}{v^{R}}
\newcommand{\rh}{h^{R}}

\newcommand{\usn}{\td U^{\nu}}
\newcommand{\vsn}{\td v^{\nu}}
\newcommand{\hsn}{\td h^{\nu}}
\newcommand{\su}{\td U}
\newcommand{\sv}{\td v}
\newcommand{\sh}{\td h}
\newcommand{\sn}{X^\nu}
\newcommand{\sdn}{\dot{X}^\nu}
\newcommand{\sd}{\dot{X}}
\newcommand{\las}{\td \la}

\newcommand{\unn}{U^{P,\nu}}
\newcommand{\nnu}{U^{P}}
\newcommand{\nv}{v^{P}}
\newcommand{\nh}{h^{P}}
\newcommand{\cz}{c_0}
\newcommand{\co}{c_1}
\newcommand{\cor}{c_1}
\newcommand{\cop}{c_1}
\newcommand{\cf}{c_2}
\newcommand{\cfi}{c_3}
\newcommand{\cs}{c_*}
\newcommand{\cpr}{c_R}
\newcommand{\cg}{c_\gamma}

\newcommand{\ub}{\bar{U}_{\nu}}
\newcommand{\ust}{U_*}
\newcommand{\p}{\partial}
\newcommand{\sse}{\mathcal{S}}
\newcommand{\rse}{\mathcal{R}}
\newcommand{\npse}{\mathcal{NP}}
\newcommand{\be}{\beta}

\newcommand{\bu}{\bar{U}}
\newcommand{\bv}{\bar{v}}
\newcommand{\bh}{\bar{h}}

\newcommand{\bvn}{\bar{v}_\nu}
\newcommand{\bhn}{\bar{h}_\nu}
\newcommand{\bun}{\bar{u}_\nu}
\newcommand{\brn}{\bar{\rho}_\nu}

\newcommand{\inb}{\int_{y_b}^\infty}
\newcommand{\dx}{\, dx}

\newcommand{\mi}{\mathcal{I}}
\newcommand{\my}{\mathcal{Y}}
\newcommand{\md}{\mathcal{D}}
\newcommand{\mg}{\mathcal{G}}
\newcommand{\mcp}{\mathcal{P}}
\newcommand{\mj}{\mathcal{J}}
\newcommand{\ms}{\mathcal{S}}
\newcommand{\mr}{\mathcal{R}}
\newcommand{\mnp}{\mathcal{NP}}
\newcommand{\ma}{\mathcal{A}}
\newcommand{\ml}{\mathcal{L}}

\newcommand{\me}{\mathbf{e}}
\newcommand{\tg}{\tilde{G}}
\newcommand{\ts}{\phi_\be}
\newcommand{\pd}{p(v)-p(\bv)}
\newcommand{\bsi}{\bar{\s}_{\be}}
\newcommand{\bfc}{\tilde{C}}
\newcommand{\ty}{\tilde{y}}

\newcommand{\mef}{\me_2}
\newcommand{\mev}{\me_3}
\newcommand{\mei}{\me^{0,1}}
\newcommand{\meit}{\me^{0,2}}

\newcommand{\lv}{\underline{v}}
\newcommand{\lu}{\underline{U}}
\newcommand{\om}{\Omega}

\newcommand{\pdu}{\{\pd > \de_1\}}
\newcommand{\pdl}{\{\pd < - \de_1\}}
\newcommand{\pds}{p(v) - p(\lv_s)}
\newcommand{\pdb}{p(v) - p(\lv_b)}

\newcommand{\uji}{\underline{\mj_\be}}
\newcommand{\ji}{\mj_\be}
\newcommand{\xoi}{x_0^\be}
\newcommand{\cde}{C}

\newcommand{\tc}{\mathbf{C}}

\newcommand{\bc}{\mathbf{C}_1}

\newcommand{\cbet}{\mathbf{c}}

\newcommand{\rne}{\rho^\nu}
\newcommand{\une}{u^\nu}
\newcommand{\udU}{\underline{U}}
\newcommand{\udr}{\underline{\rho}}
\newcommand{\udu}{\underline{u}}

\newcommand{\ef}{\textbf{x}}
\newcommand{\eft}{\textbf{y}}

\title[Weak-BV Stability and Inviscid Limits for Outflow]{Weak-BV Stability and Inviscid Limits for Barotropic Flows with Outflow Boundary Conditions}
\subjclass[2020]{35L65, 35L50, 35B35, 76N10}
\keywords{outflow boundary problem, Euler, compressible Navier-Stokes, Weak-BV principle, inviscid limit.}

\thanks{\textbf{Acknowledgment.} A. Vasseur was partially supported by the NSF grants DMS 2219434, 2306852, and 1840314.} 

\author[J. Meng]{Jiayun Meng}
\address{Department of Mathematics,
The University of Texas at Austin.}
\email{jiayun@utexas.edu}
\author[A.F. Vasseur]{Alexis F. Vasseur}
\address{Department of Mathematics,
The University of Texas at Austin.}
\email{vasseur@math.utexas.edu}

\begin{document}

\begin{abstract}
    Boundary value problems for systems of conservation laws, including inviscid compressible flows, arise naturally in physical models such as shallow water flow in a canal, where fluid may leave the domain through the boundary. In contrast to the Cauchy problem, the inviscid limit is highly sensitive to the underlying viscous model: different viscous regularizations may select different effective boundary conditions for the limiting system.

    In this paper, we show that small BV solutions of the one-dimensional compressible barotropic Euler equations arise as inviscid limits of the barotropic Navier-Stokes system for the outflow boundary problem. We work with the physically meaningful effective boundary condition selected by the Navier-Stokes system for the limiting Euler solutions. Moreover, going beyond smooth flows or a fixed class of Riemann solutions, our stability results apply to small BV solutions exhibiting general wave patterns and complicated configurations generated by interactions with the outflow boundary, including reflected shocks.
\end{abstract}

\maketitle

\tableofcontents
\section{Introduction}
Boundary value problems for systems of conservation laws arise naturally in physical models, including inviscid compressible fluid mechanics. A typical example is shallow water flow in a horizontal canal with a flat bed. The evolution of the water depth and the depth-averaged horizontal velocity is described by the shallow water equations, which form a $2\times 2$ system of conservation laws. In this paper, we study the one-dimensional compressible barotropic Euler equations, which include the shallow water equations as a special case, in the outflow setting, where fluid leaves the domain through the boundary.

In the presence of a boundary, the selection of physically admissible solutions to the Euler equations is considerably more delicate than in the Cauchy problem. The entropy inequality in the interior does not, by itself, determine a physically meaningful boundary condition. Such a condition must be compatible with both the characteristic structure of the system and the entropy inequality. A natural approach is to view the Euler equations as the vanishing viscosity limit of a given viscous model and identify the effective boundary condition selected by the associated boundary layers.

The selection of admissible solutions as inviscid limits of viscous approximations is a classical approach known as the vanishing viscosity method. A major breakthrough in this direction was the celebrated work of Bianchini and Bressan \cite{vanishingnondegenerate} in 2005, who proved that, for the Cauchy problem on the whole line, small BV solutions of strictly hyperbolic systems of conservation laws are vanishing viscosity limits of approximations with an artificial nondegenerate viscosity. For the Cauchy problem, different viscous models tend to select the same entropy solution In the presence of a boundary, however, they may generate different boundary layers and thus select different effective boundary conditions for the limiting system.

For boundary value problems, the vanishing viscosity limit is highly sensitive to the underlying viscous model, as discussed in \cite{JLe99,AMS24}. In particular, the effective boundary condition for the limiting Euler system selected by the physical Navier-Stokes system differs from that selected by a viscous system with artificial nondegenerate viscosity. In this paper, going beyond the framework developed in \cite{vanishingnondegenerate}, we work with inviscid limits of the Navier-Stokes system and thereby study small BV solutions of the Euler system with physically meaningful boundary conditions.

In addition to treating a physical viscous model, our inviscid limit results, Theorems~\ref{main thm vanishing viscosity sol} and~\ref{main thm inviscid limit}, extend beyond smooth flows and fixed classes of Riemann solutions: they apply to small BV solutions with complex wave interactions and general wave patterns. The initial boundary value problem introduces further difficulties beyond those arising in the Cauchy problem. In particular, our results account for the complicated configurations generated by interactions with the outflow boundary, including reflected shocks.

\section{Precise statement of the results}
The Euler system, introduced more than 250 years ago in \cite{Euler}, is one of the earliest systems of partial differential equations ever written. In this paper, we consider the one-dimensional barotropic Euler system on the half line
\begin{equation}\label{system E rho, rho u}
    \begin{cases}
        \rho_t + (\rho u)_x = 0 \, , \quad x \in \mathbb{R}_+ \, , \; t>0 \, ,\\
        (\rho u)_t + (\rho u^2)_x + p(\rho)_x = 0 \, , \quad x \in \mathbb{R}_+  \, , \; t>0 \, ,
    \end{cases}
\end{equation}
where $\rho$ denotes the density, $u$ denotes the velocity, and the pressure law is 
\begin{align*}
    p(\rho) = \rho^\ga \, ,
\end{align*}
where $\gamma>1$ is the adiabatic exponent. We write the unknown vector as $U=(\rho, \rho u)$ and impose the following entropy inequality 
\begin{equation}\label{entropy inequality}
    \p_t \eta(\rho, \rho u) + \p_x \bigl(q(\rho, \rho u)\bigr) \leq 0 
\end{equation}
where $\eta$, the mathematical entropy, is in this case the physical energy, and $q$ is the corresponding entropy flux. They are given by
\begin{align*}
    \eta(\rho, \rho u) = \frac{\rho |u|^2}{2} + \frac{\rho^\ga}{\ga-1} \, , \quad
    q(\rho, \rho u) = \bigl(\eta(\rho, \rho u) + \rho^\ga \bigr) u \, .
\end{align*}
The underlying physical viscous system of the Euler system is the Navier-Stokes system
\begin{equation}\label{system rho, rho u}
    \begin{cases}
        \rhn_t + (\rhn\un)_x = 0 \, , \quad t>0 \, , \; x \in \mathbb{R}_+ \, , \\
        (\rhn \un)_t + \bigl(\rhn (\un)^2\bigr)_x + p(\rhn)_x = \nu\bigl(\mu(\rhn)\un_x\bigr)_x \, , \quad t>0 \, , \; x \in \mathbb{R}_+ \, .
    \end{cases}
\end{equation}
where $\nu>0$ is the viscosity parameter and $\mu$ is a viscosity coefficient satisfying the following assumption. 
\begin{assum}\label{property viscosity}
    Let $\ga>1$. We assume $\mu$ to be differentiable and that there exist constants $C_1,C_2>0$ such that
    \begin{align}
        C_1 \rho^\al \leq \mu(\rho) \leq C_2 \rho^\al \, , \quad \textup{for } \rho\geq 1 \, , \label{property viscosity alpha} \\
        C_1 \rho^{\widetilde{\al}} \leq \mu(\rho) \leq C_2 \rho^{\widetilde{\al}} \, , \quad \textup{for } \rho< 1 \, , \label{property viscosity wide alpha} 
    \end{align}
    for some 
    \begin{align*}
        &\al = \ga-1 \, , \quad \textup{if } \ga\leq 5/3 \, , \quad
        \ga-1\leq \al\leq \ga \, , \quad \textup{otherwise,} \\
        &\widetilde{\alpha} = \alpha \, , \quad \textup{if } \al < 1/2 \, , \quad
        0<\widetilde{\alpha}<1/2 \, , \quad \textup{otherwise.}
    \end{align*}
\end{assum}
This assumption, though it requires polynomial growth, covers a broad class of viscosity functions. This growth condition is consistent with the shallow water case, for which $\gamma = 2$ and the corresponding polynomial growth exponent is $\alpha = 1$. Note that the stronger restriction on the degeneracy of the viscosity near vacuum, namely \eqref{property viscosity wide alpha}, is needed only for the existence of the Navier-Stokes solutions.

In most cases, for example \cite{Lions,FNP,BJ}, one can construct Navier-Stokes solutions satisfying the entropy inequality
\begin{equation}\label{entropy inequality NS}
    \p_t \eta(\rhn, \rhn \un) + \p_x \bigl(q(\rhn, \rhn \un)-\nu \mu(\rhn) \un \p_x \un \bigr) + \nu \mu(\rhn) |\p_x \un|^2 \leq 0 \, .
\end{equation}
Such inequality reduces formally to \eqref{entropy inequality} in the limit.

\medskip
In this paper, we consider the outflow boundary problem, in which the fluid leaves the domain through the boundary. We first supplement the Navier-Stokes system \eqref{system rho, rho u} with the outflow boundary condition
\begin{align}\label{outflow boundary condition}
    u(0,t) = u_B < 0 \, .
\end{align}
The sound speed is 
\begin{align*}
    c(\rho) = \sqrt{p'(\rho)} = \sqrt{\ga \rho^{\ga-1}} \, .
\end{align*}
According to the relation between $c(\rho)$ and $|u|$, the state space is divided into the supersonic, transonic, and subsonic regions:
\begin{align*}
    \Omega_{super} = \{U: |u|>c(\rho)\} \, , \quad
    \Omega_{tran} = \{U: |u|=c(\rho) \} \, , \quad
    \Omega_{sub} = \{U: |u|<c(\rho) \} \, .
\end{align*}
The eigenvalues of the flux Jacobian are
\begin{align*}
    \la_1(U) = u - c(\rho) \, , \quad \la_2(U) = u + c(\rho) \, .
\end{align*}
In this paper, we focus on the most delicate and interesting region of the state space for small BV solutions of the Euler system: the subsonic region. In the subsonic region, the two characteristic speeds have opposite signs, so exactly one boundary condition is prescribed for the Euler system. The effective boundary condition for the Euler system selected by the Navier-Stokes system is precisely the condition given in \eqref{outflow boundary condition}.

\medskip

We prescribe a right far-field state $U_* = (\rho_*, \rho_* u_*)$ in
\begin{align}\label{def subsonic}
    \Omega_{sub}^{-} = \Omega_{sub} \cap \{U=(\rho, \rho u): u<0\} \, ,
\end{align} 
with $u_*$ close to, but not necessarily equal to, $u_B$.

We consider the following set of Euler initial values
\begin{equation}\label{def Euler initial value}
    \begin{aligned}
        E^0 = \{U^0 = (\rho^0, \rho^0 u^0) \in L^1_{\mathrm{loc}} \cap L^\infty (\mathbb{R}_+) \, \bigl| \; \inf (\rho^0)>0 \, , \\
        \int_{\mathbb{R}_+} \bigl(\rho^0 |u^0-u_*| + |\rho^0 - \rho_*| \bigr) \dx < \infty \}
    \end{aligned} \, .
\end{equation}
As in \cite{compensated,NStoE}, we consider slightly regularized initial values for the Navier-Stokes system. For any $U^0 \in E^0$, we say that $U_0^\nu = (\rhn_0, \rhn_0 \un_0)$ is an adapted family of initial values associated with $U^0$ for the Navier-Stokes system \eqref{system rho, rho u} if $U_0^\nu$ is smooth and satisfies
\begin{align}
    u_0^\nu(0) = u_B \, ,
\end{align}
and
\begin{equation}\label{NS initial value}
    \begin{aligned}
        \lim_{\nu\rightarrow 0} \Bigl( \int_{\mathbb{R}_+} |U_0^\nu - U^0| \dx + 
        \int_{\mathbb{R}_+} \rhn_0 |\un_0 - u^0|^2 \dx + \int_{\mathbb{R}_+} |\rhn_0 - \rho^0|^\ga \dx \\
        + \nu^2 \int_{\mathbb{R}_+} |\p_x \phi(\rhn_0)|^2 \dx \Bigr) = 0 \, ,
    \end{aligned}
\end{equation}
where we choose $\phi$ such that 
\begin{align}\label{def of phi}
    \phi'(\rho) = \frac{\mu(\rho)}{\rho^{3/2}} \, .
\end{align}

We will show that for any $U^0 \in E^0$, one can construct an adapted family of initial values for the Navier-Stokes system.

\begin{lemm}\label{lemma existence of initial values NS}
For any $U^0 \in E^0$, where $E^0$ is defined in \eqref{def Euler initial value}, there exists an adapted family of initial values $U_0^\nu$ associated with $U^0$ for the Navier-Stokes system \eqref{system rho, rho u}.
\end{lemm}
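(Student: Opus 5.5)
Since $U^0\in E^0$, we have $\rho^0\ge \underline\rho:=\inf\rho^0>0$ and $\rho^0\le \bar\rho:=\|\rho^0\|_\infty$. Because $\rho^0$ is bounded above and below, $\rho^0|u^0-u_*|\in L^1$ gives $u^0-u_*\in L^1\cap L^\infty$, and hence $u^0-u_*\in L^2$. Likewise $\rho^0-\rho_*\in L^1\cap L^\infty\subset L^p$ for every $p\ge1$. The plan is to work in the variables $(\rho,u)$ rather than $(\rho,\rho u)$. We mollify the perturbations $\rho^0-\rho_*$ and $u^0-u_*$ at a scale $\varepsilon(\nu)\to0$ chosen slowly enough that the term $\nu^2\int|\p_x\phi(\rhn_0)|^2$ vanishes. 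Finally we correct the velocity in a thin layer near $x=0$ so that $\un_0(0)=u_B$.

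\textbf{Step 1 (mollification).} Extend $\rho^0-\rho_*$ and $u^0-u_*$ by zero to $x<0$ and convolve them with a standard mollifier $j_\varepsilon$. Set
$$\rho_\varepsilon=\rho_*+j_\varepsilon*(\rho^0-\rho_*)\quad\text{and}\quad w_\varepsilon=u_*+j_\varepsilon*(u^0-u_*).$$
Then $\rho_\varepsilon$ is smooth. Convolution preserves pointwise bounds, so $\min(\underline\rho,\rho_*)\le\rho_\varepsilon\le\max(\bar\rho,\rho_*)$; this uniform positive lower bound is needed below. As $\varepsilon\to0$ we have $\rho_\varepsilon-\rho^0\to0$ in $L^1$ and in $L^\ga$ (standard mollifier convergence in $L^p$), and $w_\varepsilon-u^0\to0$ in $L^1\cap L^2$. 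Moreover $\|\p_x\rho_\varepsilon\|_{L^2}\le C\varepsilon^{-1}\|\rho^0-\rho_*\|_{L^2}$. Because $\rho_\varepsilon$ takes values in a compact subset of $(0,\infty)$, $\phi'(\rho_\varepsilon)$ is bounded, so
$$\nu^2\int|\p_x\phi(\rho_\varepsilon)|^2\le C\nu^2\varepsilon^{-2}.$$

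\textbf{Step 2 (boundary correction).} Let $\chi$ be a smooth cutoff with $\chi(0)=1$, supported in $[0,1)$, and define
$$\un_0=w_\varepsilon+(u_B-w_\varepsilon(0))\chi(x/\delta).$$
Then $\un_0(0)=u_B$, and $\un_0$ is smooth on $[0,\infty)$. The correction is bounded by $C(|u_B|+\|w_\varepsilon\|_\infty)$ on a set of length $\delta$, so its contributions to the $L^1$ norm and to the weighted $L^2$ norm are both $O(\delta)$.

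\textbf{Step 3 (conclusion).} Choose $\varepsilon=\nu^{1/2}$ and $\delta=\nu$, and set $\rhn_0=\rho_\varepsilon$. Write
$$\rhn_0\un_0-\rho^0u^0=(\rhn_0-\rho^0)\un_0+\rho^0(\un_0-u^0).$$
Since $\un_0$ is uniformly bounded, this expression tends to $0$ in $L^1$. The term $\int\rhn_0|\un_0-u^0|^2$ tends to $0$ because $\rhn_0$ is bounded and $\un_0-u^0\to0$ in $L^2$. Since $\rho_\varepsilon$ does not involve the velocity correction, $\nu^2\int|\p_x\phi(\rhn_0)|^2\le C\nu\to0$. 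This verifies \eqref{NS initial value}.

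The main obstacle is technical rather than conceptual: the quantities in \eqref{NS initial value} must be controlled on the unbounded domain $\mathbb{R}_+$. This is why the argument mollifies the integrable perturbations around the far-field state $U_*$, never $U^0$ itself. The uniform lower bound on the density is what keeps $\phi'$ bounded; $\phi'$ may be singular near vacuum under Assumption~\ref{property viscosity}.
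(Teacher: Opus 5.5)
Your proposal is correct and takes essentially the same route as the paper. Both arguments mollify $\rho^0$ and $u^0$ separately at scale $\sqrt{\nu}$ after extending to $x<0$ (the paper reflects evenly where you extend the perturbation by zero, which makes no difference), and both then enforce $u_0^\nu(0)=u_B$ with a cutoff supported in a boundary layer of width $\nu$; your version additionally writes out the verification of \eqref{NS initial value}, including the bound $\nu^2\int|\p_x\phi(\rhn_0)|^2\le C\nu$ from the two-sided bounds on $\rhn_0$, which the paper delegates to \cite{NStoE}.
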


Our first result shows that, under the outflow boundary condition \eqref{outflow boundary condition}, every small BV solution of the Euler system is a vanishing viscosity solution.

\begin{theo}\label{main thm vanishing viscosity sol}
Assume that the viscosity function satisfies Assumption \ref{property viscosity}. For any $U_* = (\rho_*, \rho_* u_*)$ in $\Omega_{sub}^-$ as defined in \eqref{def subsonic}, there exists $\e>0$ such that the following is true. 

Let $U^0 = (\rho^0, \rho^0 u^0) \in E^0$, where $E^0$ is defined in \eqref{def Euler initial value}, and $u_B<0$. Assume that
\begin{align*}
    \|U^0\|_{BV(\mathbb{R}_+)} + |u_* - u_B|\leq \e \, .
\end{align*}
Let $U_0^\nu$ be an adapted family of initial values associated with $U^0$. We denote by $U^\nu$ the solution of the Navier-Stokes system \eqref{system rho, rho u} with initial value $U_0^\nu$ and outflow boundary condition \eqref{outflow boundary condition}. 

Then $U^\nu$ converges in $L^1_{\mathrm{loc}}(\mathbb{R}_+\times\mathbb{R}_+)$ to the small BV solution of the Euler system \eqref{system E rho, rho u} with initial value $U^0$ and outflow boundary condition \eqref{outflow boundary condition}.
\end{theo}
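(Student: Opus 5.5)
The plan follows the strategy of the Weak-BV / $a$-contraction with shifts program of Vasseur and coauthors (as in \cite{NStoE}), adapted to the half line with the outflow condition \eqref{outflow boundary condition}. Working in Lagrangian-type variables $(v,h)$, I would compare the Navier-Stokes solution $U^\nu$ with a family of approximate Euler solutions built by a front tracking scheme that incorporates the boundary.

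\textbf{Front tracking approximations.} For fixed $\nu$, construct a piecewise-constant front tracking approximation $\bar U_\nu$ of the small BV Euler solution with initial value $U^0$. Boundary Riemann problems at $x=0$ are resolved by imposing $u=u_B$ through a single $2$-wave, since the $1$-waves leave the domain. Reflected waves, including reflected shocks, then enter as $2$-fronts. Standard Glimm-type interaction estimates, with a boundary term controlled by the smallness of $|u_*-u_B|$, give uniform BV bounds, and the approximations converge to the small BV solution.

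\textbf{Relative entropy with shifts.} Next, replace each shock front in $\bar U_\nu$ by a viscous shock of size $\nu$ whose position is shifted by $X^\nu(t)$. Keep rarefactions and pseudo-fronts as they are, and add the viscous boundary layer near $x=0$ joining $u_B$ to the outer trace. The boundary layer is a stationary solution of the Navier-Stokes ODE; it exists and is small because the trace lies in $\Omega_{sub}^-$ and close to $u_B$. Then compute the weighted relative entropy
\[
\int_{\mathbb{R}_+} a(t,x)\,\eta\bigl(U^\nu\,|\,\bar U_\nu\bigr)\,dx ,
\]
with $a$ piecewise constant, jumping by $O(\text{shock strength})$ across each shock. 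The shifts are chosen through the usual ODE so that the bad terms at each viscous shock are controlled by the diffusion. The result is a Gronwall-type bound: the relative entropy at time $t$ is at most its initial value, which tends to $0$ by \eqref{NS initial value}, plus the front-tracking error and an $O(\nu)$ error. Since $\bar U_\nu$ is approximate, one takes $\delta\to0$ along with $\nu$.

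\textbf{Boundary flux.} The genuinely new term is the boundary flux at $x=0$. Because $u^\nu(0,t)=u_B<0$ matches the boundary layer exactly and fluid exits, this flux has a sign. The relative entropy flux there is $-u_B\,\eta(U^\nu|\bar U)\ge0$ leaving the domain, plus a viscous term that vanishes because the velocity error is zero at $x=0$. The effective condition $u=u_B$ is therefore dissipative. Finally, $L^2$ relative entropy control together with the BD-entropy bound (via $\phi$ in \eqref{def of phi}) gives $L^1_{\mathrm{loc}}$ convergence. Controlling the shifts gives $|X^\nu(t)|\to0$ in the weak-BV sense, so the shifted $\bar U_\nu$ still converges to the Euler solution.

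\textbf{Main obstacle.} I expect the hardest step to be controlling the weight and the shifts when fronts interact with the boundary. A reflected shock is generated at $x=0$, so the weight function $a$ must jump there, and its shift must not push the front out of $\mathbb{R}_+$. I would first handle this by freezing newly created shocks for a short layer of width $O(\nu)$ near the boundary, treating them as part of the boundary layer until they separate. The extra error would be bounded by the small total strength of reflected waves, which is controlled by the Glimm functional.
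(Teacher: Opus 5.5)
There is a genuine gap where you identify the limit. You claim that controlling the shifts gives $|X^\nu(t)|\to 0$, so that the shifted $\bar U_\nu$ converge to the small BV solution. You also claim that relative entropy control plus the BD bound gives $L^1_{\mathrm{loc}}$ convergence of $U^\nu$. Neither holds.

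The shifts are driven by $U^\nu$. Their speeds can be as large as $\sigma^{-2}(2|\mathcal{J}_{bad}|+1)$, with $|\sigma|\geq \nu^{c_*}$ and $\mathcal{J}_{bad}$ not uniformly bounded. So they are controlled only one-sidedly, which is enough to keep waves separated, and may oscillate in time. A front tracking approximation with such shifts is not a standard one. Even a strong limit of it would violate Rankine--Hugoniot by $\dot X^\nu$ times the jump, unless $\dot X^\nu\to 0$ in some sense, and your outline does not establish this. The paper explicitly does not have it either.

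The term $\nu^2\int|\partial_x\phi(\rho)|^2$ only ensures that the initial effective velocity $h_0^\nu$ is close to $u^0$; it gives no compactness. In Eulerian variables the relative entropy bound yields only $U^\nu-\bar U_\nu\to 0$ in distributions, because very small and very large densities are not controlled.

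The paper closes this with three ingredients your outline lacks:
\begin{itemize}
\item Compensated compactness applied directly to $U^\nu$ (Proposition~\ref{prop E compensated compactness}) gives a subsequence converging strongly to an entropy solution of Euler.
\item Proposition~\ref{prop E approximate sol} shows this limit inherits the uniform BV bound of $\bar U_\nu$ and satisfies $u=u_B$. This step is itself nontrivial. It uses translated approximations $\bar V^\nu$ whose $2$-waves move right, time-BV bounds at fixed $x$ from element tracking, and Aubin--Lions. It also compares the Lagrangian and Eulerian reparametrizations using $\int_0^T (y_b^\nu)'\,Q(v^\nu|\bar v_\nu)\,dt\to 0$.
\item Uniqueness of small BV entropy solutions of the outflow problem from \cite{prepweakbv} identifies the limit with the solution with data $U^0$, and upgrades subsequential convergence to convergence of the whole family.
\end{itemize}
Without such a uniqueness theorem your argument cannot name the limit.

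You also misplace the main difficulty at the boundary. In the Lagrangian variables $(v,h)$ the boundary is not $x=0$ but $y_b^\nu(t)$, which moves with speed $-u_B/v^\nu(y_b^\nu,t)$. This speed has no $\nu$-uniform bound, so the boundary can outrun the subsonic $2$-waves of $\bar U_\nu$ and overtake them repeatedly. The approximate solution then violates $u=u_B$ an uncontrolled number of times, and each reset costs relative entropy. The paper handles this with the merging procedure and the quantitative subsonicity Lemma~\ref{lemma estimates on relative functions 2}. Whenever the boundary outruns a $2$-wave, $Q(v|\bar v)\geq C$ at the boundary. So the good term $-\frac14|(y_b^\nu)'|\,\eta(U^\nu|\bar U_\nu)$ at $y_b^\nu$, accumulated between successive mergings of previously merged waves, dominates the increase at each such merging (Lemma~\ref{lemma global in time estimates}).

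Reflected shocks need no freezing. They are $2$-waves produced by mergings and placed with transition zones at distance at least $\rho_\nu^{c_1}$ from the boundary, so the Poincar\'e inequality reduces to the whole-line case.

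Your boundary flux is also not exactly signed. The term $a(p(v)-p(\bar v))(u-\bar u)$ at $y_b^\nu$ does not vanish, since $\bar u\neq u_B$ there because of viscous tails, non-physical waves and overtakings. Moreover $p(v^\nu)$ is not uniformly bounded at the boundary. The paper controls this term with the $\nu$-dependent bound $\|v^{-\gamma}\|_{L^\infty}\leq C+C\mathcal{D}_v^{c_\gamma}$, obtained from Gagliardo--Nirenberg.

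Finally, the boundary layer you add does not exist. The stationary equation is $\nu\mu u_x=m(u-u_+)+p(m/u)-p(m/u_+)$ with $m=\rho_+u_+<0$. At a subsonic state its linearization coefficient is $m(1-c(\rho_+)^2/u_+^2)>0$, so $u_+$ is repelling and no nontrivial layer joins $u_B$ to a different subsonic trace. This is exactly why $u=u_B$ is the effective condition. The mismatch must be carried by the $2$-wave of the boundary Riemann problem, as your scheme already does.
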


Under Assumption~\ref{property viscosity} on the viscosity, \eqref{def Euler initial value} on the Euler initial data, and the conditions on the associated adapted family of initial data for the Navier-Stokes system, the existence and uniqueness of solutions to the Navier-Stokes system \eqref{system rho, rho u} follow from \cite{nspresse}. More precisely, it is shown in \cite{nspresse} that for every $T>0$, there exist constants $\kappa_2^\nu(T)>\kappa_1^\nu(T)>0$ such that
\begin{align*}
    \rho^\nu-\rho_* \in L^\infty(0,T;H^1(\mathbb{R_+})) \, , \quad 
    u^\nu-u_* \in L^\infty(0,T;H^1(\mathbb{R_+})) \cap L^2(0,T;H^2(\mathbb{R_+})) \, .
\end{align*}
and
\begin{align*}
    \kappa_1^\nu(T) \leq \rho^\nu(x,t) \leq \kappa_2^\nu(T) \, , \; \textup{for any } (x,t)\in \mathbb{R}_+ \times [0,T] \, .
\end{align*}
This is the only point in the paper at which we impose the stronger restriction \eqref{property viscosity wide alpha} on the degeneracy of the viscosity near vacuum. Under the weaker assumption that $\mu(\rho)$ is comparable to $\rho^\al$ for all $\rho>0$, an existence and uniqueness theory for Navier-Stokes solutions on the whole space was established in \cite{existence}. However, an extension of this result to the boundary setting is not yet established.

Let $U^0 \in E^0$, and let $U_0^\nu$ be an adapted family of initial values associated with $U^0$ for the Navier-Stokes system \eqref{system rho, rho u}. We denote by $U^\nu$ the Navier-Stokes solution to \eqref{system rho, rho u} with initial value $U_0^\nu$. We call any subsequential limit of $U^\nu$ an inviscid limit associated with the Euler initial value $U^0$. We will show the existence of an inviscid limit.

\begin{lemm}\label{lemma existence of inviscid limit}
    For any $U^0 \in E^0$, there exists an inviscid limit associated with $U^0$.
\end{lemm}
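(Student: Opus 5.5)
The plan is to extract a subsequence of the Navier-Stokes solutions $U^\nu$ converging strongly in $L^1_{\mathrm{loc}}(\mathbb{R}_+\times\mathbb{R}_+)$. We use uniform-in-$\nu$ energy and BD-entropy estimates together with compensated compactness, following the whole-line strategy of \cite{compensated,NStoE} adapted to the half line with outflow condition $u^\nu(0,t)=u_B$. Lemma~\ref{lemma existence of initial values NS} supplies an adapted family $U_0^\nu$, and \cite{nspresse} supplies global smooth solutions $U^\nu$ with $\rho^\nu$ bounded above and below for each fixed $\nu$. So the whole task is to obtain $\nu$-independent bounds and pass to the limit.

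\textbf{Step 1 (relative energy).} Since $u_*$ and $u_B$ may differ, I will subtract a smooth reference state $\bar U=(\rho_*,\rho_*\bar u)$, where $\bar u(x)$ equals $u_B$ near $x=0$ and $u_*$ for $x\ge 1$. I then integrate the entropy inequality \eqref{entropy inequality NS} for the relative entropy $\eta(U^\nu|\bar U)$ over $\mathbb{R}_+$.
\begin{itemize}
\item The boundary term at $x=0$ is the relative flux evaluated at $u=u_B<0$. Fluid leaves the domain, so the convective part carries the favorable sign, and the pressure contributions are handled because $u^\nu-\bar u=0$ there.
\item The viscous boundary term $\nu\mu(\rho^\nu)(u^\nu-\bar u)\partial_x u^\nu$ vanishes at $x=0$ for the same reason.
\item The error terms coming from $\partial_x\bar u$ are compactly supported in $[0,1]$. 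They are controlled by Gronwall together with the dissipation.
\end{itemize}
This yields, locally uniformly in $t$,
\[
\sup_{[0,T]}\int_{\mathbb{R}_+}\eta(U^\nu|\bar U)\,dx+\nu\int_0^T\!\!\int_{\mathbb{R}_+}\mu(\rho^\nu)|\partial_x u^\nu|^2\le C(T).
\]

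\textbf{Step 2 (BD entropy, the main obstacle).} Next I will derive a BD-entropy estimate for the effective velocity $u^\nu+\nu\partial_x\phi(\rho^\nu)$, using $\phi$ from \eqref{def of phi}. This gives uniform control of $\nu^2\int|\partial_x\phi(\rho^\nu)|^2$ and of $\nu\int\!\!\int\mu(\rho^\nu)p'(\rho^\nu)|\partial_x\rho^\nu|^2/(\rho^\nu)^2$, with the initial contribution bounded by \eqref{NS initial value}. On the half line this produces boundary terms at $x=0$ involving $\nu\partial_x\phi(\rho^\nu)$, which are not prescribed. I would try to rewrite them using the momentum equation evaluated at $x=0$, where $u^\nu=u_B$ is constant in time, to express $\nu\mu\partial_x u^\nu$ through pressure and $\rho^\nu u_B\partial_x u^\nu$. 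The aim is that $u_B<0$ again makes the dangerous term sign-definite. If that fails, a fallback is to localize away from the boundary with a cutoff, since only $L^1_{\mathrm{loc}}$ compactness is needed. In that case compactness on $(\delta,\infty)$ for every $\delta>0$ plus a diagonal argument suffices, because the energy bound gives uniform integrability near $x=0$.

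\textbf{Step 3 (compactness and limit).} With these bounds, the standard argument applies. I will show that for every weak entropy pair $(\eta_w,q_w)$ generated by the kernel for $p=\rho^\gamma$, the dissipation measures $\partial_t\eta_w(U^\nu)+\partial_x q_w(U^\nu)$ lie in a compact subset of $H^{-1}_{\mathrm{loc}}(\mathbb{R}_+\times\mathbb{R}_+)$. The viscous parts are controlled by the energy and BD dissipation, and the Assumption~\ref{property viscosity} growth $\alpha\in[\gamma-1,\gamma]$ is used to bound $\mu(\rho)$ against the available weights. I will also use higher-integrability estimates for $\rho^\nu$ obtained by testing the momentum equation against a cutoff times $\int^x\rho^\nu$. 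The reduction of the Young measure (Chen--Perepelitsa/LeFloch--Westdickenberg framework, as in \cite{compensated}) then gives strong convergence of $(\rho^\nu,\rho^\nu u^\nu)$ in $L^1_{\mathrm{loc}}$ along a subsequence. The limit is by definition an inviscid limit associated with $U^0$.
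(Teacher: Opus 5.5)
Your Step 1 is Lemma~\ref{lemma uniform bounds eulerian}, and it is all the paper uses. The paper defines an inviscid limit as any subsequential limit of $U^\nu$, and proves existence by weak compactness alone. The bound $\sup_{[0,T]}\int\eta(U^\nu|U_*)\,dx\le C_T$ controls $\rho^\nu$ in $L^\gamma$ and $\rho^\nu u^\nu$ in $L^{2\gamma/(\gamma+1)}$ on compact sets; for the momentum this follows from $|\rho u|^{2\gamma/(\gamma+1)}\lesssim \rho|u-u_*|^2+\rho^\gamma+1$. Hence $\{U^\nu\}$ is uniformly integrable, so it is weakly precompact in $L^1_{\mathrm{loc}}(\mathbb{R}_+\times\mathbb{R}_+)$ by Dunford--Pettis, and any weak subsequential limit qualifies. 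Your Steps 2--3 aim at strong convergence and at the entropy inequality for the limit. That is the content of Proposition~\ref{prop E compensated compactness}, a separate and much harder statement that this lemma does not need.

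As written, those extra steps are also where your argument is not closed. In Step 2 you have no mechanism yet for the unprescribed boundary quantity $\nu\partial_x\phi(\rho^\nu)|_{x=0}$. In the paper's BD estimate (Lemma~\ref{lemma uniform bounds lagrangian}, done in Lagrangian variables), this quantity cancels between two boundary fluxes: the hyperbolic flux $(p(v)-p(v_*))(h-u_*)$ and the viscous flux $-\nu(p(v)-p(v_*))\mu(v)v^\gamma p(v)_x$. The cancellation holds because $h-\nu\mu(v)v^\gamma p(v)_x=u=u_B$ at the boundary. The leftover term $(p(v)-p(v_*))(u_B-u_*)$ is then absorbed by the outflow term $-y_b'\eta^L$ only under the smallness condition $|u_*-u_B|\le\epsilon_0$, which is not a hypothesis of the present lemma. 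Your fallback of localizing with a cutoff $\chi$ is not automatic either. The cutoff produces flux terms on $\operatorname{supp}\chi'$ that are cubic in the (effective) velocity, and the energy bound alone does not control them.

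So, to prove the lemma, stop after Step 1 and add the uniform-integrability and Dunford--Pettis argument. To get strong convergence, Step 2 needs a genuine boundary mechanism, and possibly extra hypotheses.
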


Our second result shows that, under the outflow boundary condition \eqref{outflow boundary condition}, if the initial values for the inviscid limits converge to a small BV initial value for the Euler system, then the inviscid limits converge to the small BV solution of the Euler system. In the presence of the outflow boundary condition \eqref{outflow boundary condition}, we show that small BV solutions are stable under perturbations of the initial values among vanishing viscosity solutions.
\begin{theo}\label{main thm inviscid limit}
Assume that the viscosity function satisfies Assumption \ref{property viscosity}. For any $U_* = (\rho_*, \rho_* u_*)$ in $\Omega_{sub}^-$ as defined in \eqref{def subsonic}, there exists $\e>0$ such that the following is true. 

Let $U^0 = (\rho^0, \rho^0 u^0) \in E^0$, where $E^0$ is defined in \eqref{def Euler initial value}, and $u_B<0$. Assume that
\begin{align*}
    \|U^0\|_{BV(\mathbb{R}_+)} + |u_* - u_B| \leq \e \, .
\end{align*}
Let $U_n^0 = (\rho_n^0, \rho_n^0 u_n^0) \in E^0$ be a sequence of initial values such that
\begin{align*}
    \lim_{n\rightarrow \infty} \Bigl[\int_{\mathbb{R}_+}|U_n^0(x)-U^0(x)| \dx + \int_{\mathbb{R}_+} \rho_n^0 |u_n^0 - u^0|^2 \dx + \int_{\mathbb{R}_+} |\rho_n^0 - \rho^0|^\ga \dx \Bigr] = 0 \, .
\end{align*}
We denote by $U_n$ the inviscid limit of the Navier-Stokes system \eqref{system rho, rho u} associated with the initial value $U_n^0$.

Then $U_n$ converges in $L^1_{\mathrm{loc}}(\mathbb{R}_+\times\mathbb{R}_+)$ to the small BV solution of the Euler system \eqref{system E rho, rho u} with initial value $U^0$ and outflow boundary condition \eqref{outflow boundary condition}.
\end{theo}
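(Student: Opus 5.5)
The strategy is a weak-BV stability estimate: an inviscid limit is compared with the small BV solution through the relative entropy method with shifts (the $a$-contraction theory of Kang--Vasseur). It is combined with a front-tracking approximation of the BV solution, in the spirit of the Chen--Krupa--Vasseur weak-BV principle, adapted to the half line with the outflow boundary condition.

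First, the limiting object. By Lemma~\ref{lemma existence of inviscid limit}, each $U_n$ exists, and as a limit of Navier-Stokes solutions it inherits two kinds of information. In the interior it satisfies \eqref{entropy inequality}. Passing to the limit in \eqref{entropy inequality NS} integrated against test functions supported up to $x=0$, and using the boundary value $u^\nu(0,t)=u_B$, it also satisfies a boundary entropy inequality: the boundary flux of $\eta(\cdot|\bar U)$ relative to any constant state $\bar U$ with $\bar u=u_B$ is controlled. The key point is the \emph{relative} version. I would derive, uniformly in $\nu$, a relative entropy inequality between $U^\nu$ and any Lipschitz-in-time piecewise constant state. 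This uses the effective velocity $h=u-\nu\p_x\phi(\rho)$-type variable (the BD-entropy quantity, which is why $\phi$ in \eqref{def of phi} appears in \eqref{NS initial value}). With it, the $a$-contraction for viscous shocks holds uniformly in $\nu$, with the shift controlled by the dissipation.

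Second, the approximating object. I would build a front-tracking approximation $\bar U$ of the small BV Euler solution with initial data $U^0$ and boundary condition \eqref{outflow boundary condition}. Near a subsonic state, the 1-waves leave through the boundary and the 2-waves are generated or reflected there. These include reflected shocks: a 1-wave hitting $x=0$ produces a 2-wave of comparable strength because $u=u_B$ is imposed. I would then modify $\bar U$ by shifting every shock front $x_i(t)$ to $x_i(t)+X_i(t)$, using the shift ODE dictated by the $a$-contraction. Rarefaction and small nonphysical fronts stay unshifted and are treated as errors of size $O(\e)$ times their strength. Weights $a_i$ are chosen piecewise constant, jumping across shocks by an amount proportional to the shock strength. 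I would then show that
\[
\int_0^\infty a(x,t)\,\eta\bigl(U_n(x,t)\,|\,\bar U(x,t)\bigr)\dx
\]
is nonincreasing, up to errors controlled by the initial discrepancy, the front-tracking parameter, and $\e\,t$. The ordering of shifted fronts must be preserved at interactions, and shifted fronts must not cross $x=0$; this is secured by a Glimm-type potential, because the BV smallness bounds the shift speeds.

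The main obstacle is the boundary. At $x=0$ the relative entropy flux term
\[
-\bigl[q(U_n;\bar U)\bigr](0,t)
\]
must have a good sign. When $\bar u=u_B$ at the boundary, the relative flux reduces to terms of the form $(u_n-u_B)(\ldots)$ plus $u_B\,\eta(U_n|\bar U)$. The latter is favorable, since $u_B<0$ means outflow. The former needs the trace information inherited from the viscous boundary layer, and for this I would exploit the subsonic condition $|u_B|<c$. The weight-jump conditions must also be compatible at reflections: when a 1-shock leaves through the boundary and a 2-shock is reflected, the weights have to be restarted consistently. I expect this requires choosing $a$ increasing toward the boundary in a controlled way, with smallness absorbed by $|u_*-u_B|\le\e$.

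Concluding, I would send the front-tracking parameter to zero. The shifts are bounded by $\int_0^t$ of the dissipation plus $\e$-small terms, so the shifted front-tracking solutions converge to the BV solution. Combining this with the $L^2$-type control from the initial hypothesis in the theorem, $\eta(U_n^0|U^0)\to0$, gives $U_n\to U$ in $L^1_{\mathrm{loc}}$.
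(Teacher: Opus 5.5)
Your proposal has a genuine gap at its central step: the claim that $\int_0^\infty a\,\eta(U_n|\bar U)\,dx$ is nonincreasing up to small errors, where $U_n$ is an inviscid limit and $\bar U$ is an Euler-level front-tracking approximation with solution-dependent shifts.

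\begin{itemize}
\item At the Euler level, the $a$-contraction with shifts (\cite{CKV}, and \cite{prepweakbv} on the half line) requires the weak solution to be $L^\infty$-bounded and to have the Strong Trace Property. The shift ODE is driven by traces of the solution along each shifted front. Signing the boundary flux $-q(U_n;\bar U)(0,t)$ also needs a trace at $x=0$.
\item An inviscid limit is only known to lie in the energy class of Lemma~\ref{lemma uniform bounds eulerian}; the $U_n^0$ are not even assumed to be BV. So $U_n$ has no known $L^\infty$ bound and no known traces.
\item You cannot inherit the inequality from $U^\nu$ by letting $\nu\to0$ either. The shifts depend on the solution, and against a discontinuous $\bar U$ the viscous term leaves front contributions of the form $\nu\mu(\rho^\nu)u^\nu_x[\bar u]$. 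These are not controlled uniformly in $\nu$. Uniform-in-$\nu$ contraction holds only against viscous shock profiles, through the Poincar\'e-type inequality of Lemma~\ref{lemma inside truncation}.
\item BV smallness does not bound the shift speeds. The shifts in \eqref{ODE of Xi} satisfy only the one-sided bound \eqref{speed of shock bound} and have no $\nu$-uniform bound. This is why the paper needs stopping times and the merging procedure (the Lagrangian boundary can overtake waves), and why it cannot prove strong convergence of the approximate solutions.
\item Your final step is also off. The shifts are not shown to be small, and the limit is identified through uniqueness, not through vanishing shifts.
\end{itemize}

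The paper never compares an inviscid limit directly with the BV solution. The proof of Theorem~\ref{thm v,h} performs the $a$-contraction at the Navier--Stokes level, in Lagrangian coordinates with the BD entropy, uniformly in $\nu$.
\begin{itemize}
\item It compares $U^\nu$ with a front-tracking approximation $\bar U_\nu$ made of viscous shock profiles and smoothed rarefaction and non-physical waves, with shifts defined through the smooth $U^\nu$.
\item The boundary term is handled using $u^\nu=u_B$ exactly at the boundary, together with a $\nu$-dependent bound on $\|v^{-\gamma}\|_{L^\infty}$.
\item The increase at mergings that involve a previously merged wave is paid for by the boundary term $-y_b'\,a\,\eta(U|\bar U)|_{y_b}$.
\end{itemize}
Any limit of $U^\nu$ is then an entropy solution (Proposition~\ref{prop E compensated compactness}). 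It is small BV and satisfies \eqref{outflow boundary condition} (Proposition~\ref{prop E approximate sol}). Hence it is the small BV solution by the uniqueness result of \cite{prepweakbv}; this is Theorem~\ref{main thm vanishing viscosity sol}.

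Theorem~\ref{main thm inviscid limit} is then a diagonal argument, as in Section 5.3 of \cite{NStoE}. For each $n$, choose $\nu_n\to0$ such that the Navier--Stokes solution with data adapted to $U_n^0$ is $L^1_{\mathrm{loc}}$-close to $U_n$. Using $U_n^0\to U^0$, choose these data so that they form an adapted family for $U^0$. Theorem~\ref{main thm vanishing viscosity sol} then makes this diagonal sequence converge to the BV solution, and hence so does $U_n$. This reduction to Theorem~\ref{main thm vanishing viscosity sol} is the idea your proposal is missing.
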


\section{Stability of small BV solutions}\label{sec history stability of small bv}
We begin with the history in the whole space. In 1965, Glimm \cite{Glimm} established the global existence of solutions to one-dimensional $n\times n$ systems of conservation laws for initial values with sufficiently small BV norm using the random choice method. Following this pioneering work, the stability theory for small BV solutions has been extensively developed within an $L^1$-based framework. A central tool is the front tracking method, which constructs approximate solutions consisting of finitely many elementary waves that preserve the small BV structure and can be compared with weak solutions.

The existence theory based on front tracking was developed in \cite{bressanbook,Dafermos2,HR}, while the $L^1$-stability of front tracking approximations was established in \cite{BLY,BCP,LY,BC}. For general $n\times n$ systems, uniqueness of small BV solutions was first proved using front tracking under additional assumptions, such as the Tame Oscillation Condition \cite{BL,BG} or the Bounded Variation Condition \cite{BLe}. These technical conditions were first removed for $2\times2$ systems in \cite{CKV}, where the $a$-contraction method yielded a stronger Weak/Strong stability principle applicable to bounded weak entropy solutions satisfying the Strong Trace Property. More recently, these conditions were also removed for general $n\times n$ systems in \cite{BGu,BDL} using front tracking method. However, these latter results do not establish a Weak/Strong stability principle analogous to that in \cite{CKV}: within the front tracking framework, the weak solutions under consideration are still required to have small total variation.

Next, we discuss the development of the vanishing viscosity method. Bianchini and Bressan \cite{vanishingnondegenerate} showed in 2005 that small BV solutions arise as vanishing viscosity limits for a general $n\times n$ system with nondegenerate artificial viscosity. Their proof relies on the small BV structure. As a result, it is difficult to extend the result to physical systems with degenerate viscosity, such as the Navier-Stokes system, where uniform BV estimates with respect to the viscosity parameter are no longer available (see \cite{LYu} for estimates with fixed viscosity).

For physical systems with degenerate viscosity, Chen and Perepelitsa \cite{compensated} proved in 2010 the existence of vanishing viscosity solutions to the one-dimensional Euler system with $\mu=1$, using the compensated compactness method \cite{Tartar}. The convergence was established for initial data with merely bounded energy. However, for small BV initial data, this result does not establish whether the limit is itself a small BV solution or whether it is unique.

For small BV solutions in physical systems with degenerate viscosity, no such result was available until the work of Chen, Kang, and Vasseur \cite{NStoE} in 2024, using the $a$-contraction with shifts method.

\subsection{$a$-contraction with shifts}\label{subsec a-contraction with shifts}
The $a$-contraction with shifts method is a recently developed $L^2$-based approach to stability, aimed at studying a broader class of weak solutions. This method builds upon the relative entropy framework introduced by Dafermos and DiPerna \cite{Dafermos,DiPerna}, originally for studying the stability of Lipschitz solutions. In particular, the method considers the following relative entropy
\begin{align}\label{def of relative entropy}
    \eta(U|\bar{U}) = \eta(U) - \eta(\bar{U}) - \nabla \eta(\bar{U}) (U-\bar{U}) \, .
\end{align}

The fundamental obstacle to extending the relative entropy framework to solutions with discontinuities is that shocks are not necessarily contractive in $L^2$. This phenomenon already appears for Burgers' equation: an initial perturbation of $L^2$-size $\delta$ may change the shock speed by an amount of order $\delta$, producing, through the displacement of the shock, an $L^2$-perturbation of order $\sqrt{\delta t}$ at time $t$.

In the scalar case, contraction can be recovered by introducing a shift whose speed depends on the perturbation \cite{Leger}. For systems, however, an asymmetry in the strength of compression on the two sides of a shock may lead to a failure of contraction \cite{SV}. To compensate for this asymmetry and restore contraction, a weight $a$ was introduced in \cite{weightaintroduction}. The resulting weighted relative entropy functional, together with dynamically chosen shifts, forms the framework of $a$-contraction with shifts. This method is not restricted to shocks, but also applies to other elementary waves, including rarefactions and contact discontinuities, and therefore provides a powerful tool for studying small BV stability. The following are the recent developments.

At the inviscid level, Chen, Krupa, and Vasseur \cite{CKV} established the stability of small BV solutions to general $2\times2$ hyperbolic systems within the class of $L^\infty$-bounded weak entropy solutions satisfying the Strong Trace Property. This qualitative result was later strengthened by Chen, Faile, and Krupa \cite{CKF} to the quantitative H\"older estimate
\begin{align}
    \|W(\cdot,t)-U(\cdot, t)\|_{L^2} \leq C\|W(\cdot, 0)-U(\cdot, 0)\|_{L^2}^{1/2} \, ,
\end{align}
where $U$ denotes the small BV solution and $W$ the weak solution. The exponent $1/2$ is sharp, as suggested by the Burgers shock example earlier.

For the vanishing viscosity method, Chen, Kang, and Vasseur \cite{NStoE} proved in 2024 the stability of small BV solutions to the Euler system \eqref{system E rho, rho u} within the inviscid limits of the Navier-Stokes system \eqref{system rho, rho u} with degenerate viscosity, in a setting where no $\nu$-uniform BV estimates are available. Using the method of $a$-contraction with shifts, they work with Navier-Stokes solutions for which no $\nu$-uniform $L^\infty$ bound is known to exist. The key difficulty is to establish stability uniformly in $\nu$. This requires proving contraction for arbitrarily large perturbations, which is already challenging even for viscous shock layers \cite{MR4195742,invent,twoshocks}.

The core idea in \cite{NStoE} is to construct approximate solutions $\bu_\nu$ that serve as a bridge between the limiting small BV solution and the Navier-Stokes solutions $U^\nu$. Starting from step functions approximating the small BV initial data, a front tracking algorithm is used to construct globally defined approximate solutions at the Euler level that retain the small BV structure and consist of only finitely many elementary waves at each time. Although the algorithm is carried out at the Euler level, the approximate solutions $\bu_\nu$ are also designed to be comparable to the Navier-Stokes solutions $U^\nu$. The elementary waves are replaced by their viscous counterparts to incorporate the viscous structure. Moreover, perturbation dependent shifts are imposed on the shock fronts to accommodate the $L^2$-stability framework.

The construction of approximate solutions by front tracking originated in the existence and $L^1$-stability theory for inviscid systems. In their study of viscous systems, however, Bianchini and Bressan \cite{vanishingnondegenerate} adopted a different approach that does not rely on front tracking approximations. In \cite{NStoE}, Euler-level front tracking approximations of small BV solutions are, for the first time, adapted to incorporate viscous structure, thereby allowing them to be compared with Navier-Stokes solutions.

\subsection{With boundaries}\label{subsec with boundaries}
In the subsonic region, the Euler system admits three types of boundary configurations, depending on the motion of the fluid through the boundary: inflow, outflow, and impermeable wall, corresponding respectively to fluid entering the domain, leaving the domain, or not crossing the boundary.

With boundaries, there is an additional challenge: one must show that the limit of the approximate solutions still satisfies the prescribed boundary condition. For an impermeable wall, there is a shortcut. A solution satisfying the impermeable wall condition admits a special extension to a solution on the whole space. Therefore, the stability of small BV solutions satisfying the impermeable wall condition can be reduced to the corresponding whole space stability problem, both within the class of bounded weak entropy Euler solutions satisfying the Strong Trace Property, as established in \cite{CKV,CKF}, and within the class of inviscid limits associated with the Navier-Stokes system, as established in \cite{NStoE}. This reduction was carried out in \cite{impermeable}.

For the subsonic inflow and outflow cases, no such extension is available. Instead, the stability of small BV Euler solutions within the class of bounded weak entropy solutions satisfying the Strong Trace Property was established in \cite{prepweakbv} by deriving a spatially uniform bound on the total variation in time of the front tracking approximations. In particular, the notion of elements was developed to identify and track how constituent wave elements combine and separate through successive interactions. In the same spirit, although we work with a different algorithm in this paper, we show that the limit of the approximate solutions satisfies the prescribed boundary condition.

Moreover, the approximate solutions constructed in this paper are adapted to the viscous structure so that they can be compared with Navier-Stokes solutions. We substantially modify the front tracking method of Chen, Kang, and Vasseur \cite{NStoE} to remove the nondegeneracy assumption imposed there on the viscosity coefficient $\mu$ at low density. This improvement also applies to the whole space setting considered in \cite{NStoE}. In particular, the inviscid limit results of \cite{NStoE} remain valid when $\mu(\rho)$ is comparable to $\rho^\al$. 

To place the viscosity on the variable associated with the nonlinear flux, we first formulate the Navier-Stokes system in Lagrangian coordinates using the Bresch-Desjardins entropy \cite{BDentropy}. When constructing the algorithm in Lagrangian coordinates, the presence of the boundary creates an additional difficulty. In these coordinates, the boundary moves at a speed determined by the Navier-Stokes solution $U^\nu$. In the absence of a known $\nu$-uniform $L^\infty$ bound on $U^\nu$, the boundary may move pathologically fast, possibly even at supersonic speeds. By contrast, the front tracking approximation $\bu_\nu$ retains the small BV structure, so its constituent waves propagate at subsonic speeds. The boundary may therefore overtake nearby waves, leading to repeated violations of the boundary condition a large number of times. Whenever such a violation occurs, we reset the front tracking approximate solution to restore the boundary condition. Each resetting may produce an increase. Without uniform control on the total number of resettings, it is difficult to bound the total accumulated increase. To overcome this difficulty, we develop a special resetting procedure such that the increase at each resetting is compensated by the boundary contribution accumulated between successive resetting times.

\subsubsection{Other applications}
Beyond the study of small BV stability, the $a$-contraction method has also advanced the study of time-asymptotic stability. We mention several recent results. The time-asymptotic stability of general Riemann solutions for the full Euler system and the associated Navier-Stokes-Fourier system was established in \cite{KVWtime,SVintermediate}. A three-dimensional stability result for contact discontinuities in the same setting was obtained in \cite{KVW}. The time-asymptotic stability of compositions of extremal planar waves for a general $n\times n$ multidimensional viscous system was established in \cite{meng}. 

For boundary value problems, time-asymptotic stability of elementary waves has also been studied for the Navier-Stokes system. In particular, the time-asymptotic behavior of viscous shocks for the Navier-Stokes system in all subsonic boundary configurations was established in \cite{asymptoticNSinflow,asymptoticNSoutflow}, while the time-asymptotic behavior of a superposition of a boundary layer, a rarefaction wave, and a shock for the inflow problem with subsonic boundary values was established in \cite{asymptoticNSboundarylayer}.

\section{Uniform bounds}
In this section, we first introduce Lagrangian coordinates and then discuss the uniform bounds on the entropy and dissipation obtained in both the Eulerian and Lagrangian frameworks. Finally, we prove Lemma~\ref{lemma existence of initial values NS} and Lemma~\ref{lemma existence of inviscid limit}.

Following \cite{MR4195742, invent, twoshocks, NStoE}, we first perform the relative entropy method on the Navier-Stokes system \eqref{system rho, rho u} written in Lagrangian coordinates, using the Bresch-Desjardins entropy \cite{BDentropy}, so that the viscosity acts on the variable with nonlinear flux. More precisely, We first introduce the mass Lagrangian coordinate 
\begin{align}\label{def of y^nu}
    y^\nu(x,t) = \int_{0}^x \rhn(z,t) \, dz - \int_0^t \rhn (0,s) \un(0,s) \, ds \, .
\end{align}
Using $(y^\nu,t)$ as the corresponding Lagrangian space-time coordinates, we can rewrite the Navier-Stokes system \eqref{system rho, rho u} in terms of the specific volume $\vn = 1/\rhn$ and the velocity $\un$. In these coordinates, the Navier-Stokes system takes the following form
\begin{equation}\label{system v,u}
    \begin{cases}
        \vn_t - \un_{y^\nu} = 0 \, , \\
        \un_t + p_L(\vn)_{y^\nu} = \nu \bigl( \ga \frac{\mu_L(\vn)}{\vn}\un_{y^\nu} \bigr)_{y^\nu} \, ,
    \end{cases}
\end{equation}
where $p_L(v) = v^{-\ga}$ for some $\ga>1$, and $\mu_L(v) = \frac{\mu(\frac{1}{v})}{\ga}$. The corresponding initial value satisfies
\begin{align*}
    \lim_{x\rightarrow \infty} (\vn_0,\un_0) = \lim_{x\rightarrow \infty} (\frac{1}{\rho^\nu_0},\un_0) = (\frac{1}{\rho_*}, u_*) =: (v_*, u_*)\, ,
\end{align*}
and therefore has infinite total mass. Moreover, it contains no vacuum. Since the absence of vacuum is preserved by the Navier-Stokes system, the formulations \eqref{system v,u} and \eqref{system rho, rho u} are equivalent. 

The boundary in Lagrangian coordinates is given by
\begin{align}\label{def of y_b^nu}
    y_b^\nu(t) = - \int_0^t \rhn (0,s) \un(0,s) \, ds \, ,
\end{align}
with speed
\begin{align}\label{property of y_b^nu'}
    (y_b^\nu)'(t) = -\rhn(0,t) u_B = -\frac{u_B}{\vn(y_b^\nu(t),t)} \geq 0\, .
\end{align}
The outflow boundary condition \eqref{outflow boundary condition} becomes
\begin{align}\label{outflow boundary condition L}
    u^\nu(y_b^\nu(t),t) = u_B < 0 \, .
\end{align}
The subsonic region in Lagrangian coordinates is
\begin{align}\label{def subsonic L}
    (\om_{sub}^-)^L = \{(v,u):\la_1^L(v)<-\frac{u}{v}<\la_2^L(v)\} \, \cap \, \{u<0\} \, ,
\end{align}
where the eigenvalues in Lagrangian coordinates are
\begin{align*}
    \la_1^L\bigl((v,h)\bigr) = - \ga^{1/2} v^{-\frac{\ga+1}{2}} \, , \quad \la_2^L\bigl((v,h)\bigr) = \ga^{1/2} v^{-\frac{\ga+1}{2}} \, .
\end{align*}

We consider the effective velocity $\hn = \un + \nu \mu_L(\vn) (\vn)^{\ga} p(\vn)_{y^\nu}$,
which is associated with the Bresch-Desjardins entropy \cite{BDentropy}. We rewrite \eqref{system v,u} in the following BD form
\begin{equation*}
    \begin{cases}
        \vn_t - \hn_{y^\nu}  = -\nu \bigl( \mu_L(\vn) (\vn)^{\ga} p(\vn)_{y^\nu}  \bigr)_{y^\nu}  \, , \\
        \hn_t + p(\vn)_{y^\nu}  = 0 \, .
    \end{cases}
\end{equation*}
The associated entropy is $\eta^L((v,h))=Q(v) + \frac{h^2}{2}$, where $Q(v) = \frac{v^{-\ga+1}}{\ga-1}$. We see that the viscosity now acts on $v$ rather than on $u$, which plays an important role in Section~\ref{section shock}, where we estimate the terms associated with shocks.

We distinguish between the Eulerian and Lagrangian frameworks by attaching the superscript $L$ to quantities expressed in Lagrangian coordinates. However, from Section~\ref{sec mft} through Section~\ref{sec global in time estimates}, we work exclusively with the Navier-Stokes system in Lagrangian coordinates and use the BD entropy. For convenience, throughout these sections we omit the superscript $L$ and write $x$ in place of $y^\nu$. More precisely, we write
\begin{equation}\label{NS system BD}
    \begin{cases}
        \vn_t - \hn_{x}  = -\nu \bigl( \mu(\vn) (\vn)^{\ga} p(\vn)_x  \bigr)_x  \, , \\
        \hn_t + p(\vn)_x  = 0 \, ,
    \end{cases}
\end{equation}
where
\begin{align}\label{def of h}
    \hn = \un + \nu \mu(\vn) (\vn)^{\ga} p(\vn)_x \, .
\end{align}
The relative entropy is
\begin{align*}
    \eta(U^\nu|\bar{U}_\nu) = \eta\bigl((\vn,\hn)|(\bv_\nu,\bh_\nu)\bigr) = Q(\vn|\bv_\nu) + \frac{|\hn-\bh_\nu|^2}{2} \, .
\end{align*}
Corresponding to Assumption~\ref{property viscosity}, the viscosity coefficient $\mu$ satisfies the following assumption.
\begin{assum}\label{property viscosity L}
    Let $\ga>1$. We assume that $\mu$ can be decomposed into two differentiable functions, $\mu=\mu_1+\mu_2$, such that for some constants $C_1, C_2>0$, the following holds. We have
    \begin{align*}
        C_1 (\vn)^{-\al} \leq \mu_1(\vn) \leq C_2 (\vn)^{-\al} \, , \quad \textup{for all } \vn \, , 
    \end{align*}
    for some 
    \begin{align*}
        \al = \ga-1 \, , \quad \textup{if } \ga\leq 5/3 \, , \quad
        \ga-1\leq \al\leq \ga \, , \quad \textup{otherwise.}
    \end{align*}
    If $\al < 1/2$, then $\mu_2 = 0$. Otherwise, for some $0<\widetilde{\al}<1/2$,
    \begin{align*}
        0 \leq \mu_2(\vn) \leq C_2 (\vn)^{-\widetilde{\al}} \, , \quad \textup{for } \vn > 2 v_* \, , \quad \mu_2(\vn) = 0 \, , \quad \textup{for } \vn \leq 2 v_* \, ,
    \end{align*}
\end{assum}

\subsection{Uniform bounds}\label{sub sec uniform bounds}
We consider a viscous perturbation of a hyperbolic system of conservation laws
\begin{align*}
    \p_t U + \p_x A(U) = \p_x\bigl(B(U)\p_x U\bigr) \, .
\end{align*}
We define the relative flux
\begin{align}
    A(U|V) = A(U) - A(V) - \nabla A(V)(U-V) \, ,
\end{align}
and the flux of the relative entropy 
\begin{align}
    G(U;V) = G(U) - G(V) - \nabla\eta(V)(A(U)-A(V)) \, ,
\end{align}
where $G$ denotes the entropy flux associated with $\eta$. More precisely, for any $1 \leq i \leq 2$, $\p_i G(U) = \sum_{k=1}^2 \p_k \eta(U) \, \p_i A_k(U)$,  for any $1 \leq i \leq 2$.

By the relative entropy method, for any functions $U, V$, we have
\begin{align*}
    -\bigl(\nabla\eta(U)-\nabla\eta(V)\bigr)\p_x A(U) + \nabla^2 \eta(V) (U-V) \p_x A(V) = -\p_x G(U;V)  - \p_x \nabla\eta(V) A(U|V) \, .
\end{align*}
Recall the definition of relative entropy in \eqref{def of relative entropy}. We have
\begin{equation}\label{derivative of relative entropy}
\begin{aligned}
    \p_t \eta(U|V) &= \bigl(\nabla\eta(U) - \nabla \eta(V)\bigr)\p_t U - \nabla^2 \eta(V)(U-V) \p_t V \\
    &= -\bigl(\nabla\eta(U) - \nabla \eta(V)\bigr)\p_x A(U) + \nabla^2 \eta(V)(U-V)\p_x A(V) \\
    &\quad\, - \nabla^2 \eta(V)(U-V)\bigl( \p_t V + \p_x A(V) \bigr) + \bigl(\nabla\eta(U) - \nabla \eta(V)\bigr) \p_x\bigl(B(U)\p_x U\bigr)\\
    & = -\p_x G(U;V)  - \p_x \nabla\eta(V) A(U|V) - \nabla^2 \eta(V)(U-V)\bigl( \p_t V + \p_x A(V) \bigr) \\
    &\quad \, + \bigl(\nabla\eta(U) - \nabla \eta(V)\bigr) \p_x\bigl(B(U)\p_x U\bigr) \, .
\end{aligned} 
\end{equation}
We note that
\begin{align}\label{relative entropy three variables}
    \eta(U_0^\nu|U_*) = \eta(U_0^\nu|U^0) + \eta(U^0|U_*) + \bigl(\nabla \eta(U^0) - \nabla \eta(U_*)\bigr) (U_0^\nu-U^0) \, .
\end{align}
We first work in the Eulerian framework. We have
\begin{align*}
    \eta(U) = \frac{\rho u^2}{2} + \frac{\rho^\ga}{\ga-1} \, , \;
    \nabla \eta(U) = \begin{pmatrix}
        \frac{\ga}{\ga-1} \rho^{\ga-1} - \frac{u^2}{2} \\ u
    \end{pmatrix} \, , \;
    \eta(U|\udU) = \frac{\rho(u-\udu)^2}{2} + \frac{p(\rho|\udr)}{\ga-1} \, ,
\end{align*}
where $p(\rho|\udr) = \rho^\ga - \bar{\rho}^\ga - \ga \udr^{\ga-1}(\rho - \udr)$. We also have that
\begin{align*}
    A(U|\udU) = \begin{pmatrix}
        0 \\ \rho(u-\udu)^2 + p(\rho|\udr)
    \end{pmatrix} , \;
    G(U;\udU) = \frac{\rho u (u-\udu)^2}{2} + \frac{p(\rho|\udr)u}{\ga-1} + \bigl(p(\rho) - p(\udr)\bigr)(u-\udu) \, .
\end{align*}

By \eqref{def Euler initial value} and \eqref{NS initial value}, there exists a constant $C>0$, independent of $\nu$, such that
\begin{align}\label{relative entropy initial bounded}
    \int_0^\infty \eta(U_0^\nu|U_*) \, dx \leq C \, .
\end{align}

\begin{lemm}\label{lemma uniform bounds eulerian}
Let $U_*=(\rho_*, \rho_* u_*)\in \Omega_{sub}^-$ as defined in \eqref{def subsonic} and $u_B<0$. Assume that 
\begin{align*}
    \int_0^\infty \eta(U_0^\nu|U_*) \, dx \leq C \, , 
\end{align*}
for some constant $C>0$ independent of $\nu$. Then for any $T>0$, there exists a constant $C_T>0$, depending only on $T$, such that
\begin{align*}
    \sup_{0\leq t\leq T} \Bigl[\int_0^\infty \eta(U^\nu|U_*) \dx \Bigr] + 
    \nu\int_0^T\int_0^\infty \mu(\rne)|\une_x|^2 \dx
    \leq C_T \, .
\end{align*}
\end{lemm}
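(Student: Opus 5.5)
The plan is to run the relative entropy computation \eqref{derivative of relative entropy} in Eulerian variables. I will \emph{not} compare with the constant state $U_*$ directly, because then the viscous boundary term $\nu\mu(\rhn)\une_x(u_B-u_*)$ at $x=0$ would appear, and it is uncontrolled. Instead I compare with a smooth, time-independent reference state $V=(\rho_*,\rho_*\bar u)$. Here $\bar u$ is smooth, $\bar u(0)=u_B$, $\bar u(x)=u_*$ for $x\ge 1$, and $\|\bar u'\|_\infty\le C|u_B-u_*|+C$. Since $\une(0,t)=u_B=\bar u(0)$, the boundary term produced by the viscosity vanishes. The justification of the integrations by parts uses the regularity of $U^\nu$ from \cite{nspresse}, namely $H^1$ bounds and $\rhn$ bounded above and below on $[0,T]$. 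I also use that $u-\bar u\in H^1$, so the terms at $x=\infty$ vanish.

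\textbf{Step 1 (identity).} Apply \eqref{derivative of relative entropy} with $B(U)\p_xU=(0,\nu\mu(\rho)u_x)$ and integrate over $\mathbb{R}_+$. The second component of $\nabla\eta(U)-\nabla\eta(V)$ is $u-\bar u$. This gives
\begin{align*}
\frac{d}{dt}\int_0^\infty\eta(U^\nu|V)\dx
&= G(U^\nu;V)(0,t)-\int_0^\infty \p_x\nabla\eta(V)\,A(U^\nu|V)\dx
-\int_0^\infty \nabla^2\eta(V)(U^\nu-V)\,\p_xA(V)\dx\\
&\quad -\nu\int_0^\infty\mu(\rne)\une_x(\une_x-\bar u_x)\dx .
\end{align*}
At $x=0$ we have $u=\bar u=u_B$. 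By the formula for $G$, this gives $G(U^\nu;V)(0,t)=p(\rhn|\rho_*)u_B/(\ga-1)\le 0$ since $u_B<0$, so the boundary flux has a good sign.

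\textbf{Step 2 (interior error terms).} All error terms are supported in $[0,1]$, where $\bar u'\neq 0$.
\begin{itemize}
\item By the explicit formula for $A(U|V)$, $|A(U|V)|\le C\eta(U|V)$. Hence the $A$-term is bounded by $C\int\eta(U^\nu|V)$.
\item The source term is linear in $U^\nu-V$ on $[0,1]$. It is bounded by $C\int_0^1(|\rhn-\rho_*|+\rhn|\une-\bar u|)\dx$. Using $\rho|u-\bar u|\le \rho(u-\bar u)^2+\rho$ and the superlinear growth of $p(\rho|\rho_*)$, this is at most $C(1+\int\eta(U^\nu|V))$.
\item For the viscous cross term, Young's inequality gives $\nu\int\mu u_x\bar u_x\le \frac{\nu}{2}\int\mu u_x^2+\frac{\nu}{2}\int_0^1\mu(\rhn)\bar u_x^2$. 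Assumption~\ref{property viscosity} gives $\mu(\rho)\le C(1+\rho^\al)$ with $\al\le\ga$. Hence the last integral is at most $C(1+\int\eta(U^\nu|V))$, uniformly for $\nu\le 1$.
\end{itemize}

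\textbf{Step 3 (Gronwall and change of reference).} Combining the above,
\begin{equation*}
\frac{d}{dt}\int\eta(U^\nu|V)\dx+\frac{\nu}{2}\int\mu(\rne)|\une_x|^2\dx\le C\Big(1+\int\eta(U^\nu|V)\dx\Big).
\end{equation*}
Gronwall then bounds both quantities on $[0,T]$ in terms of $\int\eta(U_0^\nu|V)$. It remains to pass between $V$ and $U_*$, which differ only in $u$ on $[0,1]$. We have $\rho(u-u_*)^2\le 2\rho(u-\bar u)^2+2\rho|\bar u-u_*|^2$. On $[0,1]$, $\int_0^1\rho\le C(1+\int p(\rho|\rho_*))$. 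This gives $\int\eta(U|U_*)\le C(1+\int\eta(U|V))$ and conversely. Applying this at $t=0$ with the hypothesis, and at time $t$, yields the claim.

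The main obstacle is exactly the boundary contribution at $x=0$: both the viscous flux term, which involves $\une_x(0,t)$, and the hyperbolic flux, which involves the uncontrolled trace $\rhn(0,t)$. The choice of $V$ matching $u_B$ at the boundary is what removes the first and gives a sign to the second. The rest is routine.
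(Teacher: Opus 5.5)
Your proposal is correct and follows essentially the same route as the paper. The paper also compares with a smooth reference state $\underline{U}$ satisfying $\underline{u}(0)=u_B$ and $\underline{U}\to U_*$, so the viscous boundary term vanishes and the boundary flux reduces to $p(\rho|\underline{\rho})u_B/(\gamma-1)\le 0$; it then bounds the interior terms exactly as you do, applies Gronwall, and switches the reference back to $U_*$. Your choice of constant density $\rho_*$ and a transition layer on $[0,1]$ (instead of the paper's $(1,2)$) is immaterial.
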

\begin{proof}
We work in the Eulerian framework. We choose a smooth monotone function $\udU = (\udr , \udr \udu) : (0,\infty) \rightarrow \mathbb{R}^2$ such that 
\begin{align}\label{uniform bounds lemma test function}
    \lim_{x\rightarrow \infty} \udU(x) = U_* \, , \quad \udu(0) = u_B \, , \quad \textup{supp}(\udU) \subset (1,2) \, .
\end{align}
By \eqref{derivative of relative entropy}, we have
\begin{align*}
    \frac{d}{dt} \int_0^\infty \eta(U|\udU) \dx = A_1 + I_2 + I_3 + I_4 + A_5 \, ,
\end{align*}
where
\begin{align*}
    A_1 &= - G(U;\udU)\bigl|_{x=0}^\infty \, , \\
    I_2 &= - \int_0^\infty \nabla^2\eta(\udU) \p_x \udU A(U|\udU) \dx \, , \\
    I_3 &= - \int_0^\infty \nabla^2\eta(\udU) (U-\udU) \nabla A(\udU) \p_x \udU \dx \, , \\
    I_4 &= -\int_0^\infty \bigl(\nabla \eta(U) - \nabla \eta(\udU) \bigr)_x \begin{pmatrix}
        0 \\ \nu \mu(\rho) u_x
    \end{pmatrix}  \dx \, , \\
    A_5 &= \bigl(\nabla \eta(U) - \nabla \eta(\udU) \bigr) \begin{pmatrix}
        0 \\ \nu \mu(\rho) u_x 
    \end{pmatrix} \bigl|_{x=0}^\infty \, .
\end{align*}
As $u|_{x=0} = u_B < 0$, we get
\begin{align*}
    A_1 = G(U;\udU) \bigl|_{x=0} = \frac{p(\rho|\udr) u}{\ga-1} \bigl|_{x=0} \leq 0 \, , \quad A_5 = 0 \, .
\end{align*}
Since $|A(U|\udU)|\leq C \eta(U|\udU)$, we have
\begin{align*}
    |I_2| \leq C \int_0^\infty \eta(U|\udU) \dx \, .
\end{align*}
By \eqref{uniform bounds lemma test function}, we get
\begin{align*}
    |I_3| \leq \int_1^2  |U-\udU| \dx \, .
\end{align*}
Note that $\ga>1$. Thus, we have
\begin{equation}\label{uniform lemma L^1 norm U - udU}
    \begin{aligned}
        \int_1^2  |U-\udU| \dx &\leq \int_1^2  |\rho-\udr| \dx + \int_1^2  |\rho (u-\udu)| \dx + \int_1^2  |( \rho - \udr )\udu)| \dx \\
        &\leq C \int_1^2 \bigl(1+p(\rho|\udr)\bigr) \dx + \int_1^2 \rho \dx + \int_1^2 \rho |u-\udu|^2 \dx \\
        &\leq C + C \int_1^2 \eta(U|\udU) \dx \, ,
    \end{aligned}
\end{equation}
which implies
\begin{align*}
    |I_3| \leq C + C \int_1^2 \eta(U|\udU) \dx \, .
\end{align*}
By Young's inequality and $\ga-1 \leq \al \leq \ga$, we get
\begin{align*}
    I_4 &= -\nu \int_0^\infty (u-\udu)_x \mu(\rho) u_x \dx \\
    &= - \nu \int_0^\infty \mu(\rho)|u_x|^2 \dx + \nu \int_1^2 \mu(\rho) \udu_x u_x \dx \\
    &\leq - \frac{\nu}{2} \int_0^\infty \mu(\rho)|u_x|^2 \dx + C \nu \int_1^2 \mu(\rho) \dx \\
    &\leq - \frac{\nu}{2} \int_0^\infty \mu(\rho)|u_x|^2 \dx + C \nu \int_1^2 \bigl(1+p(\rho|\udr)\bigr) \dx \\
    &\leq - \frac{\nu}{2} \int_0^\infty \mu(\rho)|u_x|^2 \dx + C\nu + C\nu \int_1^2 \eta(U|\udU) \dx \, .
\end{align*}
In all, we conclude
\begin{align*}
    \frac{d}{dt} \int_0^\infty \eta(U|\udU) \dx + \frac{\nu}{2} \int_0^\infty \mu(\rho)|u_x|^2 \dx \leq C \int_0^\infty \eta(U|\udU) \dx + C \, .
\end{align*}
By Gronwall's inequality, it follows that 
\begin{align*}
    \sup_{0\leq t\leq T} \Bigl[\int_0^\infty \eta(U|\udU) \dx \Bigr] \leq C_T \, ,
\end{align*}
and therefore,
\begin{align*}
    \nu\int_0^T\int_0^\infty \mu(\rne)|\une_x|^2 \dx
    \leq C_T \, .
\end{align*}
By \eqref{relative entropy three variables}, we have
\begin{align}
    \Bigl|\eta(U|U_*) - \eta(U|\udU)\Bigr| \leq C \mathds{1}_{(1,2)} + C \mathds{1}_{(1,2)} \bigl|U-\udU\bigr| \, .
\end{align}
By \eqref{uniform lemma L^1 norm U - udU}, we get
\begin{align*}
    \int_0^\infty \eta(U|U_*) \dx \leq C + C \int_0^\infty\eta(U|\udU) \dx \, .
\end{align*}
Hence, we show 
\begin{align*}
    \sup_{0\leq t\leq T} \Bigl[\int_0^\infty \eta(U|U_*) \dx \Bigr] \leq C_T \, .
\end{align*}
\end{proof}

We next work in the Lagrangian framework using the BD entropy. In Lagrangian coordinates, we have
\begin{equation}\label{useful computation}
\begin{aligned}
        &\nabla\eta^L(U) = 
    \begin{pmatrix}
        -p(v) \\ h
    \end{pmatrix} \, , \;
    \nabla^2\eta^L(U) = 
    \begin{pmatrix}
        -p'(v) & 0 \\ 0 & 1 
    \end{pmatrix} \, , \;
    \nabla A(U) = 
    \begin{pmatrix}
        0 & -1 \\ p'(v) & 0 
    \end{pmatrix} \, , \\
    &A(U|\bu) = 
    \begin{pmatrix}
        0 \\ p(v|\bv)
    \end{pmatrix} \, , \;
    G(U;\bu) = \bigl(p(v)-p(\bv)\bigr)(h-\bh) \, .
\end{aligned}
\end{equation}
By \eqref{relative entropy initial bounded} and \eqref{NS initial value}, there exists a constant $C>0$, independent of $\nu$, such that
\begin{align*}
    \int_{y_b^\nu}^\infty \eta^L(U_0^\nu|U_*) \, d y^\nu &= \int_0^\infty \frac{p(\rhn_0|\rho_*)}{\ga-1} + \frac{\bigl|\sqrt{\rhn_0}(\un_0-u_*)+ \nu \ga \p_x \phi(\rhn_0)\bigr|^2}{2} \dx \\
    &\leq C \int_{\mathbb{R}_+} \eta(U_0^\nu|U_*) \dx + C \nu^2 \int_{\mathbb{R}_+} |\p_x \phi(\rhn_0)|^2 \dx \\
    &\leq C \, .
\end{align*}

\begin{lemm}\label{lemma uniform bounds lagrangian}
Let $U_*^L = (v_*, u_*) \in (\Omega_{sub}^-)^L$ defined in \eqref{def subsonic L} and $u_B<0$. There exists $\e_0>0$ such that the following is true.

\noindent
Assume that $|u_*-u_B|\leq \e_0$, and
\begin{align}\label{lemma uniform bound lagrangian assumption}
    \int_{y_b^\nu}^\infty \eta^L(U_0^\nu|U_*) \, d y^\nu \leq C \, , 
\end{align}
for some constant $C>0$ independent of $\nu$. Then for any $T>0$, there exists a constant $C_T>0$, depending only on $T$, such that
\begin{align*}
    \sup_{0\leq t\leq T} \Bigl[\int_{y_b^\nu}^\infty \eta^L(U^\nu|U_*^L) \, d y^\nu \Bigr] + \nu \int_0^T \int_{y_b^\nu}^\infty \mu(\vn) (\vn)^\ga |p(\vn)_x|^2 \, d y^\nu \, dt \\
    + \int_0^T (y_b^\nu)'(t) \, dt \leq C_T \, .
\end{align*}
\end{lemm}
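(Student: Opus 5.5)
The plan is to run the relative entropy computation \eqref{derivative of relative entropy} in the Lagrangian BD variables $(v,h)$ on the moving domain $(y_b^\nu(t),\infty)$. The comparison state $\udU$ is attached to the boundary, so that the moving boundary produces a favourable term proportional to $(y_b^\nu)'$. I would also use Lemma~\ref{lemma uniform bounds eulerian} to control all terms that are merely linear in the perturbation.

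\textbf{Step 1: choice of comparison state.} Take a smooth monotone profile $\td u$ with $\td u(0)=u_B$, with $\td u\equiv u_*$ on $[1,\infty)$, and with $\|\td u'\|_{L^1}\le C|u_*-u_B|\le C\e_0$. Set
\[
\udU(y,t)=\bigl(v_*,\ \td u(y-y_b^\nu(t))\bigr).
\]
Keeping $\lv\equiv v_*$ constant and $\udu=\udU_2$ means the $v$-component generates no error terms, and the comparison state for $h$ is just this velocity profile. Note that $\p_t \udU = -(y_b^\nu)'\,\p_y \udU$.

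\textbf{Step 2: the boundary terms.} Differentiating $\int_{y_b^\nu}^\infty \eta^L(U^\nu|\udU)\,dy$ gives the transport term
\[
-(y_b^\nu)'\,\eta^L(U^\nu|\udU)\big|_{y=y_b^\nu}.
\]
It also gives the flux terms at $y_b^\nu$: the inviscid one, $G=(p(v)-p(v_*))(h-\udu)$, and the viscous one, coming from $-\nu(\mu v^\ga p(v)_y)_y$ in the $v$-equation and tested against $-(p(v)-p(v_*))$. By \eqref{def of h}, the sum of the two flux terms is exactly the flux of the $(v,u)$-form \eqref{system v,u}, namely $(p(v)-p(v_*))(u-\udu)$. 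This vanishes at the boundary by \eqref{outflow boundary condition L} and $\udu(y_b^\nu)=u_B$.

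What remains is the good term $-(y_b^\nu)'\,Q(v|v_*)$ at $y_b^\nu$ (I drop the $h$-part). Since $(y_b^\nu)'=-u_B/v$ and $Q(v|v_*)\ge c\,v^{1-\ga}-C$, I will show the pointwise bound
\[
(y_b^\nu)'\,Q(v|v_*)\ \ge\ c\,(y_b^\nu)'-C|u_B|.
\]
This holds because $v^{-1}Q(v|v_*)\ge c v^{-1}-C$ for all $v>0$, using $\ga>1$ for small $v$ and $Q(v|v_*)\ge 0$ otherwise. Hence the boundary gives
\[
-c\,(y_b^\nu)'+C.
\]

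\textbf{Step 3: interior terms.}
\begin{itemize}
\item The dissipation is $-\nu\int\mu v^\ga|p(v)_y|^2$, up to an error $\nu\int \mu v^\ga |p(v)_y|\,|\udu_y|$. Young's inequality absorbs this error, leaving $C\nu\int_{\text{supp}\,\udu_y}\mu v^\ga$, which Assumption~\ref{property viscosity L} with $\al\le\ga$ bounds by $C(1+\int \eta^L)$.
\item The terms from $\p_y\udU$ and $\p_t\udU$ are linear in $h-\udu$, with coefficient $(1+(y_b^\nu)')|\td u'|$.
\end{itemize}
For the linear terms I split $h-\udu=(u-\udu)+\nu\mu v^\ga p(v)_y$.
\begin{itemize}
\item The piece $\int|u-\udu||\td u'|$ is controlled by Lemma~\ref{lemma uniform bounds eulerian}: after the change of variables $dy=\rho\,dx$, the Eulerian relative entropy equals the Lagrangian quantity $\int Q(v|v_*)+|u-u_*|^2/2\,dy$, which is bounded by $C_T$ uniformly in time. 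This yields $C\e_0(1+(y_b^\nu)')C_T$, where the factor $\e_0$ comes from $\|\td u'\|_{L^1}$ in Step~1.
\item The piece $\nu\mu v^\ga p_y$ is bounded by $\e_0(1+(y_b^\nu)')(1+\int\eta^L)^{1/2}$.
\end{itemize}

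\textbf{Step 4: closing the estimate.} Choosing $\e_0$ small, the terms $C\e_0 C_T(y_b^\nu)'$ are absorbed by $c\,(y_b^\nu)'$. The expected main obstacle is the term $\e_0(y_b^\nu)'\,E^{1/2}$, where $E=\int\eta^L$: since there is no a priori bound on $(y_b^\nu)'$, a naive Gronwall argument would produce $\exp(\int (y_b^\nu)')$. I plan to handle it by writing
\[
\e_0(y_b^\nu)'E^{1/2}\le \tfrac{c}{2}(y_b^\nu)'+C\e_0^2(y_b^\nu)'E
\]
and closing a continuity argument on $E\le M_T$ with $\e_0^2 M_T$ small. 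If that fails, the fallback is to re-estimate the $\nu\mu v^\ga p_y$ part by using the dissipation and the fact that $(y_b^\nu)'$ multiplies only $\int |h-\udu||\td u'|$ on a unit interval.

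With either route, Gronwall gives
\[
\sup_{[0,T]}E+\nu\int_0^T\!\!\int \mu v^\ga|p_y|^2+\int_0^T (y_b^\nu)'\le C_T.
\]
Finally, replacing $\udU$ by $U_*^L$ changes $\int\eta^L$ only by $C(1+E)$, because the two states differ only on a unit interval.
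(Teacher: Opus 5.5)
There is a genuine gap in Step~3, and Step~4 would not give the statement's quantifier order even where the estimate closes.

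\textbf{Missing interior term.} Because $\udu$ is not constant, the computation \eqref{derivative of relative entropy} produces, besides the $(y_b^\nu)'$-term, the interior term
$-\int_{y_b^\nu}^\infty \td u'(y-y_b^\nu)\bigl(p(v)-p(v_*)\bigr)\,dy$.
This is the relative-flux term $-\p_y\nabla\eta^L(\udU)A(U|\udU)=-\td u'\,p(v|v_*)$ from \eqref{useful computation}, plus a term linear in $v-v_*$. It is neither linear in $h-\udu$, nor absent because $\lv\equiv v_*$.

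If $u_*<u_B$, your monotone $\td u$ is decreasing, and the term equals $\int|\td u'|\bigl(p(v)-p(v_*)\bigr)\,dy\ge 0$. Near vacuum it behaves like $\int_{y_b^\nu}^{y_b^\nu+1}v^{-\ga}\,dy$. Since $Q(v|v_*)\sim v^{1-\ga}$ as $v\to0$, this is not controlled by $E$. It is not controlled by Lemma~\ref{lemma uniform bounds eulerian} either: in Eulerian variables it is $\int\rho^{\ga+1}\,dx$ near the boundary, while the Eulerian energy only bounds $\int\rho^{\ga}\,dx$. The only $\nu$-uniform information on $v^{-\ga}$ you have is at the single point $y_b^\nu$, through the transport term. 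So for this sign of $u_*-u_B$ the estimate does not close.

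\textbf{Dependence of $\e_0$ on $T$.} The continuity argument in Step~4 needs $\e_0^2M_T\ll 1$ with $M_T\gtrsim e^{CT}$. That makes $\e_0$ depend on $T$ and on the initial bound, whereas the statement fixes $\e_0$ before $T$.

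\textbf{What is missing.} Spend the smallness $|u_*-u_B|\le\e_0$ at the boundary, not on a boundary-adapted profile. This is what the paper does: compare with the constant state $U_*^L$, so there are no interior terms at all.

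By \eqref{def of h} and \eqref{outflow boundary condition L}, the inviscid and viscous boundary fluxes combine into
$\bigl(p(v)-p(v_*)\bigr)(u_B-u_*)\big|_{y_b^\nu}\le C\e_0\bigl(1+v^{-\ga}\mathds{1}_{\{v\le v_*/2\}}\bigr)$.
The transport term satisfies
$-(y_b^\nu)'\eta^L(U|U_*^L)\le -\tfrac{|u_B|}{v}Q(v|v_*)\le -c\,v^{-\ga}\mathds{1}_{\{v\le v_*/2\}}$ at $y_b^\nu$.
For $\e_0$ small this absorbs the flux. Using $\ga>1$, it also dominates $(y_b^\nu)'=|u_B|/v\le C+Cv^{-\ga}\mathds{1}_{\{v\le v_*/2\}}$.

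This gives $\frac{d}{dt}E+\nu\int\mu v^\ga|p(v)_y|^2+(y_b^\nu)'\le C$ directly, with $\e_0$ depending only on $U_*$ and no Gronwall step.
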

\begin{proof}
By \eqref{derivative of relative entropy} and \eqref{useful computation}, we have
\begin{align*}
    \frac{d}{dt} \int_{y_b^\nu}^\infty \eta^{L}(U^\nu|U_*^L) \, dy^\nu = A_1 + A_2 + I_3 + A_4 \, ,
\end{align*}
where
\begin{align*}
    A_1 &= - (y_b^\nu)'(t) \eta^L(U^\nu|U_*^L) \bigl|_{y=y_b^\nu} \, , \\
    A_2 &= \bigl( p(\vn) - p(v_*) \bigr) (h^\nu-u_*) |_{y=y_b^\nu} \, , \\
    I_3 &= -\nu \int_{y_b^\nu}^\infty \mu(\vn) (\vn)^\ga |p(\vn)_x|^2 \, d y^\nu \, , \\
    A_4 &= - \nu \bigl( p(\vn) - p(v_*) \bigr) \mu(\vn) (\vn)^\ga p(\vn)_x |_{y=y_b^\nu} \, .
\end{align*}
By \eqref{def of h} and \eqref{outflow boundary condition L}, we get
\begin{align*}
    A_2 + A_4 \leq \bigl( p(\vn) - p(v_*) \bigr) (u_B - u_*) \leq C \e_0 + C \e_0 \vn(y_b^\nu, t)^{-\ga} \mathds{1}_{\{\vn(y_b^\nu, t) \leq \frac{v_*}{2}\}} \, .
\end{align*}
By \eqref{property of y_b^nu'} and \eqref{outflow boundary condition L}, it follows that
\begin{align*}
    A_1 \leq 0 \, , \quad |A_1| \geq \frac{-u_B}{\vn(y_b^\nu, t)} Q\bigl(\vn(y_b^\nu, t) | v_*\bigr) 
    \geq C \vn(y_b^\nu, t)^{-\ga} \mathds{1}_{\{\vn(y_b^\nu, t) \leq \frac{v_*}{2}\}} \, .
\end{align*}
In all, we obtain
\begin{align*}
    A_1+A_2+A_4 + C \vn(y_b^\nu, t)^{-\ga} \mathds{1}_{\{\vn(y_b^\nu, t) \leq \frac{v_*}{2}\}} \leq C \, .
\end{align*}
Together with \eqref{property of y_b^nu'} and $\ga>1$, we get
\begin{align*}
    \frac{d}{dt} \Bigl(\int_{y_b^\nu}^\infty \eta^L(U^\nu|U_*^L) \, d y^\nu + \nu \int_{y_b^\nu}^\infty \mu(\vn) (\vn)^\ga |p(\vn)_x|^2 \, d y^\nu \Bigr) + (y_b^\nu)'(t) \leq C \, .
\end{align*}
By \eqref{lemma uniform bound lagrangian assumption}, we get 
\begin{align*}
    \sup_{0\leq t\leq T} \Bigl[\int_{y_b^\nu}^\infty \eta^L(U^\nu|U_*^L) \, dy^\nu \Bigr] + \nu \int_0^T \int_{y_b^\nu}^\infty \mu(\vn) (\vn)^\ga |p(\vn)_x|^2 \, dy^\nu \, dt \\
    + \int_0^T (y_b^\nu)'(t) \, dt \leq C_T \, .
\end{align*}
\end{proof}

We obtain the following remark. We use \eqref{lemma uniform bounds D v} and \eqref{remark lemma uniform bound on integral of Q} to control the terms localized by non-physical waves and the boundary terms in Section~\ref{sec NP and boundary}.
\begin{rema}\label{remark uniform bound on dissipation lagrangian}
Lemma \ref{lemma uniform bounds lagrangian} implies
\begin{align}\label{lemma uniform bounds D v}
    \nu\int_0^T \md_v(U) \, dt \leq C_T \, ,
\end{align}
where
\begin{equation}\label{def of D_v}
    \mathcal{D}_{v}(U) = \|\bigl(v^{-\frac{\ga+\al}{2}}\bigr)_x\|_{L^2}^2 \, .
\end{equation}
Since
\begin{align*}
    v^{-\ga+1} \mathds{1}_{v\leq \frac{v_*}{2}}\leq C Q(v|v_*) \, .
\end{align*}
Lemma \ref{lemma uniform bounds lagrangian} implies
\begin{align}\label{remark lemma uniform bound on integral of Q}
    \sup_{0\leq t\leq T} \inb v^{-\ga+1} \mathds{1}_{v \leq \frac{v_*}{2}}\, dy \leq C_T \, .
\end{align}

\end{rema}

\subsection{Proof of Lemma \ref{lemma existence of initial values NS} and Lemma \ref{lemma existence of inviscid limit}}\label{subsec proof of lemmas existence}
We first show Lemma~\ref{lemma existence of initial values NS}, the existence of an adapted family of initial values for the Navier-Stokes system. We extend $U^0 = (\rho^0, \rho^0 u^0)\in E$ to $\mathbb{R}$ by setting
\begin{align*}
    \widetilde{U}^0(x) = \begin{cases}
        U^0(x) \, , \quad \textup{if } x\in \mathbb{R_+} \, , \\
        U^0(-x) \, , \quad \textup{if } x\in \mathbb{R_-} \, .
    \end{cases}
\end{align*}
Let $\psi: \mathbb{R} \rightarrow\mathbb{R}$ be a smooth function such that
\begin{align*}
    0\leq \psi\leq 1 \, , \quad \int_{\mathbb{R}} \psi \, dx = 1 \, , \quad \textup{supp}(\psi) \subset [-1,1] \, .
\end{align*}
Then we mollify $\widetilde{U}^0=(\widetilde{\rho}^0,\widetilde{\rho}^0 \widetilde{u}^0)$ as in Section 5.1 of \cite{NStoE} by defining
\begin{align*}
    \widetilde{\rho}_0^\nu = \widetilde{\rho}^0 * \psi_\nu \, , \quad \widetilde{u}_0^\nu = \widetilde{u}^0 * \psi_\nu \, ,
\end{align*}
where $\psi_\nu(x) = \frac{1}{\sqrt{\nu}}\psi(\frac{x}{\sqrt{\nu}})$. We then restrict these functions to the half line $\mathbb{R}_+$ and further modify the velocity near the boundary so that it satisfies the outflow boundary condition \eqref{outflow boundary condition}. More precisely, we let $\phi: \mathbb{R}_+ \rightarrow\mathbb{R}$ be a smooth function such that
\begin{align*}
    \phi = \begin{cases}
        1 \, , \; \textup{on } [0,1] \, , \\
        0 \, , \; \textup{on } [2,\infty) \, ,
    \end{cases} \quad
    0\leq \phi \leq 1 \, .
\end{align*}
We let
\begin{align*}
    u_0^\nu(x) = \phi_\nu u_B + (1-\phi_\nu) \widetilde{u}_0^\nu \, ,
\end{align*}
where $\phi_\nu = \phi(\frac{x}{\nu})$. It is easy to see that $u_0^\nu(0) = u_B$ and as $\nu \rightarrow 0$,
\begin{align*}
    \|u_0^\nu - \widetilde{u}_0^\nu\|_{L^1(\mathbb{R}_+)} + \|u_0^\nu - \widetilde{u}_0^\nu\|_{L^2(\mathbb{R}_+)} \rightarrow 0 \, .
\end{align*}
Hence, together with the proof in \cite{NStoE}, $(\widetilde{\rho}_0^\nu, u_0^\nu)$ is an adapted family of initial values for the Navier-Stokes associated with $U^0$.

We prove Lemma~\ref{lemma existence of inviscid limit} by following the argument in \cite{NStoE}. Let $U^0 \in E^0$ and $U^\nu$ be Navier-Stokes solution with initial values being an adapted family associated with $U^0$. Let $T>0$. Lemma~\ref{lemma uniform bounds eulerian} implies that there exists a constant $C>0$ such that
\begin{align*}
    \lim_{\nu\rightarrow 0} \sup_{0\leq t\leq T} \int_{\mathbb{R}_+} \eta(U^\nu | U_*) \dx \leq C \, .
\end{align*}
By the strict convexity of $\eta$, the family $U^\nu$ is uniformly integrable and therefore weakly precompact in $L^1_{\mathrm{loc}}(\mathbb{R}_+\times\mathbb{R}_+)$. Hence, there exists a subsequence of $U^\nu$ that converges weakly in $L^1_{\mathrm{loc}}(\mathbb{R}_+\times\mathbb{R}_+)$. We call any such limit $U$ an inviscid limit of the Navier-Stokes system associated with the Euler initial value $U^0$.

\section{Main idea of the proof}\label{sec main idea of the proof}
In this section, we outline the main ideas of the proof. In Section~\ref{subsec stabiltiy L}, we first present the stability result, Theorem~\ref{thm v,h}, established through the $a$-contraction method in the Lagrangian framework. We then discuss the results in the Eulerian framework in Section~\ref{subsec stability E}, including a convergence result for inviscid limits established via compensated compactness in Proposition~\ref{prop E compensated compactness}, and the Eulerian counterpart of Theorem~\ref{thm v,h}, given in Proposition~\ref{prop E approximate sol}. In Section~\ref{subsec proof of main thm}, we use these two propositions to prove Theorem~\ref{main thm vanishing viscosity sol} and Theorem~\ref{main thm inviscid limit}. Finally, Section~\ref{subsec idea of proof of stability L} is devoted to explaining the main ideas behind the proof of Theorem~\ref{thm v,h}, which is the principal task of the remainder of the paper.

\subsection{Stability for the system with BD entropy in Lagrangian coordinates}\label{subsec stabiltiy L}
We first establish the stability results for system \eqref{NS system BD}.
\begin{theo}\label{thm v,h}
Assume that the viscosity function satisfies Assumption \ref{property viscosity L}. For any $U_*^L = (v_*,u_*)$ in $(\Omega_{sub}^-)^L$ as defined in \eqref{def subsonic L}, there exist $C, \e_0>0$ such that the following is true.

Let $U^0 = (v^0, h^0) \in  L^\infty(\mathbb{R}_+)$ and $u_B<0$. Assume that 
\begin{align*}
    \int_{\mathbb{R}_+} \eta^L\bigl(U^0(x)|U_*^L\bigl) \, dy^\nu + \|U^0-U_*^L\|_{L^1(\mathbb{R}_+)} < \infty \, , \\
    \|U^0\|_{BV(\mathbb{R}_+)} + |u_* - u_B|\leq \e \, , \quad \textup{for some } 0 \leq \e < \e_0 \, ,
\end{align*}
We consider a sequence of initial values $U_0^\nu = (v_0^\nu, h_0^\nu)$ such that 
\begin{align}\label{thm lagrangian assumption initial value}
    \lim_{\nu \rightarrow 0} \int_{\mathbb{R}_+} \eta^L(U_0^\nu|U^0) \, d y^\nu = 0 \, .
\end{align}
We denote by $U^\nu = (v^\nu, h^\nu) \in L^\infty\bigl((y_b^\nu,\infty)\times \mathbb{R}_+\bigr)$ the solution of \eqref{NS system BD} with initial value $U_0^\nu = (v_0^\nu, h_0^\nu)$ and outflow boundary condition \eqref{outflow boundary condition L}.

Then for any $\nu$ small enough, there exists a function $\ub = (\bar{v}_\nu, \bar{h}_\nu)\in L^\infty\bigl((y_b^\nu,\infty)\times \mathbb{R}_+\bigr)$ such that
\begin{align}\label{thm lagrangian small bv}
    \sup_{t>0} \, \bigl(\|\ub(\cdot, t)\|_{BV((y_b^\nu,\infty))} + \|\ub(\cdot, t) - U_*^L\|_{L^\infty((y_b^\nu,\infty))} \bigr)\leq  C \e \, , 
\end{align}
and, up to a subsequence, 
\begin{align}\label{thm lagrangian reparametrization approximate}
    \bar{U}^\nu (\mathbf{x},t) := \ub(\mathbf{x}+y_b^\nu,t) \, , 
\end{align}
converges in $L^1_{\mathrm{loc}}(\mathbb{R}_+\times \mathbb{R}_+)$, and its subsequential limit satisfies the outflow boundary condition \eqref{outflow boundary condition}.

Moreover, for all $T>0$, we have
\begin{align}
    \lim_{\nu\rightarrow 0 }\sup_{t\in [0,T]} \int_{y_b^\nu}^{\infty} \eta(U^\nu(x,t)|\ub(x,t)) \, dy^\nu = 0 \, , \label{thm lagrangian uni in time} \\
    \lim_{\nu\rightarrow 0 } \int_0^T G_\nu(t) \, dt = 0 \, , \label{thm lagrangian decomposition}
\end{align}
where 
\begin{align*}
    G_\nu(t) &= 
    \nu \int_{y_b^\nu}^\infty \mu(\vn) (\vn)^{\ga} \Bigr|\bigl(p(\vn)-p(\bv_\nu)\bigr)_{y^\nu} \Bigr|^2 \, d y^\nu + |{{y}_b^\nu}'(t)| \eta(U^\nu|\ub)\big|_{x=y_b^\nu(t)} \\
    &\quad + \sum_{\be} \int_{y_b^\nu}^\infty |(V_{\be}^\nu)_{y^\nu}| p(\vn|\bv_\nu) \, d y^\nu \, .
\end{align*}
Here $(\bu_\nu)_x$ can be written as a finite sum $V^\nu_{N} + \sum_{\be} V_{\be}^\nu$ with
\begin{align*}
    \lim_{\nu \rightarrow 0} \|V^\nu_{N}\|_{L^\infty((y_b^\nu,\infty)\times \mathbb{R}_+)} = 0 \, .
\end{align*}

\end{theo}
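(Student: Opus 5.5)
The proof follows the strategy of \cite{NStoE}: build a viscous front tracking approximation $\ub$ that bridges the small BV Euler solution and $U^\nu$, and control $\int_{y_b^\nu}^\infty \eta(U^\nu|\ub)\,dy^\nu$ by $a$-contraction with shifts.

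\textbf{Step 1: construction of $\ub$.} Approximate $U^0$ by step functions $U^0_\nu$ with $\|U^0_\nu-U^0\|_{L^1}\to0$ and $\|U^0_\nu\|_{BV}\le C\e$. Run a front tracking algorithm at the Euler level with an accuracy parameter $\delta(\nu)\to0$. Each wave is replaced by its viscous counterpart:
\begin{itemize}
\item shocks by viscous shock profiles of width $\nu$, translated by shifts $X^\nu_\be(t)$ whose speed depends on the perturbation $U^\nu-\ub$, as in \cite{NStoE};
\item rarefactions by smoothed rarefaction fans;
\item non-physical waves by small-amplitude fronts travelling at a fixed speed faster than all characteristic speeds.
\end{itemize}
Interactions are resolved by the usual accurate and simplified Riemann solvers. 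At the boundary $y_b^\nu(t)$, the 1-waves reaching the boundary are reflected by solving the boundary Riemann problem with $u=u_B$, which is possible because $U_*^L\in(\om_{sub}^-)^L$ and $|u_*-u_B|\le\e$. Glimm's functional, augmented by a boundary term, gives the bound \eqref{thm lagrangian small bv} uniformly in $\nu$. The derivative $(\ub)_x$ then splits into localized pieces $V^\nu_\be$, coming from shocks and rarefactions, and a remainder $V^\nu_N$. The remainder collects the non-physical waves and smoothing errors, which are $O(\delta)$, so $\|V^\nu_N\|_{L^\infty}\to0$.

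\textbf{Step 2: boundary resetting.} The boundary moves at speed $-u_B/v^\nu$, which is not uniformly bounded in $\nu$. It can therefore overtake subsonic fronts of $\ub$ and destroy the boundary condition. Whenever this happens we reset $\ub$ near the boundary: the overtaken fronts are erased and the boundary Riemann problem is re-solved. The resetting is designed so that the increase it causes in the weighted relative entropy is paid for by the good boundary term $|{y_b^\nu}'|\,\eta(U^\nu|\ub)|_{y_b^\nu}$ accumulated since the previous reset. The cumulative speed bound $\int_0^T {y_b^\nu}'\,dt\le C_T$ from Lemma~\ref{lemma uniform bounds lagrangian} limits the total wave strength that can be overtaken. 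With the elements bookkeeping of \cite{prepweakbv}, this shows that the total variation in time of the boundary trace of $\ub$ is bounded uniformly. Helly's theorem then gives $L^1_{\mathrm{loc}}$ compactness of $\bar U^\nu$ in \eqref{thm lagrangian reparametrization approximate}. Passing to the limit in the trace shows that the limit satisfies \eqref{outflow boundary condition}.

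\textbf{Step 3: weighted relative entropy estimate.} Introduce a weight $a$, piecewise constant with jumps of size proportional to the shock strengths. Differentiate $\int_{y_b^\nu}^\infty a\,\eta(U^\nu|\ub)\,dy^\nu$ using \eqref{derivative of relative entropy} and \eqref{useful computation}, and split the result into shock, rarefaction, non-physical and boundary contributions.
\begin{itemize}
\item \emph{Shocks.} Use the local contraction estimate for viscous shocks with large perturbations from \cite{MR4195742,invent}. The BD formulation matters here because the diffusion acts on $v$; this produces the dissipation $\nu\mu v^\ga|(p(v)-p(\bv))_x|^2$ and the good terms $|V_\be|\,p(v|\bv)$.
\item \emph{Rarefactions.} Their terms have a good sign up to errors of order $\nu$.
\item \emph{Non-physical waves and boundary.} Bound these by $\|V_N\|_{L^\infty}$ times the entropy, and absorb the low-density regions using \eqref{lemma uniform bounds D v} and \eqref{remark lemma uniform bound on integral of Q}. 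The case of $\mu$ degenerate at low density is handled through the decomposition $\mu=\mu_1+\mu_2$ of Assumption~\ref{property viscosity L}.
\item \emph{Interaction times.} The jumps in $a$ and in the wave pattern create entropy increases that are controlled by the decrease of the Glimm functional.
\end{itemize}
Gronwall's inequality, combined with \eqref{thm lagrangian assumption initial value} and $\|U^0_\nu-U^0\|\to0$, then gives \eqref{thm lagrangian uni in time}. The time-integrated good terms give \eqref{thm lagrangian decomposition}.

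The main obstacle is Step 2. The number of resettings is not controlled, so each reset has to be charged locally to the boundary dissipation earned since the last one. This has to be done while keeping the Glimm functional decreasing and the shifts well defined, and it is where I expect the bulk of the technical work to lie.
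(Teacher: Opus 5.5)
\textbf{Verdict.} Your overall plan is the paper's: a viscous front-tracking bridge, shifted shocks, and resets charged to the boundary term. However, two parts would fail as written, and the mechanism that makes Step~2 work is missing.

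\textbf{Step 1: the front-tracking scheme.} You run a standard scheme and only smooth the fronts, but at the viscous level the shifts break its combinatorics.
By \eqref{X_i estimates}, the shift speed $\dot X_\beta$ of a $2$-shock can be as large as $\sigma_\beta^{-2}(2|\mathcal{J}_{bad}|+1)$. This is not bounded uniformly in $\nu$ and may oscillate; only the sign of the front speed is controlled, by \eqref{speed of shock bound}.
A non-physical front moving at a fixed speed $\hat\lambda$ can therefore be overtaken by a $2$-shock and then overtake it back, arbitrarily often. Each re-crossing violates the speed ordering on which the interaction potential is built. The functional then need not decrease, and neither the number of interactions nor the total non-physical strength is controlled.
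The paper instead keeps non-physical waves stationary (family $1.5$). All physical fronts have $|\mathrm{speed}|\ge 2\Lambda_0$ (Lemma~\ref{lemma estimates on relative functions 2}), with $1$-waves always moving left and $2$-waves always moving right. Hence two elements interact at most twice (Remark~\ref{remark interaction at most two times}), which drives Lemma~\ref{lemma L+kQ} and the counts in Lemmas~\ref{lemma number of waves and interactions} and~\ref{lemma number of big interactions}.
These counts cannot be replaced by a Glimm decrease. Each interaction re-places smoothed profiles and moves $\bar U_\nu$ by $C\rho_\nu^{c_1/2}\Sigma$ in $L^2$ (Lemma~\ref{lemma bar U change at stopping time}), which is not proportional to $|\sigma\tilde\sigma|$. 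The total of these increases vanishes only through the big/medium/small interaction count and \eqref{rt uni in time at stopping times}.
Your construction also misses three structural requirements:
\begin{itemize}
\item A viscous shock has width $\nu/|\sigma|$, not $\nu$. Physical strengths therefore need a floor $|\sigma|\ge\rho_\nu$, with weaker waves relabelled non-physical.
\item You need stopping times that keep all transition zones disjoint and away from the boundary (Property~\ref{prop transition zone}).
\item A piecewise-constant weight is unusable here, because its jumps produce pointwise terms in $U^\nu$ and $\partial_x p(v^\nu)$ that nothing controls. The weight must follow the profiles, $(a_\beta)_x=-p(\tilde v_\beta)_x/\delta_*$, and carry the term $\frac{C_w}{\delta_*}\mathcal{L}(t)$ used in Lemma~\ref{lemma L + q + omega}.
\end{itemize}

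\textbf{Step 2: the main gap.} You leave this step open, and the compactness argument attached to it fails.

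\emph{Compensating the resets.} The bound $\int_0^T(y_b^\nu)'\,dt\le C_T$ does not limit the overtaken strength uniformly, since the same wave can be reset, pushed away and caught again. In the paper it only gives $C\mathbf{e}_3^{-1}$ mergings of previously merged waves. The missing idea is the quantitative subsonicity Lemma~\ref{lemma estimates on relative functions 2}:
\begin{itemize}
\item Whenever $(y_b^\nu)'+\Lambda_0$ exceeds the speed of some $2$-wave, $v^\nu$ at the boundary lies a fixed amount below $v_*$, so $Q(v|\bar v)|_{y_b}\ge c$.
\item The merging rule places outgoing $2$-waves near the right end of a window of width $3\mathbf{e}_3$, giving \eqref{merging lower bound of y_b}. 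Together with the lemma, this yields $\int_{t_{j-1}}^{t_j}\frac14(y_b^\nu)'\,\eta(U|\bar U)|_{y_b}\,dt\ge\mathbf{C}\mathbf{e}_3$ between such mergings.
\item Each such merging only multiplies $f=\int a\,\eta(U|\bar U)\,dx+\mathbf{e}_3$ by $1+C\mathcal{L}(0)$.
\item The logarithmic Gronwall argument of Lemma~\ref{lemma global in time estimates}, under the bootstrap $f\le\widetilde{\mathbf{C}}\mathbf{e}_3$, then absorbs every reset.
\end{itemize}

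\emph{Compactness and the boundary condition.} Helly's theorem does not apply to $\bar U^\nu(\mathbf{x},t)$. In that frame a $2$-front has speed $\lambda_2+\dot X-(y_b^\nu)'$, whose sign oscillates, and all speeds are unbounded. So there is neither BV in $t$ at fixed $\mathbf{x}$ nor $L^1$-equicontinuity in $t$. A bound on the boundary trace says nothing about the interior, and $L^1_{\mathrm{loc}}$ convergence alone does not carry the trace to the limit.
The paper proceeds as follows:
\begin{itemize}
\item It builds $\bar V^\nu$ by dropping non-physical waves and slowing the boundary to $\mathbf{y}(t)$ in \eqref{proof of boundary condition translation}. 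There $1$-waves move left and $2$-waves move right with speed at least $\Lambda_0$.
\item Hence $\bar V^\nu(x_*,\cdot)$ is uniformly BV in $t$, and $\partial_x\bar V^\nu(x_*,\cdot)$ is bounded in $\mathcal{M}_t$ by $C\mathcal{L}(0)/\Lambda_0$. This gives compactness in $C_x([0,M];L^1_t)$, with $x$ playing the role of time, and traces pass to the limit.
\item Finally, $y_b^\nu-\mathbf{y}\le C\int_0^T(y_b^\nu)'\,Q(v|\bar v)|_{y_b}\,dt\le C\mathbf{e}_3$, again by the subsonicity lemma and \eqref{thm lagrangian decomposition}. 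So $\bar V^\nu-\bar U^\nu\to0$.
\end{itemize}
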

The main part of the paper, in particular Sections~\ref{sec mft} through~\ref{sec global in time estimates}, is devoted to proving this theorem. The function $\bu_\nu$ is an approximate solution to the Navier-Stokes solution $U^\nu$, constructed by the modified front tracking method introduced in Section~\ref{sec mft}. Each approximate solution consists of finitely many physical and non-physical waves. In the theorem above, we denote the wave profile associated with a physical wave $\be$ by $V_\be^\nu$, and the sum of all non-physical waves by $V_N^\nu$.

\subsection{Stability in Eulerian coordinates}\label{subsec stability E}
We first state the subsequential convergence of the Navier-Stokes solutions to a solution of the Euler system, obtained using the compensated compactness framework. This method was first developed in \cite{compensated} and later adapted to density-dependent viscosity in Section~13 of \cite{NStoE}.

In our setting, the presence of the boundary does not affect the compensated compactness argument in \cite{NStoE} in any essential way. Indeed, the entropy condition is local, and the convergence is considered in $L^1_{\mathrm{loc}}(\mathbb{R}_+\times\mathbb{R}_+)$, so the argument can be localized away from the boundary. We also note that Assumption~\ref{property viscosity} relaxes the nondegeneracy condition on the viscosity coefficient $\mu(\rho)$ near vacuum imposed in \cite{NStoE}. Nevertheless, this assumption still provides all the viscosity bounds used in the compensated compactness argument of \cite{NStoE}. Therefore, with only minor modifications to account for the half line domain, the argument of \cite{NStoE} yields the following proposition.

\begin{prop}\label{prop E compensated compactness}
Assume that the viscosity function satisfies Assumption \ref{property viscosity}.
Let $U_* = (\rho_*,\rho_* u_*) \in \mathbb{R}_+ \times \mathbb{R}$, $U^0 = (\rho^0, \rho^0 u^0) \in  E^0$, and $U_0^\nu$ be an adapted family of initial values associated with $U^0$, satisfying \eqref{NS initial value}. We denote by $U^\nu$ the solution of \eqref{system rho, rho u} with initial value $U_0^\nu$ and outflow boundary condition \eqref{outflow boundary condition}.

\noindent
Then, up to a subsequence, 
\begin{align*}
    \rhn \rightarrow \rho \, , \quad \rhn \un \rightarrow \rho u \, , \quad \textup{in } L^1_{\mathrm{loc}}(\mathbb{R}_+\times\mathbb{R}_+) \, ,
\end{align*}
where $(\rho, \rho u)$ is a solution to the Euler system \eqref{system rho, rho u} and satisfies the entropy inequality \eqref{entropy inequality}.
\end{prop}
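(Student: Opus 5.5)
The plan is to follow the compensated compactness argument of Chen and Perepelitsa \cite{compensated}, in the density-dependent viscosity version of Section~13 of \cite{NStoE}, and to localize it away from the boundary $x=0$. The argument has four steps.

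\emph{Step 1: uniform bounds.} Lemma~\ref{lemma uniform bounds eulerian} gives, on every $[0,T]$, a $\nu$-uniform bound on the relative energy $\int_0^\infty \eta(U^\nu|U_*)\dx$ and on the dissipation $\nu\int_0^T\int_0^\infty \mu(\rhn)|\un_x|^2\dx\,dt$. To these I would add the BD-type bound. Lemma~\ref{lemma uniform bounds lagrangian}, transported back to Eulerian variables via \eqref{def of y^nu}, controls $\nu^2\sup_t\int|\p_x\phi(\rhn)|^2\dx$, where $\phi$ is given by \eqref{def of phi}. Lemma~\ref{lemma uniform bounds lagrangian} requires $|u_*-u_B|\le\e_0$; the proposition does not assume this, so for general $u_*$ I would redo that lemma's argument with the smooth reference state $\udU$ from the proof of Lemma~\ref{lemma uniform bounds eulerian}, which matches $u_B$ at $x=0$. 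This makes the boundary term $A_2+A_4$ vanish instead of being of size $\e_0$. From these bounds, exactly as in \cite{NStoE}, I would derive higher integrability of the density on compact sets $K\Subset(0,\infty)\times(0,\infty)$: $\int\!\!\int_K (\rhn)^{\ga+1}\,dx\,dt\le C_K$, and the analogous estimate for $\rhn|\un|^3$. These follow by testing the momentum equation against a cutoff function times $\int^x\rhn$. Since the cutoff is supported away from $x=0$, the boundary never enters.

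\emph{Step 2: entropy pairs.} Take any weak entropy pair $(\eta_\psi,q_\psi)$ generated by a smooth compactly supported $\psi$. From Step 1 I would show that $\p_t\eta_\psi(U^\nu)+\p_x q_\psi(U^\nu)$ is compact in $H^{-1}_{\mathrm{loc}}$. The standard decomposition splits it into the viscous term $\nu\p_x\bigl(\p_{m}\eta_\psi\,\mu(\rhn)\un_x\bigr)$ plus a measure term $-\nu\,\p_x(\p_m\eta_\psi)\mu(\rhn)\un_x$. The first is small in $H^{-1}$ because $\nu\mu|\un_x|^2$ has bounded integral and $\nu\mu\to0$ in $L^1_{\mathrm{loc}}$, using the growth $\mu\le C\rho^\al$ together with the higher integrability. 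The second is bounded in $\mathcal M_{\mathrm{loc}}$. Murat's lemma, combined with $W^{-1,\infty}$ boundedness, then gives the compactness. This is the step I expect to be the main obstacle. In \cite{NStoE} it relied on nondegeneracy of $\mu$ near vacuum, whereas here $\mu\sim\rho^{\widetilde\al}$ (or $\rho^\al$) at low density. I would try first to use only the factor $\mu(\rho)\le C\rho^{\widetilde\al}$ with $\widetilde\al>0$, which gives smallness near vacuum, since entropies generated by compactly supported $\psi$ have $\p_m\eta_\psi$ bounded by $C\rho^{\theta}$ there.

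\emph{Step 3: reduction and limit.} Tartar's commutation relation and the reduction of the Young measure for the $\ga$-law (\cite{compensated}, via the DiPerna/Lions–Perthame–Souganidis analysis) then yield strong convergence $\rhn\to\rho$ and $\rhn\un\to\rho u$ in $L^1_{\mathrm{loc}}$ on $(0,\infty)\times(0,\infty)$. Uniform integrability from the energy bound extends this to $L^1_{\mathrm{loc}}(\mathbb{R}_+\times\mathbb{R}_+)$, since the set near the boundary has small measure. Passing to the limit in the weak formulation of \eqref{system rho, rho u} with interior test functions shows that $(\rho,\rho u)$ solves \eqref{system E rho, rho u}. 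The viscous term vanishes by the dissipation bound and Cauchy–Schwarz. Passing to the limit in \eqref{entropy inequality NS}, using lower semicontinuity for the nonnegative dissipation term, gives \eqref{entropy inequality}.
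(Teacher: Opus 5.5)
Your route is the paper's: it proves Proposition~\ref{prop E compensated compactness} by invoking the compensated compactness argument of Section~13 of \cite{NStoE} (after \cite{compensated}), localized away from $x=0$. It adds only the remark that Assumption~\ref{property viscosity} still provides every viscosity bound that argument uses. Several of your details, however, do not hold as written.

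The step you flag as the main obstacle is not one, and your reason for it is wrong. For an entropy generated by $\psi\in C^2_c$, $\p_m\eta_\psi$ is the kernel-weighted average of $\psi'$ over $[u-\rho^\theta,u+\rho^\theta]$, with $\theta=\frac{\ga-1}{2}$. So it tends to $\psi'(u)$, not to $0$, as $\rho\to0$. All that is needed is $|\p_m\eta_\psi|\le C$ together with $\|\nu\mu(\rho^\nu)u^\nu_x\|_{L^q(K)}\le\|\sqrt{\nu\mu(\rho^\nu)}\,u^\nu_x\|_{L^2}\,\|\sqrt{\nu\mu(\rho^\nu)}\|_{L^{2q/(2-q)}(K)}\to0$ for $q>1$ close to $1$. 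Degeneracy of $\mu$ at vacuum only helps; the paper relaxes the nondegeneracy of \cite{NStoE} because it handles the rarefaction terms without viscosity, not because of this step.

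This estimate gives smallness in $W^{-1,q}_{\mathrm{loc}}$ with $q<2$, not in $H^{-1}$. A $W^{-1,\infty}$ bound is unavailable, since $\rho^\nu$ has no uniform $L^\infty$ bound. You must instead bound $\eta_\psi(U^\nu)$ and $q_\psi(U^\nu)$ in $L^r_{\mathrm{loc}}$ for some $r>2$, using the $L^{\ga+1}_{\mathrm{loc}}$ estimate on $\rho^\nu$, and then interpolate.

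Moreover, the $\mathcal{M}_{\mathrm{loc}}$ bound on $\nu\,\p_x(\p_m\eta_\psi)\,\mu(\rho^\nu)u^\nu_x$ needs the BD \emph{dissipation} bound $\nu\iint\mu(\rho^\nu)(\rho^\nu)^{\ga-3}|\rho^\nu_x|^2\,dx\,dt\le C$. This is the Eulerian form of the dissipation in Lemma~\ref{lemma uniform bounds lagrangian}. It is needed because $\p_\rho(\p_m\eta_\psi)$ at fixed $u$ is of size $\rho^{\theta-1}$, and $\rho^{2\theta-2}=\rho^{\ga-3}$. The $\sup_t$ bound on $\nu^2\int|\p_x\phi(\rho^\nu)|^2\,dx$ that you quote does not give it.

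Two further steps fail as stated. First, passing to the limit directly in \eqref{entropy inequality NS} needs $q(U^\nu)\to q(U)$ in $\mathcal{D}'$. But $q$ contains $\frac12\rho^\nu(u^\nu)^3$, which your higher-integrability estimate only bounds in $L^1_{\mathrm{loc}}$. A.e.\ convergence plus an $L^1$ bound leaves room for a concentration defect of either sign. Likewise, the viscous flux $\nu\mu(\rho^\nu)u^\nu u^\nu_x$ would need $\nu\iint_K\mu(\rho^\nu)(u^\nu)^2\to0$, which your bounds do not control near vacuum. So the entropy inequality requires an argument beyond a direct limit with lower semicontinuity.

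Second, your patch for the missing smallness of $|u_*-u_B|$ only relocates the difficulty. The reference profile's derivative produces the interior term $-\int\p_y\underline{h}\,p(v|\underline{v})\,dy$. This term has the wrong sign when $u_*<u_B$ and is not controlled by $\int\eta^L(U|\underline{U})\,dy$. For a profile fixed in $y-y_b^\nu(t)$, $p(v|\underline{v})\sim v^{-\ga}$ whereas $Q(v|\underline{v})\sim v^{1-\ga}$ as $v\to0$. This is exactly the growth that forced smallness in Lemma~\ref{lemma uniform bounds lagrangian}, but now it sits in the interior, with no boundary term $A_1$ to absorb it. When $u_*\ge u_B$ no patch is needed: then $\bigl(p(v)-p(v_*)\bigr)(u_B-u_*)\le p(v_*)|u_B-u_*|$, and the original proof of that lemma goes through without smallness.
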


We now reformulate Theorem~\ref{thm v,h} in Eulerian coordinates. The proof is given in Section~\ref{sec from Lagrange to Euler}, following the same idea as in \cite{NStoE}, with the additional task of showing that the limit satisfies the outflow boundary condition.
\begin{prop}\label{prop E approximate sol}
Assume that the viscosity function satisfies Assumption \ref{property viscosity}.
For any $U_* = (\rho_*,\rho_* u_*)$ in $\Omega_{sub}^-$ as defined in \eqref{def subsonic}, there exist $C, \e_0$ such that the following is true.

Let $U^0 = (\rho^0, \rho^0 u^0) \in  E^0$, where $E^0$ is defined in \eqref{def Euler initial value}. Assume that  
\begin{align*}
    \|U^0\|_{BV(\mathbb{R}_+)} \leq \e \, , \quad \textup{for some } 0 \leq \e < \e_0 \, .
\end{align*}
Let $U_0^\nu$ be an adapted family of initial values associated with $U^0$, satisfying \eqref{NS initial value}.
We denote by $U^\nu$ the solution of \eqref{system rho, rho u} with initial value $U_0^\nu$ and outflow boundary condition \eqref{outflow boundary condition}.

\noindent
Then there exists a family of functions $\ub$ such that
\begin{align}\label{thm eularian small bv}
    \sup_{t>0} \, \bigl(\|\ub(\cdot, t)\|_{BV(\mathbb{R}_+)} + \|\ub(\cdot, t) - U_*\|_{L^\infty(\mathbb{R}_+)} \bigr)\leq  C \e \, , 
\end{align}
and any weak-* $L^\infty$ subsequential limit satisfies the outflow boundary condition \eqref{outflow boundary condition}. Moreover, $U^\nu - \ub$ converges to $0$ in the sense of distributions as $\nu \rightarrow 0$.
\end{prop}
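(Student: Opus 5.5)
The plan is to transfer Theorem~\ref{thm v,h} from Lagrangian to Eulerian coordinates, following the route of \cite{NStoE}. The one genuinely new task is to show that the boundary condition survives the limit.

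\textbf{Step 1: Lagrangian data and the Lagrangian approximate solution.} Given $U^0\in E^0$ with small BV norm, set $v^0=1/\rho^0$ and $h^0=u^0$. Since $\inf\rho^0>0$ and $\|U^0\|_{BV}\le\e$, the pair $(v^0,h^0)$ has BV norm $\le C\e$ and is $C\e$-close to $U_*^L$. The Lagrangian variable $y=\int_0^x\rho^0$ is bi-Lipschitz for small $\e$, so the BV bound transfers.

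For the Lagrangian initial data $(v_0^\nu,h_0^\nu)$ we use $h_0^\nu=u_0^\nu+\nu\mu_L(v_0^\nu)(v_0^\nu)^\ga p(v_0^\nu)_y$. The identity computed before Lemma~\ref{lemma uniform bounds lagrangian}, together with \eqref{NS initial value} and in particular the term $\nu^2\int|\p_x\phi(\rhn_0)|^2$, gives $\int\eta^L(U_0^\nu|U^0)\,dy\to0$. This is hypothesis \eqref{thm lagrangian assumption initial value}.

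Theorem~\ref{thm v,h} then produces $\ub^L=(\bv_\nu,\bh_\nu)$ satisfying \eqref{thm lagrangian small bv} and \eqref{thm lagrangian uni in time}. We define the Eulerian $\ub$ by pulling back through the map $x\mapsto y^\nu(x,t)$ of \eqref{def of y^nu}. Concretely, $\ub(x,t)=\bigl(1/\bv_\nu,\ \bh_\nu/\bv_\nu\bigr)(y^\nu(x,t),t)$, with the boundary $x=0$ corresponding to $y=y_b^\nu(t)$. Since $\bv_\nu$ stays $C\e$-close to $v_*$, the BV and $L^\infty$ bounds \eqref{thm eularian small bv} follow: total variation is invariant under monotone reparametrization, and the map $(v,h)\mapsto(1/v,h/v)$ is smooth near $U_*^L$.

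\textbf{Step 2: convergence in distributions.} The map $x\mapsto y^\nu$ is built from the NS density $\rhn$, which has no uniform $L^\infty$ bound. So we proceed as in \cite{NStoE}.

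First, the relative entropy estimate controls the difference. The bound \eqref{thm lagrangian uni in time} gives $\int Q(\vn|\bv_\nu)+|\hn-\bh_\nu|^2\,dy\to0$. The dissipation bounds of Lemma~\ref{lemma uniform bounds lagrangian} give $\hn-\un\to0$ in $L^2_{loc}$, since $\nu^2\int\mu^2v^{2\ga}|p_y|^2\le\nu\cdot C\cdot\nu\int\mu v^\ga|p_y|^2$ on regions where $v$ is bounded below, with the low-$v$ set controlled by \eqref{remark lemma uniform bound on integral of Q}.

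Next, take a test function $\varphi(x,t)$ and change variables $dx=v^\nu\,dy$. This gives
\begin{align*}
\int(\rhn-\bar\rho_\nu)\varphi\,dx=\int\bigl(1-\vn/\bv_\nu\bigr)\varphi(x^\nu(y,t),t)\,dy .
\end{align*}
The right side is bounded by $\int|\vn-\bv_\nu|\,\mathds{1}\,dy$ on the support. This tends to zero because $Q(v|\bar v)$ controls $|v-\bar v|$ when $|v-\bar v|$ is small, and controls $|v-\bar v|$ linearly when $v$ is large; the small-$v$ set is handled by \eqref{remark lemma uniform bound on integral of Q}. The momentum component is treated similarly using $\un$ and the $h$ estimate.

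One subtlety is that the support in $y$ of $\varphi(x^\nu(\cdot,t),t)$ must be uniformly bounded. This follows because $y^\nu(x,t)\le y_b^\nu(t)+\int_0^x\rhn$, and both terms are bounded by the uniform bounds of Lemma~\ref{lemma uniform bounds eulerian} and Lemma~\ref{lemma uniform bounds lagrangian}, namely the entropy bound and $\int_0^T(y_b^\nu)'\le C_T$.

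\textbf{Step 3: the boundary condition, the main obstacle.} Theorem~\ref{thm v,h} gives that the shifted functions $\bar U^\nu(\mathbf x,t)=\ub^L(\mathbf x+y_b^\nu,t)$ converge in $L^1_{loc}$ to a limit satisfying $u=u_B$ at $\mathbf x=0$. We must deduce the same for any weak-$*$ limit of the Eulerian $\ub$.

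The Eulerian $\ub(x,t)$ equals $\bar U^\nu$ evaluated at $\mathbf x=y^\nu(x,t)-y_b^\nu(t)=\int_0^x\rhn$. This map is not uniformly controlled, but it is monotone and has $\mathbf x=0$ exactly at $x=0$.

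The plan is to use the uniform BV bound on $\ub(\cdot,t)$ in $x$, uniformly in $\nu$. By Helly's theorem we get, for a.e.\ $t$, pointwise limits in $x$, and the trace at $x=0^+$ is well defined. The trace of $\ub$ at $x=0$ coincides with the trace of $\bar U^\nu$ at $\mathbf x=0$ for each $\nu$. The remaining issue is to show that traces commute with the limit. For this I would prove a time-integrated estimate of the form
\begin{align*}
\int_0^T\bigl|\bar u_\nu(0^+,t)-u_B\bigr|\,dt\to0 .
\end{align*}
This should come from the construction behind Theorem~\ref{thm v,h}: the resetting procedure enforces $u=u_B$ up to errors that vanish with $\nu$, and a spatially uniform bound on the total variation in time near the boundary, as in \cite{prepweakbv}, passes this to the limit. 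Uniform BV in $x$ then gives
\begin{align*}
\int_0^T\frac1\delta\int_0^\delta|\bar u_\nu-u_B|\,dx\,dt\le C\e\cdot o_\delta(1)+o_\nu(1),
\end{align*}
which yields the boundary condition in the strong trace sense for the weak-$*$ limit.

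I expect the hardest point to be this uniformity near the boundary. The $x\leftrightarrow\mathbf x$ map may compress the region near the boundary arbitrarily, since $\rhn$ is not bounded. So I would first try to phrase the boundary statement purely through the BV-in-$x$ structure and the $\nu$-independent time-variation bound, thereby avoiding any use of regularity of $\rhn$.
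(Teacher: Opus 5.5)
Your Steps 1--2 follow what the paper delegates to Section~12 of \cite{NStoE}. There is one slip in Step~2: $\mu(v)v^\gamma$ is bounded where $v$ is bounded \emph{above}, not below. So it is the large-$v$ (near-vacuum) set, not the small-$v$ set, that needs separate care.

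The genuine gap is Step~3. Your displayed estimate needs
$\int_0^T\frac1\delta\int_0^\delta|\bar u_\nu(x,t)-\bar u_\nu(0^+,t)|\,dx\,dt=o_\delta(1)$ uniformly in $\nu$. A uniform BV bound in $x$ only bounds this by $C\epsilon$. It does not prevent a front of fixed strength from staying within $o(1)$ of the wall for times of order one.

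This really happens in the construction. Merged fronts are placed at Lagrangian distance of order $\mathbf e_3\to0$ from $y_b^\nu$. The boundary, whose speed $-u_B/v^\nu(y_b^\nu,t)$ has no uniform bound, can chase them. In Eulerian variables, a front of Lagrangian speed $s$ moves with speed $s\,v^\nu+u^\nu$, built from uncontrolled Navier--Stokes quantities. So no $\nu$-independent time-variation bound near $x=0$ is available for $\bar U_\nu$. The fact that each $\bar u_\nu(0^+,t)$ is close to $u_B$ (true up to an error vanishing with $\nu$) is therefore useless on its own.

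In the paper, a time-variation bound exists only for the translated Lagrangian function $\bar V^\nu$, whose fronts leave the boundary at speed at least $\Lambda_0$. The translation is paid for by the boundary dissipation, $\int_0^T(y_b^\nu)'\,Q(v^\nu|\bar v_\nu)|_{y_b^\nu}\,dt\le C\mathbf e_3$. All of this is already packaged in the conclusion of Theorem~\ref{thm v,h}: $\bar U^\nu(\mathbf x,t)$ converges in $L^1_{\mathrm{loc}}$ to a limit $u^L$ with $u^L=u_B$ at $\mathbf x=0$.

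What is missing is that $\bar u_\nu(x,t)=\bar u^\nu\bigl(\int_0^x\rho^\nu,t\bigr)$. The Lagrangian boundary information must therefore be transported through the $\nu$-dependent map $x\mapsto\int_0^x\rho^\nu$, so you cannot avoid information on $\rho^\nu$. The paper proceeds as follows:
\begin{itemize}
\item Proposition~\ref{prop E compensated compactness} gives $\rho^\nu\to\rho^E$ in $L^1_{\mathrm{loc}}$. Hence $\sup_{x\in[0,M]}\bigl|\int_0^x(\rho^\nu-\rho^E)\bigr|\le C_M(t)$ with $\int_0^T C_M\,dt\to0$.
\item The weak-$*$ limit of $\bar U_\nu$ equals the strong limit $U^E$ of $U^\nu$ and inherits the small BV bound. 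So $C_1\le\rho^E\le C_2$, and $x\mapsto\int_0^x\rho^E$ is bi-Lipschitz.
\item Consequently $\mathbf u^L(x,t):=u^L\bigl(\int_0^x\rho^E,t\bigr)$ is the $L^1_{\mathrm{loc}}$ limit of $\bar u^\nu\bigl(\int_0^x\rho^E,t\bigr)$ and keeps the trace $u_B$.
\item The BV bound in the Lagrangian variable then gives
\begin{align*}
\int_0^T\!\!\int_0^M\Bigl|\bar u_\nu(x,t)-\bar u^\nu\Bigl(\int_0^x\rho^E,t\Bigr)\Bigr|\,dx\,dt\le C\int_0^T C_M(t)\,\|\bar h_\nu(\cdot,t)\|_{BV(y_b^\nu,\infty)}\,dt\to0 .
\end{align*}
\end{itemize}
This identifies $u^E=\mathbf u^L$, so the weak-$*$ limit satisfies \eqref{outflow boundary condition}.
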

Note that, compared with Theorem~\ref{thm v,h}, we no longer obtain strong convergence because of the lack of control on extremely small and large values of the density. Nevertheless, the approximate solution $\ub$ constructed by the front tracking algorithm is comparable to the Navier-Stokes solution $U^\nu$ in the sense of distributions, and any subsequential limit in the weak-* topology of $L^\infty$ satisfies the uniform small BV bound \eqref{thm eularian small bv} and the outflow boundary condition \eqref{outflow boundary condition}.

\subsection{Proof of Theorem \ref{main thm vanishing viscosity sol} and Theorem \ref{main thm inviscid limit}}\label{subsec proof of main thm}

The proof of Theorem~\ref{main thm vanishing viscosity sol} is straightforward. Up to a subsequence, Proposition~\ref{prop E compensated compactness} implies that the limit is an entropy solution of the Euler system, while Proposition~\ref{prop E approximate sol} shows that the limit is small BV and satisfies the outflow boundary condition \eqref{outflow boundary condition}. Together with the uniqueness of small BV solutions proved in \cite{prepweakbv}, this completes the proof.

We note that, in establishing that the limit is an entropy solution of the Euler system, one could in principle work with either the front tracking approximate solutions or the Navier-Stokes solutions. Strong convergence of the approximate solutions would be sufficient to show that the limit is an entropy solution. However, to obtain contraction within the relative entropy framework, we impose perturbation-dependent shifts on the shocks. Since we do not have uniform control with respect to $\nu$ on the Navier-Stokes solutions, these shifts lack sufficient control and may produce oscillations in time, preventing us from obtaining strong convergence of the approximate solutions. Therefore, we instead apply the compensated compactness framework directly to the Navier-Stokes solutions $U^\nu$, as in Proposition~\ref{prop E compensated compactness}.

Following the same argument as in Section 5.3 of \cite{NStoE}, together with Theorem~\ref{main thm vanishing viscosity sol}, we prove Theorem~\ref{main thm inviscid limit}. 

\subsection{Main idea of the proof of Theorem \ref{thm v,h}}\label{subsec idea of proof of stability L}
The main task of this paper is to prove Theorem~\ref{thm v,h}, and Sections~\ref{sec mft} through~\ref{sec global in time estimates} are devoted to this purpose.

\subsubsection{Front tracking algorithm}
As discussed at the end of Section \ref{subsec a-contraction with shifts}, the front tracking approximate solutions consist of finitely many elementary waves. While the construction is carried out at the Euler level, each elementary wave is replaced by its viscous counterpart to incorporate the viscous structure. More precisely, each wave is smoothed and has a transition zone. The first adaptation required by the viscous structure is therefore to prevent these transition zones from overlapping. To this end, we stop the clock as soon as the transition zones of two waves meet or the transition zone of a wave reaches the boundary, before the corresponding wave interaction or boundary interaction occurs. We then carry out the algorithm at the Euler level to generate the outgoing waves, place their viscous counterparts in such a way that their transition zones remain separated from one another and from the boundary, and restart the clock.

As in the standard front tracking algorithm, we introduce non-physical waves to keep the number of waves finite, but with a slight modification: we also convert a physical wave into a non-physical wave once its strength drops below a certain threshold. This modification is introduced because, at the viscous level, we need a lower bound on the strengths of viscous shocks to prevent their profiles from spreading too widely in space. 

Following the classical front tracking construction, we also discretize large rarefactions so that their spreading effect can be approximated by many small rarefactions. At the viscous level, we choose the transition zones of all rarefaction waves to have the same scale. We note that the front tracking algorithm is highly sensitive to spatial placement. If there are more outgoing waves than incoming waves, then placing the additional outgoing waves while preventing overlap between transition zones becomes a difficulty. During the evolution, this situation arises when two shocks of the same family interact and generate a large rarefaction in the other family. To accommodate the generation of such waves and the resulting discretized rarefactions, we introduce partition and passing events.

As discussed in Section~\ref{subsec with boundaries}, the pathological motion of the boundary, which may occur at supersonic speeds, motivates the introduction of a special resetting event called merging. This event preserves the outflow boundary condition while ensuring that the boundary contribution accumulated between successive resettings compensates for the increase produced at the resetting times.

Throughout Section~\ref{sec mft}, we develop the notion of elements to identify and track the constituent elements of the waves. This notion was first introduced in \cite{prepweakbv} to show that the limit of the front tracking approximate solutions preserves the boundary condition, and it plays the same role here. In addition, the notion of elements allows us to develop the procedures of partition, passing, and merging, and to obtain sharper estimates on the number of interactions involving waves of large strength.

\subsubsection{Energy estimates}
In Section~\ref{sec energy estimate between stopping times}, we derive the estimates between stopping times, and Sections~\ref{sec rarefaction} through~\ref{section shock} are then devoted to controlling the terms associated with rarefactions, shocks, non-physical waves, the boundary, and viscosity.

Compared with \cite{NStoE}, we are able to control the terms associated with rarefactions without using viscosity by choosing the scales of their transition zones to be compatible with their strengths. This improvement allows us to relax the assumption on the viscosity coefficient $\mu$. To control the boundary terms and the terms associated with non-physical waves, we derive a $\nu$-dependent bound on $\|p(v)\|_{L^\infty}$. Thanks to the smallness of the non-physical waves and the fact that all waves are kept away from the boundary, these terms carry sufficient smallness in $\nu$.

The terms associated with shocks are controlled in essentially the same way as in \cite{NStoE}. We truncate the specific volume of the Navier-Stokes solution near that of the front tracking approximate solution. Inside the truncation, the difference localized by each viscous shock is controlled by a Poincar\'e-type inequality with the help of the viscosity and the shift associated with the shock. Outside the truncation, we apply the fundamental theorem of calculus. The main change here concerns the Poincar\'e inequality, since we work on a domain with a boundary. However, because the transition zones of the viscous shocks are kept away from the boundary, the Poincar\'e inequality can be reduced to the corresponding whole space estimate. The analysis of the viscous terms is the same.

In Section~\ref{sec estimates at stopping time}, we evaluate the increase at stopping times associated with different types of events. Then, in Section~\ref{sec global in time estimates}, we combine the energy estimates between stopping times with those at stopping times to derive the global-in-time estimates. There, it is worth noting that we estimate the total increase caused by each type of event throughout the evolution. In particular, interactions are divided into different classes according to the strengths of the waves involved in order to obtain sharper estimates. The total increase caused by passings and partitions is controlled by the total number of rarefactions that are discretized throughout the evolution. We also discuss how the boundary contribution between mergings controls the increase at the mergings.

\subsubsection{Summary}
The front tracking algorithm developed in this paper contains several new ideas specifically designed to ensure compatibility with the viscous system, including passing, partition, and the reduction of physical waves to non-physical waves. On the other hand, closing the global-in-time energy estimates imposes additional constraints on the construction, such as the compatibility between the support scales of rarefactions and their strengths. We therefore modify the scales associated with different types of waves and carefully balance the well-definedness of the front tracking approximate solution with the analytic requirements of the energy estimates. In addition, we develop the idea of merging specifically to preserve the boundary condition in the presence of the potentially pathological behavior of the boundary caused by the absence of uniform $L^\infty$ control on the Navier-Stokes solutions.

\section{Construction of the approximate solution}\label{sec mft}
In this section, we present the construction of the front tracking approximate solution $\bu_\nu$. In Section~\ref{subsec thresholds}, we introduce the thresholds used in the algorithm, specify the relations they must satisfy for both the construction of the approximate solutions and the energy estimates, and provide a concrete choice of these thresholds. In Section~\ref{subsec event}, we follow \cite{prepweakbv} and introduce elements to identify and track the constituent waves and to describe interaction, passing, partition, and merging events. In addition, we assign a generation order to each element, which is needed to obtain quantitative estimates on the number of waves and interactions at the viscous level. We then present the algorithm in Section~\ref{subsec mft}. Finally, in Section~\ref{sub number of waves and elements}, we show that the construction is well defined and derive the required quantitative estimates through an argument based on elements rather than waves.

\subsection{Riemann solutions}
For any two sufficiently close constant states $U_L$ and $U_R$, the Riemann problem admits a self-similar entropy solution consisting of at most three
constant states, which connect $U_L$ and $U_R$ through shock or rarefaction
waves.

Moreover, let $U^R=(v^R,u^R)$ be a constant state with $u^R$ sufficiently close to a prescribed value $u^L$. Then, for each $i=1,2$, there exists $v_i^L$ close to $v^R$ such that the boundary Riemann problem with left state $U_i^L=(v_i^L,u^L)$ and right state $U^R$ admits a self-similar entropy solution consisting of a single $i$-shock or $i$-rarefaction.

More precisely, for each characteristic family $i = 1,2$, there exists a $C^2$ wave curve 
\begin{align*}
    \s_i \mapsto T_i(\s_i)(\cdot) \, ,
\end{align*}
such that
\begin{align*}
    U_R = T_2(\s_2) \circ T_1(\s_1)(U_L) \, , \quad U^R = T_i(\s_i)(U^L_i) \, .
\end{align*}
We parametrize the curve $T_i$ by the difference in the velocity component $u$, namely
\begin{align}\label{def of wave strength}
    \s_i = u_+ - u_- \, ,
\end{align}
where $U_+ = T_i(\s_i)(U_-)$.
We note that the parameter $\s_i$ is negative (respectively, positive) if the map $T_i(\s_i)$ connects two states by a shock (respectively, by a rarefaction). We define $\s_i$ to be the strength of the $i$-wave associated with $T_i(\s_i)$.

According to Lemma 7.2 in \cite{bressanbook}, we have the following estimates.

\begin{lemm}\label{lemma interaction wave strength estimates}
There exists a constant $C>0$, depending only on $U_*$, such that the following estimates hold.

\noindent
Assume two physical waves of strengths $\s$ and $\widetilde{\s}$ interact, and let $\s'$ denote the strength of the outgoing $1$-wave and $\s''$ the strength of the outgoing $2$-wave.
\begin{itemize}
    \item If $\s,\widetilde \s$ are 1-waves, then we have
    \begin{align*}
        |\s' - (\s + \widetilde \s)| + |\s''| \leq C |\s \widetilde \s| (|\s| + |\widetilde \s|) \, .
    \end{align*}
    \item If $\s,\widetilde \s$ are 2-waves, then we have
    \begin{align*}
        |\s'' - (\s + \widetilde \s)| + |\s'| \leq C |\s \widetilde \s| (|\s| + |\widetilde \s|) \, .
    \end{align*}
    \item If $\s$ is a 1-wave and $\widetilde \s$ is a 2-wave, then we have
    \begin{align*}
        |\s' - \s| + |\s'' - \widetilde \s| \leq C |\s \widetilde \s|  \, .
    \end{align*}
\end{itemize}
\end{lemm}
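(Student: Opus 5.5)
The plan is the standard Glimm-type interaction argument, carried out directly on the wave curves $T_i$. Write $U_-$ for the left state, $U_m$ for the middle state and $U_+$ for the right state of the incoming pair. In the first two cases both incoming waves have the same family $i$, so
\[
U_+ = T_i(\widetilde\s)\circ T_i(\s)(U_-).
\]
In the third case the $1$-wave lies on the right of the $2$-wave, since they must approach each other, so
\[
U_+ = T_1(\s)\circ T_2(\widetilde\s)(U_-).
\]
The outgoing strengths $(\s',\s'')$ are determined by solving the Riemann problem $U_+ = T_2(\s'')\circ T_1(\s')(U_-)$. For states near $U_*$ and small strengths, the map $\Phi(\s',\s'';U_-) = T_2(\s'')\circ T_1(\s')(U_-)$ is $C^2$. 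At $\s'=\s''=0$ its Jacobian in $(\s',\s'')$ has columns $r_1(U_-)$ and $r_2(U_-)$, normalized so that the $u$-component equals $1$ by \eqref{def of wave strength}. By strict hyperbolicity this Jacobian is invertible. The implicit function theorem then gives $C^2$ functions
\[
\s' = \Psi_1(\s,\widetilde\s;U_-), \qquad \s'' = \Psi_2(\s,\widetilde\s;U_-),
\]
with $C^2$ bounds uniform for $U_-$ near $U_*$, which is where the dependence of $C$ on $U_*$ enters.

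\textbf{Case of a $1$-wave and a $2$-wave.} Set $E(\s,\widetilde\s) = (\Psi_1-\s,\ \Psi_2-\widetilde\s)$.
\begin{itemize}
\item If $\widetilde\s=0$, then $U_+ = T_1(\s)(U_-)$, so $E=0$.
\item If $\s=0$, then $U_+ = T_2(\widetilde\s)(U_-)$, so again $E=0$.
\end{itemize}
A $C^2$ function vanishing on both coordinate axes satisfies $|E|\le C|\s\widetilde\s|$. To see this, write $E(\s,\widetilde\s)=\int_0^1\int_0^1 \s\widetilde\s\,\p_{\s}\p_{\widetilde\s}E(t\s,\tau\widetilde\s)\,dt\,d\tau$.

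\textbf{Same-family cases.} Take the $1$-family, and set $E(\s,\widetilde\s) = (\Psi_1-(\s+\widetilde\s),\ \Psi_2)$. The axis argument again gives $E=0$ when $\s=0$ or $\widetilde\s=0$, hence $|E| = O(|\s\widetilde\s|)$. The extra factor $|\s|+|\widetilde\s|$ requires a second-order cancellation: the mixed derivative $\p_\s\p_{\widetilde\s}E$ must vanish at the origin. I would verify this with the expansion
\[
T_i(\s)(U) = U + \s r_i + \tfrac{\s^2}{2}(Dr_i\, r_i)(U) + O(\s^3),
\]
valid for both shock and rarefaction branches, since the curve is $C^2$ with second-order tangency between the Hugoniot and integral curves. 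Composing two such expansions gives
\[
T_1(\widetilde\s)\circ T_1(\s)(U_-) = U_- + (\s+\widetilde\s)r_1 + \tfrac{(\s+\widetilde\s)^2}{2}Dr_1\,r_1 + O(|\s|+|\widetilde\s|)^3 .
\]
This matches $T_1(\s+\widetilde\s)(U_-)$ up to third order. The key point is that the cross term $\s\widetilde\s\, Dr_1\, r_1$ coming from evaluating $r_1$ at $U_m$ exactly supplies the missing mixed term $\s\widetilde\s\,Dr_1 r_1$ in $\tfrac{(\s+\widetilde\s)^2}{2}$. Therefore $|E|\le C(|\s|+|\widetilde\s|)^3$.

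Combining the two bounds is the main obstacle, and the step needing the most care. A quantity that is both $O(|\s\widetilde\s|)$ and $O\bigl((|\s|+|\widetilde\s|)^3\bigr)$ is not automatically $O\bigl(|\s\widetilde\s|(|\s|+|\widetilde\s|)\bigr)$. To get the sharp form, I would instead use the integral representation of $E$ above and Taylor-expand $\p_\s\p_{\widetilde\s}E$ to first order. This needs $C^3$ regularity of $E$: the wave curves here are in fact smooth for the $p$-system with $p(v)=v^{-\ga}$ in the genuinely nonlinear subsonic regime, and on the shock side they are analytic in the parameter. Together with the vanishing $\p_\s\p_{\widetilde\s}E(0,0)=0$ from the expansion, this yields $|\p_\s\p_{\widetilde\s}E(t\s,\tau\widetilde\s)|\le C(|\s|+|\widetilde\s|)$. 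That gives the claimed bound $|E|\le C|\s\widetilde\s|(|\s|+|\widetilde\s|)$. Alternatively one may simply invoke Lemma 7.2 of \cite{bressanbook}, whose hypotheses (strict hyperbolicity and $C^2$, indeed smoother, wave curves near $U_*$) hold here.
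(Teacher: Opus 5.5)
The paper gives no argument of its own here: it simply invokes Lemma 7.2 of \cite{bressanbook}, which is your closing alternative. Your self-contained route is different and essentially the mechanism behind that lemma. It uses the implicit function theorem for the outgoing strengths, vanishing of the error $E$ on the coordinate axes, the mixed-derivative integral representation, and the second-order cancellation for same-family waves. It has one real advantage over the bare citation. The cubic same-family bound depends on how strengths are parametrized. Your expansion checks directly that the paper's choice makes same-family strengths additive to second order: the jump in $u$ is the flow parameter of $r_i$ normalized to have $u$-component $1$. The bare citation leaves that compatibility implicit.

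One step, however, is justified incorrectly. The wave curves are \emph{not} smooth. $T_i(\sigma)(U)$ glues the Hugoniot branch ($\sigma<0$) to the integral-curve branch ($\sigma>0$), and these have only second-order contact. For the $p$-system, expand $u-u_-$ in powers of $v-v_-$; the cubic coefficients on the two branches differ by a nonzero multiple of $p''(v_-)^2$. So $T_i$ is $C^2$ but not $C^3$ at $\sigma=0$, and $C^3$ regularity of $E$ is not available.

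What is true, and is all you need, is $C^{2,1}$ regularity. Both branches are smooth in $(\sigma,U)$ on a full neighborhood of $(0,U_*)$, and their difference is $\sigma^3G(\sigma,U)$ with $G$ smooth. Hence $T_i$ has Lipschitz second derivatives. $C^{2,1}$ is stable under composition and the implicit function theorem, so $\partial_\sigma\partial_{\tilde\sigma}E$ is Lipschitz uniformly for $U_-$ near $U_*$. Your bound $|E|\le C(|\sigma|+|\tilde\sigma|)^3$ with $E\in C^2$ gives $\partial_\sigma\partial_{\tilde\sigma}E(0,0)=0$. Combining, $|\partial_\sigma\partial_{\tilde\sigma}E(t\sigma,\tau\tilde\sigma)|\le C(|\sigma|+|\tilde\sigma|)$, and the integral representation yields the sharp estimate. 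The same $C^{2,1}$ bound also makes the remainder in your expansion $O(\sigma^3)$, rather than merely $o(\sigma^2)$, uniformly in the base point, which you use when composing two expansions. With ``$C^3$'' replaced by ``$C^{2,1}$'', your proof is complete.
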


Moreover, the types of the outgoing waves are known in the following situations. The proof can be found in \cite{smoller, NStoE}.

\begin{lemm}\label{lemma SS and SR same family}
Assume that an incoming $i$-wave interacts with an incoming $j$-wave. Then the Riemann solution satisfies the following properties:
\begin{itemize}
    \item If $i=j$ and both incoming waves are shocks, then the outgoing wave of the same family is a shock, while the outgoing wave of the other family is a rarefaction.
    \item Two rarefactions in the same family do not interact.
    \item If $i=j$ and the incoming waves are of different types, that is, one is a shock and the other is a rarefaction, then the outgoing wave in the other family is a shock.
    \item If $i\neq j$, then, for each $k=1,2$, the outgoing $k$-wave is of the same type as the incoming $k$-wave.
\end{itemize}
\end{lemm}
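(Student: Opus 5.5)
The plan is to work in the Lagrangian formulation, where the system is $v_t - u_x = 0$, $u_t + p(v)_x = 0$ with $p(v)=v^{-\ga}$. We use the explicit structure of the wave curves there, parametrized by the velocity difference as in \eqref{def of wave strength}.

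\textbf{Wave-curve facts.} First I would record the explicit curves. A $1$-wave connecting $U_-$ to $U_+$ has $v_+<v_-$ for a shock and $v_+>v_-$ for a rarefaction. A $2$-wave has the reverse orientation. Along either curve, $\s=u_+-u_-$ is a monotone function of $v_+-v_-$:
\begin{itemize}
\item $1$-waves: $\s<0$ corresponds to compression, i.e. a shock.
\item $2$-waves: the sign convention is again negative for a shock, by genuine nonlinearity and the choice of parametrization.
\end{itemize}
In the $(v,u)$-plane both the $1$-curves and the $2$-curves are graphs over $v$. The $1$-curve through $U_-$ has $u$ increasing in $v$, and the $2$-curve has $u$ decreasing in $v$. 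The shock and rarefaction branches are $C^2$-tangent at $U_-$. Their second-order difference has a definite sign determined by the convexity of $p$: for a $1$-curve, the shock branch lies below the backward rarefaction curve in $u$ at fixed $v$. The whole argument is sign bookkeeping on these curves.

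\textbf{Same family, $i=j$.} Let the incoming waves be $U_L\to U_M$ with strength $\s$ and $U_M\to U_R$ with strength $\widetilde\s$. The outgoing solution is $U_R=T_2(\s'')\circ T_1(\s')(U_L)$.
\begin{itemize}
\item Two rarefactions of the same family have ordered speeds: the left one is slower than the right one because the characteristic speed increases across a rarefaction. So they never meet, which gives the second bullet.
\item Two shocks: $U_R$ is obtained by composing two shocks. I would compare it with the single shock $T_i(\s+\widetilde\s)(U_L)$. Using the Lax entropy condition together with the relative position of the Hugoniot locus from $U_M$ versus the one from $U_L$, the point $U_R$ lies on the side of the $T_i$ curve through $U_L$ that is reached by a rarefaction of the opposite family. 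For instance, for $1$-shocks, $U_R$ lies above the Hugoniot $1$-curve from $U_L$ in $u$. This uses the classical fact that the Hugoniot curve from an intermediate point lies above the one from the original point, which follows from $p''>0$ and the explicit formula $u_+-u_-=-\sqrt{-(p(v_+)-p(v_-))(v_+-v_-)}$. Reaching $U_R$ then requires a rarefaction of the other family, while the same-family outgoing wave stays a shock since it is close to $\s+\widetilde\s<0$ by Lemma~\ref{lemma interaction wave strength estimates}.
\item Shock and rarefaction: the opposite comparison shows $U_R$ lies on the other side, so the other-family wave is a shock.
\end{itemize}
The sign of the other-family wave is governed by the sign of the cubic term $C|\s\widetilde\s|(|\s|+|\widetilde\s|)$. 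So I would expand $T_i(\widetilde\s)\circ T_i(\s)(U_L)-T_i(\s+\widetilde\s)(U_L)$ to third order and read off the sign of its component transverse to the $i$-curve. That component is proportional to $\s\widetilde\s(\s-\widetilde\s)$-type products, with a definite coefficient coming from the difference between shock and rarefaction curves at third order. For the mixed shock/rarefaction case, only one branch switches, giving the opposite sign.

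\textbf{Different families, $i\neq j$.} The incoming $2$-wave sits on the left and the $1$-wave on the right. The outgoing strengths satisfy $|\s'-\s|+|\s''-\widetilde\s|\le C|\s\widetilde\s|$. If both incoming strengths are bounded away from zero relative to $|\s\widetilde\s|$ (small waves), their signs are preserved. For a sharp statement I would instead use the exact commutation in the $(v,u)$-plane: because the $1$- and $2$-curves have monotone slopes of opposite sign, the quadrilateral formed by crossing them preserves the order relation of $v$ along each family. Since type is determined by the sign of the $v$-jump, each type is preserved.

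\textbf{Main obstacle.} The hardest step is the sign of the transverse third-order term in the same-family case. I would handle it with the explicit $p$-system formulas rather than general expansions, following Smoller's geometric argument.
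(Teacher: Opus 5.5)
Your proposal is correct and is essentially the classical $(v,u)$-plane argument for the $p$-system from \cite{smoller, NStoE} to which the paper defers: the Cauchy--Schwarz comparison of Hugoniot branches (strict since $p''>0$) settles the same-family cases, and for $i\neq j$ the bound of Lemma~\ref{lemma interaction wave strength estimates} already gives $|\s'-\s|\le C|\s||\widetilde\s|<|\s|$ (and likewise for $\s''$), so types are preserved for small waves. One slip is in your optional third-order expansion: for two same-family shocks the transverse discrepancy is, to leading order, proportional to $\s^3+\widetilde\s^3-(\s+\widetilde\s)^3=-3\s\widetilde\s(\s+\widetilde\s)$ rather than to $\s\widetilde\s(\s-\widetilde\s)$, which has no definite sign and would not give the conclusion; your main route via the explicit Hugoniot formula does not depend on this.
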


\subsection{Thresholds}\label{subsec thresholds}
As in the standard front tracking algorithm, we introduce non-physical waves to ensure that the approximate solution is well defined. Like in \cite{NStoE}, we require the strengths of physical waves to be bounded from below. However, unlike in \cite{NStoE}, where this is achieved by introducing additional non-physical waves, we simply allow a physical wave to be labeled as a non-physical wave once its strength falls below 
\begin{align}\label{def of threshold P and NP}
    \rn = \nu^{\cs} 
\end{align}
for some $0<\cs<1$, and then retain its non-physical classification throughout the subsequent evolution. We define the strength of a non-physical wave as the jump in the velocity component $u$ across the wave as in \eqref{def of wave strength}.

To approximate the spreading effect of a large rarefaction, we discretize it into small waves with fixed support. The corresponding threshold for partitioning is given by $\rn^{\cpr}$, for some $0<\cpr<1$.

For simplicity, we construct both rarefaction waves and non-physical waves to be compactly supported, with supports of size of order 
\begin{align}\label{def of threshold support of R and NP}
    \rn^{\co}
\end{align}
for some $0<\co<1$. On the other hand,
the viscous shock associated with \eqref{NS system BD} spreads out in space. We identify the main transition zone of the shock as an interval of size $\rn^{\cz}$ for some $0<\cz<1$.

For the front tracking algorithm, we work with the following scales:
\begin{align*}
    \rn^{\cfi} > \rn^{\cf} > \rn^{\co} > \rn^{\cz} \, ,
\end{align*}
Here, $\rn^{\cfi}$ is the scale relevant to the merging procedure, while $\rn^{\cf}$ is the scale used in the construction of the approximate initial data. The corresponding exponents satisfy $0 < \cfi < \cf < \co < \cz < 1$.

To incorporate the initial value into the construction, we introduce two additional quantities that determine the merging scale and the construction of the approximate initial value
\begin{align}\label{threshold mft e}
    \mef = \max\{\mei , \rn^{\cf}\} \, , \quad \mev = \max\{\meit,\mef^{1/2} ,\rn^{\cfi}\}
\end{align}
where
\begin{align*}
    \mei = \int_{\mathbb{R}_+} \eta(U_0^\nu|U^0) \dx \, , \; 
    \meit = \|U^0-U_*\|_{L^1((\mef^{-1/2},\infty))} + \|U^0-U_*\|_{L^2((\mef^{-1/2},\infty))}^2 \, .
\end{align*}
In the following lemma, we collect all the conditions that the thresholds must satisfy in order to derive the convergence in Section~\ref{sec global in time estimates}, and show that there exist specific choices of the thresholds for which all these conditions are satisfied.

\begin{lemm}\label{lemma threshold}
    There exist 
    \begin{align}\label{threshold mft}
        0<\cs,\cpr<1 \, , \quad 0<\cfi<\cf<\co<\cz<1
    \end{align}
    such that all the following conditions are satisfied. We require that
    \begin{align}\label{rt mft initial values}
        \cfi < \co - \cpr \, , \quad \frac{\cf}{2} < \co - \cpr \, ,
    \end{align}
    \begin{align}\label{rt mft optimal control on number}
        \cfi<\cf < \cpr \, ,
    \end{align}
    \begin{align}\label{rt mft well defined}
        \co<\cz - 2\cpr \, ,
    \end{align}
    \begin{align}\label{rt transition S}
        (1+\co) \cs < (1+\cz) \cs < 1 \, ,
    \end{align}
    \begin{align}\label{rt transition R}
        \co < 3 \cpr \, ,
    \end{align}
    \begin{align}\label{rt mft total strength NP}
        (1-\cpr) \,\cs > \frac{1+\cg}{2} \, ,
    \end{align}
    \begin{align}\label{rt transition R NP}
        2 \co \cs < 1 \, ,
    \end{align}
    \begin{align}\label{rt mft total number}
        2(2\co+\cpr)\cs < 1 \, , \quad \co+\cpr<1 \, , \quad (\co-\cpr) \cs < 1 \, ,
    \end{align}
    \begin{align}\label{rt uni in time between stopping times}
        \cfi\cs < \frac{\mathbf{c}}{2} \, ,
    \end{align}
    \begin{align}\label{rt uni in time at stopping times}
        \co> \frac{8}{3} \cpr + 2\cfi \, , \quad \cpr + \cfi < 1 \, , \quad 
        \frac{\co}{2} + \cpr < 1-\cpr \, ,
    \end{align}
    where
    \begin{align}\label{def of c gamma}
        \cg=\frac{\ga}{2\ga+\al-1} \, ,
    \end{align}
    \begin{align}\label{def of bold c}
    \cbet = \min\{ \frac{1-\cg}{4\cg} , \, (2-\cop-\cpr)\cs , \, (3\cpr-\cor) \cs , \, (1-\cpr) \cs, \, \cpr \cs, \frac{1}{4}, 1-2\cpr\cs \} \, .
    \end{align}
\end{lemm}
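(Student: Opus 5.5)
The lemma is an existence statement about finitely many strict linear inequalities, so I will prove it by writing down an explicit choice. The idea is to make all the "spatial" exponents $\cpr,\cf,\cfi,\co,\cz$ proportional to one small parameter $\delta>0$, and to take $\cs$ close to $1$.

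\emph{Step 1: the fixed quantity $\cg$.} By Assumption~\ref{property viscosity L}, $\al\ge\ga-1$, so $2\ga+\al-1\ge 3\ga-2>\ga$ because $\ga>1$. Hence $0<\cg<1$ and $(1+\cg)/2<1$. Consequently the first entry $\frac{1-\cg}{4\cg}$ of $\cbet$ is a fixed positive number independent of $\delta$ and $\cs$.

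\emph{Step 2: the ratios between the small exponents.} I set
\begin{align*}
\cpr=\delta\,,\quad \cf=\tfrac{\delta}{2}\,,\quad \cfi=\tfrac{\delta}{48}\,,\quad \co=\tfrac{14}{5}\delta\,,\quad \cz=5\delta\,.
\end{align*}
I check the purely linear conditions directly.
\begin{itemize}
\item Ordering: $\cfi<\cf<\co<\cz$, and all of these are below $1$ once $\delta<1/5$.
\item \eqref{rt mft optimal control on number}: $\cfi<\cf<\cpr$.
\item \eqref{rt mft initial values}: $\co-\cpr=\tfrac95\delta$, which exceeds both $\cfi$ and $\cf/2=\delta/4$.
\item \eqref{rt mft well defined}: $\co=2.8\delta<\cz-2\cpr=3\delta$.
\item \eqref{rt transition R}: $\co<3\cpr$.
\item First part of \eqref{rt uni in time at stopping times}: $\tfrac83\cpr+2\cfi=(\tfrac83+\tfrac1{24})\delta\approx 2.71\delta<2.8\delta=\co$.
\end{itemize}
The remaining parts of \eqref{rt uni in time at stopping times}, namely $\cpr+\cfi<1$ and $\co/2+\cpr<1-\cpr$, and the conditions $\co+\cpr<1$ in \eqref{rt mft total number}, all hold once $\delta$ is small.

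\emph{Step 3: choosing $\cs$.} Conditions \eqref{rt transition S}, \eqref{rt mft total strength NP}, \eqref{rt transition R NP} and \eqref{rt mft total number} require
\begin{align*}
\frac{1+\cg}{2(1-\delta)}<\cs<\min\Bigl\{\frac{1}{1+5\delta},\ \frac{1}{2(2\co+\cpr)},\ \frac{1}{2\co}\Bigr\}\,.
\end{align*}
The right-hand side tends to $1$ as $\delta\to0$. The left-hand side tends to $(1+\cg)/2<1$. So for $\delta$ small enough this interval is nonempty, and I fix $\cs$ in it. The remaining condition in \eqref{rt mft total number}, $(\co-\cpr)\cs<1$, is then immediate, as are $\cs<1$ and $(1+\co)\cs<(1+\cz)\cs$.

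\emph{Step 4: condition \eqref{rt uni in time between stopping times}, the only delicate one.} The constant $\cbet$ itself depends on the choices through its $\delta$-small entries $(3\cpr-\co)\cs=\tfrac15\delta\cs$ and $\cpr\cs=\delta\cs$. Its other entries are bounded below independently of $\delta$ for small $\delta$: $\frac{1-\cg}{4\cg}$, $(2-\co-\cpr)\cs$, $(1-\cpr)\cs$, $\tfrac14$, and $1-2\cpr\cs$. Hence for $\delta$ small,
\begin{align*}
\cbet=\tfrac15\,\delta\,\cs\,,
\end{align*}
and \eqref{rt uni in time between stopping times} becomes $\tfrac{\delta}{48}\cs<\tfrac{1}{10}\delta\cs$, which is true.

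This step is why $\co$ must sit strictly between $\tfrac83\cpr+2\cfi$ and $3\cpr$, and why $\cfi$ must be taken much smaller than $\cpr$. A first attempt with $\cfi=\delta/12$ fails here: it forces the gap $3\cpr-\co$ to be about $0.1\delta$, which is too small compared with $\cfi$. Shrinking $\cfi$ to $\delta/48$ opens the window. With these choices every condition in the lemma holds for all sufficiently small $\delta$.
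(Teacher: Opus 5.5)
Your proposal is correct and follows essentially the paper's proof. The paper also makes an explicit choice with all spatial exponents proportional to $\cpr$: it takes $\cz=5\cpr$, $\co=\frac{17}{6}\cpr$, $\cf=\frac{1}{12}\cpr$, $\cfi=\frac{1}{24}\cpr$, and fixes $\cs=\frac12(\frac{1+\cg}{2}+1)$ before shrinking $\cpr$, whereas you choose $\cs$ after $\delta$, which is equivalent. As you identify, the only tight constraint is that $\cbet=(3\cpr-\co)\cs$ must exceed $2\cfi\cs$ while $\co>\frac83\cpr+2\cfi$, which forces $\cfi<\cpr/12$, and your constants satisfy it.
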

\begin{proof}
We prove the existence by giving a specific choice of the thresholds for which all the conditions are satisfied.

First, we observe that, since $\ga-1\leq \al \leq \ga$ and $\ga>1$,
\begin{align}\label{property of c gamma}
    0 < \frac{\ga}{3\ga-1} \leq \cg \leq \frac{\ga}{3\ga -2} < 1 \, .
\end{align}
Let us fix $\cs$ as
\begin{align*}
    \cs = \frac{1}{2}(\frac{1+\cg}{2} + 1) \, .
\end{align*}
By \eqref{property of c gamma}, we have
\begin{align*}
    \frac{1+\cg}{2}<\cs<1 \, .
\end{align*}
Then by taking $\cpr, \cz$ small enough, \eqref{rt mft total strength NP}, \eqref{rt mft total number}, \eqref{rt transition S}, \eqref{rt transition R NP}, as well as the second and third inequalities in \eqref{rt uni in time at stopping times}, are all satisfied, and 
\begin{align*}
    \cbet = \min\{(3\cpr-\co)\cs \, ,\; \cpr\cs\} \, .
\end{align*}
We choose
\begin{align*}
    \cz = 5 \cpr \, , \quad \co = \frac{17}{6}\cpr  \, , \quad 
    \cf = \frac{1}{12} \cpr \, , \quad \cfi = \frac{1}{24}\cpr \, .
\end{align*}
It is straightforward to check that all conditions required are satisfied.
\end{proof}

\begin{rema}
We require \eqref{rt mft initial values}, \eqref{rt mft optimal control on number}, and \eqref{rt mft well defined} for the construction of the modified front tracking method. In particular, \eqref{rt mft initial values} ensures that the construction of the approximate initial data and the merging procedure work. To obtain the optimal control on the number of waves and mergings, we additionally impose \eqref{rt mft optimal control on number}. Finally, for the global well-definedness of the approximate solution, we need \eqref{rt mft well defined}.

The energy estimates between stopping times suggest the requirements \eqref{rt transition S}, \eqref{rt transition R}, \eqref{rt mft total strength NP}, and \eqref{rt mft total number}. Since the strengths of shocks are bounded from below by $\rn$, the scale of their main transition regions, $\rn^{\cz}$, should satisfy \eqref{rt transition S}. To control the terms associated with rarefactions in Section~\ref{sec rarefaction}, we require their transition zones to be sufficiently large compared with their strengths, in the sense of \eqref{rt transition R}. To handle the terms associated with non-physical waves and the boundary in Section~\ref{sec NP and boundary}, we need the total strength of the non-physical waves to be sufficiently small, as suggested by \eqref{rt mft total strength NP}. To control the viscous terms, we require the transition zones associated with rarefactions and nonphysical waves to satisfy \eqref{rt transition R NP}. Finally, to estimate the terms localized by the derivatives of the weights and by the shocks, we also need the number of waves to be controlled by the first two inequalities in \eqref{rt mft total number} and the derivatives of the rarefactions to be sufficiently small, as ensured by the last inequality in \eqref{rt mft total number}.

Finally, the global-in-time uniform estimates for the relative entropy suggest the requirements \eqref{rt uni in time between stopping times} and \eqref{rt uni in time at stopping times}. In particular, the former gives a suitable estimate between stopping times, while the latter keeps the total increase caused by all events at all stopping times under control.

\end{rema}

\subsection{Event}\label{subsec event}
We introduce all events in this section, including interactions, passings, partitions, mergings, and overtakings, and in particular define the corresponding outgoing waves. In Section~\ref{subsec smoothing}, we define the speeds of these outgoing waves. While rarefaction waves move approximately with their usual speeds, shock waves are assigned speeds that include a shift, for which no uniform $L^\infty$ bound with respect to $\nu$ is available. Since the speed of physical waves in the approximate solution are not uniformly controlled from above, to prevent interactions between two non-physical waves, we let non-physical waves remain stationary and assign them a new family label, namely $1.5$-waves.  

We specify each wave by an associated ordered set, which serves as its label and encodes all information about its formation history. Each element $b$ of an ordered set is a natural number, and we assign to it two functions: a tracked strength $\s_b(t)\in\mathbb{R}$ and a generation order $k_b(t)\in\mathbb{N}_*$. The tracked strengths are chosen so that, for each wave, the sum of the tracked strengths of all elements in its associated ordered set equals the strength of the wave. We note that, by construction, at any time $t$, each element can appear in the associated ordered set of at most one wave. Moreover, the elements in each associated ordered set are distinct.

The generation order will be used later to count the total number of waves. The elements in the ordered set associated with a shock, or with a non-physical wave converted from a shock, always have the same generation order. By contrast, the elements in the ordered set associated with a rarefaction, or with a non-physical wave converted from a rarefaction, may have different generation orders.

We keep track of a quantity 
\begin{align}\label{def of N_e}
    N_e(t) \, , 
\end{align}
which counts the total number of elements ever created across all the waves up to time $t$. Initially, $N_e$ is equal to the number of elements constructed at time $0$. The value of $N_e$ is then updated whenever an event occurs.

\medskip

We first introduce interaction events. Suppose that, at time $t_*$, an $i$-wave $\be$ of strength $\s$, associated with the ordered set $\{b_1,\ldots,b_m\}$, interacts with a $j$-wave $\widetilde{\be}$ of strength $\widetilde{\s}$, associated with the ordered set $\{b_{m+1},\ldots,b_n\}$, where $m<n$, $m,n\in\mathbb{N}_+$, and $b_1,\ldots,b_n$ are distinct natural numbers.

We denote by $k_{\min}$ and $\widetilde{k}_{\min}$ the minimal generation orders among the elements of the ordered sets associated with the two incoming waves, respectively. Namely,
\begin{align*}
    k_{\min} = \min\{k_{b_1}(t_*-),\ldots,k_{b_m}(t_*-)\} \, ,
    \quad
    \widetilde{k}_{\min} = \min\{k_{b_{m+1}}(t_*-),\ldots,k_{b_n}(t_*-)\} \, .
\end{align*}

\subsubsection{Interaction of two waves in different families}\label{subsub interaction two physical in different families}
We distinguish the cases according to whether a non-physical wave is involved.

\noindent
\textbf{Case 1: both waves are physical.} We solve the associated Riemann problem and obtain an outgoing $i$-wave $\be'$ with strength $\s'$ and an outgoing $j$-wave $\widetilde{\be}'$ with strength $\widetilde{\s}'$.

We let $\be'$ inherit the associated ordered set of $\be$, $\widetilde{\be}'$ inherit the associated ordered set of $\widetilde{\be}$, keep the generation order of each $b_l$, and define the tracked strengths after the interaction by
\begin{equation*}
    \s_{b_l}(t_*+) =
    \begin{cases}
        \frac{\s_{b_l}(t_*-)}{\s} \s ' \, , \quad \textup{for } 1\leq l \leq m \, , \\
        \frac{\s_{b_l}(t_*-)}{\widetilde{\s}} \widetilde{\s} ' \, , \quad \textup{for } m+1\leq l \leq n \, .
    \end{cases}
\end{equation*}
Note that $\s_{b_l}$ is defined so that
\begin{align*}
    \s' = \sum_{l=1}^m \s_{b_l}(t_*+) \, , \quad 
    \s = \sum_{l=1}^m \s_{b_l}(t_*-) \, , \quad
    \widetilde{\s} ' = \sum_{l=m+1}^n \s_{b_l}(t_*+) \, , \quad
    \widetilde{\s} = \sum_{l=m+1}^n \s_{b_l}(t_*-) \, .
\end{align*}

\noindent
\textbf{Case 2: one of the incoming waves is non-physical.}
We let the physical wave pass through the non-physical wave while keeping its strength unchanged. 

More precisely, we first consider the case $i=1$ and $j=1.5$. In this case, $\be$ is a physical $1$-wave, $\widetilde{\be}$ is a non-physical $1.5$-wave, and $\widetilde{\be}$ lies to the left of $\be$. Let $U_L$ be the left state of $\widetilde{\be}$, let $U_M$ be the right state of $\widetilde{\be}$, which is also the left state of $\be$, and let $U_R$ be the right state of $\be$.

We define the outgoing $1$-wave $\be'$ as the wave connecting $U_L$ to $\widetilde U_M$, with strength $\s$. That is,
\begin{align*}
    \widetilde U_M = T_1(\s)(U_L) \, .
\end{align*}
The outgoing non-physical wave $\widetilde{\be}'$ is then defined as the wave connecting $\widetilde U_M$ to $U_R$. We denote the strength $\widetilde{\be}'$ by $\widetilde{\s}'$.

Similarly as in case 1, we let $\be'$ inherit the associated ordered set of $\be$, $\widetilde{\be}'$ inherit the associated ordered set of $\widetilde{\be}$, keep the generation order of each $b_l$, and define the tracked strengths after the interaction by
\begin{equation*}
    \s_{b_l}(t_*+) =
    \begin{cases}
        \s_{b_l}(t_*-) \, , \quad \textup{for } 1\leq l \leq m \, , \\
        \frac{\s_{b_l}(t_*-)}{\widetilde{\s}} \widetilde{\s} ' \, , \quad \textup{for } m+1\leq l \leq n \, .
    \end{cases}
\end{equation*}
Note that $\s_{b_l}$ is defined so that
\begin{align*}
    \s' = \s = \sum_{l=1}^m \s_{b_l}(t_*+) = \sum_{l=1}^m \s_{b_l}(t_*-)\, , \quad 
    \widetilde{\s} ' = \sum_{l=m+1}^n \s_{b_l}(t_*+) \, , \quad
    \widetilde{\s} = \sum_{l=m+1}^n \s_{b_l}(t_*-) \, .
\end{align*}

The case $i=2$ and $j=1.5$ is defined analogously.

\medskip

Using an argument similar to the proof of Lemma 7.2 in \cite{bressanbook}, we obtain the following estimate for the interaction between a physical wave and a non-physical wave.
\begin{lemm}\label{lemma interaction wave strength estimates NP}
There exists a constant $C>0$, depending only on $U_*$, such that the following estimates hold.

\noindent
Assume a physical wave of strength $\s$ interacts with a non-physical wave of strength $\widetilde{\s}$, and let $\s'$ denote the strength of the outgoing physical wave and $\widetilde{\s} '$ the strength of the outgoing non-physical wave. Then
\begin{align*}
    \s'=\s \, , \quad |\widetilde{\s} ' - \widetilde \s| \leq C |\s \widetilde \s| \, .
\end{align*}
\end{lemm}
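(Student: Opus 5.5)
The identity $\s'=\s$ holds by construction: in Case 2 the outgoing physical wave $\be'$ is defined through $\widetilde U_M = T_1(\s)(U_L)$ (resp.\ the analogous relation for $i=2$), with the same parameter $\s$. So only the estimate on $\widetilde\s'$ needs proof. We treat the case $i=1$, $j=1.5$; the case $i=2$ is symmetric. Writing $u(\cdot)$ for the velocity component, the definition of strength as a velocity jump gives
\begin{align*}
\widetilde\s = u(U_M)-u(U_L) \, , \qquad \widetilde\s' = u(U_R)-u(\widetilde U_M) \, , \qquad \s = u(U_R)-u(U_M) = u(\widetilde U_M) - u(U_L) \, .
\end{align*}
Hence
\begin{align*}
\widetilde\s' - \widetilde\s = \bigl[u(U_R)-u(U_M)\bigr] - \bigl[u(\widetilde U_M)-u(U_L)\bigr] + \bigl[u(U_L)-u(U_M)\bigr] + \bigl[u(U_M) - u(U_L)\bigr]\, ,
\end{align*}
and with the parametrization \eqref{def of wave strength} the velocity parts cancel exactly, so $\widetilde\s'=\widetilde\s$. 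If the strength of a non-physical wave is understood through the full state jump, as in the Bressan estimate, we instead argue as follows.

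Write $U_R = T_1(\s)(U_M)$ and $\widetilde U_M = T_1(\s)(U_L)$. The outgoing non-physical wave is measured by $U_R - \widetilde U_M = T_1(\s)(U_M) - T_1(\s)(U_L)$, while the incoming one is measured by $U_M - U_L$. Define
\begin{align*}
\Phi(\s, U_L, U_M) = \bigl[T_1(\s)(U_M) - T_1(\s)(U_L)\bigr] - (U_M - U_L) \, .
\end{align*}
Then $\Phi$ is $C^2$ in all variables, since $T_1$ is a $C^2$ wave curve depending smoothly on the base point near $U_*$. It vanishes when $\s=0$, because $T_1(0)=\mathrm{id}$, and it vanishes when $U_M=U_L$. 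A function of class $C^2$ that vanishes on both $\{\s=0\}$ and $\{U_M=U_L\}$ satisfies
\begin{align*}
|\Phi| \leq C |\s|\,|U_M-U_L| \, ,
\end{align*}
with $C$ depending only on the $C^2$ norms on a compact neighborhood of $U_*$. One proves this by writing $\Phi = \int_0^1\int_0^1 \p_\s\p_{U_M}\Phi \, (\s,U_L,U_L+\theta(U_M-U_L))\,(U_M-U_L)\,\s \, d\theta\, ds$, the Taylor argument of Lemma 7.2 in \cite{bressanbook}. Finally, $|U_M - U_L|$ is comparable to $|\widetilde\s|$ through the chosen non-physical strength convention. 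This gives $|\widetilde\s'-\widetilde\s| \leq C|\s\widetilde\s|$.

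The only delicate point is the relation between the scalar strength of a non-physical wave and the full jump $U_M-U_L$. Since the paper defines non-physical strength via the $u$-jump alone, the first computation already gives the claim, in fact with zero error. The second argument serves as a robust backup if $|U_R-\widetilde U_M|$ is the quantity needed later. All states lie in a small neighborhood of $U_*$ by smallness of the total variation, so the constants depend only on $U_*$.
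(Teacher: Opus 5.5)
Your proof is correct. Your main route is more elementary than the paper's, which gives no computation and only points to the argument of Lemma~7.2 in \cite{bressanbook}: a Taylor expansion of the $C^2$ maps $(\sigma,U)\mapsto T_i(\sigma)(U)$. That is essentially your backup argument with $\Phi$.

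Your primary argument exploits the paper's convention that both the parameter of $T_i$ and the strength of a non-physical wave are the jump in $u$. Since $\widetilde U_M=T_1(\sigma)(U_L)$ and $U_R=T_1(\sigma)(U_M)$ carry the same $u$-increment $\sigma$, the $u$-jumps telescope and give $\widetilde\sigma'=\widetilde\sigma$ exactly. This is shorter than the paper's route and stronger than what the lemma claims.

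The limitation is that the exact identity depends on measuring everything by a single linear functional, and by itself it says nothing about the full jump $U_R-\widetilde U_M$. The Bressan-type estimate is convention-independent and also gives $|U_R-\widetilde U_M|\le(1+C|\sigma|)\,|U_M-U_L|$. That full-jump control is what is implicitly needed elsewhere. For instance, the bound on $\partial_x U^{P}_\beta$ in Lemma~\ref{lemma estimate of waves} requires $|U_+-U_-|\lesssim|\sigma_\beta|$, and a pure $u$-jump does not provide this. So your backup is the more useful statement for the rest of the paper, even though your main computation suffices for the lemma as stated.

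Two cosmetic points. The last two brackets in your displayed identity cancel each other identically and can be dropped. In the double-integral representation of $\Phi$, the first argument of $\partial_\sigma\partial_{U_M}\Phi$ should be $s\sigma$ rather than $\sigma$.
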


\subsubsection{Interaction of two physical waves in the same family}\label{subsub interaction two physical in same family}
We solve the associated Riemann problem and obtain an outgoing $i$-wave $\be'$ with strength $\s'$ and an outgoing $i'$-wave ${\be}''$ with strength ${\s}''$, where $i'\in \{1,2\}$ and $i'\neq i$.

\noindent
\textbf{Case 1: both incoming waves are shocks.}
Lemma~\ref{lemma interaction wave strength estimates} implies that $\be'$ is also a shock. We let $\be'$ inherit the associated ordered sets of the two incoming waves $\be$ and $\widetilde{\be}$. Namely,
\begin{align*}
    \be' = \{b_1, ..., b_{m}, {b}_{m+1}, ..., b_n\} \, .
\end{align*}
The ordering of the elements does not matter.
For each element $b_l$ in $\be'$, we define its generation order to be
\begin{align*}
    \min\{k_{\min},\widetilde{k}_{\min}\} \, ,
\end{align*}
that is, the minimum generation order among all elements of $\be$ and $\widetilde{\be}$, and define its tracked strength after the interaction by
\begin{align*}
    \s_{b_l}(t_*+) = \frac{\s_{b_l}(t_*-)}{\s+\widetilde{\s}} \s ' \, .
\end{align*}
Note that $\s_{b_l}$ is defined so that
\begin{align*}
    \s' = \sum_{l=1}^n \s_{b_l}(t_*+) \, , \quad 
    \s = \sum_{l=1}^m \s_{b_l}(t_*-) \, , \quad
    \widetilde{\s} = \sum_{l=m+1}^n \s_{b_l}(t_*-) \, .
\end{align*}
Lemma~\ref{lemma SS and SR same family} implies that $\be''$ is a rarefaction. Recall that $N_e(t_* -)$ denotes the total number of elements created before time $t_*$. We define the ordered set associated with $\be''$ by
\begin{align*}
    \be''=\{N_e(t_*-)+1, ..., N_e(t_*-)+p\} \, ,
\end{align*}
where 
\begin{align*}
    p = \left\lceil \frac{\s''}{\rn^{\cpr}} \right\rceil,
\end{align*}
with $\lceil s \rceil$ denotes the smallest integer greater than or equal to $s$. We define the generation order of each element in $\be''$ to be
\begin{align*}
    \max\{k_{\min},\widetilde{k}_{\min}\}+1 \, ,
\end{align*}
and assign to each element the tracked strength
\begin{align*}
    \s_{N(t_*-)+l}(t_*+) = \frac{\s''}{p} \, .
\end{align*}
In particular, we have
\begin{align}\label{newly generated R strength}
    \frac{\rn^{\cpr}}{2} \leq \s_{N(t_*-)+l}(t_*+)  \leq \rn^{\cpr} \, .
\end{align}
We update
\begin{align*}
    N_e(t_*+) = N_e(t_*-)+p \, .
\end{align*}

\noindent
\textbf{Case 2: one of the incoming waves is a rarefaction.}
Without loss of generality, assume that $\be$ and $\be'$ are of the same type. We then let $\be'$ inherit the associated ordered set of $\be$, keep the generation order of each $b_l$, and define the tracked strengths after the interaction by
\begin{equation*}
    \s_{b_l}(t_*+) = \frac{\s_{b_l}(t_*-)}{\s} \s ' \, , \quad \textup{for } 1\leq l \leq m \, .
\end{equation*}
Note that $\s_{b_l}$ is defined so that
\begin{align*}
    \s' = \sum_{l=1}^m \s_{b_l}(t_*+) \, , \quad 
    \s = \sum_{l=1}^m \s_{b_l}(t_*-) \, .
\end{align*}
Lemma~\ref{lemma SS and SR same family} implies that $\be''$ is a shock. We define the ordered set associated with $\be''$ by
\begin{align*}
    \be''=\{N_e(t_*-)+1\} \, .
\end{align*}
We define the generation order of its element to be
\begin{align*}
    \widetilde{k}_{\min}+1 \, ,
\end{align*}
and assign to it the tracked strength
\begin{align*}
    \s_{N_e(t_*-)+1}(t_*+) = \s '' \, .
\end{align*}
We update
\begin{align*}
    N_e(t_*+) = N(t_*-)+1 \, .
\end{align*}

\medskip
\medskip
Next, we introduce passing and partition events. These events are used to determine the placement of newly generated partitioned rarefactions. In particular, when Case~1 in Section~\ref{subsub interaction two physical in same family} occurs, we aim to partition the outgoing wave $\be''$ into $p$ rarefactions, each associated with an ordered set consisting of a single element whose initial tracked wave strength is of order $\rn^{\cpr}$. 

Therefore, rather than placing $\be''$ as a single outgoing $i'$-wave, we need a procedure that creates enough space to place $p$ outgoing $i'$-waves. The passing and partition events introduced below provide the framework for this procedure. The full placement construction will be described in Section \ref{subsub j stopping time}.

\subsubsection{Passing}\label{subsub passing}
Suppose that, at time $t_*$, an $i$-rarefaction $\be$ of strength $\s$, associated with the ordered set $\{b_1,\ldots,b_m\}$, is adjacent to another $i$-rarefaction $\widetilde{\be}$ of strength $\widetilde{\s}$, associated with the ordered set $\{b_{m+1}\}$, where $2\leq m\in\mathbb{N}_+$, and $b_1,\ldots,b_{m+1}$ are natural numbers. If $i=1$, then $\be$ lies to the right of $\widetilde{\be}$; if $i=2$, then $\be$ lies to the left of $\widetilde{\be}$.

Let $U_L$ be the left state of the leftmost incoming rarefaction, and let $U_R$ be the right state of the rightmost incoming rarefaction. We define the outgoing waves to be two $i$-rarefactions,
\begin{align*}
    \widetilde{\be}' = \{b_1\} \, , \quad \be' = \{b_2, ..., b_{m+1}\} \, ,
\end{align*}
connecting $U_L$ to $U_R$. We keep the generation order and the tracked strength of each element unchanged. Thus, the strength of $\widetilde{\be}'$ is $\s_{b_1}(t_*-)$, while the strength of $\be'$ is $\s+\widetilde{\s}-\s_{b_1}(t_*-)$. The waves are arranged so that $\widetilde{\be}'$ lies on the same side of $\be'$ as $\be$ lies relative to $\widetilde{\be}$.

In this way, the rarefaction whose associated ordered set contains more than one element is moved to the left when $i=1$, and to the right when $i=2$. We remove $b_1$ from this rarefaction and incorporate $b_{m+1}$ into it in order to obtain the estimate in Lemma~\ref{lemma L + q + omega}, as shown in Section~\ref{subsubsec decay of the weight}.

\subsubsection{Partition}\label{subsub partition}
Suppose that, at time $t_*$, there is an $i$-rarefaction $\be$ of strength $\s$, associated with the ordered set $\{b_1,\ldots,b_m\}$, for some $m\geq 2$, where $b_1,\ldots,b_{m}$ are natural numbers. We denote the left and right states of $\be$ by $U_L$ and $U_R$, respectively. We partition $\be$ into two $i$-rarefactions,
\begin{align*}
    \widetilde{\be}' = \{b_1\},
    \qquad
    \be' = \{b_2,\ldots,b_m\},
\end{align*}
which together connect $U_L$ to $U_R$. The outgoing rarefactions are arranged so that $\be'$ lies to the left of $\widetilde{\be}'$ when $i=1$, and to the right of $\widetilde{\be}'$ when $i=2$. The generation order and tracked strength of each element are kept unchanged. Thus, the strength of $\widetilde{\be}'$ is $\s_{b_1}(t_*-)$, while the strength of $\be'$ is $\s-\s_{b_1}(t_*-)$.

\medskip
\medskip
Finally, we discuss the case in which a wave approaches the boundary. We distinguish between physical and non-physical waves approaching the boundary.
In the former case, the event must be handled carefully through a merging procedure, since a physical wave may approach the boundary multiple times. 
In the latter case, we simply allow the boundary to overtake the non-physical wave, because the total error generated by non-physical waves can be well controlled.

Here we introduce the framework for merging physical waves. The precise choice of which waves are merged, as well as where the resulting waves are placed, will be described in Section \ref{subsub j stopping time}. For non-physical waves, the overtaking procedure will also be described there. We note that an overtaking event has no outgoing waves.

\subsubsection{Merging}\label{subsub merging}
Intuitively, we evaluate the information near the boundary and obtain the net change. We then shrink all incoming waves whose strengths have the same sign as this net change accordingly. Finally, we keep only those waves that remain physical after the shrinking. 

Suppose that, at time $t_*$, we merge a collection of adjacent waves near the boundary. Let $U_L=(v_L,u_L)$ denote the left state of the leftmost wave, namely the wave closest to the boundary, and let $U_R=(v_R,u_R)$ denote the right state of the rightmost wave. We solve the boundary Riemann problem with right state $U_R$ and boundary condition $u=u_L$. This boundary Riemann problem yields a single $2$-wave of strength $\s'=u_R-u_L$.

We denote by $\s$ the sum of the strengths of the incoming waves whose strengths have the same sign as $\s'$. In particular, if $\s'>0$, then $\s>0$ is the sum of the strengths of all incoming rarefactions and incoming non-physical waves with positive strength. If $\s'<0$, then $\s<0$ is the sum of the strengths of all incoming shocks and incoming non-physical waves with negative strength. These waves may belong to different classes: they may be $1$-waves, $2$-waves, or non-physical $1.5$-waves. Since wave strength is defined as the jump in the velocity component, we directly obtain
\begin{align*}
    |u_R-u_L| = |\s'| \leq |\s| \, .
\end{align*}
Ordered from left to right, we denote by
\begin{align*}
    \be^{(1)}=\{b_1^{(1)},\ldots,b_{m_1}^{(1)}\} \, , \; \ldots \, , \;
    \be^{(M)}=\{b_1^{(M)},\ldots,b_{m_M}^{(M)}\}
\end{align*}
the incoming physical waves whose strengths $\s^{(i)}$ have the same sign as $\s'$ and for which
\begin{align}\label{selection of outgoing waves based on strength merging}
    {\s^{(i)}}' := \frac{\s'}{\s} \s^{(i)} \geq \rn \, .
\end{align}
We note that
\begin{align*}
    \sum_{i=1}^M |\s^{(i)}| \leq |\s| \, , \quad \sum_{i=1}^M |{\s^{(i)}}'| \leq |\s'| \, .
\end{align*}

\noindent
\textbf{Case 1: $\s'>0$.}
We define the outgoing waves to be $M$ adjacent $2$-rarefactions, ordered from left to right,
\begin{align*}
    {\be^{(1)}}'=\{b_1^{(1)},\ldots,b_{m_1}^{(1)}\},
    \quad \ldots,\quad
    {\be^{(M)}}'=\{b_1^{(M)},\ldots,b_{m_M}^{(M)}\},
\end{align*}
where the strength of ${\be^{(i)}}'$ is ${\s^{(i)}}'$, and the right state of ${\be^{(M)}}'$ is $U_R$. 

For each element $b_l^{(i)}$, we keep its generation order unchanged and assign to it the tracked strength
\begin{align*}
    \s_{b_l^{(i)}}(t_*+) = \frac{\s'}{\s}\,\s_{b_l^{(i)}}(t_*-) \, .
\end{align*}

\noindent
\textbf{Case 2: $\s'<0$.}
We define the outgoing wave to be the $2$-shock 
\begin{align*}
    \be' = \{b_1^{(1)} ,\ldots,b_{m_1}^{(1)}, \ldots,
    b_1^{(M)},\ldots,b_{m_M}^{(M)}\} \, ,
\end{align*}
where the strength of $\be'$ is $\sum_{i=1}^M {\s^{(i)}}'$, and the right state of $\be'$ is $U_R$.

For each element $b_l^{(i)}$ in $\be'$, we define its generation order to be the minimum generation order among all elements of $\be^{(1)},\ldots,\be^{(M)}$, and assign to it the tracked strength
\begin{align*}
    \s_{b_l^{(i)}}(t_*+) = \frac{\s'}{\s}\,\s_{b_l^{(i)}}(t_*-) \, .
\end{align*}

\smallskip
Note that, in both cases, the outgoing waves are physical $2$-waves.

\subsubsection{Properties of events}
In the constructions of outgoing waves for interactions, passing, and partition, we regard all outgoing waves as physical for clarity in the definition. It may happen that the strength of an outgoing wave falls below $\rn$. In such a case, we label it as non-physical and keep it stationary, while keeping its associated ordered set and the generation order of each element exactly as defined in the construction.

\smallskip

From the construction, we have the following three properties.
\begin{property}\label{prop associated ordered set S}
Once an associated ordered set becomes the associated ordered set of a shock, it can only remain unchanged, gain elements, or disappear.
\end{property}
\begin{property}\label{prop introduction of new elements}
    Only interactions between two waves in the same family can introduce new elements. Moreover, these new elements constitute the associated ordered set of the outgoing wave in the other family.
\end{property}
\begin{property}\label{prop non-increasing passing, partition, merging}
    Passing, partition, and merging do not increase the tracked strength of any element.
\end{property}
The generation order of an element that belongs both to the associated ordered set of an incoming wave and to the associated ordered set of an outgoing wave is not changed by an event, except when the event is an interaction between two shocks in the same family, or a merging that produces a $2$-shock. In these two cases, the generation order of every element in the associated ordered set of the outgoing shock is redefined as the minimum of the generation orders of all such elements before the event. We therefore obtain the following two properties.
\begin{property}\label{prop generation order non-increasing}
At any event, the generation order of an element is non-increasing.
\end{property}
\begin{property}\label{prop generation order S same}
All elements in the ordered set associated with a given shock have the same generation order.
\end{property}

By the construction in Section~\ref{subsub interaction two physical in same family} and Property~\ref{prop introduction of new elements}, we have the following properties. In particular, the second one follows from \eqref{newly generated R strength}.

\begin{property}\label{prop new elements order}
    The newly introduced elements have the same generation order, which is one greater than the minimum generation order among the elements in the associated ordered set of one of the incoming waves. 
    
    If the event is an interaction between a shock and a rarefaction in the same family, then this incoming wave is the one whose type is different from that of the outgoing wave in the same family. If the event is an interaction between two shocks in the same family, then this incoming wave is the shock whose associated ordered set contains elements of the larger generation order.
\end{property}

\begin{property}\label{prop newly generated R strength}
If the associated ordered set of an outgoing rarefaction wave contains $p>1$ newly generated elements, then these elements have the same tracked strength, which lies between $\frac{\rn^{\cpr}}{2}$ and $\rn^{\cpr}$.
\end{property}

We also observe that no event increases the cardinality of the ordered set associated with a rarefaction. More precisely, we have the following property.

\begin{property}\label{prop R cardinality}
For every event for which exactly one of the incoming waves is a rarefaction whose associated ordered set has cardinality $p\geq 1$, at most one outgoing wave is a rarefaction whose associated ordered set has cardinality greater than $1$, and the cardinality of that set is at most $p$.
\end{property}

We observe that when a non-physical wave interacts with another wave, the latter must be physical, since non-physical waves are stationary. In such an interaction, as defined in Case 2 of Section \ref{subsub interaction two physical in different families}, the physical wave passes through the non-physical wave, leaving the ordered set associated with the non-physical wave unchanged. Moreover, the incoming waves in passing and partition events are rarefactions, whereas the outgoing waves produced by a merging event contain no non-physical waves. Finally, an overtaking event has no outgoing waves. Combining these observations, we obtain the following property.
\begin{property}\label{prop ordered set NP unchanged}
    Once an element belongs to the ordered set associated with a non-physical wave, it either disappears or continues to belong to the ordered set associated with a non-physical wave. Once an ordered set becomes the associated ordered set of a non-physical wave, it is kept unchanged throughout the subsequent evolution, until the non-physical wave with this associated ordered set disappears.
\end{property}

\subsection{Smoothing of elementary waves}\label{subsec smoothing}
To construct a $\nu$-approximate solution associated with \eqref{NS system BD}, we smooth the elementary waves on scales compatible with both the Navier-Stokes dynamics, governed by the viscosity parameter $\nu$, and the Euler dynamics in the limit as $\nu\to0$.

More precisely, physical shocks are regularized on a scale determined by the viscosity parameter $\nu$. Rarefactions and non-physical waves are smoothed on scales chosen so that the modified front tracking algorithm is well-defined and the energy estimates remain controllable over time.

We denote the strength of the wave by $\s$, its family by $i$, and its left and right states by $U_-$ and $U_+$, respectively.

\smallskip

\noindent
\textbf{Viscous shocks.}
We denote by
\begin{align*}
    \td U (x- \td \la t; \um, \up, \s, i) = \bigl(\td v (x- \td \la t; \um, \up, \s, i), \td h (x- \td \la t; \um, \up, \s, i)\bigr)
\end{align*}
the viscous $i$-shock with Rankine-Hugoniot speed $\td \la$, associated with \eqref{NS system BD} for viscosity parameter $\nu=1$.
We have
\begin{equation}\label{equations - unshifted viscous shock}
    \begin{cases}
        -\td \la \td v_x - \td h_x = -\bigl(\mu(\td v) \td v^\gamma p(\td v)_x\bigr)_x \, , \\
        -\td \la \td h_x + p(\td v)_x = 0 \, ,
    \end{cases}
    \;
    \underset{x\rightarrow - \infty}{\textup{lim}} \td U (x) = \um \, , \quad 
    \underset{x\rightarrow \infty}{\textup{lim}} \td U (x) = \up \, .
\end{equation}
We write
\begin{align}\label{def of shock nu}
    \usn (x- \td \la t; \um, \up, \s, i) = \td U (\frac{x-\td \la t}{\nu}; \um, \up, \s, i) \, .
\end{align}
for the viscous $i$-shock associated with \eqref{NS system BD} for viscosity parameter $\nu$.

\noindent
\textbf{Viscous Rarefaction.}
We define the smoothened $i$-rarefaction profile associated to \eqref{NS system BD} by 
\begin{align}\label{def of rarefaction nu}
    \urn(x;\um, \up, \s, i) = \mathbf{R}_{\um}^\be(\s\phi(\frac{x}{\rn^{\cor}})) \, ,
\end{align}
where $\mathbf{R}_{\um}^\be$ denotes the $i$-integral curve passing through $\um$, $\rn^{\co}$ is the support scale for rarefactions defined in \eqref{def of threshold support of R and NP}, and $\phi$ is a smooth monotone function such that
\begin{align}\label{def test function smoothen waves}
    \phi(x) = 0 \, , \; \textup{ for } x\leq -1 \, , \; \quad \phi(x) = 1 \, , \; \textup{ for } x\geq 1 \, .
\end{align}
Note that the tangent vector of the rarefaction curve $\mathbf{R}_{\um}^\be$ is the right eigenvector $\mathbf{r}_i$ of the $i$-family, that is,
\begin{align*}
    \frac{d}{d \s} \mathbf{R}_{\um}^\be (\cdot) = \mathbf{r}_i \bigl(\mathbf{R}_{\um}^\be (\cdot)\bigr) \, ,
\end{align*}
Moreover, \eqref{def test function smoothen waves} implies
\begin{align*}
    \underset{x\rightarrow - \infty}{\textup{lim}} \urn (x) = \um \, , \quad 
    \underset{x\rightarrow \infty}{\textup{lim}} \urn (x) = \up \, .
\end{align*}

\noindent
\textbf{Smooth non-physical wave.}
We define a non-physical shock by
\begin{align}\label{def of NP nu}
    \unn(x;\um,\up,\s) = \um + (\up-\um) \, \phi(\frac{x}{\rn^{\cop}}) \, ,
\end{align}
where $\phi$ is defined in \eqref{def test function smoothen waves} and $\rn^{\co}$ is the support scale for non-physical waves defined in \eqref{def of threshold support of R and NP}.

\subsection{Modified front tracking algorithm}\label{subsec mft}

In this subsection, we focus on introducing the algorithm and postpone the proof of well-definedness, as well as the estimates on the total number of waves and events, to Section~\ref{sub number of waves and elements}.

For the moment, we assume that the total variation of the approximate solution is controlled by the total variation at time $0$, up to a multiplicative constant. This assumption will be justified in Lemma \ref{lemma L+kQ} in Section~\ref{sub number of waves and elements}.

\subsubsection{Propagation speeds of physical and non-physical waves}
Consider a wave $\be$. 
We denote its strength by $\s$, its characteristic family by $i$, the states it connects by $U_{-}$ and $U_{+}$, and its initial location by $x_\be$. 

If $\be$ is a shock, then we let it propagate with a shift $\sn$ defined in \eqref{ODE of Xi} (with $\sn(0)=0$). In particular, we consider
\begin{align}\label{def shifted viscous shock}
    {\usn}(x,t) = \usn(x-x_{\be}-{\las} t - \sn(t); U_{-}, U_{+}, \s , i) \, ,
\end{align}
where $\las$ denotes the Rankine-Hugoniot speed of the shock and $\td U^\nu$ is defined in (\ref{def of shock nu}).
By \eqref{equations - unshifted viscous shock} and \eqref{def of shock nu}, we have
\begin{equation}\label{equations - viscous shock}
    \begin{cases}
        {\vsn}_t + \sdn {\vsn}_x - \hsn_x = -\nu\bigl(\mu(\vsn) (\vsn)^\gamma p(\vsn)_x\bigr)_x \, , \\
        \hsn_t + \sdn \hsn_x + p(\vsn)_x = 0 \, ,
    \end{cases}
\end{equation}
and 
\begin{equation}\label{equations - viscous shock 2}
    \begin{cases}
        -{\las} {\vsn}_x - \hsn_x = -\nu\bigl(\mu(\vsn) (\vsn)^\gamma p(\vsn)_x\bigr)_x \, , \\
        -{\las} {\hsn}_x  + p(\vsn)_x = 0 \, .
    \end{cases}
\end{equation}
If $\be$ is a rarefaction, then we let it propagate with speed $\la_{i}(U_{-})$. More precisely, we consider
\begin{align}\label{def shifted rarefaction}
    \urn(x,t) =
    \urn(x-x_{\be}-\la_{i}(U_{-}) \, t\, ;\, U_{-},\, U_{+},\, \s ,\, i) \, .
\end{align}
If $\be$ is a non-physical wave, then we keep it stationary and consider
\begin{align}\label{def non-physical wave}
    \unn(x) = \unn(x-x_\be; U_{-}, U_{+}, \s) \, ,
\end{align}
where $\unn$ is defined in \eqref{def of NP nu}.

We denote the transition zone of a wave $\be$ by
\begin{align}\label{def of transition zone}
    I_{\be} = \{x: |x-y_{\be}|\leq \rn^{\co}\} \, ,
\end{align}
where $y_\be$ denotes the location of the wave. In particular, if $\be$ is a viscous shock, we define its main transition region by
\begin{align}\label{def of main transition zone S}
    \widetilde{I}_{\be}:=\{x: |x-y_{\be}|\leq \rn^{\cz}\} \subset I_\be \, .
\end{align}

\subsubsection{Initial step}\label{subsub initial step}
At time $t=0$, we approximate the small BV initial value $U^0$ by a step function $\udU_\nu^0$ consisting of $1/\mef$ steps, each of width $\mef^{1/2}$, where $\mef$ is defined in \eqref{threshold mft e}.
On each interval, we define $\udU_\nu^0$ as the average of $U^0$ over that interval. Outside the approximation region, we set $\udU_\nu^0=U_*$, where $U_*$ is the right far-field limit of $U^0$. We get the following estimate
\begin{equation}\label{initial value intermediate step 1}
    \begin{aligned}
        &\|U^0 - \udU_\nu^0\|_{L^1(\mathbb{R}_+)} + \|U^0 - \udU_\nu^0\|_{L^2(\mathbb{R}_+)}^2 \\
        &\leq \mef^{1/2} \textup{TV}(U^0) + \|U^0 - U_*\|_{L^1((\mef^{-1/2},\infty))} + \|U^0 - U_*\|_{L^2((\mef^{-1/2},\infty))}^2 \\
        &\leq 2\mev \, ,
    \end{aligned}
\end{equation}
since
\begin{align*}
    \int_0^{\mef^{-1/2}} |U^0 - \udU_\nu^0| \, dx
    &\leq \sum_{i=1}^{\mef^{-1}} \int_{(i-1)\mef^{1/2}}^{i\mef^{1/2}} |U^0 - \udU_\nu^0| \, dx \\
    &\leq \sum_{i=1}^{\mef^{-1}} \mef^{1/2} \textup{TV}(U^0;[(i-1)\mef^{1/2},i\mef^{1/2})) \\
    &\leq \mef^{1/2} \textup{TV}(U^0) \, ,
\end{align*}
where $\textup{TV}(U^0;I)$ denotes the total variation of $U^0$ on the interval $I$.

We solve the associated Riemann problems locally in the interior. At the boundary, we solve the corresponding boundary Riemann problem with the outflow boundary condition \eqref{outflow boundary condition}, which yields a $2$-wave. Locally, we place the outgoing waves of a Riemann problem at mutual distances of $3\rn^{\co}$, and partition any outgoing rarefaction whose strength $\s$ exceeds $\rn^{\cpr}$ into $p$ rarefactions of equal strength, where
\begin{align*}
    p = \left\lceil \frac{\s}{\rn^{\cpr}} \right\rceil,
\end{align*}
We place the resulting rarefactions at mutual distances of $3\rn^{\co}$. By \eqref{rt mft initial values}, we can ensure that the distance between any two adjacent waves, as well as the distance between any wave and the boundary, is at least $3\rn^{\co}$.

We denote such an approximation of the initial value $U^0$ by $\bu_\nu^0$. By \eqref{threshold mft}, \eqref{threshold mft e}, \eqref{rt mft initial values}, and \eqref{rt mft optimal control on number}, such a construction is possible, and we have 
\begin{align*}
    \|\udU_\nu^0 - \bu_\nu^0\|_{L^1(\mathbb{R}_+)} + \|\udU_\nu^0 - \bu_\nu^0\|_{L^2(\mathbb{R}_+)}^2 \lesssim \rn^{\co}(\mef^{-1}+\rn^{-\cpr})
    \ll \rn^{\cfi} \leq \mev \, .
\end{align*}
Together with \eqref{initial value intermediate step 1}, we get
\begin{align}\label{initial value intermediate step 2}
    \|U^0 - \bu_\nu^0\|_{L^1(\mathbb{R}_+)} + \|U^0 - \bu_\nu^0\|_{L^2(\mathbb{R}_+)}^2 \leq C\mev \, .
\end{align}
Since
\begin{align*}
    \eta(U^\nu_0|\bu_0^\nu) &= \eta(U^\nu_0|U^0) + \eta(U^0) - \eta(\bu_0^\nu) 
    \\ &\qquad + \bigl( \nabla \eta(U^0) - \nabla \eta(\bu_0^\nu) \bigr)(U^\nu_0 - U^0)
    - \nabla \eta(\bu_0^\nu)(U^0 - \bu_0^\nu) \, ,
\end{align*}
we have
\begin{align*}
    \int_{\mathbb{R}_+} \eta(U^\nu_0|\bu_0^\nu) \dx &\leq \int_{\mathbb{R}_+} \eta(U^\nu_0|U^0) \dx + C \|U^0-\bu_0^\nu\|_{L^1(\mathbb{R}_+)} \\
    &\quad+ C \int_{\mathbb{R}_+} |U^0-\bu_0^\nu| |U_0^\nu-U^0| \dx\, ,
\end{align*}
where
\begin{align*}
    &\int_{\mathbb{R}_+} |U^0-\bu_0^\nu| \, |U_0^\nu-U^0| \dx \\
    &\leq \int_{\mathbb{R}_+} |U^0-\bu_0^\nu|\,|v_0^\nu-v^0| \dx + \int_{\mathbb{R}_+} |U^0-\bu_0^\nu|\,|h_0^\nu-h^0| \dx \\
    &\leq C \|U^0-\bu_0^\nu\|_{L^1(\mathbb{R}_+)} + C \int_{\mathbb{R}_+} Q(v_0^\nu|v^0) \dx + \|U^0-\bu_0^\nu\|_{L^2(\mathbb{R}_+)}^2 + \int_{\mathbb{R}_+} |h_0^\nu-h^0|^2 \dx \, ,
\end{align*}
since
\begin{align*}
    |v_0^\nu-v^0| \leq C \mathds{1}_{\{v_0^\nu\leq 2v_*\}} + C Q(v_0^\nu|v^0)\mathds{1}_{\{v_0^\nu> 2v_*\}} \, .
\end{align*}
Recall \eqref{threshold mft e}.
By \eqref{initial value intermediate step 2}, we conclude
\begin{align}\label{decay rate of difference between initial and app initial}
    \int \eta(U^\nu_0|\bu_0^\nu) \dx \leq \mathbf{C}_0\mev \, ,
\end{align}
for some $\mathbf{C}_0>0$.

Recall $N_e$ defined in \eqref{def of N_e}.
We let $N_e(0)$ be the total number of waves in the approximate solution $\bu_0^\nu$, and from left to right, we assign to these waves the associated ordered sets
\begin{align}\label{def initial associated ordered set}
    \{1\} \, , \; ... \, , \; \{N_e(0)\} \, .
\end{align}
We assign generation order $1$ to all elements. By \eqref{rt mft optimal control on number}, we have
\begin{align}\label{control of N(0)}
    N_e \lesssim \rn^{-\cpr} \, .
\end{align}

We let $\sse$, $\rse$, and $\npse$ denote the sets of shocks, rarefactions, and non-physical waves, respectively, in the approximate solution. These sets are constant between stopping times and may change only when a stopping time is crossed.

For $t$ before the first stopping time, we define the approximate solution by
\begin{align}\label{def approximate sol at time 0}
    \ub (x,t) = \ust - \int_{x}^\infty \sum_{\sse} \p_x {\usn}(y,t) + \sum_{\rse} \p_x \urn(y,t) + \sum_{\npse} \p_x \unn(y) \, dy \, .
\end{align}

\subsubsection{$j$-th stopping time $t_j$}\label{subsub j stopping time}
We denote by $d(t)$ the minimum of the distances between any two distinct waves, and between the leftmost wave and the boundary. We stop the clock for the $j$-th time when
\begin{align*}
    d(t) = d_j \, ,
\end{align*}
where, for each $j\in\mathbb{N}_+$, we define
\begin{align}\label{def of dj}
    d_j = 3\rn^{\co} - 5 (m_j + 1 )\rn^{\cz}  \, ,
\end{align}
where
\begin{align*}
    m_j = |\{1\leq i\leq j-1: \textup{$i$-th stopping time is triggered by an interaction}\}| \, .
\end{align*}
From the construction of the initial step in Section \ref{subsub initial step}, we have
\begin{align*}
    d(0) \geq 3\rn^{\co} \, .
\end{align*}
We can continue to construct the approximate solution by the modified front tracking algorithm introduced below as long as $d_j>0$. At the end of this introduction, we prove in \eqref{dj infinity} that
\begin{align*}
    \lim_{j\to\infty} d_j > 2\rn^{\co} > 0.
\end{align*}

Between stopping times, since each $i$-rarefaction wave with left state $U_-$ is propagated at speed $\lambda_i(U_-)$, the distance between any two adjacent rarefaction waves is increasing. Hence, each stopping time corresponds to the occurrence of an interaction, a merging event, or an overtaking event. A merging and an overtaking cannot occur simultaneously. We give priority to interactions over boundary events, namely merging and overtaking. Moreover, if multiple interactions occur at the same stopping time, we prioritize the rightmost one.

\medskip

We first discuss the placement of the outgoing waves generated by interactions and mergings, and then introduce overtakings. Intuitively, for each event, we place the outgoing waves according to the rule that $1$-waves propagate to the left and $2$-waves propagate to the right. More precisely, we require the algorithm to satisfy the following property.
\begin{property}\label{prop 1-wave to left 2-wave to right}
    Suppose that an element exists both before and after a stopping time $t$, belonging to the associated ordered set of an $i$-wave $\be$ before the stopping time and to that of an $i'$-wave $\be'$ after the stopping time. Then $y_\be(t-)\geq y_{\be'}(t+)$ if $i=i'=1$, while $y_\be(t-)\leq y_{\be'}(t+)$ if either $i=i'=2$ or $i=1$ and $i'=2$. Note that, because of the presence of the boundary, it is possible to have $i=1$ and $i'=2$, but it is impossible to have $i=2$ and $i'=1$.
\end{property}
We note that, to establish this property, it suffices to verify it for each individual event. We also remark that this property implies the following.
\begin{rema}\label{remark interaction at most two times}
Any two elements $b,\widetilde{b}$ interact at most twice throughout the evolution, where we say that $b$ and $\widetilde{b}$ interact if a wave whose associated ordered set contains $b$ interacts with another wave whose associated ordered set contains $\widetilde{b}$.
\end{rema}

\smallskip

\noindent
\textbf{Placement of outgoing waves generated by an interaction.}
Ordered from left to right, we denote the incoming $i_l$-wave by $\be_l$ and the incoming $i_r$-wave by $\be_r$. Similarly, ordered from left to right, we denote the outgoing $i_l'$-wave by $\be_l'$ and the outgoing $i_r'$-wave by $\be_r'$.
We place the outgoing waves $\be_l'$ and $\be_r'$ as follows
\begin{equation}\label{placement of outgoing waves interaction i=1}
    y_{\be_l'} = 
    \begin{cases}
        y_{\be_l}-2\rn^{\cz} \, , \quad \textup{if } \be_l \textup{ and } \be_l' \textup{ are 1-shocks,} \\
        y_{\be_l} \, , \quad \quad \textup{otherwise,}
    \end{cases}
\end{equation}
and
\begin{equation}\label{placement of outgoing waves interaction i=2}
    y_{\be_r'} = 
    \begin{cases}
        y_{\be_r}+2\rn^{\cz} \, , \quad \textup{if } \be_r \textup{ and } \be_r' \textup{ are 2-shocks,} \\
        y_{\be_r} \, , \quad \quad \textup{otherwise.}
    \end{cases}
\end{equation}
We see the placement of outgoing waves verifies Property \ref{prop 1-wave to left 2-wave to right}. We perturb the position of the outgoing shock by $2\rn^{\cz}$ when the incoming wave at the same position is a shock of the same family, to obtain the estimate in Lemma~\ref{lemma L + q + omega}.

\medskip

\noindent
\textbf{Placement of outgoing waves generated by a merging.} 
We first introduce the set of elements belonging to the associated ordered sets of physical waves that have already participated in a merging event. More precisely, for each time $t$, we define
\begin{align*}
    \mathcal{M}(t)= \bigl\{ b\in \mathbb{N}_{+} : b \textup{ belongs to the associated ordered set of a physical wave at time } t \\
    \textup{and to the associated ordered set of some wave merged during } [0,t) \bigr\} \, .
\end{align*}
At time $t$, we say that a wave has been previously merged if its associated ordered set contains an element of $\mathcal{M}(t)$. Note that, once a previously merged wave becomes non-physical, we no longer track it as a previously merged wave. Therefore, what we call a previously merged wave is, more precisely, a wave that has been previously merged and is still physical at the current time.

Since the outgoing waves generated by mergings are all $2$-waves, and since a $2$-wave never transforms into a $1$-wave, we have the following property.
\begin{property}\label{prop merging previously merged wave are 2-wave}
    All previously merged waves are $2$-waves.
\end{property}
Hence, we have
\begin{equation}\label{merging def of M}
    \begin{aligned}
        \mathcal{M}(t)= \bigl\{ b\in \mathbb{N}_{+} : b \textup{ belongs to the associated ordered set of a 2-wave at time } t \\
        \textup{and to the associated ordered set of some wave merged during } [0,t) \bigr\} \, .
    \end{aligned}
\end{equation}
We define 
\begin{align}\label{def of y_M}
    y_{\mathcal{M}}
\end{align}
to be the position of the leftmost previously merged wave, if such a wave exists, and set $y_{\mathcal M}=\infty$ otherwise.

We next specify the region in which the merging is performed. Let
\begin{equation}\label{def of tilde y merging}
    \widetilde{y}(t) =
    \begin{cases}
        y_\mathcal{M}(t) \, , \quad \textup{if } \mathcal{M}(t) \neq \emptyset \, , \textup{ and } \mev \leq y_{\mathcal{M}}(t) - y_b(t)< 3\mev \, , \\
        y_b(t) + 3 \mev \, , \quad \textup{otherwise.}
    \end{cases}
\end{equation}
We then merge all waves located in the interval $(y_b(t_j),\widetilde{y}(t_j-))$. Notice that a wave located exactly at $\widetilde{y}(t_j-)$, if any, is not merged. We also note that we always have
\begin{align}\label{lower bound of widetilde y}
    \widetilde{y} \geq \mev \, .
\end{align}

As defined in Section~\ref{subsub merging}, let
\begin{align*}
    {\be^{(1)}}' \, , \; ... \, , \; {\be^{(M)}}' \, , 
\end{align*}
denote the outgoing physical $2$-waves, constructed from incoming waves $\be^{(1)}$, ..., $\be^{(M)}$. We place the outgoing waves as follows
\begin{align}\label{def of merging rightmost}
    y_{{\be^{(M)}}'} = \max\{\tilde{y}(t_j-)-3\rn^{\co}, y_{\be^{(M)}}\} \, , 
\end{align}
and, for $1 \leq i \leq M-1$,
\begin{align}\label{def of merging mutual}
    y_{{\be^{(i+1)}}'} - y_{{\be^{(i)}}'} = \min\{y_{{\be^{(i+1)}}} - y_{{\be^{(i)}}}, 3\rn^{\co} \} \, .
\end{align}
Intuitively, we place the outgoing $2$-waves as far to the right as possible by shrinking both their mutual distances and the distance between the leftmost outgoing wave and $\tilde y$ to $3\rn^{\co}$ whenever they are larger, and keeping them unchanged otherwise. 
We have that for any $1\leq i\leq M$,
\begin{align*}
    y_{{\be^{(i)}}'} \geq y_{{\be^{(i)}}} \, .
\end{align*}
Hence, this placement verifies Property~\ref{prop 1-wave to left 2-wave to right}. We note that
\begin{align}\label{lower bound of merging after distance}
    y_{{\be^{(1)}}'} - y_b(t_j) > \widetilde{y}(t_j-) - \textup{total number of waves} \times 3\rn^{\co} \, .
\end{align}
Hence, \eqref{lower bound of widetilde y}, Lemma \ref{lemma number of waves and interactions}, and \eqref{rt mft initial values} imply that
\begin{equation}\label{merging after distance}
    \begin{aligned}
        y_{{\be^{(1)}}'} - y_b(t_j)  > \mev - \textup{total number of waves} \times 3\rn^{\co} \geq \frac{\mev}{2} \gg \rn^{\co} \, .
    \end{aligned}
\end{equation}
By \eqref{def of merging rightmost}, \eqref{def of merging mutual}, and \eqref{merging after distance}, we have the following property
\begin{property}\label{prop merging nondecrease distance}
    Merging does not decrease the mutual distance between any two waves, nor the distance between the leftmost wave and the boundary.
\end{property}

\medskip

\noindent
\textbf{Overtakings.} Assume that a non-physical wave lies at distance $d_j$ from the boundary. We remove it by redefining the approximate solution to be equal to the right state of the non-physical wave, so that the solution becomes constant across the former location of the wave. 

\smallskip

We note that the outflow condition \eqref{outflow boundary condition} of the approximate solution may be violated due to the presence of mergings and overtakings. However, we will show that the total error caused by these violations is bounded, and that the limit of the approximate solutions still satisfies the boundary condition. The details are discussed in Section~\ref{sub sec proof of prop limit of approximate solution}.

\medskip
\medskip
We have now described how to place the outgoing waves generated by events that trigger the stopping time. There are two possible scenarios: either exactly one outgoing wave is a rarefaction whose associated ordered set has cardinality greater than $1$, or no such rarefaction exists. In the latter case, we simply restart the clock. In the former case, Lemma~\ref{lemma SS and SR same family} implies that the event triggering the stopping time is an interaction between two shocks in the same family, and that this interaction generates a rarefaction in the other family with strength greater than $\rn^{\cpr}$. In this case, we initiate the following procedure and allow subsequent events to occur, restarting the clock only once the associated ordered set of every rarefaction has cardinality one. In other words, the goal is to obtain the following property.
\begin{property}\label{prop before and after stopping time one element R}
    Immediately before and after each stopping time, the associated ordered set of each rarefaction contains only one element.
\end{property}

\noindent
\textbf{Discretization of a rarefaction}
We denote by $\be=\{b_1,\ldots,b_p\}$ the outgoing rarefaction whose associated ordered set has cardinality greater than $1$, and we denote its family by $i$. If $i=1$, we let $y_{\textup{adj}}$ denote the position of the rightmost wave $\be_{\textup{adj}}$ to the left of $\be$, if such a wave exists, and the position of the boundary $y_b$ otherwise. If $i=2$, we let $y_{\textup{adj}}$ denote the position of the leftmost wave $\be_{\textup{adj}}$ to the right of $\be$, if such a wave exists, and $\infty$ otherwise.

\noindent
\textit{Case 1.}
Assume that
\begin{equation*}
    y_{{\textup{adj}}}
    \begin{cases}
          \leq y_{\be} - 2d_j \, , \quad \textup{if } i = 1 \, , \\
          \geq y_{\be} + 2d_j \, , \quad \textup{if } i = 2 \, .
    \end{cases}
\end{equation*}
Then there is enough space to place a rarefaction. We perform the partition event defined in Section~\ref{subsub partition}, and place the outgoing waves $\be' = \{b_2,\ldots,b_m\}$ and $\widetilde{\be}' = \{b_1\}$ as follows
\begin{equation*}
    y_{\widetilde{\be}'} = y_{\be} \, , \qquad 
    y_{\be'} = 
    \begin{cases}
        y_{\be} - d_j \, , \quad \textup{if } i = 1 \, , \\
        y_{\be} + d_j \, , \quad \textup{if } i = 2 \, .
    \end{cases}
\end{equation*}
Note that this placement verifies Property~\ref{prop 1-wave to left 2-wave to right}.

\smallskip
\noindent
\textit{Case 2.}
Assume that 
\begin{equation*}
    y_{{\textup{adj}}}
    \begin{cases}
          \geq y_{\be} - 2d_j \, , \quad \textup{if } i = 1 \, , \\
          \leq y_{\be} + 2d_j \, , \quad \textup{if } i = 2 \, .
    \end{cases}
\end{equation*}
Suppose $\be_{\mathrm{adj}}$ exists. We perform the passing event defined in Section~\ref{subsub passing} if $\be_{\mathrm{adj}}$ is an $i$-rarefaction. In this case, we obtain two outgoing waves, denoted by $\be'$ and $\widetilde{\be}'$, whose relative placement is the same as that of $\be_{\textup{adj}}$ and $\be$. We place the outgoing waves as follows
\begin{align*}
    y_{{\be}'} = y_{\be_{\mathrm{adj}}} \, , \quad 
    y_{\widetilde{\be}'} = y_{{\be}} \, .
\end{align*}
By the construction of passing in Section \ref{subsub passing}, Property \ref{prop 1-wave to left 2-wave to right} is satisfied.
If $\be_{\mathrm{adj}}$ is not an $i$-rarefaction, then we let $\be$ interact with $\be_{\textup{adj}}$ and place the outgoing waves according to the procedure described earlier.

Suppose that $\be_{\mathrm{adj}}$ does not exist. Then the only remaining possibility is that $i=1$ and $y_b \geq y_{\be}-2d_j$. In this case, we let the corresponding boundary event, i.e., merging, happen.

We repeat the above procedure inductively until the associated ordered set of any rarefaction contains only one element. We then restart the clock.

\medskip
\medskip
Intuitively, the discretization process of a $2$-rarefaction proceeds as follows. We let it travel to the right and discretize it whenever there is enough space; if there is not enough space, we keep moving it to the right until such space becomes available. We continue this process until the rarefaction is fully discretized or becomes a non-physical wave. The discretization process of a $1$-rarefaction is similar, except that we let it travel to the left, with one additional possibility due to the presence of the boundary. When it becomes close to the boundary, we let the boundary event, which is a merging, occur. Then the rarefaction either disappears, or a $2$-rarefaction is generated and awaits discretization. In the latter case, we repeat the discretization process described above for this newly generated $2$-rarefaction.

By Property~\ref{prop introduction of new elements} and Lemma~\ref{lemma SS and SR same family}, at each stopping time, a rarefaction whose generation triggers the discretization process can only be generated by an interaction between two shocks in the same family. We also observe that the incoming waves of each event in the discretization process include at least one rarefaction. Hence, we obtain the following lemma.
\begin{property}\label{prop creation of at most one large rarefaction}
    At each stopping time, at most one rarefaction whose associated ordered set has cardinality greater than $1$ and contains newly generated elements can be created. Moreover, such a rarefaction must be generated by the interaction that triggers the stopping time, and this interaction must be an interaction between two shocks in the same family.
\end{property}
Since the discretization process occurs at most once at each stopping time, Property \ref{prop creation of at most one large rarefaction} and Property~\ref{prop R cardinality} give the following property.
\begin{property}\label{prop events triggered by generation of big R}
Assume that the event triggering the stopping time generates a rarefaction whose associated ordered set contains $p>1$ elements. For each event in the discretization process, exactly one incoming wave and at most one outgoing wave are rarefactions whose associated ordered sets have cardinality greater than $1$. Moreover, the cardinality of each such ordered set does not exceed $p$.
\end{property}

At a stopping time, only interactions and boundary events can change the mutual distances between waves, or the distance between a wave and the boundary. By \eqref{placement of outgoing waves interaction i=1}, \eqref{placement of outgoing waves interaction i=2}, Property~\ref{prop merging nondecrease distance}, and the definition of overtaking, these distances can decrease only when two waves in the same family interact and the outgoing wave in that same family is a shock. By Property~\ref{prop creation of at most one large rarefaction}, the discretization process terminates once such an interaction occurs. Hence, we obtain the following property.
\begin{property}\label{prop minimum distance decrease by interaction}
    Only interactions can decrease the minimum distance $d$ between any two waves, and between the leftmost wave and the boundary. Moreover, at each stopping time, $d$ can decrease by at most $4\rn^{\cz}$.
\end{property}
Together with \eqref{merging after distance}, this yields the following property.
\begin{property}\label{prop at most one merging each stopping time}
    At each stopping time, at most one merging event occurs.
\end{property}

\medskip
Assume that a boundary event triggers the stopping time.
Since interactions are given priority over boundary events, it follows that, at $t_j-$, the minimum distance between two waves, or between a wave and the boundary, is attained only by the distance between the leftmost wave and the boundary. By \eqref{merging after distance}, boundary events increase the distance between the leftmost wave and the boundary. Together with Property \ref{prop minimum distance decrease by interaction}, if the $j$-th stopping time is triggered by a boundary event, then we restart the clock immediately afterward and have
\begin{align*}
    d(t_j+) > d_j = d_{j+1} \, .
\end{align*}
On the other hand, if the $j$-th stopping time is triggered by an interaction, then Property \ref{prop minimum distance decrease by interaction} implies that
\begin{align*}
    d(t_j+) \geq d_j - 4\rn^{\cz} > d_{j+1} \, .
\end{align*}
We note that Lemma \ref{lemma number of waves and interactions} and \eqref{rt mft well defined} imply that 
\begin{align}\label{dj infinity}
    d_j \geq 3\rn^{\co} - C (\rn^{-\cpr} \ln \rn^{-1})^2 \rn^{\cz} > 2\rn^{\co}\, .
\end{align}
This gives the following property.
\begin{property}\label{prop transition zone}
    The transition zones of all waves, as defined in \eqref{def of transition zone}, are pairwise disjoint and remain at a distance of at least $\rn^{\co}$ from the boundary.
\end{property}

\smallskip

For $t$ between the $j$-th and $(j+1)$-th stopping times, we define the approximate solution by
\begin{align}\label{def approximate sol at time t_j}
    \ub (x,t) = \ust - \int_{x}^\infty \sum_{\sse} \p_x {\usn}(y,t) + \sum_{\rse} \p_x \urn(y,t) + \sum_{\npse} \p_x \unn(y) \, dy \, .
\end{align}

\subsubsection{Properties of merging}
Here we derive a lower bound on the distance traveled by the boundary before a merging involving a previously merged wave can occur. This lower bound will be used in Section~\ref{subsec global-in-time estimates} to compensate for the growth of the relative entropy caused by the potentially large number of resettings, which may arise when the boundary moves faster than the $2$-waves in the approximate solution.

\smallskip

We begin by discussing the three possible scenarios for mergings at time $t_*$. Recall the position $y_{\mathcal M}$ of the leftmost previously merged wave, defined in \eqref{def of y_M}, and the right endpoint $\widetilde y$ of the merging interval, defined in \eqref{def of tilde y merging}.
The first scenario occurs when there is a previously merged wave very close to the boundary, namely when
\begin{align*}
    y_{\mathcal{M}}(t_*-) - y_b(t_*) < \mev \, .
\end{align*}
In this case, the merging region is $(y_b(t_*),y_b(t_*)+3\mev)$, and the incoming waves include a previously merged wave.

The second scenario occurs when the leftmost previously merged wave is at an intermediate distance from the boundary, that is when
\begin{align*}
    \mev \leq y_{\mathcal{M}}(t_*-) - y_b(t_*) \leq 3\mev \, .
\end{align*}
The merging region in this case is $(y_b(t_*),y_{\mathcal M}(t_*-))$, which excludes the endpoint $y_{\mathcal M}(t_*-)$. Hence, no previously merged wave is merged.

The third scenario occurs when all previously merged waves are sufficiently far from the boundary, or when no previously merged wave exists. More precisely, this is the case when
\begin{align*}
    y_{\mathcal{M}}(t_*-) - y_b(t_*) > 3\mev \, .
\end{align*}
In this case, the merging region is $(y_b(t_*),y_b(t_*)+3\mev)$, and it contains no previously merged wave.

We note that the incoming waves include a previously merged wave only in the first scenario.

\medskip

Property~\ref{prop merging previously merged wave are 2-wave} and Property ~\ref{prop 1-wave to left 2-wave to right} imply that $y_{\mathcal M}$ can decrease only when a merging occurs and that, between stopping times, $y_{\mathcal M}$ moves to the right with speed at least the minimal speed of the physical $2$-waves in the approximate solution, denoted by
\begin{align}\label{def of the minimal speed of the physical 2-wave}
    \la_{\inf}^{(2)}(\cdot) \, .
\end{align}

We denote the merging times of the first scenario by
\begin{align*}
    s_0:=0<s_1<s_2<... 
\end{align*}
For $j\in \mathbb{N}_+$, we define
\begin{align*}
    \tilde{s}_{j-1} = \inf\bigl\{s_{j-1}<s<s_{j}: y_\mathcal{M}(t) - y_b(t) \leq 3 \mev \textup{ for any } t \in (s,s_j)\bigr\} \, .
\end{align*}
By \eqref{lower bound of merging after distance}, we have 
\begin{align}\label{lower bound of y_M}
    y_\mathcal{M}(\tilde{s}_{j-1}+) - y_b(\tilde{s}_{j-1}+) > 3\mev - \textup{total number of waves} \times 3\rn^{\co} \, .
\end{align}
We also observe that any merging occurring on $(\widetilde{s}_{j-1},s_j)$ must fall under the second scenario.

Recall the definition of the set $\mathcal{M}$ in \eqref{merging def of M}. We claim that each element can enter $\mathcal{M}$ at most once during the evolution. Indeed, if an element leaves $\mathcal{M}$, then the corresponding wave either disappears or becomes non-physical. Assume that the latter case occurs. Property~\ref{prop ordered set NP unchanged} implies that this element remains in the associated ordered set of a non-physical wave. By the construction in \eqref{selection of outgoing waves based on strength merging}, if a non-physical wave is merged, then it disappears immediately. Hence, this element cannot enter $\mathcal{M}$ again. In summary, we obtain the following property.
\begin{property}\label{prop merging any element merged at most twice}
    Over the entire evolution, each element is merged at most twice in merging events in which none of the incoming waves has been previously merged.
\end{property}
Therefore, from $\widetilde{s}_{j-1}+$ to $s_{j}-$, each element can decrease $y_{\mathcal{M}}$ at most once, and by at most $3\rn^{\co}$.
Together with \eqref{lower bound of y_M}, this yields that, on $(\tilde{s}_{j-1},s_{j})$,
\begin{align*}
    y_{\mathcal{M}}(s)-y_b(\tilde{s}_{j-1}) &> 3\mev - \textup{total number of waves}\times 3\rn^{\co}\\
    &\quad -\textup{total number of elements}\times 3\rn^{\co} + \int_{\tilde{s}_{j-1}}^{s} \la_{\inf}^{(2)}(s) \, ds \, ,
\end{align*}
and
\begin{align*}
    y_{\mathcal{M}}(s_{j}-) - y_b(s_{j})<\mev \, .
\end{align*}
Hence, we get
\begin{align*}
    y_b(s_{j}) - y_b(\tilde{s}_{j-1}) &= y_b(s_j) - y_{\mathcal{M}}(s_j-) + y_{\mathcal{M}}(s_j-) - y_b(\tilde{s}_{j-1}) \\
    &> 2\mev - 2\times \textup{total number of elements}\times 3\rn^{\co} +\int_{\tilde{s}_{j-1}}^{s_{j}} \la_{\inf}^{(2)}(s)\, ds\, .
\end{align*}
Lemma \ref{lemma number of waves and interactions} and \eqref{rt mft initial values} imply
\begin{align}\label{merging lower bound of y_b}
    y_b(s_{j}) - y_b(\tilde{s}_{j-1}) - \int_{\tilde{s}_{j-1}}^{s_{j}} \la_{\inf}^{(2)}(s)\, ds > \mev \gg 3\rn^{\co} \times \textup{total number of elements}\, .
\end{align}

\subsection{Bounds on the number of elements and events}\label{sub number of waves and elements}
In this subsection, we count the total number of elements and events.

In Section~\ref{subsub mft total variation}, we first prove the well-definedness of the approximate solution and show that the tracked strength of each element is controlled by its initial value, up to a multiplicative constant. To this end, we introduce a weighted total variation $\ml$, which assigns larger weights to the strengths of elements with higher generation order, instead of using the usual sum of the total variation and the Glimm interaction potential. By proving that this weighted total variation is controlled by its initial value, we obtain sharper bounds on the strengths of elements of higher generation order. The detailed proof is given in Appendix~\ref{sec proof of lemma L+kQ}, where we simultaneously prove that the tracked strength of each element is bounded by its initial value through the construction of an equivalent tracked strength $g_b$, defined in \eqref{def of total variation g}. The construction of $g_b$ is inspired by \cite{prepweakbv}. 

Then, in Section~\ref{subsub bounds on number of waves and events}, we count the total number of elements and different types of events throughout the evolution. In standard counting arguments, such as that in \cite{bressanbook}, one counts the number of waves. Here, however, we work with elements in order to prove in Section~\ref{sub sec proof of prop limit of approximate solution} that any limit of the approximate solutions satisfies the boundary condition. Therefore, instead of introducing a convention for identifying incoming and outgoing waves as the same wave, we directly count the total number of elements throughout the evolution by introducing the notion of flagged elements.

Finally, in Section~\ref{subsub bounds on number of large waves and events}, the framework of elements and associated ordered sets developed above allows us to obtain a sharper estimate on the number of interactions for which at least one incoming wave has strength greater than or equal to a prescribed threshold.

\subsubsection{Control of the total variation and the tracked strength}\label{subsub mft total variation}
We start with a lemma collecting estimates on the change in tracked strength when different events occur. When an event occurs, we define the tracked strength after the event proportionally to its tracked strength before the event. In this way, the change in wave strength is reflected proportionally in the change of the tracked strength. The estimates for merging, partition, and passing follow directly from the definition, while the estimates for interactions follow from Lemma~\ref{lemma interaction wave strength estimates} and Lemma~\ref{lemma interaction wave strength estimates NP}. A detailed proof of the interaction estimates, in the same spirit, can be found in \cite{prepweakbv}.

\begin{lemm}\label{lemma tracked strength at an event}
Assume that an event occurs at time $t$. Then there exists a constant $C>0$, depending only on $U_*$, such that the following estimates hold.

Let $b$ be an element that belongs to the associated ordered set of one of the incoming waves and remains in the associated ordered set of one of the outgoing waves. Then
\begin{align*}
    \textup{sign}\bigl(\s_b(t+)\bigr) = \textup{sign}\bigl(\s_b(t-)\bigr) \, .
\end{align*}
If a merging occurs, then
\begin{align*}
    |\s_b(t+)| - |\s_b(t-)| \leq 0 \, .
\end{align*}
If a partition or a passing occurs, then
\begin{align*}
    \s_b(t+) = \s_b(t-) \, .
\end{align*}
Assume an interaction occurs. We denote by $\widetilde{\s}$ the strength of the other incoming wave, namely the one whose associated ordered set does not contain $b$. If the two waves interacting belong to different families, then 
\begin{equation*}
    |\s_b(t+)-\s_{b}(t-)| \leq C |\s_b(t-)||\widetilde{\s}| \, ,
\end{equation*}
If the two waves interacting belong to the same family, then
\begin{align*}
    |\s_b(t+)-\s_b(t-)| \leq C |\s_b(t-)||\widetilde{\s}|\, \bigl(|\s|+|\widetilde{\s}|\bigr) \, ,
\end{align*}
where $\s$ denotes the strength of the incoming wave whose associated ordered set contains $b$.
\end{lemm}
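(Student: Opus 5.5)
The plan is to verify each claim event by event, working directly from the definitions of the tracked strengths in Section~\ref{subsec event}. In every event the tracked strength of a surviving element is rescaled by a factor. Call this factor $r$. It is the ratio of the outgoing strength to the incoming strength of the wave (or group of waves) the element belongs to:
\begin{itemize}
\item $r=\s'/\s$ for interactions in different families;
\item $r=1$ for the physical wave crossing a non-physical one, and for partition and passing;
\item $r=\s'/(\s+\widetilde\s)$ for two shocks of the same family;
\item $r=\s'/\s$ for a shock--rarefaction interaction in the same family;
\item $r=\s'/\s$ with $\s,\s'$ as in Section~\ref{subsub merging} for merging.
\end{itemize}
So $\s_b(t+)=r\,\s_b(t-)$ in every case. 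The sign claim reduces to showing $r>0$. The magnitude claims reduce to bounding $|r-1|$, since $|\s_b(t+)-\s_b(t-)|=|r-1||\s_b(t-)|$.

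Partition and passing give $r=1$ immediately. For merging, $\s$ and $\s'$ have the same sign by definition, and $|\s'|=|u_R-u_L|\le|\s|$. Hence $0<r\le1$, which yields both the sign claim and $|\s_b(t+)|\le|\s_b(t-)|$. For the non-physical wave in a physical/non-physical interaction, Lemma~\ref{lemma interaction wave strength estimates NP} gives $|\widetilde\s'-\widetilde\s|\le C|\s\widetilde\s|$. Dividing by $|\widetilde\s|$ gives $|r-1|\le C|\s|$, and here the "other" wave is the physical one of strength $\s$. For the physical wave, $r=1$.

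For two physical waves in different families, Lemma~\ref{lemma interaction wave strength estimates} gives $|\s'-\s|\le C|\s\widetilde\s|$, so $|r-1|\le C|\widetilde\s|$. For the same family, the lemma gives $|\s'-(\s+\widetilde\s)|\le C|\s\widetilde\s|(|\s|+|\widetilde\s|)$. In the shock--shock case, $\s,\widetilde\s$ have the same sign, so $|\s+\widetilde\s|\ge|\s|$. Dividing, $|r-1|\le C|\widetilde\s|(|\s|+|\widetilde\s|)$. Then
\[
\s_b(t+)-\s_b(t-)=(r-1)\s_b(t-)
\]
gives the claimed bound; note that elements of $\widetilde\be$ get the symmetric bound with $|\s|$ as the other strength.

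The main obstacle is the shock--rarefaction case in the same family, where the element's wave $\be$ shares the type of $\be'$. Here $r=\s'/\s$, and $\s'-\s=\widetilde\s+O(|\s\widetilde\s|(|\s|+|\widetilde\s|))$. The $\widetilde\s$ term is not of the product form, and $|\s+\widetilde\s|$ may be small compared with $|\s|$ because of cancellation, so the quadratic-type estimate does not follow from the lemma as stated. I would handle this in two steps. First, use the convention in the construction that the incoming wave of the same type as $\be'$ is the one whose elements are kept, so that $|\s'|$ and $|\s|$ are comparable. Second, follow the accounting in \cite{prepweakbv} and read the estimate as applying with the cancellation absorbed, i.e.\ with the comparison made relative to $\s+\widetilde\s$. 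If that reading fails, I would fall back on proving only $|r-1|\le C|\widetilde\s|$ here, which is what the subsequent total variation argument via $\ml$ in Appendix~\ref{sec proof of lemma L+kQ} actually needs.

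For positivity of $r$ in the same-family and different-family interactions, smallness of $\e$ makes the perturbation terms $C|\widetilde\s|$ and $C|\widetilde\s|(|\s|+|\widetilde\s|)$ less than $1$. Then $r>0$, which gives the sign claim.
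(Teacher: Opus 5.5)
The parts of your plan covering partition, passing, merging, crossing a non-physical wave, the different-family interactions and the shock--shock interaction are correct. They follow the route the paper indicates: read off the rescaling factor from the definitions, then apply Lemma~\ref{lemma interaction wave strength estimates} and Lemma~\ref{lemma interaction wave strength estimates NP}.

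\textbf{The gap is the same-family shock--rarefaction interaction.} None of your remedies works there, because the two-sided product bound is false in this subcase. The paper's one-line justification also passes over it.

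The surviving elements are those of $\be$, with $r=\s'/\s$ and $\s'-\s=\widetilde{\s}+e$, where $|e|\le C|\s\widetilde{\s}|(|\s|+|\widetilde{\s}|)$. So $|r-1|$ is of size $|\widetilde{\s}|/|\s|$, which is of order one. For instance, let a $1$-shock with $\s=-2\theta$ meet a $1$-rarefaction with $\widetilde{\s}=\theta$, where $0<\theta\ll 1$. Then $\s'=-\theta+O(\theta^3)$, hence $\s_b(t+)=(\tfrac12+O(\theta^2))\,\s_b(t-)$. The claimed right-hand side is only $3C\theta^2|\s_b(t-)|$, and your fallback $C|\widetilde{\s}|\,|\s_b(t-)|$ is only $O(\theta)|\s_b(t-)|$.

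Your other steps fail in this subcase as well:
\begin{itemize}
\item $|\s'|$ need not be comparable to $|\s|$. Near cancellation ($\widetilde{\s}\approx-\s$) makes $\s'/\s$ arbitrarily small, and $\be'$ keeps the elements of $\be$ even if it is relabelled non-physical.
\item Reading the estimate ``relative to $\s+\widetilde{\s}$'' changes the statement rather than proving it.
\item Your positivity argument via $|r-1|<1$ is unavailable here, since $|r-1|$ is not small.
\end{itemize}

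\textbf{What to prove instead.} In this subcase the true statement is $0<r\le 1$.
\begin{itemize}
\item Positivity comes from the convention that $\be$ and $\be'$ have the same type, so $\s'$ and $\s$ have the same sign.
\item For the upper bound, $\widetilde{\s}$ has the sign opposite to $\s$. Once the total variation is small, $|e|\le C|\s|(|\s|+|\widetilde{\s}|)\,|\widetilde{\s}|<|\widetilde{\s}|$. So $\s'-\s$ has the sign of $\widetilde{\s}$, and therefore $|\s'|=|\s|-|\s'-\s|<|\s|$.
\end{itemize}
Hence the sign of $\s_b$ is preserved and $|\s_b(t+)|\le|\s_b(t-)|$.

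This monotone estimate is all that Appendix~\ref{sec proof of lemma L+kQ} uses from this case. Indeed, \eqref{proof of well-definedness change of g} only requires $|\s_b(t+)|\le(1+C_0|\bar{\s}|)\,|\s_b(t-)|$. In Case~2.1 the decrease of $\ml$ comes from the elements of $\widetilde{\be}$, which disappear.

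The lemma should therefore be stated in this monotone form for the shock--rarefaction subcase. With that replacement, your argument is complete.
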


We associate with each element $b$ the function
\begin{align}\label{def of total variation g}
    g_b(t) = |\s_b(t)| \exp\{- C_g \sum |\s_{\tilde{b}}(s_{\tilde{b}}-)|\} \, , 
\end{align}
where the sum is taken over all pairs $(\widetilde b,s_{\widetilde b})$ such that $b$ interacts with $\widetilde b$ at time $s_{\widetilde b}\in[0,t)$. In Lemma~\ref{lemma L+kQ}, we fix the constant $C_g>0$ and show that $g_b$ is equivalent to $|\s_b|$.

We then introduce a weighted total variation at time $t$ by
\begin{align}\label{def of weighted total variation}
    \ml(t) = \sum_{k\geq 1} \ml(0)^{-k+1} \ml_k(t) \, ,
\end{align}
where
\begin{align}\label{def of k weighted total variation}
    \ml_k(t) = \sum_{\{b: \textup{ element of order } k \textup{ at time } t\}} g_{b}(t) \, ,
\end{align}
denotes the sum of the weighted strengths of all elements of generation order $k$ at time $t$. Note that at the initial time, all elements have generation order $1$, and $g_b$ coincides with $|\s_b|$. Hence, $\ml(0)$ coincides with the initial total variation.

For each element $b$, we define its generation time 
\begin{align}\label{def of generation time}
    t_0^b
\end{align}
as the earliest time at which it belongs to the associated ordered set of a wave.
We define the total initial strength of all elements generated over all time by
\begin{align*}
    \ml_{total} = \sum_{\{b:0\leq t_0^b<\infty\}} |\s_b(t_0^b)| \, .
\end{align*}
Note that $\ml_{total}$ does not depend on time.

In the following lemma, we show that the weighted total variation plays the same role as the sum of the total variation and a large constant times the interaction potential in the usual front tracking algorithm. In particular, we show that $g_b$ is equivalent to $|\s_b|$, that the weighted total variation is controlled by the weighted total variation at the initial time, and that, at each interaction, the weighted total variation decreases by an amount proportional to the product of the strengths of the two incoming waves. Moreover, we show that the tracked strength of any element is bounded, up to a constant, by its strength at the generation time, and that the total generation strength of all elements is controlled by the weighted total variation at time $0$. The proof of the following lemma is given in Appendix~\ref{sec proof of lemma L+kQ}.

\begin{lemm}\label{lemma L+kQ}
For any $U_*\in \om_{sub}^-$, there exists $\e_0, C_g, C >0$ such that the following is true. 

\noindent
Let $C_g$ be the constant defining $g_b$ in \eqref{def of total variation g}, and let $\ml(0)=\e<\e_0$. Then the approximate solution defined as in Section~\ref{subsec mft} is globally well-defined. For any $t \in \mathbb{R}_+$,
\begin{equation*}
    g_b(t)\leq |\s_b(t)|\leq 2 g_b(t) \, .
\end{equation*}
Moreover, $\ml$ is non-increasing, and if two waves of strengths $\s$ and $\widetilde{\s}$ interact at time $t_*$, then
\begin{align*}
    \Delta \ml (t_*) \leq -C|\s||\widetilde{\s}| \, .
\end{align*}
For each element $b$ generated at time $t_0^b$, we have
\begin{align*}
    |\s_b(t)|\leq 2|\s_b(t_0^b)| \, .
\end{align*}
for any $t\geq t_0^b$ before the element disappears. Finally, we have
\begin{equation*}
    \ml_{total} \leq 2 \ml(0) \, .
\end{equation*}
\end{lemm}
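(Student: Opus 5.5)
We argue by induction over the stopping times $t_1<t_2<\dots$, with a continuity (bootstrap) hypothesis: as long as $\ml(t)\le \ml(0)=\e$ and $g_b\le|\s_b|\le 2g_b$ for every element $b$ alive at time $t$, the algorithm can be continued to the next stopping time. Between stopping times nothing changes, since tracked strengths, generation orders and the exponential factors in \eqref{def of total variation g} are constant there. The inequality $g_b\le|\s_b|$ is immediate from the definition. The reverse bound $|\s_b|\le 2g_b$ requires
\begin{align*}
    C_g\sum|\s_{\tilde b}(s_{\tilde b}-)|\le \ln 2 .
\end{align*}
By Remark~\ref{remark interaction at most two times}, each $\tilde b$ interacts with $b$ at most twice. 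By the inductive bound $|\s_{\tilde b}(s)|\le 2|\s_{\tilde b}(t_0^{\tilde b})|$, this sum is at most $4\ml_{total}$. It therefore suffices to show $\ml_{total}\le 2\ml(0)$ and then choose $\e_0$ small depending on $C_g$. We prove these statements simultaneously, closing the loop at the end.

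\emph{Step 1: monotonicity of $\ml$ event by event.}
\begin{itemize}
\item For passing, partition and overtaking, $\ml$ does not increase, by Lemma~\ref{lemma tracked strength at an event} and Property~\ref{prop generation order non-increasing}. Overtaking only removes elements.
\item For merging, $|\s_b|$ does not increase and elements may disappear. The only subtlety is that, in Case~2 of merging, generation orders are lowered to the minimum, which raises the weight $\ml(0)^{-k+1}$. We control this by Property~\ref{prop generation order S same}: before the merging, all incoming shocks already carry a common order within each shock. We compensate the gained weight by the net decrease $|\s'|\le|\s|$. If this fails, we instead redefine the weight using the minimal order taken over the whole incoming family, and we expect this adjustment to be the delicate point.
\item For an interaction of waves $\s$ and $\tilde\s$, each surviving element of the wave $\s$ satisfies
\begin{align*}
    g_b(t+)\le |\s_b(t-)|\,(1+C|\tilde\s|)\,e^{-C_g\sum|\s_{\tilde b}|}
\end{align*}
by Lemma~\ref{lemma tracked strength at an event}. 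The interaction itself adds the terms $|\s_{\tilde b}(t-)|$ to the exponent, and these sum to $|\tilde\s|$. Hence $g_b(t+)\le g_b(t-)\,e^{-(C_g-C)|\tilde\s|}$, and summing over $b$ shows that the old elements lose at least $\tfrac{C_g}{2}|\s||\tilde\s|\min_b\ml(0)^{-k_b+1}$ in weighted mass.
\item When a same-family interaction creates new elements (Property~\ref{prop introduction of new elements}), their total strength is at most $C|\s\tilde\s|(|\s|+|\tilde\s|)$ by Lemma~\ref{lemma interaction wave strength estimates}. By Property~\ref{prop new elements order}, their order is one larger than the relevant incoming minimal order $k$. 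Their weighted contribution is therefore at most $\ml(0)^{-k}\,C|\s\tilde\s|\,\e$, which is absorbed by the loss $\ml(0)^{-k+1}|\s\tilde\s|$ once $C\e\ll\ml(0)$. This is exactly why the weight $\ml(0)^{-k+1}$ was chosen. The same estimate applies to a shock generated from an incoming rarefaction in a shock--rarefaction interaction.
\item In a shock--shock merger, orders drop to the minimum. This is handled as for merging, using that the increase of weight is paid by the factor $C_g$ times the interaction product together with the Glimm-type decrease. We take $C_g$ large relative to the constants in Lemmas~\ref{lemma interaction wave strength estimates} and~\ref{lemma interaction wave strength estimates NP}.
\end{itemize}
Together these give $\Delta\ml\le -C|\s||\tilde\s|$ at every interaction.

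\emph{Step 2: bounds on generated strength.} Fix an element $b$ and a time $t\ge t_0^b$. Between $t_0^b$ and $t$, the ratio $|\s_b(t)|/|\s_b(t_0^b)|$ is bounded by
\begin{align*}
    \prod(1+C|\tilde\s|)\le \exp\Bigl(C\sum|\tilde\s|\Bigr),
\end{align*}
where the product runs over the interactions involving $b$. Each interaction partner $\tilde b$ is counted at most twice, so the sum is at most $C\ml_{total}$, and the ratio is at most $2$ for small $\e$. Newly generated elements of order $k+1$ are created only at interactions, with total strength bounded by $C\ml(0)^{k}\cdot(\text{decrease of }\ml)$. Summing the resulting geometric series in the orders $k$ gives
\begin{align*}
    \ml_{total}\le \ml(0)+C\e\,\ml(0)\le 2\ml(0).
\end{align*}
The initial elements contribute exactly $\ml(0)$.

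\emph{Step 3: well-definedness.} This uses Lemma~\ref{lemma number of waves and interactions} and \eqref{dj infinity}: the number of elements and of stopping times is finite, and $d_j>2\rn^{\co}$, so the algorithm never stalls.

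The main obstacle is Step~1 at the events that lower generation orders, namely shock--shock interactions and Case~2 mergings. There the heavier weight must be paid by the decrease, and we expect the uniform choice of $C_g$ and $\e_0$ to be the crux.
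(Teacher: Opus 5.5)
Your architecture is the paper's (Appendix~\ref{sec proof of lemma L+kQ}): a bootstrap in time, the factor $e^{-C_g\sum|\s_{\widetilde b}|}$ turning each interaction into a loss of $g_b$ proportional to the partner's strength, the order weight absorbing newly created elements, and then the bounds on $|\s_b|$ and $\ml_{total}$. But Step~1 rests on a sign error. Since $\ml(0)=\e<1$, the weight $\ml(0)^{-k+1}=\e^{1-k}$ \emph{increases} with $k$, so lowering generation orders can only decrease $\ml$.

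The events you single out as the crux are therefore trivial. At a merging, $|\s_b|$ does not increase (Lemma~\ref{lemma tracked strength at an event}). No term enters the exponent of $g_b$, since a merging is not an interaction. Orders do not increase (Property~\ref{prop generation order non-increasing}). In a shock--shock interaction the relabelling is an extra gain, because $\e^{1-k_b(t+)}g_b(t+)-\e^{1-k_b(t-)}g_b(t-)\le \e^{1-k_b(t-)}\bigl(g_b(t+)-g_b(t-)\bigr)$. Had your premise been right, your remedies would not have worked. The bound $|\s'|\le|\s|$ may be an equality and gives no quantitative decrease, and redefining the weight changes the functional the lemma is about.

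The real difficulty is absorbing the created elements. There your condition ``$C\e\ll\ml(0)$'' is vacuous, since $\e=\ml(0)$: the factor $|\s|+|\widetilde\s|\le 4\e$ in the created strength exactly cancels the jump $\ml(0)^{-1}$ in weight. Suppose the incoming shocks have orders $k\le k'$. The new rarefaction then has order $k'+1$ and weighted mass at most $4C_0\e^{1-k'}|\s||\widetilde\s|$. This must be beaten by the $g$-loss of the elements of the order-$k'$ shock \emph{alone}, which is at least $\tfrac12(C_g-C_0)\e^{1-k'}|\s||\widetilde\s|$. That is why the paper fixes $C_g>11C_0$ before choosing $\e_0$. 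Your lower bound with $\min_b\ml(0)^{-k_b+1}=\e^{1-k}$ loses a factor $\e^{k'-k}$ and fails when $k<k'$. In the shock--rarefaction case the situation is easier: the wave fixing the new order disappears, its whole weighted mass $\ge\tfrac12\e^{1-k'}|\widetilde\s|$ is lost, and there $|\s|\le 4\e$ does suffice.

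Finally, Step~3 is circular: Lemma~\ref{lemma number of waves and interactions} and \eqref{dj infinity} are derived from this lemma. Make the bootstrap explicit, on $T_*$ defined by $|\s_b|\le 2|\s_b(t_0^b)|$, $\ml\le 2\ml(0)$ and $\widetilde{\ml}_{total}(t)\le 3\ml(0)$. Use the partial sum here, not $\ml_{total}$, which is not yet known to be finite. Then obtain finiteness of the number of elements, hence of waves and events, directly from $\Delta\ml\le -C|\s||\widetilde\s|$ with $|\s|,|\widetilde\s|\ge\rn$ at the physical--physical interactions, which are the only ones creating elements.
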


We record the following corollary. In particular, the first statement follows directly from the definition, since we label a physical wave as non-physical once its strength drops below $\rn$. The second statement is implied by Property~\ref{prop newly generated R strength}, Lemma~\ref{lemma L+kQ}, and Property~\ref{prop before and after stopping time one element R}. Finally, Lemma~\ref{lemma L+kQ} and Property~\ref{prop ordered set NP unchanged} imply the third statement.

\begin{coro}\label{lemma strength of waves}
There exists $C>0$ such that, between stopping times, the strengths of physical and non-physical waves satisfy the following properties:
\begin{itemize}
    \item[1)] The strength of any physical wave is bounded from below by $\rn$.
    \item[2)] The strength of each rarefaction is bounded from above by $C\rn^{\cpr}$.
    \item[3)] The strength of each non-physical wave is bounded from above by $C\rn$.
\end{itemize}    
\end{coro}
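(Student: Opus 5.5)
The plan is to prove the three items separately, working with the element bookkeeping rather than with waves directly. Fix a time $t$ strictly between two stopping times. Every wave present at $t$ carries an associated ordered set, and its strength is the sum of the tracked strengths $\s_b(t)$ of its elements. All tracked strengths in one ordered set have the same sign: by Lemma~\ref{lemma tracked strength at an event} signs are preserved across events, and outgoing waves are built by rescaling or concatenating elements of the same sign.

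\emph{Item 1)} follows directly from the labelling rule. After every event that produces outgoing waves (interaction, passing, partition, merging), any outgoing wave whose strength falls below $\rn$ is relabelled non-physical. The relabelling is permanent by Property~\ref{prop ordered set NP unchanged}. Between stopping times physical strengths do not change, since the profiles are only translated. For merging, the selection rule \eqref{selection of outgoing waves based on strength merging} already keeps only waves of strength at least $\rn$. Hence every physical wave alive between stopping times has strength at least $\rn$.

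\emph{Item 2)} uses Property~\ref{prop before and after stopping time one element R}: between stopping times every rarefaction has an ordered set $\{b\}$ with a single element, so its strength equals $|\s_b(t)|$. By Lemma~\ref{lemma L+kQ}, $|\s_b(t)|\leq 2|\s_b(t_0^b)|$, so it remains to bound the generation strength $|\s_b(t_0^b)|$ by $C\rn^{\cpr}$. Elements arise in three ways.
\begin{itemize}
\item Initially, large rarefactions are partitioned into pieces of strength at most $\rn^{\cpr}$, while non-rarefaction initial elements are irrelevant here.
\item In a same-family shock–shock interaction, the new elements satisfy \eqref{newly generated R strength} (Property~\ref{prop newly generated R strength}).
\item In a shock–rarefaction interaction, a single new element is created, but it constitutes a shock, and by Property~\ref{prop associated ordered set S} it can never later belong to a rarefaction.
\end{itemize}
The delicate point is that a rarefaction element might originally have been a shock element. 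This cannot happen: Property~\ref{prop associated ordered set S} says a shock's ordered set only persists, grows or disappears, and a shock element never turns into a rarefaction element, because same-family shock–shock interactions keep the outgoing same-family wave a shock, and the other cases preserve type. Merging needs a check as well, since its outgoing rarefactions inherit elements; but merging never increases $|\s_b|$ (Property~\ref{prop non-increasing passing, partition, merging}) and only uses elements of rarefactions or non-physical waves of positive strength. The remaining concern is a positive non-physical element that came from a physical wave and now lands in a rarefaction. Its strength is already at most $C\rn\leq C\rn^{\cpr}$, which settles that case.

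\emph{Item 3)} uses Property~\ref{prop ordered set NP unchanged}. A non-physical wave's ordered set is frozen from the moment $s$ it was labelled non-physical, when its strength was below $\rn$. Each subsequent interaction with a physical wave multiplies the element strengths by a factor $1+O(|\widetilde\s|)$ (Lemma~\ref{lemma interaction wave strength estimates NP}, Lemma~\ref{lemma tracked strength at an event}). The $g_b$ argument of Lemma~\ref{lemma L+kQ}, applied from time $s$ instead of $t_0^b$, then gives $|\s_b(t)|\leq 2|\s_b(s)|$ for every element. Since all elements share a sign, the wave strength is at most $2\rn$. The step needing the most care is this restart of the $g_b$ estimate at time $s$ rather than at generation time; I would verify it by noting that the ratio $|\s_b(t)|/|\s_b(s)|$ is controlled by $\exp\{C_g\sum|\s_{\tilde b}|\}\leq 2$, exactly as in the proof of Lemma~\ref{lemma L+kQ}.
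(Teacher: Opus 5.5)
Your proof is correct and follows the same route as the paper, which obtains 1) from the relabelling rule, 2) from Property~\ref{prop before and after stopping time one element R}, Lemma~\ref{lemma L+kQ} and Property~\ref{prop newly generated R strength}, and 3) from Lemma~\ref{lemma L+kQ} together with Property~\ref{prop ordered set NP unchanged}. Your restart of the estimate at the conversion time $s$ in 3) is exactly the monotonicity of $g_b$ proved in Appendix~\ref{sec proof of lemma L+kQ}, and it is what makes the bound $C\rn$ work, since the stated inequality $|\s_b(t)|\leq 2|\s_b(t_0^b)|$ only compares with the generation strength.
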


By the construction of the initial value in Section \ref{subsub initial step}, Property \ref{prop newly generated R strength}, Lemma \ref{lemma L+kQ}, and Property \ref{prop events triggered by generation of big R}, we have the following property.
\begin{property}\label{prop strength R discretization}
    Assume that the event triggering the stopping time generates a rarefaction of strength $\s''$ whose associated ordered set contains $p>1$ elements. For each event in the subsequent discretization process, the strength of any incoming rarefaction is bounded by both $Cp\rn^{\cpr}$ and $\s''$.
\end{property}

\subsubsection{Bounds on the number of elements and events}\label{subsub bounds on number of waves and events}

We count the total number of elements created throughout the evolution. Recall Property~\ref{prop introduction of new elements}. Apart from the initial time, new elements are introduced only by interactions between two waves in the same family. According to Section~\ref{subsub interaction two physical in same family}, more than one new element can be created only when two shocks in the same family interact and the strength of the outgoing rarefaction in the other family exceeds $\rn^{\cpr}$. In fact, Property~\ref{prop newly generated R strength} and Lemma~\ref{lemma L+kQ} give the following upper bound on the total number of elements created through such events throughout the evolution:
\begin{align*}
    \frac{\ml_{total}}{\rn^{\cpr}/2} \leq \rn^{-\cpr} \, ,
\end{align*}
We record this estimate as a lemma for future reference.

\begin{lemm}\label{lemma total number of R introduced big fan}
    The total number of elements created throughout the evolution by events that introduce more than one new element is bounded above by $\rn^{-\cpr}$.
\end{lemm}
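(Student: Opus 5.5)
The bound follows by tracking where the multi-element events occur and how much tracked strength each newly created element carries at birth.

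By Property~\ref{prop introduction of new elements}, after time $0$ new elements are introduced only by interactions between two waves of the same family. By the construction in Section~\ref{subsub interaction two physical in same family}, Case~2 (a shock meeting a rarefaction) always creates exactly one new element. Case~1 (two shocks) creates $p=\lceil \s''/\rn^{\cpr}\rceil$ elements, and $p>1$ exactly when $\s''>\rn^{\cpr}$. So the events in question are precisely the shock--shock interactions in the same family whose outgoing rarefaction has strength exceeding $\rn^{\cpr}$.

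For each such event, every newly created element $b$ has generation time $t_0^b$ equal to the event time. By Property~\ref{prop newly generated R strength}, equivalently \eqref{newly generated R strength}, it satisfies
\begin{align*}
    |\s_b(t_0^b)| \geq \frac{\rn^{\cpr}}{2} \, .
\end{align*}
Distinct events create disjoint sets of elements, since the labels $N_e(t_*-)+1,\ldots,N_e(t_*-)+p$ are fresh. Summing over all elements created by such events therefore gives
\begin{align*}
    \#\{\text{such elements}\}\cdot \frac{\rn^{\cpr}}{2} \leq \sum_{\{b:0\leq t_0^b<\infty\}} |\s_b(t_0^b)| = \ml_{total} \, .
\end{align*}

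Next I invoke Lemma~\ref{lemma L+kQ}, which gives $\ml_{total}\leq 2\ml(0)=2\e$. Hence the number of such elements is at most $4\e\,\rn^{-\cpr}$. Taking $\e_0\leq 1/4$ yields the claimed bound $\rn^{-\cpr}$.

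The only point needing care is that Lemma~\ref{lemma L+kQ} applies in the current regime, namely that $\ml(0)=\e<\e_0$ and that the algorithm is globally well defined. This is exactly the hypothesis under which that lemma is stated, so no additional work is needed; the remainder is bookkeeping.
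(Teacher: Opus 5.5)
Your proposal is correct and follows the paper's own argument: each element created by a multi-element event has initial tracked strength at least $\rn^{\cpr}/2$ by Property~\ref{prop newly generated R strength}. Summing over these elements against $\ml_{total}\leq 2\ml(0)$ from Lemma~\ref{lemma L+kQ}, with $\e$ small, gives the bound $\rn^{-\cpr}$. Your identification of these events as same-family shock--shock interactions with $\s''>\rn^{\cpr}$, and your remark that fresh labels make the created sets disjoint, simply spell out bookkeeping the paper leaves implicit.
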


The generation order of any element created throughout the evolution can also be bounded from above. Assume that, at time $t$, two waves in the same family interact and that the outgoing wave in the other family is physical. Its associated ordered set may contain one or more elements, all with the same generation order $k$, and the tracked strength of each element is bounded below by $\rn$.  Lemma~\ref{lemma L+kQ}, \eqref{def of weighted total variation}, and \eqref{def of k weighted total variation} imply that
\begin{align*}
    \rn \leq 2\ml_{k}(t+) \leq 2\ml(0)^k \, .
\end{align*}
Hence, $k$ is bounded above by $C\ln\rn^{-1}$. By Property \ref{prop introduction of new elements}, Property~\ref{prop new elements order}, and Property~\ref{prop ordered set NP unchanged}, the maximum generation order among the elements in the associated ordered set of a non-physical wave is at most one greater than the maximum generation order among the elements in the associated ordered set of a physical wave. Therefore, we obtain the following lemma. 
\begin{lemm}\label{lemma max generation order}
    The generation order of any element is bounded above by $C\ln\rn^{-1}$.
\end{lemm}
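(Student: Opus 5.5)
The argument combines the lower bound on strengths of physical waves with the geometric decay built into the weighted total variation $\ml$. Fix an element $b$ of generation order $k$ at some time $t$. By Property~\ref{prop generation order non-increasing}, generation orders never increase at events. Hence it suffices to bound the generation order of each element at its generation time $t_0^b$. All elements created at time $0$ have order $1$. By Property~\ref{prop introduction of new elements}, the only other elements are those created by interactions between two waves of the same family. These form the associated ordered set of the outgoing wave in the other family, and by Property~\ref{prop new elements order} they all share a common order $k$.

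The first step handles the case where this outgoing wave is physical. By Corollary~\ref{lemma strength of waves}, its strength is at least $\rn$. The wave's strength is the sum of the tracked strengths of its elements, all of order $k$. Lemma~\ref{lemma L+kQ} gives $|\s_b|\le 2g_b$, so
\begin{align*}
    \rn \le \sum_{b\in\be''}|\s_b(t+)| \le 2\ml_k(t+) .
\end{align*}
From \eqref{def of weighted total variation} we have $\ml(0)^{-k+1}\ml_k(t+)\le\ml(t+)$. The same lemma says $\ml$ is non-increasing, so $\ml(t+)\le\ml(0)$. Combining these gives $\ml_k(t+)\le\ml(0)^k$. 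Since $\ml(0)=\e<\e_0<1$, taking logarithms yields $k\le \ln(\rn/2)/\ln\ml(0)\le C\ln\rn^{-1}$ for $\nu$ small. Here $C$ depends only on $\e_0$, because $|\ln\ml(0)|\ge|\ln\e_0|$.

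The second step handles the case where the outgoing wave is non-physical from the start, so its strength is below $\rn$ and the above bound is unavailable. Here I would use Property~\ref{prop new elements order}: the new order is one plus the minimal order among elements of one of the incoming waves. Both incoming waves are physical, since two same-family waves interact and non-physical waves form the separate stationary $1.5$ family. By the first step and an induction over stopping times, every element in a physical wave has order at most $C\ln\rn^{-1}$. Therefore the new non-physical elements have order at most $C\ln\rn^{-1}+1$.

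Finally, Property~\ref{prop ordered set NP unchanged} shows that non-physical ordered sets never change and never feed new elements into later events. Passings, partitions and mergings create no new elements, and mergings discard non-physical waves. So no order can exceed $C\ln\rn^{-1}+1\le 2C\ln\rn^{-1}$. The main delicacy is the induction in the second step: it must be applied to elements that may later leave a physical wave, and Property~\ref{prop generation order non-increasing} handles this.
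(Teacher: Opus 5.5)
Your proof is correct and follows essentially the same route as the paper. It first uses $\rn\le 2\ml_k(t+)\le 2\ml(0)^k$ for elements created in a physical outgoing wave. It then adds the observation (via Properties~\ref{prop introduction of new elements}, \ref{prop new elements order}, and~\ref{prop ordered set NP unchanged}) that elements created in a non-physical wave have order at most one greater than the orders of elements in physical waves.
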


Typically, one counts the total number of waves throughout the evolution. This is simpler because, at each event, the number of outgoing waves is no greater than the number of incoming waves, except for interactions between two waves in the same family that produce more than one partitioned rarefaction in the other family, as mentioned above. Together with a bound on the total number of such rarefactions throughout the evolution, obtained by an argument similar to that in Lemma~\ref{lemma total number of R introduced big fan}, and the notion of generation order, this gives the standard counting argument for the total number of waves (see, for example, Chapter~7 of \cite{bressanbook}).

Here, however, we develop the notion of elements in order to show that any limit of the approximate solutions satisfies the outflow boundary condition \eqref{outflow boundary condition}, as will be discussed in Section~\ref{sub sec proof of prop limit of approximate solution}. Instead of introducing a convention for identifying incoming and outgoing waves as the same wave, we count the total number of elements throughout the evolution directly. Unlike in the counting of waves, when two shocks in the same family interact, the total number of elements in the associated ordered sets of the outgoing waves is greater than that of the incoming waves. To address this issue, we introduce the notion of flagged elements as follows.

\smallskip

\noindent
\textbf{Flagged elements.}
At the initial time $0$, every element is flagged. Assume that an event occurs at time $t$. We flag all newly introduced elements. If an outgoing wave is a shock or a non-physical wave whose associated ordered set contains more than one element $b_1,\ldots,b_n$ that was flagged before the event, we unflag all but one of these elements, choosing an element $b_l$ that had minimal generation order before the event.

\smallskip
We observe that a flagged element may become unflagged, whereas an unflagged element is never flagged again. We also note the following.
\begin{property}\label{prop flagged}
    The associated ordered set of a shock or a non-physical wave contains exactly one flagged element, while every element in the associated ordered set of a rarefaction is flagged.
\end{property}
By Property~\ref{prop before and after stopping time one element R}, between stopping times, each associated ordered set contains exactly one flagged element. Moreover, every element is flagged when it is created. Hence, the total number of elements that are ever flagged throughout the evolution is exactly the total number of elements created throughout the evolution. 

It suffices to count the number of flagged elements. First, by \eqref{control of N(0)}, the number of flagged elements at the initial time is bounded by $C\rn^{-\cpr}$. After the initial time, new flagged elements are introduced only when two waves in the same family interact. There are two cases: either exactly one new flagged element is introduced, or more than one new flagged element is introduced. By Lemma~\ref{lemma total number of R introduced big fan}, the total number of flagged elements introduced in the second case throughout the evolution is bounded by $\rn^{-\cpr}$.

For the first case, Property~\ref{prop flagged} and Property \ref{prop new elements order} imply that the number of flagged elements in the associated ordered sets of the outgoing waves is no greater than that in the associated ordered sets of the incoming waves, and that the generation order of the new flagged element is one greater than that of a flagged element in the associated ordered sets of the incoming waves that becomes unflagged or disappears after the interaction.

Thus, at any time, the number of flagged elements is bounded by the sum of the number of flagged elements at the initial time and the total number of flagged elements introduced in the second case, and is therefore bounded by $C\rn^{-\cpr}$. Whenever the first case occurs, one flagged element either becomes unflagged or disappears, and a new flagged element of generation order one greater is created. Together with Lemma \ref{lemma max generation order}, we conclude that the total number of elements that are ever flagged throughout the evolution is bounded above by $C\rn^{-\cpr}\ln\rn^{-1}$. This gives the first estimate in the following lemma.

Remark \ref{remark interaction at most two times} gives the second estimate in the following lemma. Moreover, each overtaking causes at least one element to disappear, so the total number of overtakings is bounded by the total number of elements, which yields the third estimate.

By \eqref{merging lower bound of y_b} and Lemma~\ref{lemma uniform bounds lagrangian}, we obtain the fourth statement in the following lemma. Property~\ref{prop merging any element merged at most twice} implies that the total number of mergings for which none of the incoming waves has been previously merged is bounded by the total number of elements. Together with \eqref{rt mft optimal control on number}, this gives the final estimate in the lemma.

\begin{lemm}\label{lemma number of waves and interactions}
There exists a constant $C>0$ such that the following is true. 

\noindent
Throughout the evolution, the total number of elements is bounded by
\begin{align*}
    C \rn^{-\cpr} \ln \rn^{-1} \, ,
\end{align*}
the total number of interactions is bounded by 
\begin{align*}
    C \bigl( \rn^{-\cpr} \ln \rn^{-1} \bigr)^2 \, ,
\end{align*}
the total number of overtakings is bounded by
\begin{align*}
    C \rn^{-\cpr} \ln \rn^{-1} \, ,
\end{align*}
the total number of mergings that merge a previously merged wave is bounded by
\begin{align*}
    C \mev^{-1} \, ,
\end{align*}
the total number of mergings is bounded by
\begin{align*}
    C \rn^{-\cpr} \ln \rn^{-1} \, .
\end{align*}
\end{lemm}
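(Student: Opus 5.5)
The plan is to prove the five bounds in Lemma~\ref{lemma number of waves and interactions} in order, since the later ones use the first.

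\emph{Number of elements.} I would count flagged elements, because every element is flagged when it is created and, once unflagged, is never flagged again.
\begin{enumerate}
\item At time $0$, the number of flagged elements is at most $C\rn^{-\cpr}$ by \eqref{control of N(0)}.
\item Events that create more than one element contribute at most $\rn^{-\cpr}$ elements in total, by Lemma~\ref{lemma total number of R introduced big fan}.
\item Every other creation event adds exactly one flagged element. By Property~\ref{prop flagged} and Property~\ref{prop new elements order}, such an event simultaneously removes or unflags one flagged incoming element whose generation order is exactly one less. So the number of flagged elements present at any time stays bounded by $C\rn^{-\cpr}$.
\item I would charge each creation of the third type to the generation-order chain it extends. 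Generation orders are bounded by $C\ln\rn^{-1}$ (Lemma~\ref{lemma max generation order}), so each of the $C\rn^{-\cpr}$ ``slots'' is reused at most $C\ln\rn^{-1}$ times.
\end{enumerate}
This gives at most $C\rn^{-\cpr}\ln\rn^{-1}$ elements ever created.

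\emph{Number of interactions.} Every interaction involves a pair of elements, one from each incoming wave. By Remark~\ref{remark interaction at most two times}, a given pair of elements interacts at most twice, which rests on Property~\ref{prop 1-wave to left 2-wave to right}. So the number of interactions is at most twice the number of pairs of elements, which is $C(\rn^{-\cpr}\ln\rn^{-1})^2$.

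\emph{Number of overtakings.} Each overtaking deletes a non-physical wave. By Property~\ref{prop ordered set NP unchanged}, the elements of that wave then disappear permanently, so each overtaking consumes at least one element. The count is therefore bounded by the first estimate.

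\emph{Mergings that involve a previously merged wave.} These are the first-scenario times $s_j$. Between $\tilde s_{j-1}$ and $s_j$, \eqref{merging lower bound of y_b} shows that the boundary travels at least $\mev$ more than $\int\la^{(2)}_{\inf}$. Since $\la^{(2)}_{\inf}>0$, this gives $y_b(s_j)-y_b(\tilde s_{j-1})>\mev$. The intervals $(\tilde s_{j-1},s_j)$ are disjoint and $y_b$ is nondecreasing by \eqref{property of y_b^nu'}. Summing over $j$ and using $\int_0^T (y_b^\nu)'\,dt\le C_T$ from Lemma~\ref{lemma uniform bounds lagrangian} gives at most $C\mev^{-1}$ such mergings.

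I expect this step to be the main obstacle. The bound $C_T$ depends on $T$, so the estimate as written is only uniform on finite time intervals $[0,T]$, and I would state it that way unless a global bound on $y_b$ is available.

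\emph{Total number of mergings.} Mergings in the second and third scenarios involve no previously merged wave. By Property~\ref{prop merging any element merged at most twice}, their number is at most twice the number of elements, namely $C\rn^{-\cpr}\ln\rn^{-1}$. The first-scenario mergings number at most $C\mev^{-1}$. Since $\mev\ge\rn^{\cfi}$ and $\cfi<\cpr$ by \eqref{rt mft optimal control on number}, we have $C\mev^{-1}\le C\rn^{-\cfi}\le C\rn^{-\cpr}$. Adding the two counts gives the final bound $C\rn^{-\cpr}\ln\rn^{-1}$.
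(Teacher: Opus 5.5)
Your proposal is correct and follows the paper's proof essentially step by step. The matching steps are: counting flagged elements via Lemma~\ref{lemma total number of R introduced big fan} and Lemma~\ref{lemma max generation order}; Remark~\ref{remark interaction at most two times} for interactions; disappearance of elements for overtakings; \eqref{merging lower bound of y_b} with Lemma~\ref{lemma uniform bounds lagrangian} for mergings of previously merged waves; and Property~\ref{prop merging any element merged at most twice} with \eqref{rt mft optimal control on number} for the total. Your caveat is also well taken: the argument uses $\int_0^T (y_b^\nu)'\,dt\le C_T$, so the last two bounds hold only on a fixed horizon $[0,T]$ with a $T$-dependent constant, despite the statement's wording ``throughout the evolution''. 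This fixed-horizon reading is the setting in which the paper actually works in Section~\ref{sec global in time estimates}.
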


\subsubsection{Bounds on the number of interactions involving a large wave}\label{subsub bounds on number of large waves and events}
We obtain a sharper estimate for the total number of interactions throughout the evolution in which at least one incoming wave is large, namely, has strength greater than or equal to a prescribed threshold $\rn^c$, for some $0<c<\cpr$. This yields a sharper estimate for the total growth of the relative entropy caused by interactions. The details are given in Section~\ref{sec global in time estimates}.

\begin{lemm}\label{lemma number of big interactions}
There exists a constant $C>0$ such that the following is true. 

\noindent
Over all time, the total number of interactions for which at least one incoming wave has strength greater than or equal to $\rn^c$, for some $0 < c < \cpr$, is bounded by
\begin{align*}
    C \rn^{-c-\cpr} \ln \rn^{-1}   \, .
\end{align*}
\end{lemm}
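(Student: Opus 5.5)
The plan is to count interactions involving a large wave through the elements that carry the large strength, rather than through the waves themselves. Since every element is created flagged and each associated ordered set contains exactly one flagged element between stopping times (Property~\ref{prop flagged}), I will attach each large wave to its flagged element. I will then bound (i) the number of elements that are ever flagged while belonging to a wave of strength at least $\rn^c$, and (ii) the number of interactions such an element can take part in.

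For (i), I will work generation by generation. By Corollary~\ref{lemma strength of waves}, every rarefaction has strength at most $C\rn^{\cpr}$, and $\rn^{\cpr}\ll\rn^{c}$ because $c<\cpr$ and $\rn<1$. Hence, for $\nu$ small, every wave of strength $\ge\rn^c$ is a shock; non-physical waves are even smaller and do not interact with each other. The flagged element of such a shock carries the generation order of the shock (Property~\ref{prop generation order S same}). Now fix a generation order $k$. The shocks of order $k$ have disjoint ordered sets, so at any time the number of them with strength $\ge\rn^c$ is at most $2\ml_k(t)/\rn^c\le 2\ml(0)^{k}\rn^{-c}\le C\rn^{-c}$ by Lemma~\ref{lemma L+kQ}. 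However, a bound at each fixed time is not enough: I need to count all distinct flagged elements that are ever attached to a large shock over the whole evolution. For this I will use the total initial strength $\ml_{total}\le 2\ml(0)$ from Lemma~\ref{lemma L+kQ}. A shock of strength $\ge \rn^c$ is built, through merges of shocks (Property~\ref{prop associated ordered set S}), from elements whose initial strengths sum to at least $\rn^c/2$, since $|\s_b(t)|\le 2|\s_b(t_0^b)|$. Associated ordered sets of shocks only gain elements or disappear, so distinct large shocks that are alive at different times either share elements through this growth or use disjoint sets of elements. I will therefore bound the number of ``large shock lineages'' by $C\rn^{-c}$, charging each lineage to a disjoint packet of initial strength of size $\gtrsim\rn^c$. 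The exception is shocks that lose large strength through interactions; to handle these I will also charge to each generation level, which costs a factor $C\ln\rn^{-1}$ from Lemma~\ref{lemma max generation order}. This step, making the charging disjoint across lineages, is the one I expect to be the main obstacle. The first thing I would try is to charge each large flagged element to the set of elements of its shock at the first time the shock becomes large, and to check that these sets are disjoint using Property~\ref{prop associated ordered set S}.

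For (ii), I will use Remark~\ref{remark interaction at most two times}: any two elements interact at most twice. Consequently, a fixed flagged element of a large shock participates in at most $2\times$ (total number of elements) interactions, that is, at most $C\rn^{-\cpr}\ln\rn^{-1}$ interactions by Lemma~\ref{lemma number of waves and interactions}. The lineage identification from (i) ensures that interactions of the same large shock are not double-counted across successive flagged elements. Since the shock's ordered set only grows, its flagged element changes only at same-family shock interactions, and each such change removes a flagged element.

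Combining (i), which gives $C\rn^{-c}$ lineages (possibly up to a logarithmic factor), with (ii), which gives $C\rn^{-\cpr}\ln\rn^{-1}$ interactions per lineage, yields the bound $C\rn^{-c-\cpr}\ln\rn^{-1}$. If the generation-level charging in (i) costs an extra logarithm, I would absorb it by counting interaction partners per lineage through the element count rather than per element.
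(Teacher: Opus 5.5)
Your reduction to shocks has a genuine gap. You use Corollary~\ref{lemma strength of waves} to conclude that every wave of strength at least $\rn^c$ is a shock. That corollary holds only \emph{between} stopping times. The lemma, however, counts all interactions, including those carried out at a stopping time during the discretization process of Section~\ref{subsub j stopping time}.

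During that process, the rarefaction $\be''$ created by two same-family shocks has strength $\s''$ bounded only by $C|\s\widetilde{\s}|(|\s|+|\widetilde{\s}|)$. This bound does not shrink with $\nu$, so $\s''$ can far exceed $\rn^c$. In Case~2 of the discretization, this large rarefaction interacts with the adjacent wave whenever that wave is not an $i$-rarefaction, and this can happen repeatedly. These interactions have a large incoming rarefaction and are not covered by your count.

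You need a separate argument for them, as the paper gives:
\begin{itemize}
\item By Property~\ref{prop strength R discretization}, an incoming rarefaction in a discretization process has strength at most $\s''$. So large rarefactions occur only in processes with $\s''\geq\rn^c$.
\item The elements newly created by different processes are distinct, and in each process their initial strengths sum to $\s''$. Hence $\ml_{total}\le 2\ml(0)$ gives at most $C\rn^{-c}$ such processes.
\item In each process, the interactions involving the large rarefaction are bounded by a constant times the total number of elements.
\end{itemize}
By Lemma~\ref{lemma number of waves and interactions}, this contributes another term of order $\rn^{-c-\cpr}\ln\rn^{-1}$.

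For shocks, your idea of charging a large shock to its set of elements at the first time it becomes large is the paper's argument. You should, however, drop the flagged elements and the generation-level charging. When a large shock absorbs a small one, its flagged element can be replaced by the small shock's flagged element, since the retained one only needs minimal generation order. So the number of elements ever flagged in a large shock need not be $O(\rn^{-c})$. Moreover, the extra logarithm you anticipate would spoil the stated bound.

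Instead, over the whole evolution, select the ordered sets of large shocks none of whose elements previously belonged to a large shock. By Property~\ref{prop associated ordered set S}, these sets are pairwise disjoint, and every large shock's set contains one of them. The initial strengths of the elements of each selected set sum to at least $\rn^c/2$, so there are at most $C\rn^{-c}$ selected sets. Now fix one element in each selected set. Remark~\ref{remark interaction at most two times} bounds the interactions of all large shocks containing that set by twice the total number of elements. This gives $C\rn^{-c-\cpr}\ln\rn^{-1}$ with no logarithmic loss.
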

\begin{proof}
We first count the total number of interactions involving a large shock. We consider the entire evolution and select the associated ordered sets of those large shocks for which none of the elements in the associated ordered set has previously belonged to the associated ordered set of another large shock. Property~\ref{prop associated ordered set S} implies that these selected associated ordered sets are distinct from one another. Together with Lemma~\ref{lemma L+kQ}, the total number of such selected associated ordered sets is bounded above by
\begin{align*}
    \frac{\ml_{total}}{\rn^c}\leq \rn^{-c} \, .
\end{align*}
Property~\ref{prop associated ordered set S} implies that the associated ordered set of any large shock contains one of these selected associated ordered sets. Together with Remark~\ref{remark interaction at most two times}, this implies that the total number of interactions involving a large shock is bounded by $\rn^{-c}$ times the total number of elements.

Then we count the total number of interactions involving a big rarefaction. By the second statement in Corollary~\ref{lemma strength of waves}, there are no large rarefactions between stopping times. Hence, interactions involving a large rarefaction can occur only during the discretization process. Property \ref{prop strength R discretization} and Lemma~\ref{lemma L+kQ} imply that the total number of interactions involving an incoming large rarefaction is bounded, up to a constant, by $\rn^{-c}$ times the total number of elements.

Together with Lemma~\ref{lemma number of waves and interactions}, we conclude the proof.
\end{proof}

\section{Weighted Relative entropy with shifts}\label{sec energy estimate between stopping times}
We first derive the energy estimate between stopping times. In this section, we apply the relative entropy method, compute the resulting terms, and group them as in \eqref{relative entropy method terms}. The terms associated with rarefaction waves are treated in Section~\ref{sec rarefaction}, while those associated with nonphysical waves and the boundary are handled in Section~\ref{sec NP and boundary}. The parabolic terms are estimated in Section~\ref{sec parabolic}. Finally, the terms associated with the weights and shocks are analyzed in Section~\ref{section shock}. We summarize the resulting energy estimate between stopping times in Proposition~\ref{proposition energy estimate}. Finally, in Section~\ref{subsec useful estimates}, we present several useful estimates, including estimates for the waves and interaction terms, and a quantitative estimate based on the subsonicity of the approximate solutions.

We work with the Navier-Stokes system \eqref{NS system BD} with viscosity parameter $\nu$. For convenience, until the beginning of Section \ref{sub sec proof of prop limit of approximate solution}, we omit the explicit dependence on $\nu$ in our notation and write
\begin{align*}
    U = U^\nu \, , \quad \bar{U} = \bar{U}_\nu \, ,\ \quad X = X^\nu \, .
\end{align*}

\subsection{Choice of $\e_0$}\label{subsec choice of e_0}
We show that there exists a choice of $\e_0>0$, depending only on $U_*$, such that Theorem~\ref{thm v,h} holds. Throughout Sections~\ref{sec mft} to \ref{sec global in time estimates}, various constants and constraints are introduced, leading to many conditions on the choice of $\e_0$. To avoid ambiguity and any potential circularity in the argument, we explain here how the constants are fixed in sequence and carefully track the dependence of each constraint. Note that throughout the process, we repeatedly impose additional upper bounds on $\e_0$, allowing its value to decrease as needed.

First, we impose a smallness requirement on $\e_0$ and fix $\La_0,C_X$ according to Lemma~\ref{lemma estimates on relative functions 2}. Note that all the choices made at this stage depend only on $U_*$. Moreover, $\La_0$ quantifies the subsonicity of the wave speeds in the front tracking approximate solution,
while $C_X$ defines the shifts imposed on the shocks through \eqref{ODE of Xi}. We then fix $C_w$, depending only on $U_*$, as in Lemma~\ref{lemma L + q + omega}, which ensures a sufficient decrease of the weight $a$, defined in \eqref{def of weight}, at interactions, passings, and partitions.

Then we choose $\de_1$, the small threshold used to truncate the specific volume $v$ of the Navier-Stokes solution in order to apply the Poincar\'e-type inequality in Section~\ref{subsection estimate inside the truncation}. We take $\de_1$ to be smaller than $1$ and than all the constants $\de_0$ in Lemma \ref{lemma relative evaluations}, Lemma~\ref{lemma smallness of localized relative entropy 1}, Lemma~\ref{lemma smallnss of Y_i}, and Lemma~\ref{lemma inside truncation}, where $C_1$ in Lemma~\ref{lemma inside truncation} is chosen as in Lemma~\ref{lemma smallnss of Y_i}, so that the truncation is sufficiently localized for the Poincar\'e-type inequality to yield the desired cancellation. All these constraints depend only on $U_*$. 

Next, we choose $\de_*$, the constant defining the weights in \eqref{def of weight a_i} and \eqref{def of weight}, sufficiently small so that \eqref{relation delta 8}, \eqref{relation delta 5}, \eqref{relation delta 1}, \eqref{relation delta 2}, \eqref{relation delta 3}, \eqref{relation delta 4}, \eqref{relation delta 6}, \eqref{relation delta 7}, and \eqref{relation delta 9} are satisfied. These conditions allow us to obtain in Section~\ref{section shock} the desired energy estimates for the terms localized by the derivatives of the shocks, as stated in Proposition~\ref{proposition shock estimate}. We note that $\de_*$ is chosen sufficiently small compared to $\de_1$, and that all these constraints depend only on $U_*,\de_1,C_X,\La_0$.

Finally, we choose $\e_0$ to be smaller than all the constants denoted by $\e_0$ in Lemma~\ref{lemma uniform bounds lagrangian}, Lemma~\ref{lemma L+kQ}, Lemma~\ref{lemma estimates on relative functions 2}, and Lemma~\ref{lemma L + q + omega}, and also smaller than $C_0\de_1$, where $C_0$ is the constant in Lemma~\ref{lemma relative evaluations}. Moreover, we choose it sufficiently small so that Lemma~\ref{lemma global in time estimates} can be applied in Section~\ref{subsec global-in-time estimates}. In addition, we impose
\begin{align}\label{weight relation between L and de*}
    \ml(0) \leq \e_0 \leq \de_*^3 \, ,
\end{align}
where $\ml$ denotes the weighted total variation at time $t$, as defined in \eqref{def of weighted total variation}. All these constraints depend only on the constants fixed above, which in turn depend only on $U_*$. Hence, $\e_0>0$ can be chosen depending only on $U_*$.

\subsection{Construction of the weights}\label{subsec construction of weights}
We introduce the weight $a$. For each viscous shock $\su_\be$, we introduce an associated weight $a_\be$ satisfying
\begin{equation}\label{def of weight a_i}
    (a_\be)_x = - \frac{p(\sv_\be)_x}{\de_*} \, .
\end{equation}
We have 
\begin{align}\label{weight equivalence}
    |(a_\be)_x| \sim \frac{|(\sv_\be)_x|}{\de_*} \sim \frac{|(\sh_\be)_x|}{\de_*} \, ,
\end{align}
Moreover, by \eqref{def shifted viscous shock}, 
\begin{align}\label{time derivative of a_i}
    (a_\be)_t = (- \dot{X}_\be - \las_\be) (a_\be)_x \, .
\end{align}
We define the weight $a$ by
\begin{align}\label{def of weight}
    a(x,t) = 1 + \frac{C_w}{\de_*}\ml(t) + \int_{\infty}^x \sum_{\be \in \mathcal{S}} \p_x a_\be(y,t) \, dy \, ,
\end{align}
where $C_w$ was fixed earlier in this subsection as the constant chosen in Lemma~\ref{lemma L + q + omega}, depending only on $U_*$. Lemma \ref{lemma L+kQ} and \eqref{weight relation between L and de*} imply that
\begin{align*}
    |a-1| \leq (C_{w}+\mathbf{C})\de_*^2 \, ,
\end{align*}
for some $\mathbf{C}>0$ depending only on $U_*$. We can take $\de_*$ small enough such that
\begin{align}\label{relation delta 8}
    (C_{w}+\mathbf{C})\de_*^2 \leq \frac{1}{4}
\end{align}
and get
\begin{align}\label{weight a close to 1}
    |a-1| \leq \frac{1}{4}  \, .
\end{align}

\subsection{Relative entropy method}
We rewrite \eqref{NS system BD} as a viscous perturbation of a hyperbolic system of conservation laws
\begin{align*}
    \p_t U + \p_x A(U) = 
    \begin{pmatrix}
        -\nu \bigl( \mu(v) v^\ga p(v)_x \bigr)_x \\
        0
    \end{pmatrix} \, ,
\end{align*}
where
\begin{align*}
    U = 
    \begin{pmatrix}
        v \\ h
    \end{pmatrix} \, , \quad
    A(U) = 
    \begin{pmatrix}
        -h \\ p(v)
    \end{pmatrix} \, .
\end{align*}
Recall \eqref{derivative of relative entropy}. Let us consider the case $V=\bu$. Since non-physical waves are stationary, we get
\begin{align*}
    \p_t \bu + \p_x A(\bu) &= \sum_{\be \in \mathcal{S}} \bigl(\p_t \su_\be + \nabla A(\bu)\p_x \su_\be\bigr) + \sum_{\be \in \mathcal{R}} \bigl(\p_t \ru_\be + \nabla A(\bu)\p_x \ru_\be\bigr) \\
    &\quad \, + \sum_{\be \in \mathcal{NP}}  \nabla A(\bu)\p_x \nnu_\be \, .
\end{align*}
By \eqref{equations - viscous shock}, we have
\begin{align*}
    \p_t \su_\be + \sd_\be \p_x \su_\be + \nabla A(\su_\be)\p_x \su_\be = 
    \begin{pmatrix}
        - \nu \bigl(\mu(\sv_\be) (\sv_\be)^{\ga} p(\sv_\be)_x \bigr)_x \\ 0
    \end{pmatrix}  
    \, .
\end{align*}
By \eqref{def shifted rarefaction}, we get
\begin{align*}
    \p_t \ru_\be = - \la_\be(\ru_{\be,-}) \p_x \ru_\be \, ,
\end{align*}
where $i$ is the characteristic family of $\ru_\be$ and $\ru_{\be,-}$ is the left state of $\ru_\be$.
Therefore, we get
\begin{align*}
    \p_t \bu + \p_x A(\bu) &= 
    \sum_{\be \in \mathcal{S}} \Bigl[- \sd_\be \p_x \su_\be + \bigl(\nabla A(\bu) - \nabla A(\su_\be) \bigr) \p_x \su_\be + 
    \begin{pmatrix}
        - \nu \mu(v) v^{\ga} p(v)_x \\ 0
    \end{pmatrix}  \Bigr] \\
    &\quad \, + \sum_{\be \in \mathcal{R}} \Bigl[ \bigl(- \la_\be(\ru_{\be,-}) + \nabla A(\ru_\be) \bigr)\p_x \ru_\be + \bigl( \nabla A(\bu) - \nabla A(\ru_\be) \bigr)\p_x \ru_\be
    \Bigr] \\
    &\quad \, + \sum_{\be \in \mathcal{NP}} \nabla A(\bu) \p_x \nnu_\be \, .
\end{align*}
We substitute this equality into \eqref{derivative of relative entropy}. Together with \eqref{useful computation}, we get
\begin{align*}
    \p_t \eta(U|\bu) &= - \p_x \Bigl( \bigl(p(v)-p(\bv)\bigr)(h-\bh) \Bigr) \\
    &\quad \, - \sum_{\be \in \mathcal{S}} \p_x \sh_\be \, p(v|\bv) - \sum_{\be \in\mathcal{R}} \nabla^2 \eta(\bu) \p_x \ru_\be A(U|\bu) - \sum_{\be \in \mathcal{NP}} \p_x \nh_\be \, p(v|\bv)  \\
    &\quad \, +  \nabla^2 \eta(\bu) (U-\bu) \sum_{\be \in \mathcal{S}} \sd_\be \p_x \su_\be
    - (h-\bh)\sum_{\be \in \mathcal{S}} \bigl( p'(\bv) - p'(\sv_\be) \bigr) (\sv_\be)_x \\
    &\quad \, - \nu p'(\bv)(v-\bv) \sum_{\be \in \mathcal{S}} \bigl(\mu(\sv_\be) (\sv_\be)^{\ga} p(\sv_\be)_x \bigr)_x \\
    &\quad \, - \nabla^2 \eta(\bu) (U-\bu) \sum_{\be \in \mathcal{R}} \bigl(- \la_\be(\ru_{\be,-}) + \nabla A(\ru_\be) \bigr)\p_x \ru_\be \\
    &\quad - (h-\bh)\sum_{\be \in \mathcal{R}} \bigl( p'(\bv) - p'(\rv_\be) \bigr) (\rv_\be)_x \\
    &\quad \, - p'(\bv)(v-\bv) \sum_{\be \in \mathcal{NP}} \p_x \nh_\be - (h-\bh)p'(\bv) \sum_{\be \in \mathcal{NP}} \p_x \nv_\be \\
    &\quad \, + \nu \bigl(p(v)-p(\bv)\bigr) \bigl( \mu(v) v^{\ga} p(v)_x \bigr)_x \, .
\end{align*}
By \eqref{equations - viscous shock 2}, we have
\begin{align*}
    - \nu p'(\bv)(v-\bv) \sum_{\be \in \mathcal{S}} \bigl(\mu(\sv_\be)(\sv_\be)^{\ga} p(\sv_\be)_x \bigr)_x &= p(v|\bv)\sum_{\be \in \mathcal{S}} ({\las}_\be \p_x\sv_\be + \p_x\sh_\be )\\
    &\quad \, - \nu\bigl(p(v)-p(\bv)\bigr)\sum_{\be \in \mathcal{S}}
    \bigl(\mu(\sv_\be)(\sv_\be)^{\ga} p(\sv_\be)_x \bigr)_x \, .
\end{align*}
Hence, we have
\begin{align*}
    \p_t \eta(U|\bu) = Y + H_1 + H_2 + H_3 + P \, ,
\end{align*}
where
\begin{align*}
    Y &= \nabla^2 \eta(\bu) (U-\bu) \sum_{\be \in \mathcal{S}} \sd_\be \p_x \su_\be \, , \\
    H_1 &= - \p_x \Bigl( \bigl(p(v)-p(\bv)\bigr)(h-\bh) \Bigr) + p(v|\bv)\sum_{\be \in \mathcal{S}} {\las}_\be \p_x\sv_\be - (h-\bh)\sum_{\be \in \mathcal{S}} \bigl( p'(\bv) - p'(\sv_\be) \bigr) (\sv_\be)_x \, , \\
    H_2 &= - \sum_{\be \in\mathcal{R}} \nabla^2 \eta(\bu) \p_x \ru_\be A(U|\bu) - (h-\bh)\sum_{\be \in \mathcal{R}} \bigl( p'(\bv) - p'(\rv_\be) \bigr) (\rv_\be)_x \\
    &\quad - \nabla^2 \eta(\bu) (U-\bu) \sum_{\be \in \mathcal{R}} \bigl(- \la_\be(\ru_{\be,-}) + \nabla A(\ru_\be) \bigr)\p_x \ru_\be \, , \\
    H_3 &= - \bigl(p'(v) - p'(\bv) \bigr) \sum_{\be \in \mathcal{NP}} \p_x \nh_\be - (h-\bh)p'(\bv) \sum_{\be \in \mathcal{NP}} \p_x \nv_\be \, , \\
    P &= \nu \bigl(p(v)-p(\bv)\bigr) \Bigl(\mu(v) v^{\ga} p(v)_x - \sum_{\be \in \mathcal{S}}\mu(\sv_\be)(\sv_\be)^{\ga} p(\sv_\be)_x \Bigr)_x \, .
\end{align*}
Together with \eqref{time derivative of a_i}, we obtain the following. The boundary terms $\mathcal{B}$ arise from the first terms of $H_1$ and $P$ through integration by parts. Together with \eqref{def of h} and Property \ref{prop transition zone}, we get
\begin{equation}\label{relative entropy method inequality}
    \begin{aligned}
        \frac{d}{dt} \int_{y_b(t)}^\infty a \eta(U|\bu) \, dx =& - y_b'(t) a \eta(U|\bu)|_{x=y_b(t)} + \mathcal{B} (U) + \sum_{\be \in\mathcal{S}} \sd_\be(t) Y_\be  \\
        & + \mathcal{H}_1 (U) + \mathcal{H}_2 (U) + \mathcal{H}_3 (U)
        + \mathcal{P} (U) - \mathcal{J}_{good}  (U) \, ,
    \end{aligned}
\end{equation}
where 
\begin{equation}\label{relative entropy method terms}
\begin{aligned}
    &\mathcal{B} (U) = \Bigl[ a \bigl(p(v)-p(\bv)\bigr)(u-\bar{u}) \Bigr]\Big|_{x=y_b(t)} \, ,\\
    &Y_\be (U) = - \inb (a_\be)_x \eta(U|\bu) \dx + \inb a \nabla^2 \eta(\bu) (U-\bu) \p_x \su_\be \dx \, ,\\
    &\mathcal{H}_1 (U) = \sum_{\be \in \mathcal{S}} \Bigl[\inb (a_\be)_x \bigl(p(v)-p(\bv)\bigr) (h-\bh) \dx + \las_\be \inb a (\sv_\be)_x p(v|\bv) \dx\Bigr] \\
    &\quad\quad\quad\quad - \sum_{\be \in \mathcal{S}} \inb a (h-\bh) \bigl( p'(\bv) - p'(\sv_\be) \bigr) (\sv_\be)_x \dx \, , \\
    &\mathcal{H}_2 (U) = - \sum_{\be \in\mathcal{R}} \inb a \nabla^2 \eta(\bu) \p_x \ru_\be A(U|\bu) \dx - \sum_{\be \in \mathcal{R}} \inb a (h-\bh) \bigl( p'(\bv) - p'(\rv_\be) \bigr) (\rv_\be)_x \dx \\
    &\quad\quad\quad\quad - \sum_{\be \in \mathcal{R}} \inb a \nabla^2 \eta(\bu) (U-\bu) \bigl(- \la_\be(\ru_{\be,-}) + \nabla A(\ru_\be) \bigr)\p_x \ru_\be \dx \, , \\
    &\mathcal{H}_3 (U) = - \sum_{\be \in \mathcal{NP}} \inb a \bigl(p(v)-p(\bv)\bigr) \p_x \nh_\be \dx - \sum_{\be \in \mathcal{NP}} \inb a (h-\bh)p'(\bv) \p_x \nv_\be \dx \, , \\
    &\mathcal{P} (U) = -\nu \inb \p_x \Bigl(a\bigl(p(v)-p(\bv)\bigr) \Bigr) \bigl(\mu(v) v^{\ga} p(v)_x - \sum_{\be \in \mathcal{S}}\mu(\sv_\be)(\sv_\be)^{\ga} p(\sv_\be)_x \bigr) \dx \, , \\
    &\mathcal{J}_{good} (U) = \sum_{\be \in \mathcal{S}}\inb \las_\be (a_\be)_x \eta(U|\bu) \dx \, .
\end{aligned}
\end{equation}

The hyperbolic terms $\mathcal{H}_1 $, $\mathcal{H}_2 $, and $\mathcal{H}_3 $ contain terms localized by the derivatives of the weights and shocks, by the derivatives of rarefactions, and by the derivatives of non-physical waves, respectively. The term $\mathcal{B} $ denotes part of the boundary contribution. These are the terms that need to be controlled. The estimate on $\mathcal{H}_2 $ is given in Section~\ref{sec rarefaction}, while the estimates on $\mathcal{H}_3 $ and $\mathcal{B} $ are given in Section~\ref{sec NP and boundary}. In Section~\ref{section shock}, we use the diffusion term obtained from $\mathcal{P} $ in Section~\ref{sec parabolic}, together with the terms $\sum_\be \dot X_\be Y_\be $ arising from the introduction of shifts on the shocks, to control $\mathcal{H}_1 $ through a Poincar\'{e}-type inequality. We summarize the result in the following proposition. The proof is given in Section~\ref{subsec proof of energy estimate between stopping times}. 

Finally, We note that the boundary term $-y_b'(t)a\eta(U|\bu)|_{x=y_b(t)}$ is used later in Section~\ref{sec global in time estimates} to control the increase in relative entropy caused by mergings.

\medskip
Sections~\ref{sec rarefaction}, \ref{sec NP and boundary}, \ref{sec parabolic}, and \ref{section shock} are devoted to deriving the following energy estimate between stopping times from \eqref{relative entropy method inequality}. The proof of the proposition, which summarizes the results obtained in these sections, is given in Section~\ref{subsec proof of energy estimate between stopping times}.

\begin{prop}\label{proposition energy estimate}
For all $t$ between stopping times, 
\begin{equation}\label{proposition energy estimate eq}
    \begin{aligned}
        \frac{d}{dt} \inb a \eta(U|\bu) \dx + G(U)(t) \leq C \inb a \eta(U|\bu) \dx + C \nu^{\cbet} \ln \rn^{-1} h(t) - l(t) \, , 
    \end{aligned}
\end{equation}
where 
\begin{equation}\label{def of D_v, G}
    \begin{aligned}
        G(U)(t) &= \de_* \sum_{\be \in \ms} \int |(\su_\be)_x| p(v|\bv) \dx + \sum_{\be \in \mr} \int |(\ru_\be)_x| p(v|\bv) \dx  + \frac{1}{2}|{y}_b'(t)| \eta(U|\bu)\big|_{x=y_b(t)} \\ 
        &\quad+ \nu \inb a \mu_1(v) v^{\ga} \Bigr|\bigl(p(v)-p(\bv)\bigr)_x \Bigr|^2 \dx + \nu \inb a \mu_2(v) v^{\ga} \bigl|p(v)_x\bigr|^2 \dx\, , \\
        h(t) &=  \nu \mathcal{D}_v(U) + 1 \, , \\
        l(t) &=  \frac{1}{4}|{y}_b'(t)| \eta(U|\bu)\big|_{x=y_b(t)} \, ,
    \end{aligned}
\end{equation}
with $\de_*$ chosen in Section \ref{subsec choice of e_0}, and $\cbet$ defined in \eqref{def of bold c}, and $\mathcal{D}_v$ is defined in \eqref{def of D_v}.
\end{prop}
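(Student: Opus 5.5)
Starting from the identity \eqref{relative entropy method inequality}, I will estimate each group of terms separately and then sum, splitting the factor $|y_b'|\eta(U|\bu)|_{x=y_b}$ of the first boundary term into three quarters, one half and one quarter. The half goes into $G$, the quarter is kept as $-l(t)$, and the remaining quarter, together with the lower bound $a\geq 3/4$ from \eqref{weight a close to 1}, absorbs the boundary contributions below.

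\textbf{Rarefactions ($\mathcal H_2$).} The support scale $\rn^{\co}$ and the strength bound $\s\lesssim\rn^{\cpr}$ from Corollary~\ref{lemma strength of waves}, together with \eqref{rt transition R}, give $|\p_x\ru_\be|\lesssim \rn^{\cpr-\co}$, and the profile is close to the integral curve. Within the support, the leading quadratic part of $-\nabla^2\eta(\bu)\p_x\ru_\be A(U|\bu)$ has a sign and yields the good term $-\int|(\ru_\be)_x|p(v|\bv)$ by genuine nonlinearity. The linear term in $(U-\bu)$ nearly cancels because $\la_i(U_-)$ differs from $\la_i(\ru_\be)$ by $O(\s)$, and the $p'(\bv)-p'(\rv_\be)$ term is small since $\bu=\ru_\be$ on $I_\be$ by Property~\ref{prop transition zone}. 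What remains is bounded by $C\int a\eta$ plus $C\nu^{\cbet}$, using $(3\cpr-\co)\cs\geq \cbet$.

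\textbf{Non-physical waves and boundary ($\mathcal H_3$, $\mathcal B$).} The total strength of non-physical waves is at most $C\rn^{-\cpr}\ln\rn^{-1}\cdot\rn$ by Lemma~\ref{lemma number of waves and interactions} and Corollary~\ref{lemma strength of waves}, and \eqref{rt mft total strength NP} makes this $\ll\nu^{(1+\cg)/2}$. I will bound $|h-\bh|$ and $|p(v)-p(\bv)|$ pointwise on these supports by $1+\|p(v)\|_{L^\infty}$, with a $\nu$-dependent $L^\infty$ bound of the form $\|p(v)\|_{L^\infty}\lesssim 1+\nu^{-\cg}(\nu\md_v+1)^{\cg'}$ obtained from a one-dimensional Sobolev interpolation of $v^{-(\ga+\al)/2}$ against the $L^1$ control \eqref{remark lemma uniform bound on integral of Q}; this produces $\nu^{\cbet}h(t)$. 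For $\mathcal B$, I use $u(y_b)=u_B$ and write $\bar u=\bh$ at the boundary (waves are $\rn^{\co}$ away), so $\mathcal B=a(p(v)-p(\bv))(u_B-\bar u)$. The violation $|u_B-\bar u|$ is bounded by the strength lost through overtakings and mergings, plus $\e$. The large-$p$ part is absorbed by $\frac14|y_b'|\eta(U|\bu)$, using $y_b'=-u_B/v$ and $Q(v|\bv)\gtrsim v^{-\ga+1}$ exactly as in Lemma~\ref{lemma uniform bounds lagrangian}. I expect the remainder to be bounded by $C\int a\eta+C\nu^{\cbet}h$, though this hinges on the violation being small, which needs checking.

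\textbf{Parabolic terms and shocks ($\mathcal P$, $\mathcal H_1$, $\sum\dot X_\be Y_\be$, $\mathcal J_{good}$).} I will decompose $\mu=\mu_1+\mu_2$ and extract from $\mathcal P$ the diffusion $\nu\int a\mu_1 v^\ga|(p(v)-p(\bv))_x|^2+\nu\int a\mu_2v^\ga|p(v)_x|^2$. The errors come from $a_x$, from $\bv$ being non-shock (rarefactions and NP waves, whose derivatives are small by \eqref{rt transition R NP}), and from $\mu(v)v^\ga$ versus $\mu(\sv)\sv^\ga$; I will control them by Young's inequality, half the diffusion, and $\nu\md_v$. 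For the shocks I follow \cite{NStoE}. The shift ODE \eqref{ODE of Xi} makes $\dot X_\be Y_\be\leq -\frac{C}{\de_*}|Y_\be|^2$ plus error. I then truncate $v$ at $\de_1$ near $\bv$. Inside the truncation, on each main transition zone $\widetilde I_\be$ (which is away from the boundary and from other waves by Property~\ref{prop transition zone}), I apply the whole-line Poincaré-type inequality in the scaled variable, which gives $\mathcal H_1-\mathcal J_{good}+\dot XY+\text{diffusion}\leq-\de_*\int|(\su_\be)_x|p(v|\bv)$. Outside the truncation I use the fundamental theorem of calculus together with the diffusion. Outside $\widetilde I_\be$, the exponential tails of the shock profile and \eqref{rt transition S} give errors of size $\nu^{\cbet}$.

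\textbf{Main obstacle.} I expect the hardest step to be the shock Poincaré estimate uniform over many interacting shocks, where the weak-shock cross terms and the interplay with $a_x$ require the constraints \eqref{relation delta 1}--\eqref{relation delta 9}; the boundary-violation bound for $\mathcal B$ is a secondary risk. Summing all the estimates and collecting the logarithmic factor from the wave count then gives \eqref{proposition energy estimate eq}.
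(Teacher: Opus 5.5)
Your architecture matches the paper's. You estimate $\mathcal H_2$, $\mathcal H_3$, $\mathcal B$, $\mathcal P$ and the shock terms separately from \eqref{relative entropy method inequality}. You use a $\md_v$-based $L^\infty$ bound on $v^{-\ga}$ for the non-physical and boundary terms, and the truncation/Poincar\'e scheme of \cite{NStoE} for the shocks. The boundary term, however, is not closed.

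You bound the violation $|u_B-\bar u|$ at $x=y_b$ by the lost strength ``plus $\e$''. With an $O(\e)$ violation, $\mathcal B=a\bigl(p(v)-p(\bv)\bigr)(u_B-\bar u)|_{x=y_b}$ leaves a remainder of order $\e$ (or $\e^2$ after Young). This is a pointwise boundary quantity, so it is bounded neither by $C\inb a\eta(U|\bu)\dx$ nor by $C\nu^{\cbet}\ln\rn^{-1}h$. The proposition would fail, and so would the Gronwall argument of Section~\ref{subsec global-in-time estimates}, which needs every source to vanish as $\nu\to0$.

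The check you postpone is exactly what the paper's bound \eqref{estimate on B} rests on, and there is no $\e$:
\begin{itemize}
\item At $t=0$ the boundary Riemann problem is solved with $u=u_B$ itself.
\item Each merging solves a boundary Riemann problem with the \emph{current} boundary velocity $u_L$.
\item Hence $\bar u(y_b)$ changes only through discarded waves and overtakings. The discarded waves are merged non-physical waves of strength at most $C\rn$, and physical waves rescaled below $\rn$. The only other error is the $C_n\nu^n$ tails of viscous shocks.
\end{itemize}
Each element disappears at most once, so Lemma~\ref{lemma number of waves and interactions} gives $|u-\bar u|_{x=y_b}\le C\rn^{1-\cpr}\ln\rn^{-1}$. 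Then $|\mathcal B|\le C\rn^{1-\cpr}\ln\rn^{-1}(1+\|v^{-\ga}\|_{L^\infty})$ is handled by \eqref{evaluation of v^-gamma} exactly like $\mathcal H_{31}$, with no boundary dissipation spent.

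Your alternative of absorbing the $v^{-\ga}$ part into $|y_b'|\eta(U|\bu)$, as in Lemma~\ref{lemma uniform bounds lagrangian}, would also work once the violation is this small. Your bookkeeping is off, though. The bound $a\ge 3/4$ yields exactly $\frac34|y_b'|\eta(U|\bu)|_{x=y_b}$, all of which is consumed by the $\frac12$ in $G$ and the $\frac14$ in $l$. To free a fraction you must use $|a-1|\le C\de_*^2$ and shrink $\de_*$.

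Three further steps fail as written.

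First, in $\mathcal H_3$ you cannot bound $|h-\bh|$ pointwise, since $h$ has no $L^\infty$ control, in particular not by $\|p(v)\|_{L^\infty}$. The $h$-term must be treated by Cauchy--Schwarz:
$|\mathcal H_{32}|\le\inb a|h-\bh|^2\dx+C\sum_{\be\in\mnp}\|\p_x\nv_\be\|_{L^2}^2\le\inb a|h-\bh|^2\dx+C\rn^{2-\co-\cpr}\ln\rn^{-1}$.

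Second, in $\mathcal P$ the $\mu_2$ cross terms cannot be closed by Young's inequality against $\widetilde{\md}$. These are $\nu\inb a\,p(\bv)_x\mu_2(v)v^\ga p(v)_x\dx$ and $\nu\inb\bigl(\sum_{\be\in\ms}(a_\be)_x\bigr)p(\bv)\mu_2(v)v^\ga p(v)_x\dx$. The leftover $\nu\inb|p(\bv)_x|^2\mu_2(v)v^\ga\dx$ carries a weight of order $v^{\ga-\widetilde{\al}}$, and $\ga-\widetilde{\al}>1$ whenever $\mu_2\neq0$. By contrast, $p(v|\bv)$ and $Q(v|\bv)$ grow only linearly for large $v$, and $\md_v$ only controls small $v$. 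The paper writes $\mu_2(v)v^\ga p(v)_x=-\Phi(v)_x$ with $\Phi(v)=\int_{2\vs}^v\ga\mu_2(s)s^{-1}\,ds$. This $\Phi$ is bounded and satisfies $\Phi\le Cp(v|\bv)$. Integrating by parts then puts all derivatives on $\bv$ and $a$, which gives \eqref{estimate on P51 and P62}.

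Third, the shift penalty you state is too weak. From \eqref{X_i Y_i estimates} and Lemma~\ref{lemma smallness of localized relative entropy 1}, the shift gives $-\frac{c}{|\s_\be|\de_*}|Y_\be|^2$, not $-\frac{C}{\de_*}|Y_\be|^2$. The factor $|\s_\be|^{-1}$ is what matches the term $\frac{1}{|\s_\be|\de_0}\my_\be^g(v)^2$ in Lemma~\ref{lemma inside truncation}. Your weaker penalty would not suffice for weak shocks.
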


\subsection{Useful estimates}\label{subsec useful estimates}
We establish several estimates that will be used in the subsequent analysis.
\subsubsection{Useful estimates for waves and interaction terms}
We state useful estimates on waves and their interaction terms.
\begin{lemm}\label{lemma estimate of waves}
There exist $C,\widetilde{C}>0$, depending only on $U_*$, such that for all $x$ and $t$ between stopping times, the following holds. 

We consider a wave $\be$ with strength $\s_\be$. Recall that $y_\be$ denotes its trajectory, $I_\be$, defined in \eqref{def of transition zone}, denotes its transition zone, and, if $\be$ is a shock, $\widetilde{I}_\be$, defined in \eqref{def of main transition zone S}, denotes its main transition zone. Let $U_{-}$ denote the left state and $U_{+}$ denote the right state of the wave.
\begin{itemize}
    \item[1)] For any $\be \in \mathcal{S}$, 
    \begin{align*}
        |\su_\be(x,t)-U_{-}| &\leq C |\s_\be| \, \textup{exp}\bigl( -\frac{\widetilde{C}|\s_\be||x-y_\be|}{\nu} \bigr) \, , \quad x<y_\be \, , \\
        |\su_\be(x,t)-U_{+}| &\leq C |\s_\be| \, \textup{exp}\bigl( -\frac{\widetilde{C}|\s_\be||x-y_\be|}{\nu} \bigr) \, , \quad x>y_\be \, , \\
        C^{-1}\frac{\s_\be^2}{\nu} \textup{exp}\bigl( -\frac{\widetilde{C}|\s_\be||x-y_\be|}{\nu} \bigr) &\leq |\p_x \su_\be (x,t)| \leq C\frac{\s_\be^2}{\nu} \textup{exp}\bigl( -\frac{\widetilde{C}|\s_\be||x-y_\be|}{\nu} \bigr) \, , \\
        |\p_x^2 \su_\be(x,t)| &\leq C \frac{|\s_\be|}{\nu} |\p_x \su_\be(x,t)| \, . 
    \end{align*}
    \item[2)] For any $\be \in \mathcal{R}$,
    \begin{align*}
        \p_x \ru_\be(x,t) &= \frac{\s_\be}{\rn^{\cor}} \phi'(\frac{x}{\rn^{\cor}}) \, \mathbf{r}_{k}\bigl(\ru_\be(x,t)\bigr) \, , \\
        |\p_x \ru_\be(x,t)| &\leq C \frac{\s_\be}{\rn^{\cor}} \mathds{1}_{|x-y_\be|\leq \rn^{\cor}}\, , \\
        |\p_{xx} \ru_\be(x,t)| &\leq C \frac{\s_\be}{\rn^{2\cor}} \mathds{1}_{|x-y_\be|\leq \rn^{\cor}} \, .
    \end{align*}
    \item[3)] For any $\be \in\mathcal{NP}$,
    \begin{align*}
        |\p_x \nnu_\be| &\leq C \frac{\s_\be}{\rn^{\cop}} \mathds{1}_{|x-y_\be|\leq \rn^{\cop}} \, , \\
        |\p_{xx} \nnu_\be| &\leq C \frac{\s_\be}{\rn^{2\cop}} \mathds{1}_{|x-y_\be|\leq \rn^{\cop}} \, .
    \end{align*}
\end{itemize}
For any $n \in \mathbb{N}_*$, there exists $C_n>0$, depending only on $U_*$ and $n$, such that the following is true.
\begin{itemize}
    \item[4)] For any $\be \in \mathcal{S}$,
    \begin{align*}
        |\su_\be - U_{-}| \leq C_n \nu^n& \, , \; \textup{ if } x \leq y_\be - \rn^{\cz}  \, ; \quad
        |\su_\be - U_{+}| \leq C_n \nu^n \, , \; \textup{ if } x \geq y_\be + \rn^{\cz} \, ; \\
        |\su_\be - U_{-}| \leq C_n \nu^n& \, , \; \textup{ if } x \leq y_\be - \rn^{\co}  \,; \quad
        |\su_\be - U_{+}| \leq C_n \nu^n \, , \; \textup{ if } x \geq y_\be + \rn^{\co} \, ;  \\
        &|\p_{x} \su_\be| \leq C_n \nu^n \, , \; \textup{ on } [y_{\be}-\rn^{\co},y_{\be}+\rn^{\co}] \, ,
    \end{align*}
    \item[5)] 
    For any $\be \in \mathcal{R} \cup \mathcal{NP}$,
    \begin{align*}
        \ru_\be = U_{-} \, , \; \textup{ if } x \leq y_\be - \rn^{\co}  \,; \quad
        \ru_\be = U_{+} \, , \; \textup{ if } x \geq y_\be + \rn^{\co} \, ;\\
        \nnu_\be = U_{-} \, , \; \textup{ if } x \leq y_\be - \rn^{\co}  \,; \quad
        \nnu_\be = U_{+} \, , \; \textup{ if } x \geq y_\be + \rn^{\co} \, ;
    \end{align*}
    \item[6)] We have
    \begin{align*}
        &\Bigl(\sum_{\be\in \mathcal{S}} |(\sv_\be)_x| \Bigr)^2 
        \leq \sum_{\be\in \mathcal{S}} |(\sv_\be)_x|^2 + C_n \nu^n \, , \\
        &\Bigl(\sum_{\be\in \mathcal{S}} |(\sv_\be)_x| + \sum_{\be\in \mathcal{R}} |(\rv_\be)_x| + \sum_{\be\in \mathcal{NP}} |(\nv_\be)_x| \Bigr)^2 \\
        &\leq \sum_{\be\in \mathcal{S}} |(\sv_\be)_x|^2  + \sum_{\be\in \mathcal{R}} |(\rv_\be)_x|^2 + \sum_{\be\in \mathcal{NP}} |(\nv_\be)_x|^2 + C_n \nu^n \, .
    \end{align*}
\end{itemize}
\end{lemm}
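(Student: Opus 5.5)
The plan is to verify the six items separately. Items 1)–3) are estimates on a single wave profile. Items 4)–6) then follow from these profile bounds, combined with the separation of transition zones in Property~\ref{prop transition zone} and the scale relations of Lemma~\ref{lemma threshold}.

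\textbf{Item 1).} By the scaling \eqref{def of shock nu}, it suffices to treat $\nu=1$ and then replace $x$ by $x/\nu$. Integrating the profile system \eqref{equations - unshifted viscous shock} once gives
\begin{align*}
\sh=-p(\sv)/\las+\textup{const},
\end{align*}
and the first equation reduces to the scalar ODE
\begin{align*}
\mu(\sv)\sv^\ga p(\sv)_x=\las(\sv-v_-)+\bigl(\sh-h_-\bigr)=:F(\sv).
\end{align*}
Here $F$ vanishes at $v_-$ and $v_+$, and $|v_+-v_-|\sim|\s_\be|$ with $\s_\be$ small. Since $\las^2$ is the slope of the chord of $-p$ between $v_-$ and $v_+$, $F$ is, up to sign, comparable to $(\sv-v_-)(v_+-\sv)$ for $\sv$ between the end states. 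This is the standard small-amplitude Lax shock analysis (e.g.\ as in \cite{NStoE,MR4195742}). It gives $|\sv'|\sim|\sv-v_\pm|\,|\s_\be|$ near either end and $|\sv'|\sim \s_\be^2$ in the middle.

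Comparison with the logistic ODE then yields the exponential decay $|\sv-v_\pm|\le C|\s_\be|e^{-\widetilde C|\s_\be||x|}$ and the two-sided bound $|\sv'|\sim \s_\be^2e^{-\widetilde C|\s_\be||x|}$. The $h$-component follows from $\sh'=p'(\sv)\sv'/\las$. Differentiating the ODE once more gives $|\sv''|\le C|\s_\be||\sv'|$. Rescaling by $\nu$ produces item 1).

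\textbf{Items 2), 3) and 5).} These are direct from the definitions \eqref{def of rarefaction nu} and \eqref{def of NP nu}. We use the chain rule with $\frac{d}{d\s}\mathbf R=\mathbf r_i$, that $\phi'$ is supported in $[-1,1]$, and that $\mathbf r_i,\nabla\mathbf r_i,\phi',\phi''$ are bounded near $U_*$. The second-derivative bound for rarefactions carries an extra term of size $\s_\be^2/\rn^{2\co}$, which is absorbed because $\s_\be\le C$.

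\textbf{Item 4).} Insert $|x-y_\be|\ge\rn^{\cz}$ into the exponential bounds of item 1) and use the lower bound $|\s_\be|\ge\rn$ from Corollary~\ref{lemma strength of waves}. The exponent is then at least
\begin{align*}
\widetilde C\,\rn^{1+\cz}/\nu=\widetilde C\,\nu^{(1+\cz)\cs-1},
\end{align*}
which tends to $+\infty$ as a power of $\nu^{-1}$ by \eqref{rt transition S}. Hence $e^{-\widetilde C\nu^{-\kappa}}\le C_n\nu^n$ for every $n$, where $\kappa=1-(1+\cz)\cs>0$. The same argument with $\rn^{\co}\ge\rn^{\cz}$ gives the bounds outside $I_\be$. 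I read the last line of 4) as the derivative bound off the main zone, that is for $|x-y_\be|\ge\rn^{\cz}$ (in particular on and beyond the edge of $I_\be$); it follows identically from the derivative estimate in 1), with the prefactor $\s^2/\nu$ absorbed by the superpolynomial decay.

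\textbf{Item 6).} This is where the geometry enters, and I expect it to be the only genuinely delicate step. By Property~\ref{prop transition zone}, the zones $I_\be$ are pairwise disjoint. Fix $x$. By item 5), at most one rarefaction or non-physical derivative is nonzero at $x$, and by item 4) every shock derivative except possibly one is $O(C_n\nu^n)$.

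Expanding the square, each cross term contains a factor $O(\nu^n)$ multiplied by a quantity bounded by $C\s^2/\nu+C\rn^{-\co}$. The problem is that the number of shocks is not uniformly bounded: by Lemma~\ref{lemma number of waves and interactions} it is at most $C\rn^{-\cpr}\ln\rn^{-1}$, which is polynomial in $\nu^{-1}$. So the sum of the cross terms is still $O(C_{n'}\nu^{n'})$, after enlarging $n$ to absorb these polynomial factors. This yields both inequalities in 6).
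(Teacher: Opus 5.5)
Your proposal is correct and follows essentially the same route as the paper: item 1) comes from the standard small-amplitude viscous-shock profile analysis, which the paper simply cites from Section 2.1 of \cite{MR4195742}; items 2), 3), 5) come directly from the definitions; item 4) comes from inserting $|\s_\be|\geq\rn$ and \eqref{rt transition S} into the exponential bounds of 1); and item 6) comes from the disjointness of the transition zones together with the polynomial bound $C\rn^{-\cpr}\ln\rn^{-1}$ on the number of waves. Your reading of the last line of 4) as a bound away from the transition zone is the right one, since that is exactly how it is used in 6), and the sign slip in your first display ($\sh = p(\sv)/\las + \textup{const}$, not $-p(\sv)/\las$) is harmless because your later relation $\sh' = p'(\sv)\sv'/\las$ is correct.
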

\begin{proof}
The definition of rarefaction in \eqref{def of rarefaction nu} gives 2), while 3) follows from the definition of non-physical waves in \eqref{def of NP nu}. The proof of 1) can be found in Section 2.1 of \cite{MR4195742}.

We prove 4). Let $\be \in \mathcal{S}$. First, we note that \eqref{threshold mft}, 1) in Corollary \ref{lemma strength of waves}, and \eqref{rt transition S} imply that
\begin{align*}
    C \frac{\s_\be^2}{\nu} \textup{exp}\bigl( -\frac{\widetilde{C}\rn^{1+\cz}}{\nu} \bigr) 
    \leq C \frac{\s_\be^2}{\nu} \textup{exp}\bigl( -\frac{\widetilde{C}\rn^{1+\co}}{\nu} \bigr) 
    \leq \frac{C}{\nu} \textup{exp}\bigl(-\widetilde{C}\nu^{(1+\cz)\cs-1}\bigr) 
    \leq C_n \nu^n \, .
\end{align*}
By 1), we complete the proof of 4).

Statement 5) follows immediately from the fact that the derivatives of rarefactions and non-physical waves vanish outside their transition zones.

Finally, we prove 6). Let $w$ and $\widetilde{w}$ be two distinct waves in $\mathcal{S}\cup\mathcal{R}\cup\mathcal{NP}$. Then (4) and (5) imply that
\begin{align*}
    |w_x||\widetilde{w}_x| \leq C_n \nu^n \, .
\end{align*}
Together with Lemma \ref{lemma number of waves and interactions}, we get
\begin{align*}
    \Bigl(\sum_{\be\in \mathcal{S}} |(\sv_\be)_x|\Bigr)^2 &\leq \sum_{\be\in \mathcal{S}} |(\sv_\be)_x|^2 + C_n \nu^n \rn^{-2\cpr} (\ln \rn^{-1})^2 \, ,
\end{align*}
and
\begin{align*}
    &\Bigl(\sum_{\be\in \mathcal{S}} |(\sv_\be)_x| + \sum_{\be\in \mathcal{R}} |(\rv_\be)_x| + \sum_{\be\in \mathcal{NP}} |(\nv_\be)_x| \Bigr)^2 \\
    &\leq \sum_{\be\in \mathcal{S}} |(\sv_\be)_x|^2  + \sum_{\be\in \mathcal{R}} |(\rv_\be)_x|^2 + \sum_{\be\in \mathcal{NP}} |(\nv_\be)_x|^2 + C_n \nu^n \rn^{-2\cpr} (\ln \rn^{-1})^2 \, .
\end{align*}
Since $C_n \nu^n \rn^{-2\cpr} (\ln \rn^{-1})^2\leq C_n \nu^n$, we finish the proof.

\end{proof}
Property~\ref{prop transition zone} implies that the transition zones of all waves are well separated. The derivatives of rarefactions and non-physical waves are compactly supported inside their transition zones, while 4) in Lemma~\ref{lemma estimate of waves} gives exponential decay of shocks outside their main transition zones $\widetilde{I}_\be$, and hence outside their transition zones $I_\be$. Therefore, we obtain the following lemma for arbitrary $n$.

\begin{lemm}\label{lemma estimates of interaction terms}
For any $n \in \mathbb{N}_*$, there exists $C_n>0$, depending only on $U_*$ and $n$, such that the following is true.
\begin{align*}
    \inb \Bigl(\sum_{\be \in\mathcal{S}} |\bv - \sv_\be| |(\sv_\be)_x|\Bigr)^2 \dx &\leq C_n \nu^n \, , \\
    \inb \Bigl(\sum_{\be \in\mathcal{R}} |\bv - \rv_\be| |(\rv_\be)_x|\Bigr)^2 \dx &\leq C_n \nu^n \, , \\
    \inb \sum_{\be \in\mathcal{S}} |\bv - \sv_\be| |(\sv_\be)_x| \dx &\leq C_n \nu^n \, , \\
    \inb \sum_{\be \in\mathcal{R}} |\bv - \rv_\be| |(\rv_\be)_x| \dx &\leq C_n \nu^n \, .
\end{align*}
In addition, 
\begin{align*}
    \int_{v>2v_*} \Bigl(\sum_{\be \in\mathcal{S}} |\bv - \sv_\be| |(\sv_\be)_x|\Bigr)^2 v^{\ga-\al} \dx \leq C_n \nu^{n} \sum_{\be \in\mathcal{S}} \inb |(\sv_\be)_x| p(v|\bv) \dx + C_n \nu^{n} \inb Q(v|\bv) \dx \, .
\end{align*}
\end{lemm}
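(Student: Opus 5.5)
The plan is to reduce each of the five estimates to pointwise localization statements for a single wave, and then sum. The key input is Property~\ref{prop transition zone}: the transition zones $I_\be$ are pairwise disjoint and lie at distance at least $\rn^{\co}$ from the boundary. Fix a shock $\be\in\mathcal{S}$ and write $\bv-\sv_\be=\sum_{\be'\neq\be}(\text{profile of }\be' - \text{its state on the side of }\be)$. This telescoping identity follows from \eqref{def approximate sol at time t_j}, because $\bu$ is $U_*$ minus the integrated sum of all wave derivatives.

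On the zone $I_\be$, every other wave is outside its own transition zone. Rarefactions and non-physical waves are then exactly constant, by 5) of Lemma~\ref{lemma estimate of waves}. Other shocks differ from their far-field state by at most $C_n\nu^n$, by 4). Hence $|\bv-\sv_\be|\le C_n\nu^n\cdot(\text{number of waves})$ on $I_\be$. Lemma~\ref{lemma number of waves and interactions} bounds the number of waves by $C\rn^{-\cpr}\ln\rn^{-1}$, which is absorbed into $\nu^n$ after relabeling $n$.

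Outside $I_\be$, we use the fact that $|(\sv_\be)_x|\le C_n\nu^n$. For the decay far away we use the exponential bound in 1), $|(\sv_\be)_x|\le C\nu^{-1}\exp(-\widetilde{C}\rn|x-y_\be|/\nu)$, together with $|\bv-\sv_\be|\le C$. For $|x-y_\be|\ge\rn^{\co}$, the condition $(1+\co)\cs<1$ from \eqref{rt transition S} shows that this quantity is integrable with total mass $O(\nu^n)$, and the same holds for its square. Summing over $\be$ costs another polylogarithmic power of $\rn^{-1}$ in number of waves, which is harmless. The square of the sum is handled the same way, or by Cauchy--Schwarz with the wave count.

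The rarefaction estimates are simpler. $(\rv_\be)_x$ is supported in $I_\be$, and there $|\bv-\rv_\be|\le C_n\nu^n$ by the same telescoping argument. The integrals then follow since $|(\rv_\be)_x|\le C\s_\be\rn^{-\co}$ and $|I_\be|\le 2\rn^{\co}$.

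The main obstacle is the last, weighted estimate, because $v$ is unbounded and $v^{\ga-\al}$ may grow. On $\{v>2v_*\}$, $\bv$ stays near $v_*$ by the small BV bound. Since $\al\le\ga$, the growth $v^{\ga-\al}\le v$ is at most linear, and $Q(v|\bv)\ge c\,v$ there. It remains false that $Q(v|\bv)\gtrsim v$ for $Q(v)=v^{1-\ga}/(\ga-1)$ as $v\to\infty$; instead $Q(v|\bv)\approx -p(\bv)... $ grows linearly because of the term $p(\bv)(v-\bv)$. So $Q(v|\bv)\ge c\,v$, and similarly $p(v|\bv)\ge c$ there, with $p(v|\bv)$ bounded below because $p(v)\to0$.

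To prove it, I split the domain into $\bigcup_\be I_\be$ and its complement. On $I_\be$ the squared sum is $\le C_n\nu^{2n}|(\sv_\be)_x|^2+C_n\nu^{2n}$. Using $|(\sv_\be)_x|\lesssim\rn^2/\nu\le1$ and $v^{\ga-\al}\le Cv\le CQ(v|\bv)$, this is bounded by $C_n\nu^n Q(v|\bv)$. Off the zones, the pointwise bound is $C_n\nu^n|(\sv_\be)_x|$-type smallness, with one factor $|(\sv_\be)_x|$ kept. Then $v^{\ga-\al}\le Cv\le C\,Q(v|\bv)$ gives the $Q$ term, and $p(v|\bv)\ge c$ gives the $|(\sv_\be)_x|p(v|\bv)$ term, by bounding one factor $|\bv-\sv_\be||(\sv_\be)_x|\le C_n\nu^n$ and retaining $|(\sv_\be)_x|$.
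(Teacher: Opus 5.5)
Your proposal is correct and follows the paper's proof. The shared ingredients are the telescoping bound $|\bv-\sv_\be|\le C_n\nu^n$ on $I_\be$, the exponential tails off $I_\be$ via \eqref{rt transition S}, Cauchy--Schwarz with the wave count, and $v^{\ga-\al}\mathds{1}_{\{v>2v_*\}}\le CQ(v|\bv)$ for the weighted bound. The one difference is minor: the paper uses $v^{\ga-\al}\mathds{1}_{\{v>2v_*\}}\le Cp(v|\bv)$ on $I_\be$ to keep one factor $|(\sv_\be)_x|$, whereas you bound everything by $Q$, which is equally valid and slightly stronger.

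There are two harmless slips to fix. First, shock strengths are bounded \emph{below} by $\rn$, so the correct pointwise bound is $|(\sv_\be)_x|\le C\s_\be^2/\nu\le C\nu^{-1}$, not $\rn^2/\nu$; this still works, since $\nu^{2n}$ absorbs $\nu^{-2}$. Second, off the zones, $p(v|\bv)\ge c$ alone cannot absorb the unbounded factor $v^{\ga-\al}$. There you should simply use $v^{\ga-\al}\le CQ(v|\bv)$ together with $|(\sv_\be)_x|\le C_n\nu^n$, or note that $p(v|\bv)\ge cv$ on $\{v>2v_*\}$.
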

\begin{proof}
By Lemma \ref{lemma number of waves and interactions}, we have
\begin{align*}
    &\inb \Bigl(\sum_{\be \in\mathcal{S}} |\bv - \sv_\be| |(\sv_\be)_x|\Bigr)^2 \dx \\
    &\leq C \frac{\ln \rn^{-1}}{\rn^{\cpr}} \sum_{\be \in\mathcal{S}} \inb |\bv - \sv_\be|^2 |(\sv_\be)_x|^2 \dx \\
    &\leq C \frac{\ln \rn^{-1}}{\rn^{\cpr}} \sum_{\be \in\mathcal{S}} \int_{I_\be} |\bv - \sv_\be|^2 |(\sv_\be)_x|^2 \dx + C \frac{\ln \rn^{-1}}{\rn^{\cpr}} \sum_{\be \in\mathcal{S}} \int_{I_\be^c} |\bv - \sv_\be|^2 |(\sv_\be)_x|^2 \dx \\ 
    &=:J_1 + J_2\, .
\end{align*}
Lemma \ref{lemma estimate of waves} and Lemma \ref{lemma number of waves and interactions} imply that on $I_\be$, 
\begin{align}\label{estimate inside Ii S}
    |\bv - \sv_\be| \leq \Bigl|\sum_{\widetilde{\be}\in \mathcal{S}\backslash \{\be\} } (\sv_\be - \underline{v}_{\widetilde{\be}})\Bigr| \leq C_n \nu^n \, ,
\end{align}
where $\underline{v}_{\widetilde{\be}}$ denotes the left state (respectively, right state) of $\widetilde{\be}$ if $\widetilde{\be}$ lies to the right (respectively, left) of $\be$. Together with Lemma \ref{lemma L+kQ}, we get
\begin{align*}
    J_1 \leq C_n \nu^n \, .
\end{align*}
By Lemma \ref{lemma estimate of waves}, Corollary \ref{lemma strength of waves}, Lemma \ref{lemma L+kQ}, and \eqref{rt transition S}, we have
\begin{align*}
    J_2 &\leq C \frac{\ln \rn^{-1}}{\rn^{\cpr}} \sum_{\be \in\mathcal{S}} \int_{I_\be^c} \frac{\s_\be^4}{\nu^2} \textup{exp}\bigl( -\frac{C|\s_\be||x-y_\be|}{\nu} \bigr) \dx \\
    &\leq C \frac{\ln \rn^{-1}}{\rn^{\cpr}} \sum_{\be \in\mathcal{S}} \frac{|\s_\be|^3}{\nu} \textup{exp}\bigl( -\frac{C\rn^{1+\co}}{\nu} \bigr) \\
    &\leq C \frac{\ln \rn^{-1}}{\rn^{\cpr}} \sum_{\be \in\mathcal{S}} \frac{|\s_\be|^3}{\nu} \textup{exp}\bigl(-\frac{C}{\nu^{1-(1+\co)\cs}}\bigr) \\
    &\leq C_n \nu^n \, .
\end{align*}
We prove the first inequality.

By Lemma \ref{lemma estimate of waves}, we have
\begin{align*}
    &\inb \Bigl(\sum_{\be \in\mathcal{R}} |\bv - \rv_\be| |(\rv_\be)_x|\Bigr)^2 \dx \\
    &\leq \sum_{\be \in\mathcal{R}} \int_{I_\be} |\bv - \rv_\be|^2 |(\rv_\be)_x|^2 \dx \\ 
    &=:J_3 \, .
\end{align*}
Lemma \ref{lemma estimate of waves} and Lemma \ref{lemma number of waves and interactions} show that on $I_\be$, 
\begin{align}\label{estimate inside Ii R}
    |\bv - \rv_\be| \leq \Bigl|\sum_{\widetilde{\be}\in \mathcal{S} } (\sv_\be - \underline{v}_{\widetilde{\be}})\Bigr| \leq C_n \nu^n \, ,
\end{align}
where $\underline{v}_{\widetilde{\be}}$ denotes the left state (respectively, right state) of $\widetilde{\be}$ if $\widetilde{\be}$ lies to the right (respectively, left) of $\be$. Together with Lemma \ref{lemma L+kQ}, we get
\begin{align*}
    J_3 \leq C_n \nu^n \, .
\end{align*}
We get the second inequality.

By Lemma \ref{lemma estimate of waves}, Lemma \ref{lemma L+kQ}, \eqref{estimate inside Ii S}, \eqref{rt transition S}, and \eqref{estimate inside Ii R}, we have
\begin{align*}
    &\inb \sum_{\be \in\mathcal{S}} |\bv - \sv_\be| |(\sv_\be)_x| \dx  \\
    &\leq \sum_{\be \in\mathcal{S}} \int_{I_\be} |\bv - \sv_\be| |(\sv_\be)_x| \dx  + \sum_{\be \in\mathcal{S}} \int_{I_\be^c} |\bv - \sv_\be| |(\sv_\be)_x| \dx \\
    &\leq C_n \nu^n L(0) + \sum_{\be \in\mathcal{S}} |\s_\be| \textup{exp}\bigl(-\frac{C}{\nu^{1-(1+\co)\cs}}\bigr) \\
    &\leq C_n \nu^n \, ,
\end{align*}
and
\begin{align*}
    &\inb \sum_{\be \in\mathcal{R}} |\bv - \rv_\be| |(\rv_\be)_x| \dx  \\
    &\leq \sum_{\be \in\mathcal{R}} \int_{I_\be} |\bv - \rv_\be| |(\rv_\be)_x| \dx  \\
    &\leq C_n \nu^n L(0) \\
    &\leq C_n \nu^n \, ,
\end{align*}

Finally, we prove the last inequality. Since $0\leq \ga-\al\leq 1$, we have
\begin{align*}
    v^{\ga-\al} \mathds{1}_{v>2v_*} \leq C Q (v|\bv) \, , \quad v^{\ga-\al} \mathds{1}_{v>2v_*} \leq C p (v|\bv) \, .
\end{align*}
Together with Lemma \ref{lemma number of waves and interactions}, \eqref{estimate inside Ii S}, Lemma \ref{lemma estimate of waves}, and \eqref{rt transition S}, we get
\begin{align*}
    &\int_{v>2v_*} \Bigl(\sum_{\be \in\mathcal{S}} |\bv - \sv_\be| |(\sv_\be)_x|\Bigr)^2 v^{\ga-\al} \dx \\
    &\leq C \frac{\ln \rn^{-1}}{\rn^{\cpr}} \sum_{\be \in\mathcal{S}} \int_{v>2v_*} |\bv - \sv_\be|^2 |(\sv_\be)_x|^2 v^{\ga-\al} \dx \\
    &\leq C \frac{\ln \rn^{-1}}{\rn^{\cpr}} \sum_{\be \in\mathcal{S}} \int_{I_\be} |\bv - \sv_\be|^2 |(\sv_\be)_x|^2 p(v|\bv) \dx + C \frac{\ln \rn^{-1}}{\rn^{\cpr}} \sum_{\be \in\mathcal{S}} \int_{I_\be^c} |\bv - \sv_\be|^2 |(\sv_\be)_x|^2 Q(v|\bv) \dx \\ 
    &\leq C_n \nu^n \sum_{\be \in\mathcal{S}} \inb  |(\sv_\be)_x| p(v|\bv) \dx + C \frac{\ln \rn^{-1}}{\rn^{\cpr}} \sum_{\be \in\mathcal{S}} \frac{\s_\be^4}{\nu^2} \textup{exp}\bigl(-\frac{C}{\nu^{1-(1+\co)\cs}}\bigr) \inb Q(v|\bv) \dx \\
    &\leq C_n \nu^n \sum_{\be \in\mathcal{S}} \inb  |(\sv_\be)_x| p(v|\bv) \dx + C_n \nu^n \inb Q(v|\bv) \dx \, .
\end{align*}
\end{proof}

\subsubsection{Quantitative subsonicity of the approximate solution}
The following lemma shows that when the boundary moves pathologically fast compared to the lower bound on the wave speeds that follows from the subsonicity of the approximate solution, the variable $v$ in the Navier-Stokes system must be sufficiently small compared to $v_*$, which in turn yields a positive lower bound for $Q(v|\bv)$ at the boundary.

\begin{lemm}\label{lemma estimates on relative functions 2}
Given $U_*=(v_*, u_*)\in \Omega_{sub}^-$ as defined in \eqref{def subsonic L}, there exist $\La_0,C_X,\e_0> 0$ small enough such that the following is true.

Assume that $\ml(0)\leq \e_0$, and let $C_X>0$ be the constant defining the shifts $X_\be$ associated with the shocks $\be$ through \eqref{ODE of Xi}.

Then the absolute value of the speed of every wave in the approximate solution is bounded below by $2\La_0$. Moreover, if $y_b'+\La_0$ is greater than or equal to the speed of some $2$-wave in the approximate solution $\bu=(\bv,\bh)$ defined in Section \ref{subsec mft} at time $t$, then
\begin{align*}
    Q\bigl(v(y_b(t),t)\mid \bv(y_b(t),t)\bigr)\geq C \, ,
\end{align*}
for some $C>0$ depending only on $U_*$ and $\La_0$, provided that $C_X$ and $\e_0$ are sufficiently small.
\end{lemm}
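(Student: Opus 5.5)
The plan has two parts: a lower bound on wave speeds, which comes from the subsonicity of $U_*^L$ and the smallness of $\ml(0)$, and a lower bound on $Q$ at the boundary, which comes from comparing $y_b'$ with $\la_2^L$. Throughout, the $1.5$-waves are stationary by construction, so "wave" in the first assertion should be read as "physical wave". I expect this to be the intended meaning.

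\textbf{Speed bounds.} By Lemma~\ref{lemma L+kQ}, the approximate solution satisfies $\|\bu-U_*^L\|_{L^\infty}\leq C\ml(0)\leq C\e_0$. Hence every left state $U_-$ has $|\la_i^L(U_-)-\la_i^L(U_*^L)|\leq C\e_0$, where $|\la_i^L(U_*^L)|=\ga^{1/2}v_*^{-\frac{\ga+1}{2}}=:\la_*>0$.

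\emph{Rarefactions.} They move at speed $\la_i^L(U_-)$, so their speeds are within $C\e_0$ of $\pm\la_*$.

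\emph{Shocks.} Their Rankine-Hugoniot speeds $\las$ are within $C\e_0$ of $\pm\la_*$. The only delicate contribution is the shift $\sd$. Here I would use the explicit form of \eqref{ODE of Xi}. The shift is built from $C_X$ times quantities localized by the weight and shock derivatives, and these derivatives are bounded in terms of the relative entropy. I would first try to prove either an a priori bound of the form $|\sd_\be|\leq C_X\cdot(\text{bounded})$, or a one-sided sign property: the shifts push $1$-shocks to the left and $2$-shocks to the right, so they only increase $|\las+\sd|$. Either is enough to keep the shifted speeds away from $0$ once $C_X$ is small.

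This step is where I expect the main obstacle, since the paper stresses that $\sd$ has no $\nu$-uniform bound. If a uniform bound really fails, the sign structure of the shift ODE is what must be exploited. Given either route, I choose $\La_0$ and $\e_0$ so that $4\La_0+C\e_0<\la_*$. Then every physical wave has speed of absolute value at least $2\La_0$.

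\textbf{Boundary estimate.} Subsonicity of $U_*^L$ in \eqref{def subsonic L} gives a gap $g:=\la_*-(-u_*/v_*)>0$. Further shrink $\La_0\leq g/8$ and $\e_0$ so that $C\e_0+|u_*-u_B|/v_*\leq g/8$. Suppose $y_b'+\La_0\geq s$, where $s$ is the speed of some $2$-wave. By the previous paragraph, $s\geq \la_*-C\e_0-(\text{shift contribution})\geq \la_*-g/4$. Using \eqref{property of y_b^nu'}, this gives
\begin{align*}
    \frac{-u_B}{v(y_b(t),t)}\geq \la_*-\frac{g}{4}-\La_0\geq \frac{-u_B}{v_*}+\frac{g}{2} \, .
\end{align*}
Hence $v(y_b(t),t)\leq v_*(1-\kappa)$ for some $\kappa>0$ depending only on $U_*$ and $\La_0$. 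Since $|\bv(y_b(t),t)-v_*|\leq C\e_0\leq \kappa v_*/2$, we get $|v-\bv|\geq \kappa v_*/2$ at the boundary. The strict convexity of $Q$, together with $Q(v|\bv)\to\infty$ as $v\to 0$, then yields $Q(v|\bv)\geq C(U_*,\La_0)>0$.
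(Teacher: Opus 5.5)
Your overall plan coincides with the paper's proof. Rarefaction and Rankine--Hugoniot speeds stay close to $\pm\lambda_*$ because $\|\bar U-U_*\|_{L^\infty}\le C\mathcal{L}(0)$. A one-sided control of the shifts with $C_X$ small handles the shocks. At the boundary, the comparison with $y_b'=-u_B/v$ forces $v(y_b(t),t)\le v_*-c$, and hence $Q(v|\bar v)\ge C$. Your boundary paragraph is fine, and it is more careful than the paper's, which writes $-u_*/v$ for $y_b'$. Your extra smallness assumption on $|u_*-u_B|$ is harmless: the boundary $2$-wave at time $0$ already gives $|u_B-u_*|\le C\mathcal{L}(0)$. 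Your remark that the stationary $1.5$-waves must be excluded from the first assertion is also correct.

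The gap is in the shock-speed step. You leave it open, and both of your candidate routes are wrong as stated.

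\emph{First route.} A bound $|\dot X_\beta|\le C_X\cdot(\text{bounded})$ is false. In the regime $(-1)^{i-1}Y_\beta\ge\sigma_\beta^2$, \eqref{X_i estimates} gives $|\dot X_\beta|=\sigma_\beta^{-2}(2|\mathcal{J}_{bad}(U)|+1)$. This has no factor $C_X$ and no $\nu$-uniform bound.

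\emph{Second route.} It is also not true that the shifts only push $1$-shocks to the left and $2$-shocks to the right. For a $1$-shock with $Y_\beta\le-\sigma_\beta^2$, one has $\dot X_\beta=-C_X\tilde\lambda_\beta=C_X|\tilde\lambda_\beta|>0$, which points toward zero speed. If your sign claim were true, smallness of $C_X$ would not be needed at all.

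\emph{The correct property.} It is a one-sided cap, read off directly from \eqref{ODE of Xi} using $-\sigma^{-2}\le\Phi_\sigma\le 0$:
\begin{itemize}
\item The first term of \eqref{ODE of Xi} always has the outward sign (away from zero speed), with unbounded size.
\item The second term, $\sigma_\beta^2C_X\tilde\lambda_\beta\Phi_{\sigma_\beta}((-1)^iY_\beta)$, points inward with magnitude at most $C_X|\tilde\lambda_\beta|$.
\end{itemize}
Hence $\dot X_\beta\le C_X|\tilde\lambda_\beta|$ for $1$-shocks and $\dot X_\beta\ge-C_X\tilde\lambda_\beta$ for $2$-shocks. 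This is \eqref{speed of shock bound}, i.e.\ $|\tilde\lambda_\beta+\dot X_\beta|\ge(1-C_X)|\tilde\lambda_\beta|$. This is exactly where $C_X$ must be small relative to the subsonic gap. With it, every physical $2$-wave speed satisfies $s\ge(1-C_X)(\lambda_*-C\epsilon_0)\ge\lambda_*-g/4$. Your choice of $\Lambda_0$ and $\epsilon_0$ and the rest of your boundary argument then go through as written.
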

\begin{proof}
Since $U_* \in \Omega_{sub}^-$, we have
\begin{align*}
    0<\frac{-u_*}{v_*} < \ga^{\frac{1}{2}} v_*^{-\frac{\ga+1}{2}} \, .
\end{align*}
Lemma \ref{lemma L+kQ} and \eqref{speed of shock bound} imply that the absolute value of the speed of any wave is bounded below by
\begin{align*}
    \la_{lower} : = (1-C_X)(1-C\e_0) \ga^{\frac{1}{2}} v_*^{-\frac{\ga+1}{2}} \, .
\end{align*}
We take
\begin{align*}
    \La_0 = \frac{1}{3}\bigl(\ga^{\frac{1}{2}} v_*^{-\frac{\ga+1}{2}} - \frac{-u_*}{v_*}\bigr) \, .
\end{align*}
Then by taking $\e_0,C_X$ small enough, we have 
\begin{align*}
    \la_{lower} > \frac{-u_*}{v_*} + 2\La_0 > 2\La_0 \, .
\end{align*}
When $y_b'+\La_0$ exceeds the speed of some $2$-wave in the approximate solution, we have
\begin{align*}
    \frac{-u_*}{v(y_b(t),t)} + \La_0 \geq \la_{lower} > \frac{-u_*}{v_*} + 2\La_0 \, .
\end{align*}
Hence, there exists $c>0$ depending on $\La_0$ such that
\begin{align*}
    v(y_b(t),t) + c < v_* \, .
\end{align*}
By taking $\e_0$ small enough, Lemma \ref{lemma L+kQ} implies that
\begin{align*}
    Q\bigl(v(y_b(t),t)\mid \bv(y_b(t),t)\bigr)\geq C \, .
\end{align*}
\end{proof}

\section{Estimates for main terms localized by derivatives of rarefaction waves}\label{sec rarefaction}
We handle the following term $\mathcal{H}_2 $ defined in \eqref{relative entropy method terms}
\begin{align*}
    \mathcal{H}_2 (U) &= - \sum_{\be \in\mathcal{R}} \inb a \nabla^2 \eta(\bu) \p_x \ru_\be A(U|\bu) \dx - \sum_{\be \in \mathcal{R}} \inb a (h-\bh) \bigl( p'(\bv) - p'(\rv_\be) \bigr) (\rv_\be)_x \dx \\
    &\quad - \sum_{\be \in \mathcal{R}} \inb a \nabla^2 \eta(\bu) (U-\bu) \bigl(- \la_\be(\ru_{\be,-}) + \nabla A(\ru_\be) \bigr)\p_x \ru_\be \dx  \\
    &=: \mathcal{H}_{21} + \mathcal{H}_{22} + \mathcal{H}_{23} \, .
\end{align*}
Later in this section, we further split $\mathcal{H}_{23}$ into $\mathcal{H}_{231}$ and $\mathcal{H}_{232}$. The estimates for all these terms are similar to those in \cite{NStoE}, except for $\mathcal{H}_{232}$. Instead of using the dissipation, we directly use the $L^1$-norm of the relative entropy to control this term. This is possible here because the transition zones of the rarefactions are chosen to be not too small compared to their strengths, as indicated by \eqref{rt transition R}. In this way, we avoid introducing time-dependent artificial shifts for the rarefactions, as in \cite{NStoE}.

By \eqref{useful computation}, Lemma \ref{lemma estimate of waves}, Lemma \ref{lemma L+kQ}, and
\begin{align*}
    \mathbf{r}_{i}(U_*) \, \parallel
    \begin{pmatrix}
        (-1)^{i+1} \\ \sqrt{\ga v_*^{-\ga-1}} \, ,
    \end{pmatrix}
\end{align*}
we get
\begin{align}\label{estimate on H21}
    \mathcal{H}_{21} \leq -C \sum_{\be \in\mathcal{R}} G_\be^R \, ,
\end{align}
where
\begin{align}\label{def of G_i^R}
    G_\be^R =  \inb \bigl|(\ru_\be)_x\bigr| \, p(v|\bv) \dx \, .
\end{align}
H\"older's inequality and Lemma \ref{lemma estimates of interaction terms} give
\begin{equation}\label{estimate on H_22}
    \begin{aligned}
        |\mathcal{H}_{22}| &\leq \inb a |h-\bh|^2 \dx + C \inb \Bigl(\sum_{\be \in\mathcal{R}} |\bv-\rv_\be||(\rv_\be)_x| \Bigr)^2 \dx \\
        &\leq \inb a |h-\bh|^2 \dx + C_n \nu^n \, .
    \end{aligned}
\end{equation}
By \eqref{useful computation} and \eqref{def of rarefaction nu}, we have
\begin{align*}
    |\mathcal{H}_{23}| &= \Bigl| \sum_{\be \in \mathcal{R}} \inb a \nabla^2 \eta(\bu) (U-\bu) \bigl(- \la_\be(\ru_{\be,-}) + \la_\be (\ru_\be) \bigr)\p_x \ru_\be \dx \Bigr| \\
    &\leq C \sum_{\be \in\mathcal{R}} |\s_\be| \inb |v-\bv| |\p_x \rv_\be| \dx + C \sum_{\be \in\mathcal{R}} |\s_\be| \inb |h-\bh| |\p_x \rh_\be| \dx \\
    &=: \mathcal{H}_{231} + \mathcal{H}_{232} \, .
\end{align*}
We have
\begin{align*}
    |v-\bv| \mathds{1}_{v>2v_*} \leq C v \leq p(v|\bv) \, .
\end{align*}
By 2) in Corollary \ref{lemma strength of waves} and H\"older's inequality, we have
\begin{equation}\label{estimate on H231}
    \begin{aligned}
        \mathcal{H}_{231} &\leq C \rn^{\cpr} \sum_{\be \in\mathcal{R}} \inb |v-\bv| |\p_x \rv_\be| \mathds{1}_{v\leq 2\vs} \dx + C \rn^{\cpr} \sum_{\be \in\mathcal{R}} \inb |v-\bv| |\p_x \rv_\be| \mathds{1}_{v> 2\vs} \dx \\
        &\leq C \rn^{\cpr} \sum_{\be \in\mathcal{R}} \inb |v-\bv|^2 |\p_x \rv_\be| \mathds{1}_{v\leq 2\vs} \dx + C \rn^{\cpr} \sum_{\be \in\mathcal{R}} \inb |\p_x \rv_\be| \mathds{1}_{v\leq 2\vs} \dx \\
        &\quad \, + C \rn^{\cpr} \sum_{\be \in\mathcal{R}} \inb p(v|\bv) |\p_x \rv_\be| \mathds{1}_{v> 2\vs} \dx \\
        &\leq C\rn^{\cpr} \sum_{\be \in\mathcal{R}} G_\be^R + C\rn^{\cpr} L(0) \, .
    \end{aligned}
\end{equation}
By H\"older's inequality, Young's inequality, Property \ref{prop transition zone}, 3) in Corollary \ref{lemma strength of waves}, and Lemma \ref{lemma L+kQ}, we get
\begin{equation}\label{estimate on H232}
    \begin{aligned}
        |\mathcal{H}_{232}| &\leq C \sum_{\be \in\mathcal{R}} \frac{\s_\be^2}{\rn^{\cor}} \inb |h-\bh| \mathds{1}_{[y_\be-\rn^{\cor}, y_\be+\rn^{\cor}]} \dx \\
        &\leq C \sum_{\be \in\mathcal{R}} \frac{\s_\be^2}{\rn^{\cor/2}} \Bigl( \inb |h-\bh|^2 \mathds{1}_{[y_\be-\rn^{\cor}, y_\be+\rn^{\cor}]} \dx \Bigr)^{1/2} \\
        &\leq \inb |h-\bh|^2 \sum_{\be \in\mathcal{R}} \mathds{1}_{[y_\be-\rn^{\cor}, y_\be+\rn^{\cor}]} \dx +  C \sum_{\be \in\mathcal{R}} \frac{\s_\be^4}{\rn^{\cor}} \\
        &\leq \inb |h-\bh|^2 \dx + \frac{\rn^{3\cpr}}{\rn^{\cor}}L(0) \, .
    \end{aligned}
\end{equation}
By \eqref{estimate on H21}, \eqref{estimate on H_22}, \eqref{estimate on H231}, and \eqref{estimate on H232}, we get
\begin{equation}\label{estimate on H_2}
    \begin{aligned}
        \mathcal{H}_2 &\leq -C\sum_{\be \in\mathcal{R}} G_\be^R + C \inb a |h-\bh| \dx + C_n \nu^n + C\rn^{\cpr} + C \rn^{3\cpr-\cor}
        \, .
    \end{aligned}
\end{equation}
To ensure that this inequality gives the desired estimate, we impose the relation \eqref{rt transition R}.

\section{Estimates for main terms localized by derivatives of non-physical waves and for boundary terms}\label{sec NP and boundary}
The key estimate is a $\nu$-dependent $L^\infty$ bound for $v^{-\ga}$, obtained using the dissipation $\mathcal{D}_v$ defined in \eqref{def of D_v}. We first derive this estimate in Section~\ref{subsec NP and B v^-ga}, and then bound the terms localized by the derivatives of non-physical waves $\mathcal{H}_3 $ in Section~\ref{subsec NP bound} and the boundary terms $\mathcal{B} $ in Section~\ref{subsec B bound}.
\subsection{Estimates for $\|v^{-\ga}\|_{L^\infty}$}\label{subsec NP and B v^-ga}
We first note that $\|v^{-\ga}\|_{L^\infty}$ is bounded when $v$ is large, so it suffices to control it when $v$ is small. Recall Remark \ref{remark uniform bound on dissipation lagrangian}. We have
\begin{align*}
    \nu \int_0^T \mathcal{D}_v \, dt \leq C \, ,
\end{align*}
where $\mathcal{D}_v$ is defined in \eqref{def of D_v}. 
Accordingly, in view of the dissipation $\mathcal{D}_v$, we define the function
\begin{align}\label{def of g}
    g(v) = \bigl(v^{-\frac{\ga+\al}{2}} - (\frac{\vs}{2})^{-\frac{\ga+\al}{2}}\bigr) \mathds{1}_{v\leq \frac{\vs}{2}} \, ,
\end{align}
By the Gagliardo-Nirenberg inequality and \eqref{remark lemma uniform bound on integral of Q}, we get
\begin{align*}
    \|g(v)\|_{L^\infty} &\leq C \|g(v)_x\|_{L^2}^{2/3} \|g(v)\|_{L^1}^{1/3} \\ 
    &\leq C \mathcal{D}_v^{1/3} \bigl (\int |v^{-\ga+1}||v^{\frac{\ga-\al}{2}-1}| \mathds{1}_{v\leq \frac{\vs}{2}} \, dx \bigr)^{1/3} \\
    &\leq C \mathcal{D}_v^{1/3} \|v^{\frac{\ga-\al-2}{6}}\mathds{1}_{v\leq \frac{\vs}{2}}\|_{L^\infty}
    \, .
\end{align*}
Hence, there exist some constant $C_1,C_2>0$ such that 
\begin{align*}
    \|v^{-\frac{\ga+\al}{2}}\mathds{1}_{v\leq \frac{\vs}{2}}\|_{L^\infty} \leq C_1 + C_2 \mathcal{D}_v^{1/3} \|v^{\frac{\ga-\al-2}{6}}\mathds{1}_{v\leq \frac{\vs}{2}}\|_{L^\infty} \, .
\end{align*}
If $C_1\geq C_2 \mathcal{D}_v^{1/3} \|v^{\frac{\ga-\al-2}{6}}\mathds{1}_{v\leq \frac{\vs}{2}}\|_{L^\infty}$, then 
\begin{align}\label{bound on v^-gamma 1}
    \|v^{-\ga}\mathds{1}_{v\leq \frac{\vs}{2}}\|_{L^\infty} \leq C \, .
\end{align}
Assume $C_1 < C_2 \mathcal{D}_v^{1/3} \|v^{\frac{\ga-\al-2}{6}}\mathds{1}_{v\leq \frac{\vs}{2}}\|_{L^\infty}$. Since $\ga-1\leq \al \leq \ga$ and $\ga>1$, we have 
\begin{align*}
    \frac{\ga-\al-2}{6} < 0 \, , \quad 
    -\frac{\ga+\al}{2} - \frac{\ga-\al-2}{6} = -\frac{2\ga + \al - 1}{3} < 0 \, .
\end{align*} 
Therefore, we get
\begin{align*}
    \|v^{-\frac{2\ga+\al-1}{3}} \mathds{1}_{v\leq \frac{\vs}{2}}\|_{L^\infty} \leq C \mathcal{D}_v^{1/3} \, ,
\end{align*}
which implies
\begin{align}\label{bound on v^-gamma 2}
    \|v^{-\ga}\mathds{1}_{\{v\leq \frac{\vs}{2}\}} \|_{L^\infty} \leq C D_v^{-\frac{\ga}{2\ga+\al-1}} \, .
\end{align}

In all, \eqref{bound on v^-gamma 1} and \eqref{bound on v^-gamma 2} give
\begin{align}\label{bound on v^-gamma}
    \|v^{-\gamma}\|_{L^\infty} \leq C + C \mathcal{D}_v^{\cg} \, ,
\end{align}
where $\cg=\frac{\ga}{2\ga+\al-1}$ is defined in \eqref{def of c gamma}.

\subsection{Estimates for main terms localized by non-physical waves}\label{subsec NP bound}
We evaluate $\mathcal{H}_3 (U)$ defined in \eqref{relative entropy method terms}:
\begin{align*}
    \mathcal{H}_3 (U) &= - \sum_{\be \in \mathcal{NP}} \inb a \bigl(p(v)-p(\bv)\bigr) \p_x \nh_\be \dx - \sum_{\be \in \mathcal{NP}} \inb a (h-\bh)p'(\bv) \p_x \nv_\be \dx \\
    &=: \mathcal{H}_{31} + \mathcal{H}_{32} \, .
\end{align*}
By 3) in Corollary \ref{lemma strength of waves}, and Lemma \ref{lemma number of waves and interactions}, we have
\begin{align*}
    |\mathcal{H}_{31}| \leq C (1 + \|v^{-\ga}\|_{L^\infty}) 
    \sum_{\be \in \mathcal{NP}} |\s_\be|  \leq C (1 + \|v^{-\ga}\|_{L^\infty}) 
    \rn^{1-\cpr} \ln{\rn^{-1}}  \, .
\end{align*}
By \eqref{bound on v^-gamma}, \eqref{rt mft total strength NP}, \eqref{property of c gamma}, and Young's inequality, we have
\begin{equation}\label{evaluation of v^-gamma}
    \begin{aligned}
        &\rn^{1-\cpr} \|v^{-\ga}\|_{L^\infty}  \\
        &\leq C \rn^{1-\cpr} + C \nu^{(1-\cpr)\cs}\mathcal{D}_v^{\cg} \\
        &\leq C \rn^{1-\cpr} + C(\nu^{\cg + \frac{1-\cg}{4}} \mathcal{D}_v^{\cg}) \nu^{\frac{1-\cg}{4}} \\
        &\leq C \rn^{1-\cpr} + C\nu^{\frac{1-\cg}{4\cg}} \nu \mathcal{D}_v + C \nu^{\frac{1}{4}} \, .
    \end{aligned}
\end{equation}
Therefore, we get
\begin{align}\label{estimate H31}
    |\mathcal{H}_{31}| \leq C (\nu^{\frac{1-\cg}{4\cg}} \nu \mathcal{D}_v + \rn^{1-\cpr} + \nu^{\frac{1}{4}}) \ln{\rn^{-1}} \, .
\end{align}
By H\"older's inequality, we get
\begin{align*}
    |\mathcal{H}_{32}| &\leq \inb a |h-\bh|^2 \dx + C \inb \bigl( \sum_{\be \in \mathcal{NP}} \p_x \nv_\be \bigr)^2 \dx \, .
\end{align*}
Lemma \ref{lemma estimate of waves}, 3) in Corollary \ref{lemma strength of waves}, and Lemma \ref{lemma number of waves and interactions} give
\begin{align*}
    \inb \bigl( \sum_{\be \in \mathcal{NP}} \p_x \nv_\be \bigr)^2 \dx 
    = \sum_{\be \in \mathcal{NP}} \inb |\p_x \nv_\be|^2 \dx 
    \leq C\sum_{\be \in \mathcal{NP}} \frac{|\s_\be|^2}{\rn^{\cop}} \leq C \rn^{2-\cop-\cpr} \ln \rn^{-1}\, .
\end{align*}
Together with \eqref{estimate H31}, we get
\begin{align}\label{estimate on H3}
    |\mathcal{H}_3 (U)| \leq \inb a |h-\bh|^2 \dx + C\Bigl(\nu^{\frac{1-\cg}{4\cg}} \nu \mathcal{D}_v + \rn^{2-\cop-\cpr} + \rn^{1-\cpr} + \nu^{\frac{1}{4}} \Bigr) \ln \rn^{-1} \, .
\end{align}

\subsection{Estimates for boundary terms}\label{subsec B bound}
We estimate $\mathcal{B} (U)$ defined in \eqref{relative entropy method terms}. We note that the Navier-Stokes solution always satisfies the outflow boundary condition \eqref{outflow boundary condition}. However, the boundary condition of the approximate solution $\bar U$ constructed in Section~\ref{sec mft} may be violated when a merging or an overtaking causes non-physical waves to disappear. Moreover, since $\bu$ is constructed as an approximate solution to the Navier-Stokes system, an additional error is produced due to the non-localization of the viscous shocks. In all, Property \ref{prop transition zone}, Lemma \ref{lemma estimate of waves}, 3) in Corollary \ref{lemma strength of waves} imply that
\begin{align*}
    |u-\bar{u}| \Big|_{x=y_b(t)} 
    \leq \sum_{\be \in \mathcal{S}} |\tilde{u}_\be| \Big|_{x=y_b(t)}  + \textup{number of elements} \times C \rn
\end{align*}
By Lemma~\ref{lemma number of waves and interactions}, Lemma~\ref{lemma estimate of waves}, and \eqref{rt transition S}, we get
\begin{align*}
    |u-\bar{u}| \Big|_{x=y_b(t)} \leq C \rn^{-\cpr} \ln \rn^{-1} \bigl( \exp(-C\frac{\rn^{1+\co}}{\nu}) + \rn \bigr) \leq C \rn^{1-\cpr} \ln \rn^{-1} \, .
\end{align*}
Together with \eqref{evaluation of v^-gamma}, we get
\begin{align}\label{estimate on B}
    |\mathcal{B} (U)| \leq C \rn^{1-\cpr} \ln \rn^{-1} \bigl(\|v^{-\ga}\|_{L^\infty} + 1 \bigr)  \leq C \Bigl( \rn^{1-\cpr} + \nu^{\frac{1-\cg}{4\cg}} \nu \mathcal{D}_v + \nu^{\frac{1}{4}} \Bigr) \ln \rn^{-1} \, .
\end{align}

\section{Estimates for parabolic term}\label{sec parabolic}
We estimate the following term $\mcp $ defined in \eqref{relative entropy method terms}. We first note that Assumption \ref{property viscosity L} and Lemma \ref{lemma L+kQ} imply that for any $\be \in \mathcal{S}$, 
\begin{align*}
    \mu(\sv_{\be}) = \mu_1(\sv_{\be}) \, .
\end{align*}
Therefore, we can write
\begin{align*}
    \mcp (U) &= -\nu \inb \p_x \Bigl(a\bigl(p(v)-p(\bv)\bigr) \Bigr) \bigl(\mu_1(v) v^{\ga} p(v)_x - \sum_{\be \in \mathcal{S}}\mu_1(\sv_\be)(\sv_\be)^{\ga} p(\sv_\be)_x \bigr) \dx \\
    &\quad - \nu \inb \p_x \Bigl(a\bigl(p(v)-p(\bv)\bigr) \Bigr) \mu_2(v) v^{\ga} p(v)_x \dx \\
    &= \mcp_1 + \mcp_2 + \mcp_3 + \mcp_4 + \mcp_5 + \mcp_6 \, ,
\end{align*}
where
\begin{align*}
    &\mcp_1 = -\nu \inb a\bigl(p(v)-p(\bv)\bigr)_x \, \mu(v) v^{\ga} \bigl(p(v)-\sum_{\be \in \mathcal{S}} p(\sv_\be)\bigr)_x \dx \, , \\
    &\mcp_2 = -\nu \inb a\bigl(p(v)-p(\bv)\bigr)_x \sum_{\be \in \mathcal{S}} \bigl( \mu(v) v^{\ga}-\mu(\sv_\be)\sv_\be^{\ga}\bigr) p(\sv_\be)_x \dx \, , \\
    &\mcp_3 = -\nu \inb \Bigl(\sum_{\be \in \mathcal{S}} (a_\be)_x\Bigr)\bigl(p(v)-p(\bv)\bigr) \, \mu(v) v^{\ga} \bigl(p(v)-\sum_{\be \in \mathcal{S}} p(\sv_\be)\bigr)_x \dx \, , \\
    &\mcp_4 = -\nu \inb \Bigl(\sum_{\be \in \mathcal{S}} (a_\be)_x\Bigr) \bigl(p(v)-p(\bv)\bigr) \sum_{\be \in \mathcal{S}} \bigl( \mu(v) v^{\ga}-\mu(\sv_\be)\sv_\be^{\ga}\bigr) p(\sv_\be)_x \dx \, ,
\end{align*}
and
\begin{align*}
    &\mcp_5 = - \nu \inb a \bigl(p(v)-p(\bv)\bigr)_x \mu_2(v) v^\ga p(v)_x \dx \, , \\
    &\mcp_6 = - \nu \inb \Bigl(\sum_{\be \in \mathcal{S}} (a_\be)_x\Bigr) \bigl(p(v)-p(\bv)\bigr) \mu_2(v) v^\ga p(v)_x \dx \, . 
\end{align*}
The analysis essentially follows the treatment of $\mathcal{P}$ in \cite{NStoE}. Since the front tracking algorithm has been substantially modified, we carry out the computation in the present setting. 

\noindent
\textbf{Estimates for $\mcp_1$.}
We first analyze $\mcp_1$. 
\begin{align*}
    \mcp_1 = -\md - \mcp_{11} \, ,
\end{align*}
where $\md$ is the dissipation we aim to obtain
\begin{align}\label{def of D}
    \md =  \nu \inb a \mu_1(v) v^{\ga} \Bigr|\bigl(p(v)-p(\bv)\bigr)_x \Bigr|^2 \dx
    \, ,
\end{align}
and $\mcp_{11}$ is the error term 
\begin{align*}
    \mcp_{11} = - \nu \inb a\bigl(p(v)-p(\bv)\bigr)_x \, \mu_1(v) v^{\ga} \bigl(p(\bv)-\sum_{\be \in \mathcal{S}} p(\sv_\be)\bigr)_x \dx \, .
\end{align*}
We have
\begin{align*}
    \mcp_{11} =& - \nu \inb a\bigl(p(v)-p(\bv)\bigr)_x \, \mu_1(v) v^{\ga} p'(\bv) \Bigl(\sum_{\be \in \mathcal{R}} (\rv_\be)_x + \sum_{\be \in \mathcal{NP}} (\nv_\be)_x\Bigr) \dx \\
    & - \nu \inb a\bigl(p(v)-p(\bv)\bigr)_x \, \mu_1(v) v^{\ga} \sum_{\be \in \mathcal{S}} \bigl(p'(\bv) - p'(\sv_\be) \bigr) (\sv_\be)_x \dx \\
    =:& \, \mcp_{111} + \mcp_{112} \, .
\end{align*}
The transition zones of rarefactions and non-physical waves do not overlap. Hence, H\"older's inequality and Assumption~\ref{property viscosity L} imply
\begin{align*}
    \mcp_{111}\leq \de_* \md + \mcp_{1111} \, ,
\end{align*}
where
\begin{align}\label{def of P1111}
    \mcp_{1111} = \frac{C\nu}{\de_*} \inb \Bigl(\sum_{\be \in \mathcal{R}} (\rv_\be)_x^2 + \sum_{\be \in \mathcal{NP}} (\nv_\be)_x^2\Bigr) v^{\ga-\al} \dx \, .
\end{align}
Since $0\leq \ga-\al\leq 1$, we have
\begin{align*}
    v^{\ga-\al} \leq C + C p(v|\bv) \, , \quad v^{\ga-\al} \leq C + C Q(v|\bv)  \, .
\end{align*}
By Lemma \ref{lemma estimate of waves} and Corollary \ref{lemma strength of waves}, we have
\begin{align*}
    \mcp_{1111} \leq& \frac{C\nu}{\de_*} \sum_{\be \in \mathcal{R}} \inb (\rv_\be)_x^2 \dx + \frac{C\nu}{\de_*} \sum_{\be \in \mathcal{NP}} \inb (\nv_\be)_x^2 \dx \\
    & + \frac{C\nu}{\de_*} \sum_{\be \in \mathcal{R}} \inb (\rv_\be)_x^2 p(v|\bv)\dx + \frac{C\nu}{\de_*} \sum_{\be \in \mathcal{NP}} \inb (\nv_\be)_x^2 Q(v|\bv)\dx \\
    \leq & \frac{C\nu}{\de_*} (\rn^{\cpr - \cor} + \rn^{1-\cop}) L(0) + \frac{C\nu}{\de_*} \rn^{\cpr-\cor} \sum_{\be \in \mathcal{R}} G_\be^R + \frac{C\nu}{\de_*} \rn^{2(1-\cop)} \inb Q(v|\bv) \dx \, ,
\end{align*}
where $G_\be^R$ is defined in \eqref{def of G_i^R}. By \eqref{rt transition R} and \eqref{threshold mft}, we get
\begin{align}\label{control on P1111}
    \mcp_{1111} \leq C \nu^{1-2\cs\cpr} + C \nu^{1-2\cs\cpr} \sum_{\be \in \mathcal{R}} G_\be^R + C \nu \inb Q(v|\bv) \dx \, .
\end{align}
On the other hand, we have
\begin{align*}
    \mcp_{112} &\leq \nu \inb |\bigl(p(v)-p(\bv)\bigr)_x| \, \mu_1(v) v^{\ga} \sum_{\be \in \mathcal{S}} |\bv - \sv_\be | |(\sv_\be)_x| \dx \\
    &\leq \de_* \md + \mcp_{1121} \, ,
\end{align*}
where
\begin{align}\label{def of P1121}
    \mcp_{1121} = \frac{C\nu}{\de_*} \inb \Bigl(\sum_{\be \in \mathcal{S}} |\bv - \sv_\be | |(\sv_\be)_x|\Bigr)^2 v^{\ga-\al} \dx \, .
\end{align}
By Lemma \ref{lemma estimates of interaction terms}, we get
\begin{equation}\label{control on P1121}
    \begin{aligned}
        \mcp_{1121} 
        &\leq \frac{C\nu}{\de_*} \inb \Bigl(\sum_{\be \in \mathcal{S}} |\bv - \sv_\be | |(\sv_\be)_x|\Bigr)^2 (1+v^{\ga-\al} \mathds{1}_{v>2\vs})\\
        &\leq C_n \nu^n + C_n \nu^n \sum_{\be \in \mathcal{S}} \inb |(\sv_\be)_x| p(v|\bv) \dx + C_n \nu^n \inb Q(v|\bv) \dx \, .
    \end{aligned}
\end{equation}
In all, we have
\begin{equation}\label{estimate on P1}
    \begin{aligned}
        \mcp_{1} \leq& -(1-2\de_*) \md + C \nu^{1-2\cs\cpr}  \sum_{\be \in \mathcal{R}} G_\be^R + C \nu \sum_{\be \in \mathcal{S}} \inb |(\sv_\be)_x| p(v|\bv) \dx \\
        &+ C \inb Q(v|\bv) \dx + C \nu^{1-2\cs\cpr} \, .
    \end{aligned}
\end{equation}

\noindent
\textbf{Estimates for $\mcp_2$.}
We have
\begin{align*}
    \mcp_2 =& -\nu \inb a\bigl(p(v)-p(\bv)\bigr)_x \bigl( \mu_1(v) v^{\ga}- \mu_1(\bv) \bv^{\ga}\bigr) \sum_{\be \in \mathcal{S}} p(\sv_\be)_x \dx \\
    &-\nu \inb a\bigl(p(v)-p(\bv)\bigr)_x \sum_{\be \in \mathcal{S}} \bigl( \mu_1(\bv) \bv^{\ga}-\mu_1(\sv_\be)(\sv_\be)^{\ga}\bigr) p(\sv_\be)_x \dx \\
    =: & \, \mcp_{21} + \mcp_{22} \,. 
\end{align*}
Since $0\leq \ga-\al\leq 1$, we have
\begin{align*}
    \frac{|\mu_1(v)v^{\ga} - \mu_1(\bv)\bv^{\ga}|^2}{\mu_1(v)v^{\ga}} &\leq C v^{-\ga+\al} \mathds{1}_{v\leq \vs/2} + C |v - \bv|^2 \mathds{1}_{\vs/2< v< 2\vs} + v^{\ga-\al} \mathds{1}_{v\geq 2\vs} \\
    &\leq C p(v|\bv) \, .
\end{align*}
By Lemma \ref{lemma estimate of waves} and \eqref{weight relation between L and de*}, we have
\begin{align}\label{estimate of sum of shock L infinity}
    \|\sum_{\be \in\mathcal{S}}|(\sv_\be)_x|\|_{L^\infty} \leq C \sum_{\be \in\mathcal{S}} \frac{\s_\be^2}{\nu} \leq \frac{C}{\nu} L(0) \leq \frac{C\de_*^3}{\nu}\, .
\end{align}
By \eqref{estimate of sum of shock L infinity} and \eqref{weight relation between L and de*}, we get
\begin{align*}
    \mcp_{21} &\leq C\nu \sqrt{\frac{\md}{\nu}} \sqrt{\inb \Bigl(\sum_{\be \in\mathcal{S}}|(\sv_\be)_x|\Bigr)^2 \frac{|\mu_1(v)v^{\ga} - \mu_1(\bv)\bv^{\ga}|^2}{\mu_1(v)v^{\ga}}\dx }  \\
    &\leq \de_* \md + C \de_*^2 \sum_{\be \in\mathcal{S}} \inb |(\sv_\be)_x| p(v|\bv) \dx \, .
\end{align*}
Since $0\leq \ga-\al\leq 1$, we have
\begin{align*}
    v^{-\ga+\al} \leq C + C p(v|\bv) \, .
\end{align*}
By \eqref{estimate of sum of shock L infinity} and Lemma \ref{lemma estimates of interaction terms}, we get
\begin{align*}
    \mcp_{22} &\leq C\nu \inb \Bigl|\bigl(p(v)-p(\bv)\bigr)_x\Bigr| \sum_{\be \in \mathcal{S}} |\bv - \sv_\be| |(\sv_\be)_x| \dx \\
    &\leq C\nu \sqrt{\frac{\md}{\nu}} \sqrt{\inb \Bigl(\sum_{\be \in \mathcal{S}} |\bv - \sv_\be| |(\sv_\be)_x| \Bigr)^2 v^{-\ga+\al}\dx} \\ 
    &\leq \de_*\md + \frac{C\nu}{\de_*} \inb \Bigl(\sum_{\be \in \mathcal{S}} |\bv - \sv_\be| |(\sv_\be)_x| \Bigr)^2 (p(v|\bv) + 1) \dx \\
    &\leq \de_*\md + C\de_*^2 \sum_{\be \in \mathcal{S}} \inb |(\sv_\be)_x| (p(v|\bv) \dx + C_n \nu^n \, .
\end{align*}
In all, we get
\begin{align}\label{estimate on P2}
    \mcp_{2} \leq 2\de_* \md + C\de_* \sum_{\be \in \mathcal{S}} \inb |(\sv_\be)_x| (p(v|\bv) \dx + C \nu \, .
\end{align}

\noindent
\textbf{Estimates for $\mcp_3$.}
We see
\begin{align*}
    \mcp_3 =& -\nu \inb \Bigl(\sum_{\be \in \mathcal{S}} (a_\be)_x\Bigr)\bigl(p(v)-p(\bv)\bigr) \, \mu_1(v) v^{\ga} \bigl(p(v)-p(\bv))_x \dx \\
    & -\nu \inb \Bigl(\sum_{\be \in \mathcal{S}} (a_\be)_x\Bigr)\bigl(p(v)-p(\bv)\bigr) \, \mu_1(v) v^{\ga} p'(\bv) \Bigl(\sum_{\be \in \mathcal{R}} (\rv_\be)_x + \sum_{\be \in \mathcal{NP}} (\nv_\be)_x\Bigr) \dx \\
    & -\nu \inb \Bigl(\sum_{\be \in \mathcal{S}} (a_\be)_x\Bigr)\bigl(p(v)-p(\bv)\bigr) \, \mu_1(v) v^{\ga} \sum_{\be \in \mathcal{S}} \bigl(p'(\bv) - p'(\sv_\be) \bigr) (\sv_\be)_x \dx \\
    =:&\, \mcp_{31} + \mcp_{32} + \mcp_{33} \, .
\end{align*}
We have
\begin{align*}
    \mcp_{31} &\leq C \nu \sqrt{\frac{\md}{\nu}} \sqrt{\inb \Bigl(\sum_{\be \in \mathcal{S}} (a_\be)_x\Bigr)^2 \bigl|p(v)-p(\bv)\bigr|^2 v^{\ga-\al} \dx} \\
    &\leq \de_*\md + \frac{C\nu}{\de_*}\inb \Bigl(\sum_{\be \in \mathcal{S}} (a_\be)_x\Bigr)^2\bigl|p(v)-p(\bv)\bigr|^2 v^{\ga-\al}\dx \, ,
\end{align*}
where the last term will be controlled in Section \ref{section shock}.

Recall \eqref{def of P1111}. We get
\begin{align*}
    \mcp_{32} &\leq \nu \inb \Bigl|\sum_{\be \in \mathcal{S}} (a_\be)_x\Bigr| \bigl|p(v)-p(\bv)\bigr| v^{\ga-\al} \Bigl|\sum_{\be \in \mathcal{R}} (\rv_\be)_x + \sum_{\be \in \mathcal{NP}} (\nv_\be)_x\Bigr| \dx \\
    &\leq C\nu\inb \Bigl(\sum_{\be \in \mathcal{S}} (a_\be)_x\Bigr)^2 \bigl|p(v)-p(\bv)\bigr|^2 v^{\ga-\al} \dx + C \de_* \mcp_{1111} \, .
\end{align*}
Recall \eqref{def of P1121}. We have
\begin{align*}
    \mcp_{33} &\leq C \nu \inb \Bigl|\sum_{\be \in \mathcal{S}} (a_\be)_x\Bigr|\bigl|p(v)-p(\bv)\bigr| v^{\ga-\al} \sum_{\be \in \mathcal{S}} \bigl|\bv -\sv_\be \bigr| |(\sv_\be)_x| \dx \\
    &\leq C \nu \inb \Bigl(\sum_{\be \in \mathcal{S}} (a_\be)_x\Bigr)^2 \bigl|p(v)-p(\bv)\bigr|^2 v^{\ga-\al} \dx + C \de_* \mcp_{1121} \, .
\end{align*}
In all, we get
\begin{align}\label{estimate on P3}
    \mcp_3 \leq \de_* \md + \frac{C \nu}{\de_*} \inb \Bigl(\sum_{\be \in \mathcal{S}} (a_\be)_x\Bigr)^2 \bigl|p(v)-p(\bv)\bigr|^2 v^{\ga-\al} \dx + C \de_* \mcp_{1111} + C \de_* \mcp_{1121} \, .
\end{align}

\noindent
\textbf{Estimates for $\mcp_4$.}
We have
\begin{align*}
    \mcp_4 =& -\nu \inb \Bigl(\sum_{\be \in \mathcal{S}} (a_\be)_x\Bigr) \bigl(p(v)-p(\bv)\bigr) \bigl( \mu_1(v) v^{\ga}- \mu_1(\bv) \bv^{\ga}\bigr) \sum_{\be \in \mathcal{S}} p(\sv_\be)_x \dx \\
    &-\nu \inb \Bigl(\sum_{\be \in \mathcal{S}} (a_\be)_x\Bigr) \bigl(p(v)-p(\bv)\bigr) \sum_{\be \in \mathcal{S}} \bigl( \mu_1(\bv) \bv^{\ga}-\mu_1(\sv_\be)(\sv_\be)^{\ga}\bigr) p(\sv_\be)_x \dx \\
    =: & \, \mcp_{41} + \mcp_{42} \,. 
\end{align*}
Since $0\leq \ga-\al\leq 1$, we have
\begin{align*}
    &\bigl|p(v)-p(\bv)\bigr| \Bigl| \mu_1(v) v^{\ga}- \mu_1(\bv) \bv^{\ga}\Bigr| \\
    &\leq C v^{-\ga} \mathds{1}_{v\leq \vs/2} + C |v-\bv|^2 \mathds{1}_{\vs/2< v< 2\vs} + C v^{\ga-\al}\mathds{1}_{v\geq 2\vs} \\
    &\leq C p(v|\bv) \, .
\end{align*}
By \eqref{def of weight a_i} and \eqref{estimate of sum of shock L infinity}, we get
\begin{align*}
    \mcp_{41} &\leq \frac{C\nu}{\de_*} \inb \Bigl(\sum_{\be \in \mathcal{S}} |(\sv_\be)_x| \Bigr)^2 p(v|\bv) \dx \\
    &\leq C \de_*^2 \sum_{\be \in \mathcal{S}} \inb |(\sv_\be)_x|  p(v|\bv) \dx \\
    &\leq C \de_*^2 \sum_{\be \in \mathcal{S}} \inb |(\sv_\be)_x|  p(v|\bv) \dx \, .
\end{align*}
Since $0\leq \ga-\al\leq 1$, we have
\begin{align*}
    \bigl|p(v)-p(\bv)\bigr| \leq C v^{-\ga} \mathds{1}_{v\leq \vs/2} + C \mathds{1}_{v> \vs/2} \leq C p(v|\bv) + C \, .
\end{align*}
By \eqref{def of weight a_i}, \eqref{estimate of sum of shock L infinity}, and Lemma \ref{lemma estimates of interaction terms}, we get
\begin{align*}
    \mcp_{42} &\leq \frac{C\nu}{\de_*} \inb \Bigl(\sum_{\be \in \mathcal{S}} |(\sv_\be)_x| \Bigr)\Bigl(\sum_{\be \in \mathcal{S}} |\bv - \sv_\be| |(\sv_\be)_x| \Bigr) \bigl(p(v|\bv)+1\bigr) \dx \\
    &\leq C \de_*^2 \Bigl[ \sum_{\be \in \mathcal{S}} \inb |(\sv_\be)_x| p(v|\bv) \dx + \inb \sum_{\be \in \mathcal{S}} |\bv - \sv_\be| |(\sv_\be)_x | \dx \Bigr] \\
    &\leq C\de_*^2 \sum_{\be \in \mathcal{S}} \inb |(\sv_\be)_x| p(v|\bv) \dx + C_n \nu^n \, .
\end{align*}
In all, we get
\begin{align}\label{estimate on P4}
    \mcp_4 \leq C \de_* \sum_{\be \in \mathcal{S}} \inb |(\sv_\be)_x|  p(v|\bv) \dx +C \nu \, .
\end{align}

\noindent
\textbf{Estimates for $\mcp_5,\mcp_6$.}
Let
\begin{align}\label{def of D tilde}
    \widetilde{\mathcal{D}} = \nu \inb a \mu_2(v) v^\ga \bigl|p(v)_x\bigr|^2 \dx \, .
\end{align}
We have
\begin{align*}
    \mcp_5 \leq - \widetilde{\mathcal{D}} + \mcp_{51} \, ,
\end{align*}
where
\begin{align*}
    \mcp_{51} = \nu \inb a p(\bv)_x \mu_2(v) v^\ga p(v)_x \dx \, .
\end{align*}
We also have
\begin{align*}
    \mcp_6 = \mcp_{61} + \mcp_{62} \, , 
\end{align*}
where
\begin{align*}
    &\mcp_{61} = - \nu \inb \Bigl(\sum_{\be \in \mathcal{S}} (a_\be)_x\Bigr) p(v) \mu_2(v) v^\ga p(v)_x \dx \, , \\
    &\mcp_{62} = \nu \inb \Bigl(\sum_{\be \in \mathcal{S}} (a_\be)_x\Bigr) p(\bv) \mu_2(v) v^\ga p(v)_x \dx \, .
\end{align*}

We first estimate $\mcp_{51}$ and $\mcp_{62}$. As in \cite{NStoE}, we define
\begin{align*}
    \Phi(v) = \int_{2v_*}^v \frac{\ga \mu_2(s)}{s} \, ds \, .
\end{align*}
Since $\Phi'(v) = -\mu_2(v) v^\ga p'(v)$ and
\begin{align}\label{parabolic evaluation of phi}
    \Phi(v) = \Phi(v) \mathds{1}_{v > 2 v_*}  \leq C| v^{-\widetilde{\al}} - (2v_*)^{-\widetilde{\al}} | \mathds{1}_{v > 2 v_*} \leq C p(v|\bv) \, ,
\end{align}
we can write 
\begin{align*}
    \mcp_{51} &= - \nu \int a p'(\bv) \bv_x \Phi(v)_x \dx = \nu \int A_{51} \, p(v|\bv) \dx + B_{51} \, , \\
    \mcp_{62} &= - \nu \int \Bigl(\sum_{\be \in \mathcal{S}} (a_\be)_x\Bigr) p(\bv) \Phi(v)_x \dx = \nu \int A_{62} \, p(v|\bv) \dx + B_{62} \, ,
\end{align*}
where
\begin{align*}
    A_{51} &= a p''(\bv) \bv_x^2 + a p'(\bv) \bv_{xx} + \Bigl(\sum_{\be \in \mathcal{S}} (a_\be)_x\Bigr) p'(\bv) \bv_x \, , \\
    A_{62} &= \Bigl(\sum_{\be \in \mathcal{S}} (a_\be)_x\Bigr) p'(\bv) \bv_x +  \Bigl(\sum_{\be \in \mathcal{S}} (a_\be)_{xx}\Bigr) p(\bv) \, ,
\end{align*}
and
\begin{align*}
    B_{51} &= -\nu a p'(\bv) \bv_x \Phi(v)|_{x=y_b}^\infty \, , \\
    B_{62} &= -\nu \Bigl(\sum_{\be \in \mathcal{S}} (a_\be)_x\Bigr) p(\bv) \Phi(v) |_{x=y_b}^\infty \, .
\end{align*}
By \eqref{def of weight}, Lemma \ref{lemma estimate of waves}, Corollary \ref{lemma strength of waves}, Lemma \ref{lemma number of waves and interactions}, and \eqref{weight relation between L and de*}, we have
\begin{align*}
    &|A_{51}| + |A_{62}| \\
    &\leq \frac{C}{\de_*} \Bigl(\sum_{\be\in \mathcal{S}} |(\sv_\be)_x| + \sum_{\be\in \mathcal{R}} |(\rv_\be)_x| + \sum_{\be\in \mathcal{NP}} |(\nv_\be)_x| \Bigr)^2 \\
    &\quad +\frac{C}{\de_*} \Bigl(\sum_{\be\in \mathcal{S}} |(\sv_\be)_{xx}| + \sum_{\be\in \mathcal{R}} |(\rv_\be)_{xx}| + \sum_{\be\in \mathcal{NP}} |(\nv_\be)_{xx}| \Bigr) \\
    &\leq \frac{C}{\de_*} \Bigl(\sum_{\be\in \mathcal{S}} |(\sv_\be)_x|^2  + \sum_{\be\in \mathcal{R}} |(\rv_\be)_x|^2 + \sum_{\be\in \mathcal{NP}} |(\nv_\be)_x|^2 + C_n \nu^n \\
    &\quad + \sum_{\be\in \mathcal{S}} \frac{|\s_\be|}{\nu} |(\sv_\be)_x| +  \ml(0)\rn^{-2\co} \Bigr)\\
    &\leq \frac{C}{\de_*}\sum_{\be\in \mathcal{S}} \frac{|\s_\be|}{\nu} |(\sv_\be)_x| + C\de_*^2\rn^{-2\co} \\
    &\leq C \frac{\de_*^2}{\nu} \sum_{\be\in \mathcal{S}}  |(\sv_\be)_x| + C \rn^{-2\co} \, .
\end{align*}
By \eqref{def of weight}, Lemma \ref{lemma estimate of waves}, Lemma \ref{lemma number of waves and interactions}, and $\Phi(v)\leq C$, we have
\begin{align*}
    |B_{51}| + |B_{62}| \leq \frac{C}{\de_*} C_n \nu^n \rn^{-\cpr} \ln \rn^{-1} \leq C_n \nu^n \, .
\end{align*}
Since $p(v|\bv) \mathds{1}_{v>2v_*}\leq C Q(v|\bv) \mathds{1}_{v>2v_*}$, we get
\begin{equation}\label{estimate on P51 and P62}
    \begin{aligned}
        \mcp_{51} + \mcp_{62} &\leq C\de_*^2 \sum_{\be \in \mathcal{S}} \inb |(\sv_\be)_x| (p(v|\bv) \dx + C \nu^{1-2\cs\co} \inb Q(v|\bv) \dx + C_n \nu^n \, .
    \end{aligned}
\end{equation}

We estimate $\mcp_{61}$. Since $p(v)^2 \mu_2(v) v^\ga \leq C \mathds{1}_{v>2v_*} $,
\begin{align*}
    p(v)^2 \mu_2(v) v^\ga \leq C p(v|\bv) \, , \quad p(v)^2 \mu_2(v) v^\ga \leq C Q(v|\bv) \, .
\end{align*}
Together with \eqref{def of weight}, we have
\begin{align*}
    \mcp_{61} &\leq \frac{1}{2} \widetilde{\mathcal{D}} +  \frac{C \nu}{\de_*^2} \inb \Bigl(\sum_{\be\in \mathcal{S}} |(\sv_\be)_x|\Bigr)^2 p(v|\bv) \, dx  \, .
\end{align*}
By Lemma \ref{lemma estimate of waves} and \eqref{weight relation between L and de*}, we get
\begin{align*}
    \Bigl(\sum_{\be\in \mathcal{S}} |(\sv_\be)_x|\Bigr)^2 \leq \sum_{\be\in \mathcal{S}} |(\sv_\be)_x|^2 + C_n \nu^n  \leq C \frac{\de_*^6}{\nu}\sum_{\be\in \mathcal{S}} |(\sv_\be)_x| + C_n \nu^n \, .
\end{align*}
In all, we get
\begin{align}\label{estimate on P61}
    \mcp_{61}\leq \frac{1}{2} \widetilde{\mathcal{D}} + C\de_*^4 \sum_{\be \in \mathcal{S}} \inb |(\sv_\be)_x| (p(v|\bv) \dx + C_n \nu^n \inb Q(v|\bv) \dx \, .
\end{align}

\noindent
\textbf{Summary.}
By \eqref{estimate on P1}, \eqref{estimate on P2}, \eqref{estimate on P3}, \eqref{estimate on P4}, \eqref{estimate on P51 and P62}, \eqref{estimate on P61}, \eqref{rt transition R}, and \eqref{rt transition R NP}, we have
\begin{equation}\label{estimate on P}
    \begin{aligned}
        \mcp &\leq -(1-C\de_*)\md - \frac{1}{2} \widetilde{\mathcal{D}} + C\de_* \sum_{\be \in \mathcal{S}} \inb |(\sv_\be)_x| (p(v|\bv) \dx + C \nu^{1-2\cs\cpr}  \sum_{\be \in \mathcal{R}} G_\be^R \\
        &\quad + C \inb Q(v|\bv) \dx + C \nu^{1-2\cs\cpr} + \frac{C \nu}{\de_*} \inb \Bigl(\sum_{\be \in \mathcal{S}} (a_\be)_x\Bigr)^2 \bigl|p(v)-p(\bv)\bigr|^2 v^{\ga-\al} \dx \, .
    \end{aligned}
\end{equation}

\section{Estimates for main terms localized by derivatives of weights and shocks}\label{section shock}

In this section, we bound the terms localized by the derivatives of the weights and the shocks. We first summarize the results obtained in Section~\ref{sec rarefaction}, Section~\ref{sec NP and boundary}, and Section~\ref{sec parabolic}. We then maximize the terms with respect to $h-\bh$ when $p(v)$ is small compared to $p(\bv)$ in Section~\ref{subsec maximize h}, and define the shifts associated with the shocks in Section~\ref{subsec def of shift}. In Section~\ref{subsec proof of energy estimate between stopping times}, we state Proposition~\ref{proposition shock estimate}, which provides the main estimates in this section for the terms localized by the derivatives of the weights and the shocks, and prove Proposition~\ref{proposition energy estimate}, which gives the estimates between stopping times. The remainder of the section is devoted to proving Proposition~\ref{proposition shock estimate}. We truncate $v$ and estimate the region near $\bar v$ using a Poincar\'e-type inequality for each shock in Section~\ref{subsection estimate inside the truncation}, and the region outside the truncation using the fundamental theorem of calculus in Section~\ref{subsection estimate outside the truncation}.

This section closely follows \cite{NStoE}, and the overall structure of the analysis is similar. The main adaptation is that we work in a half space determined by the Navier-Stokes solution rather than on the whole space, which leads to a slightly different Poincar\'e inequality. Thanks to the construction of the approximate solution in Section~\ref{subsec mft}, in particular Property~\ref{prop transition zone}, the wave fronts remain sufficiently far from the boundary, allowing this Poincar\'e inequality to be reduced to the whole space case.

Since our front tracking construction substantially modifies that of \cite{NStoE}, we present the full argument carefully and include the details in the appendix. We also take this opportunity to reorganize the proof slightly by isolating several estimates that are used repeatedly into separate lemmas.

\medskip

Recall $\mathcal{H}_1$ defined in \eqref{relative entropy method terms}. Lemma \ref{lemma estimates of interaction terms} implies the last term can be bound as follows.
\begin{equation*}
    \begin{aligned}
        &\Bigl|\sum_{\be \in \mathcal{S}} \inb a (h-\bh) \bigl( p'(\bv) - p'({\sv}_\be) \bigr) ({\sv}_\be)_x \dx \Bigr| \\
        &\leq C \inb a |h-\bh| \sum_{\be \in \mathcal{S}} |\bv-{\sv}_\be| \bigl|({\sv}_\be)_x\bigr| \dx \\
        &\leq C \inb a \eta(U|\bu) \dx + C_n \nu^n \, .
    \end{aligned}
\end{equation*}
Together with \eqref{relative entropy method inequality}, \eqref{estimate on H_2}, \eqref{estimate on H3}, \eqref{estimate on B}, \eqref{estimate on P}, \eqref{rt mft total number}, and \eqref{weight a close to 1}, we have
\begin{equation}\label{relative entropy estimate general 1}
    \begin{aligned}
        \frac{d}{dt} \inb a \eta(U|\bu) \dx &\leq \mr(U) + C \inb a \eta(U|\bu) \dx - C_G \sum_{\be \in R}G_\be^R(U) - \frac{1}{2}\widetilde{\mathcal{D}}\\
        &\qquad + C \nu^{\cbet}\ln \rn^{-1} h - \frac{3}{4} |{y}_b'(t)| \eta(U|\bu)\big|_{x=y_b(t)} \, ,
    \end{aligned}
\end{equation}
where $C,C_G>0$ are constant depending only on $U_*$, $G_\be^R$ is defined in \eqref{def of G_i^R}, $\widetilde{\mathcal{D}}$ is defined in \eqref{def of D tilde}, $\cbet$ is defined in \eqref{def of bold c}, $h$ is defined in \eqref{def of D_v, G}, and
\begin{equation}\label{def of R}
    \begin{aligned}
        \mr(U) = \sum_{\be \in \ms} \dot{X}_\be Y_\be(U) + \overline{\mathcal{H}_1}(U) - \mj_{good}(U) - (1-C\de_*) \md(U) \\
        + C \de_* \sum_{\be \in \ms} \inb \bigl| ({\sv}_\be)_x \bigr| p(v|\bv) \dx + \frac{C \nu}{\de_*} \inb \Bigl(\sum_{\be \in \mathcal{S}} |(a_\be)_x|\Bigr)^2 \bigl|p(v)-p({\bv})\bigr|^2 v^{\ga-\al} \dx \, , 
    \end{aligned}
\end{equation}
where $Y_\be(U)$ is defined in \eqref{relative entropy method terms}, $\md$ is defined in \eqref{def of D}, and 
\begin{align*}
    \overline{\mathcal{H}_1}(U) = \sum_{\be \in \ms} \inb (a_\be)_x \bigl(p(v)-p(\bv)\bigr) (h-\bh) \dx + \sum_{\be \in \ms} \las_\be \inb a (\sv_\be)_x p(v|\bv) \dx \, .
\end{align*}
\subsection{Maximization in terms of $h-\bh$}\label{subsec maximize h}
By \eqref{def of R}, we can write
\begin{equation}\label{def of R(U), J}
    \begin{aligned}
        \mr(U) + \de_* \int |(\sv_\be)_x| p(v|\bv) \dx 
        = \sum_{\be \in \ms} \dot{X}_\be Y_\be + \mj_{bad}(U) - \mj_{good}(U) - (1-C \de_*) \md(U) \, ,
    \end{aligned}
\end{equation}
where
\begin{equation}\label{def of J_bad}
    \begin{aligned}
        \mj_{bad}(U) &= \overline{\mathcal{H}_1}(U) + C \de_* \sum_{\be \in \ms} \inb \bigl| ({\sv}_\be)_x \bigr| p(v|\bv) \dx \\
        &\qquad + \frac{C\nu}{\de_*} \inb \Bigl(\sum_{\be \in \mathcal{S}} |(a_\be)_x|\Bigr)^2 \bigl|p(v)-p(\bv)\bigr|^2 v^{\ga-\al} \dx \, .
    \end{aligned}
\end{equation}
We define
\begin{align}\label{def of Omega}
    \om = \{x : \bigl(p(v) - p(\bv)\bigr) (x) \leq \de_1 \} \, ,
\end{align}
where $\de_1$ is chosen in Section \ref{subsec choice of e_0}. We maximize $\mj_{bad}-\mj_{good}$ with respect to $h-\bh$ on $\om$ as follows
\begin{align*}
    \mj_{bad}(U) - \mj_{good}(U) = B_{\de_1}(U) - G_{\de_1}(U) \, ,
\end{align*}
where
\begin{equation}\label{def of B and G}
    \begin{aligned}
        B_{\de_1}(U) &= \sum_{\be \in \ms} \bigl( B_{1\be}(U) + B_{2\be}^+(U) + B_{2\be}^-(U) \bigr) + B_3(U) \, , \\
        G_{\de_1}(U) &= \sum_{i \in \ms} \bigl( G_{1\be}^+(U) + G_{1\be}^-(U) + G_{2\be}(U) \bigr)\, ,
    \end{aligned}
\end{equation}
where
\begin{align*}
    B_{1\be}(U) &= \int (|\las_\be|a + \bc\de_*) |(\sv_\be)_x| p(v|\bv) \dx \, , \\
    B_{2\be}^+(U) &= \frac{1}{2\las_\be} \int_\om (a_\be)_x \bigl|p(v)-p(\bv)\bigr|^2 \dx \, , \\
    B_{2\be}^-(U) &= \int_{\om^c} (a_\be)_x \bigl(p(v) - p(\bv)\bigr) (h-\bh) \dx \, , \\
    B_3(U) &= \frac{C\nu}{\de_*} \int \Bigl(\sum_{\be \in \ms} |(a_\be)_x| \Bigr)^2 v^{\ga-\al} \bigl|p(v) - p(\bv)\bigr|^2 \dx \, , \\
\end{align*}
for some constant 
\begin{align}\label{def of bc}
    \bc > 0 \, ,
\end{align}
depending only on $U_*$, and 
\begin{align*}
    G_{1\be}^+(U) &= \frac{\las_\be}{2} \int_{\om} (a_\be)_x \Bigl|h-\bh - \frac{\pd}{\las_\be}\Bigr|^2 \dx \, , \\
    G_{1\be}^-(U) &= \frac{\las_\be}{2} \int_{\om^c} (a_\be)_x |h-\bh|^2 \dx \, , \\
    G_{2\be}(U) &= \las_\be \int (a_\be)_x Q(v|\bv) \dx \, . 
\end{align*}
By \eqref{def of R(U), J}, we get
\begin{equation}\label{def of R(U), B, G}
    \begin{aligned}
        \mr(U) + \de_* \int |(\sv_\be)_x| p(v|\bv) \dx = \sum_{\be \in \ms} \dot{X}_\be Y_\be + B_{\de_1}(U) - G_{\de_1}(U) - (1-C \de_*) \md(U) \, .
    \end{aligned}
\end{equation}
We also maximize $Y_\be$ with respect to $h-\bh$ on $\om$ as follows.
\begin{equation}\label{def of Yig, Yib, Yil, Yis}
    \begin{aligned}
        Y_\be(U) = Y_\be^g(U) + Y_\be^b(U) + Y_\be^l(U) + Y_\be^s(U) \, ,
    \end{aligned}
\end{equation}
where
\begin{align*}
    Y_\be^g(U) &= - \frac{1}{2\las_\be^2} \int_\om (a_\be)_x \bigl|p(v)-p(\bv)\bigr|^2 - \int_\om (a_\be)_x Q(v|\bv) \dx \\ &\quad - \int_\om a (\sv_\be)_x p'(\bv)(v-\bv) \dx+ \frac{1}{\las_\be} \int_\om a (\sh_\be)_x \bigl(p(v) - p(\bv) \bigr) \dx \, , \\
    Y_\be^b(U) &= -\frac{1}{2} \int_\om (a_\be)_x \Bigl( h-\bh - \frac{p(v)-p(\bv)}{\las_\be}\Bigr)^2 \dx \\
    &\quad - \frac{1}{\las_\be} \int_\om (a_\be)_x \bigl(p(v)-p(\bv)\bigr)\Bigl(h-\bh - \frac{p(v)-p(\bv)}{\las_\be}\Bigr) \dx \, , \\
    Y_\be^l(U) &= \int_\om a (\sh_\be)_x \Bigl( h-\bh - \frac{p(v)-p(\bv)}{\las_\be}\Bigr) \dx \, , \\
    Y_\be^s(U) &= - \int_{\om^c} (a_\be)_x Q(v|\bv) \dx - \int_{\om^c} a (\sv_\be)_x p'(\bv)(v-\bv) \dx  \\
    &\quad - \int_{\om^c} (a_\be)_x \frac{|h-\bh|^2}{2} \dx + \int_{\om^c} a (\sh_\be)_x (h-\bh)  \dx \, .
\end{align*}

\subsection{Construction of shifts for shocks}\label{subsec def of shift}
We define the shift in the same way as in \cite{NStoE}. Let us first define a Lipschitz function $\Phi_\epsilon$ by
\begin{equation*}
    \Phi_\epsilon =
    \begin{cases}
        0 \, , \quad \textup{if } y \leq 0 \, , \\
        -\frac{1}{\epsilon^4} y \, , \quad \textup{if } 0\leq y \leq \epsilon^2 \, , \\
        -\frac{1}{\epsilon^2} \, , \quad \textup{if } y \geq \epsilon^2 \, .
    \end{cases}
\end{equation*}
For $t \in [t_j, t_{j+1})$, i.e., between the $j$-th and $(j+1)$-st stopping times, we define the shift function $X_\be$ for any $i$-shock $\be \in \ms$ by the following nonlinear ODEs.
\begin{equation}\label{ODE of Xi}
    \begin{cases}
        \dot{X}_\be(t) = (-1)^{i-1}\Phi_{\s_\be}\bigl((-1)^{i-1} Y_\be(U)\bigr) \bigl( 2|\mj_{bad}(U)| + 1 \bigr)  + \s_\be^2 C_X{\las_\be}\Phi_{\s_\be}\bigl((-1)^{i}Y_\be(U)\bigr) \, , \\
        X_\be(t_j) = 0 \, ,
    \end{cases}
\end{equation}
where $0<C_X<1$ is the constant chosen in Lemma \ref{lemma estimates on relative functions 2}, and $\mj_{bad}(U)$ is defined in \eqref{def of J_bad}. By Appendix C of \cite{twoshocks}, there exists a unique absolutely continuous solution $X_\be$ on $[t_j, t_{j+1})$.
By \eqref{ODE of Xi}, we get that for each $\be \in \ms$, 
\begin{equation}\label{X_i estimates}
    \dot{X}_\be = \begin{cases}
        (-1)^{i} \s_\be^{-2} \bigl( 2|\mj_{bad}(U)|  + 1 \bigr) \, , \quad \textup{if } (-1)^{i-1} Y_\be(U) \geq \s_\be^2 \, , \\
        -\s_\be^{-4} Y_\be(U) \bigl( 2|\mj_{bad}(U)|  + 1 \bigr) \, , \quad \textup{if } 0\leq (-1)^{i-1} Y_\be(U) \leq \s_\be^2 \, , \\
        (-1)^{i-1} C_X{\las_\be} \s_\be^{-2} Y_\be(U) \, , \quad \textup{if } -\s_\be^2 \leq (-1)^{i-1} Y_\be(U) \leq 0 \, , \\
        -C_X{\las_\be} \, , \quad \textup{if } (-1)^{i-1} Y_\be(U) \leq -\s_\be^2 \, .
    \end{cases}
\end{equation}
We see that
\begin{align*}
    \dot{X}_\be \leq - C_X{\las_\be} \, , \quad \textup{if } i = 1 \, , \quad \dot{X}_\be \geq - C_X{\las_\be} \, , \quad \textup{if } i = 2 \, .
\end{align*}
Hence, we have 
\begin{align}\label{speed of shock bound}
    \dot{X}_\be + \las_\be \leq (1-{C_X})\las_\be<0 \, , \quad \textup{if } i = 1 \, , \quad \dot{X}_\be + \las_\be \geq (1-C_X){\las_\be}>0 \, , \quad \textup{if } i = 2 \, ,
\end{align}
which ensures the separation of waves.

\subsection{Proof of Proposition \ref{proposition energy estimate}}\label{subsec proof of energy estimate between stopping times}
By \eqref{X_i estimates}, we have
\begin{equation}\label{X_i Y_i estimates}
    \dot{X}_\be Y_\be(U) = 
    \begin{cases}
        - 2|\mj_{bad}(U)| \, , \quad \textup{if } (-1)^{i-1} Y_\be(U) \geq \s_\be^2 \, , \\
        -\s_\be^{-4} |Y_\be(U)|^2 \, , \quad \textup{if } 0\leq (-1)^{i-1} Y_\be(U) \leq \s_\be^2 \, , \\
        (-1)^{i-1} C_X{\las_\be} \s_\be^{-2} |Y_\be(U)|^2 \, , \quad \textup{if } -\s_\be^2 \leq (-1)^{i-1} Y_\be(U) \leq 0 \, , \\
        -C_X{\las_\be} Y_\be(U) \, , \quad \textup{if } (-1)^{i-1} Y_\be(U) \leq -\s_\be^2 \, .
    \end{cases}
\end{equation}
Recall \eqref{def of R(U), J}. For any $U \in \cup_{\be \in \ms} \{U : (-1)^{i-1}Y_\be(U) > \s_\be^2\}$, we have
\begin{align}\label{R(U) estimate simple case}
    \mr(U) + \de_* \int |(\sv_\be)_x| p(v|\bv) \dx \leq - |\mj_{bad}(U)|  - \mj_{good}(U) - (1-C \de_*)\md(U) \leq 0 \, .
\end{align}
For $U \in \cup_{\be \in \ms} \{U : (-1)^{i-1}Y_\be(U) > \s_\be^2\}^c = \cap_{\be \in \ms} \{U : (-1)^{i-1}Y_\be(U) \leq \s_\be^2\}$, we apply the following proposition, which corresponds to Proposition 10.1 in \cite{NStoE}. 

The remainder of this section after Section~\ref{subsec proof of energy estimate between stopping times} is devoted to its proof. More precisely, the main idea in proving Proposition~\ref{proposition shock estimate} is to truncate $v$ near $\bv$. The proof is then divided into estimates within and outside the truncation region. The estimates within the truncation region are derived in Section~\ref{subsection estimate inside the truncation}, while those outside the truncation region are derived in Section~\ref{subsection estimate outside the truncation}. These estimates are combined in Appendix~\ref{section proof of prop shock estimate} to complete the proof of Proposition~\ref{proposition shock estimate}.
\begin{prop}\label{proposition shock estimate}
There exists a constant $\cde>0$, depending only on $U_*$ and $\de_1$, such that for any $U\in \cap_{\be \in \ms} \{U : (-1)^{i-1}Y_\be(U) \leq \s_\be^2\}$, the following holds.
\begin{align*}
    &\mr(U) + \de_* \int |(\sv_\be)_x| p(v|\bv) \dx + \frac{\de_1}{4} \md(U) \\
    &\leq \cde \de_*^{1/2} \sum_{\be \in \mr} \int |(\rv_\be)_x| p(v|\bv) \dx + \cde \de_*^{1/2} \sum_{\be \in \mnp} \int |(\nv_\be)_x| Q(v|\bv) \dx \\
    &\qquad+ C\nu^2 + \frac{C\nu^{1-\cs(2\co+\cpr)}\ln \rn^{-1}}{\de_*}  \int a \eta(U|\bu) \dx \, .
\end{align*}
\end{prop}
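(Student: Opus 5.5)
The proof will follow Proposition 10.1 of \cite{NStoE}. We truncate $v$ near $\bv$ and argue shock by shock. Using \eqref{def of R(U), B, G}, we rewrite the left-hand side as
\[
\sum_{\be\in\ms}\dot X_\be Y_\be + B_{\de_1}(U) - G_{\de_1}(U) - \bigl(1-C\de_*-\tfrac{\de_1}{4}\bigr)\md(U).
\]
On $\cap_{\be}\{(-1)^{i-1}Y_\be\le \s_\be^2\}$, the formula \eqref{X_i Y_i estimates} gives $\dot X_\be Y_\be\le -\s_\be^{-4}|Y_\be|^2$ when $0\le (-1)^{i-1}Y_\be\le\s_\be^2$. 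In the other two regimes it gives a term of the form $-C_X|\las_\be|\s_\be^{-2}|Y_\be|^2$ or $-C_X|\las_\be||Y_\be|$. In every case we obtain a good term of order $-C\s_\be^{-2}\min\{|Y_\be|^2,\s_\be^2|Y_\be|\}$, and this is what compensates the bad terms in the Poincar\'e step.

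Next we introduce the truncation $\bar v_\be$ of $v$ at distance $\de_1$ from $\bv$, localized near each shock $\be$ on the region where $|p(v)-p(\bv)|\le\de_1$. We decompose every term as its value at the truncated function plus a remainder. Inside the truncation, the key localization fact is Property~\ref{prop transition zone}: the main transition zone $\widetilde I_\be$ stays at distance $\gtrsim\rn^{\co}$ from $y_b$ and from other waves. Lemma~\ref{lemma estimate of waves} 4) makes $(\sv_\be)_x$ of size $O(\nu^n)$ outside $\widetilde I_\be$. We therefore change variables to $y=\frac{\tilde v_\be-v_-}{v_+-v_-}\in(0,1)$ on the whole line; the half-line boundary contributes only $C_n\nu^n$ errors, which are absorbed into $C\nu^2$.

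Near each shock $\bv\approx\sv_\be$ up to $C_n\nu^n$ by Lemma~\ref{lemma estimates of interaction terms}. After this change of variables the terms $B_{1\be},B_{2\be}^+,G_{1\be}^+,G_{2\be},Y_\be^g$ reduce to the model functional of \cite{MR4195742,NStoE}. We then apply the nonlinear Poincar\'e inequality on $(0,1)$, with the diffusion $\md$ controlled from below by $\frac{\s_\be}{\nu}\int y(1-y)|\p_y(\cdot)|^2$. Combining it with the good shift term and the smallness $\de_*\ll\de_1$ from \eqref{relation delta 1}--\eqref{relation delta 9}, we get that the inside-truncation part of $B-G+\dot XY-(1-\de_1)\md$ is $\le -\de_*\int|(\sv_\be)_x|p(v|\bv)$ for $\s_\be$ small.

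The cross terms between shocks, and the pieces of $\bv_x$ coming from rarefactions and non-physical waves inside $\md$, produce the errors $\cde\de_*^{1/2}\sum_{\mr}\int|(\rv_\be)_x|p(v|\bv)$ and $\cde\de_*^{1/2}\sum_{\mnp}\int|(\nv_\be)_x|Q(v|\bv)$. These are controlled through Young's inequality against $\de_*\md$.

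Outside the truncation we work with $B_{2\be}^-$, $B_3$, $Y_\be^s$, and the remainder parts of the other terms, on the set where $|p(v)-p(\bv)|>\de_1$. There we use the fundamental theorem of calculus together with $\md$. On $\om^c$ the quantity $p(v)-p(\bv)$ is bounded below, so pointwise values of $|p(v)-p(\bv)|-\de_1$ are controlled by $\md$ and the localized $L^1$ mass over $I_\be$, which has length $\rn^{\co}$. The $(a_\be)_x\sim\s_\be^2/(\nu\de_*)$ weighting then yields a bound $\frac{C\nu^{1-\cs(2\co+\cpr)}\ln\rn^{-1}}{\de_*}\int a\eta(U|\bu)$ after summing over the $C\rn^{-\cpr}\ln\rn^{-1}$ shocks given by Lemma~\ref{lemma number of waves and interactions}. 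The $h$-dependent bad terms on $\om^c$ are absorbed by $G_{1\be}^-$. The region $v$ small (large $p(v)$) is treated with the lower bound on $Q$ and the bound \eqref{bound on v^-gamma}.

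The main obstacle is the inside-truncation step. One must verify that the shifted quantities $Y_\be$ of the truncated and untruncated functions differ by terms controllable by $\de_1$-smallness, so that $-\s_\be^{-2}|Y_\be|^2$ still dominates the linear part of the Poincar\'e functional. I would first try to reproduce the \cite{NStoE} estimates $|Y_\be(U)-Y_\be(\bar U)|\lesssim$ outside-truncation quantities, and check that the uniform lower bound $\s_\be\ge\rn$ is only needed in the form \eqref{rt transition S}.
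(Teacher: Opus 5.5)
Your overall architecture (truncation at level $\de_1$, Poincar\'e inequality inside, fundamental theorem of calculus outside, shift term paying for $Y_\be$) matches the paper. However, the outside-truncation step as you describe it does not close, and it misidentifies where the error terms on the right-hand side come from.

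The weight $(a_\be)_x$ lives on the scale $\nu/|\s_\be|$, not on $I_\be$. The paper uses Lemma~\ref{lemma smallness of localized relative entropy 1}, which is exactly where the hypothesis $(-1)^{i-1}Y_\be\le\s_\be^2$ is used, to find $\xoi$ with $|\xoi-y_\be|\le\nu/|\s_\be|$ and $|p(v)-p(\bv)|(\xoi)\le\mathbf{C}\de_*<\de_1$. It integrates from $\xoi$ to get \eqref{truncation estimate vb}--\eqref{truncation estimate vs}. Lemma~\ref{lemma technical preliminary} then gives $\int_{\ji}|(a_\be)_x||x-\xoi|\mathds{1}_{\{|p(v)-p(\bv)|\ge\de_1\}}\dx\le C\nu\de_*^{1/2}$, whose factor $\nu$ cancels the $1/\nu$ in $\md_\be/\nu$. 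If you instead control pointwise values through $\md$ and an $L^1$ mass over an interval of length $\rn^{\co}$, you pay a factor $|\s_\be|\rn^{\co}/(\nu\de_*)\ge\nu^{\cs(1+\co)-1}/\de_*\to\infty$ in front of $\md$, by \eqref{rt transition S}.

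Consequently the outside-truncation pieces ($B_{2\be}^-$, $B_3$, $Y_\be^s$, and the differences $B(U)-B(\lu)$) are bounded by good quantities: $\de_*^{1/2}(\md_\be+\tg_\be)$, $G_{2\be}(U)-G_{2\be}(\lu)$ and $(\de_1+C\de_*^{1/4})G_{1\be}^-$. They carry the weight $(a_\be)_x\sim\s_\be^2/(\nu\de_*)$ and cannot be bounded by $o(1)\int a\eta(U|\bu)\dx$.

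The error terms on the right-hand side have different origins from the ones you give.
\begin{itemize}
\item The term $\frac{C\nu^{1-\cs(2\co+\cpr)}\ln\rn^{-1}}{\de_*}\int a\eta(U|\bu)\dx$ comes from the cutoffs $\ts$, when $\md(\lu)$ is compared with $\sum_\be\widetilde\md_\be(\lv)$. This uses $|(\ts)_x|\le C\nu^{-\cs\co}$ and the number of shocks.
\item The rarefaction and non-physical terms come from $C\de_*^{1/2}\tg$, produced by the $\bv_x$ term in \eqref{truncation estimate vs}. One then uses $\tg\le C\de_*\sum_{\ms}G_{2\be}+C\sum_{\mr}\int|(\rv_\be)_x|p(v|\bv)\dx+C\sum_{\mnp}\int|(\nv_\be)_x|Q(v|\bv)\dx$. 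The first part is absorbed by the leftover good term $\de_*\de_1G_{2\be}$.
\item Young's inequality against $\de_*\md$ would instead produce $\nu\de_*^{-1}\int|(\rv_\be)_x|^2(\cdots)$, which is not the stated form.
\item Small $v$ is handled by the weighted estimate \eqref{truncation estimate vs} with exponent $q=2\ga/(\ga+\al)$. It is not handled by \eqref{bound on v^-gamma}, which would bring in $\mathcal{D}_v$, absent from the statement.
\end{itemize}

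The obstacle you flag at the end is not resolved by $\de_1$-smallness. The paper splits the shocks into $I_*$, those with $\de_*^{1/2}\md_\be+\nu|\s_\be|\md\le|\s_\be|\de_*$ (see \eqref{def of I_*}), and its complement.
\begin{itemize}
\item For $\be\notin I_*$ no Poincar\'e inequality is used. By \eqref{prop difference eq 2}, $|B_{1\be}(\lu)|+|B_{2\be}^+(\lu)|\le C\de_*|\s_\be|<C(\de_*^{1/2}\md_\be+\nu|\s_\be|\md)$, which the dissipation absorbs.
\item For $\be\in I_*$, condition \eqref{prop difference separation} is exactly what makes $|Y_\be(U)-\my_\be^g(\lv)|^2$ comparable to $\de_*|\s_\be|$ times good terms, as in \eqref{prop difference eq 4}. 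Without it the squares of these differences are not controlled.
\end{itemize}

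You also need the shift contribution in the form $G_{Y_\be}\le-\frac{6}{|\s_\be|\de_1}|Y_\be|^2$, to match $-\frac{1}{|\s_\be|\de_0}\my_\be^g(\lv)^2$ in Lemma~\ref{lemma inside truncation}. Your bound $-C\s_\be^{-2}\min\{|Y_\be|^2,\s_\be^2|Y_\be|\}$ gives this in the linear regime only after using $|Y_\be|\le C|\s_\be|\de_*$ from Lemma~\ref{lemma smallness of localized relative entropy 1} together with \eqref{relation delta 3}. The hypothesis $|\my_\be^g(\lv)|\le C_1|\s_\be|\de_*$ of Lemma~\ref{lemma inside truncation} is Lemma~\ref{lemma smallnss of Y_i}, again a consequence of that same lemma.

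Two smaller points:
\begin{itemize}
\item In the variable $y$ the dissipation is of order $\bsi\int y(1-y)|\p_y(\cdot)|^2\,dy$, not $\frac{\s_\be}{\nu}\int\cdots$. The Poincar\'e step is a sharp balance at the same scale $\bsi$ as $\mi_{1\be},\mi_{2\be},\mg_{2\be}$.
\item Since $v$ is only defined on $(y_b,\infty)$, you cannot pass to the whole line. The paper works on $[0,c_b]$ with $1-c_b\le C\nu$ and rescales $\ty=y/c_b$.
\end{itemize}
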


By \eqref{R(U) estimate simple case} and Proposition \ref{proposition shock estimate}, 3) in Lemma \ref{lemma estimate of waves}, 3) in Corollary \ref{lemma strength of waves}, we have
\begin{align*}
    &\mr(U) + \de_* \int |(\sv_\be)_x| p(v|\bv) \dx + C \md (U) \\
    &\leq \cde \de_*^{1/2} \sum_{\be \in \mr} G_\be^R(U) + \cde \de_*^{1/2} \nu^{\cs(1-\co-\cpr)} \ln\rn^{-1} \int Q(v|\bv) \dx \\
    &\qquad + C\nu^2 + \frac{C\nu^{1-\cs(2\co+\cpr)}\ln \rn^{-1}}{\de_*}  \int a \eta(U|\bu) \dx \, ,
\end{align*}
where $G_\be^R$ is defined in \eqref{def of G_i^R}. By \eqref{rt mft total number}, we get
\begin{align*}
    &\mr (U) + \de_* \int |(\sv_\be)_x| p(v|\bv) \dx + C \md(U) \\
    &\leq \widetilde{C}_G \de_*^{1/2} \sum_{\be \in \mr} G_\be^R(U) +  C\nu^2 + C \int a \eta(U|\bu) \dx \, ,
\end{align*}
for some constants $\widetilde{C}_G,C>0$ depending only on $U_*,\de_1$.

By \eqref{relative entropy estimate general 1}, we can take $\de_*$ small enough such that
\begin{align}\label{relation delta 5}
    \widetilde{C}_G \de_* < C_G \, ,
\end{align}
and get \eqref{proposition energy estimate eq}.

\subsection{Decomposition of the domain}\label{subsec domain decomposition}
We first decompose $\md$ with respect to the transition zone $I_\be$, defined in \eqref{def of transition zone}, of each shock $\be$.  More precisely, we define 
\begin{equation}\label{def of ji}
    \ji =
    \begin{cases}
        \bigl(y_b, y_\be + \frac{y^R}{4}] \, , \quad \textup{if } y_\be \, \textup{ is the leftmost wave,} \\
        (y_\be - \frac{y^L}{4} , \infty) \, , \quad \textup{if } y_\be \textup{ is the rightmost wave,} \\
        (y_\be - \frac{y^L}{4} , y_\be + \frac{y^R}{4}] \, , \quad \textup{otherwise.}
    \end{cases}
\end{equation}
where $y^R$ (respectively, $y^L$) denotes the absolute distance between $y_\be$ and the wave adjacent to it on the right (respectively, left).

We note that
\begin{align*}
    (y_b,\infty) = \cup_{\be \in \ms \cup \mr \cup \mnp} \ji \, .
\end{align*}
We observe
\begin{align}\label{scale of Ji}
    |\ji| \geq \frac{\rn^{\co}}{2} = \frac{\nu^{\co\cs}}{2}\, .
\end{align}
For each $\be \in \ms$, we introduce a cutoff function $\ts$
\begin{equation}\label{def of cutoff function}
    \ts = 
    \begin{cases}
        1 \, , \quad \textup{if } x\in \ji \, , \\
        0 \, , \quad \textup{if } x\in \overline{\ji}^c \, , \\
        \textup{linear,} \quad \textup{otherwise,}
    \end{cases}
\end{equation}
where
\begin{align*}
    \overline{\ji}(t) =
    \begin{cases}
        \bigl(y_b, y_\be + \frac{y^R}{2}] \, , \quad \textup{if } y_\be \, \textup{ is the leftmost wave,}  \\
        (y_\be - \frac{y^L}{2} , \infty) \, , \quad \textup{if } y_\be \textup{ is the rightmost wave,} \\
        (y_\be - \frac{y^L}{2} , y_\be + \frac{y^R}{2}] \, , \quad \textup{otherwise.}
    \end{cases}
\end{align*}
We note that 
\begin{align}\label{properties of ts}
    \sum_{\be \in \ms} \ts \leq 1 \, , \quad 0\leq \ts^2\leq \ts \leq 1 \, , \quad |(\ts)_x| \leq C\nu^{-\cs\co}
\end{align}

\subsection{Sharp estimates inside the truncation}\label{subsection estimate inside the truncation}
For any $\de>0$, we define 
\begin{align}
    \psi_{\de}(y) = \inf \bigl(\de, \sup(-\de, y)\bigr) \, ,
\end{align}
and define $\lv_\de$ by
\begin{align}\label{def of truncation general}
    p(\lv_{\de}) - p(\bv) = \psi_{\de}\bigl(p(v)-p(\bv)\bigr) \, .
\end{align}
In other words, we truncate $v$ around $\bv$ at level $\de$.

We consider the following functionals
\begin{equation}\label{terms associated with shocks}
    \begin{aligned}
        \my_\be^g(v) =& - \frac{1}{2\las_\be^2} \inb (a_\be)_x \ts^2 |\pd|^2 \dx - \inb (a_\be)_x \ts^2 Q(v|\bv) \dx \\
        & - \inb a \, \p_x p(\sv_\be) \ts (v-\bv) \dx + \frac{1}{\las_\be} \inb a (\sh_\be)_x \ts (\pd) \dx \, \\
        \mi_{1\be}(v) =& \, \las_\be \inb a (\sv_\be)_x \ts^2 p(v|\bv) \dx \, , \\
        \mi_{2\be}(v) =& \,  \frac{1}{2\las_\be} \inb (a_\be)_x \ts^2 |\pd|^2 \dx \, , \\
        \mg_{2\be}(v) =& \, \las_\be \inb (a_\be)_x \Bigl( \frac{1}{2\ga} p(\sv_\be)^{-\frac{1}{\ga}-1}\ts^2 \bigl(\pd\bigr)^2 \\
        &- \frac{1+\ga}{3\ga^2} p(\sv_\be)^{-\frac{1}{\ga}-2}\ts^3\bigl(\pd\bigr)^3\Bigr) \dx \, , \\
        \widetilde{\md}_\be(v) =& \nu\inb a \mu_1(v)v^\ga \Bigl| \bigl(\ts \pd\bigr)_x \Bigr|^2 \dx \, .
    \end{aligned}
\end{equation}
Recall \eqref{def of weight a_i}. We have
\begin{align*}
    \mi_{1\be}(v) \, , \; \mi_{2\be}(v) \geq 0 \, . 
\end{align*}

We first introduce a lemma containing estimates on relative functions.
\begin{lemm}\label{lemma relative evaluations}
Let $\ga>1$ and $v_*>0$. There exist sufficiently small constants $\de_0,C_0>0$ and a constant $C>0$ such that the following holds.

\noindent
Assume $\ml(0) \leq C_0 \de_0$ and $\bv$ is constructed as in Section \ref{sec mft}. Then 
\begin{align}\label{relative evaluation Q lower bounds}
    C Q(w|\bv) \geq  
    \begin{cases}
        |w-\bv|^2 \, , \quad \textup{if } 0<w<3v_* \, , \\
        |w-\bv| \, , \quad \textup{if } w\geq 3v_* \, .
    \end{cases}
\end{align}
We define $\lv=\lv_\de$, with $\de \leq \de_0$, as in \eqref{def of truncation general}. Then
\begin{align}\label{relative evaluation Q}
    Q(v|\bv) \geq Q(\lv|\bv) \, ,
\end{align}
and there exists some constant $C_{\de} > 0 $, depending on $\de$, such that
\begin{align}\label{relative evaluation Q and ||}
    Q(v|\bv) - Q(\lv|\bv) \geq C_{\de}|v-\lv| \, .
\end{align}
Finally, we have
\begin{align}\label{relative evaluation A.3}
    Q(\lv|\bv) \geq \frac{p(\bv)^{-\frac{1}{\ga}-1}}{2\ga} \bigl(p(\lv) - p(\bv) \bigr)^2 - \frac{1+\ga}{3\ga^2} p(\lv)^{-\frac{1}{\ga}-2}\bigl(p(\lv) - p(\bv)\bigr)^3 \, .
\end{align}

\end{lemm}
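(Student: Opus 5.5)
The plan is to treat Lemma~\ref{lemma relative evaluations} as a collection of elementary one-variable inequalities. Everything rests on two facts. First, $\bv$ stays uniformly close to $v_*$: by Lemma~\ref{lemma L+kQ}, $\|\bv-v_*\|_{L^\infty}\le C\ml(0)\le CC_0\de_0$, so $\bv\in[v_*/2,2v_*]$ once $C_0\de_0$ is small. Second, we write $Q(w|\bv)=\int_{\bv}^{w}(w-s)Q''(s)\,ds$ with $Q''(s)=-p'(s)=\ga s^{-\ga-1}>0$.

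For \eqref{relative evaluation Q lower bounds} we split according to the size of $w$.
\begin{itemize}
\item On $0<w<3v_*$ we have $Q''\ge c>0$ on the interval between $w$ and $\bv$, because $s\le 3v_*$ there. This gives $Q(w|\bv)\ge c|w-\bv|^2$.
\item On $w\ge3v_*$ we use convexity: $Q(\cdot|\bv)$ is convex and increasing for $w>\bv$, so its growth is at least linear. Concretely, $Q(w|\bv)\ge \frac{w-\bv}{3v_*-\bv}\,Q(3v_*|\bv)\ge c|w-\bv|$.
\end{itemize}

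For \eqref{relative evaluation Q} and \eqref{relative evaluation Q and ||}, recall that the truncation \eqref{def of truncation general} puts $\lv$ between $\bv$ and $v$, since $p$ is monotone. Moreover $\lv=v$ when $|p(v)-p(\bv)|\le\de$. Because $Q(\cdot|\bv)$ is convex with its minimum at $\bv$, it is monotone on each side of $\bv$, and \eqref{relative evaluation Q} follows at once.

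For \eqref{relative evaluation Q and ||} we only need to treat the case $v\ne\lv$. Then $|p(\lv)-p(\bv)|=\de$, so $|\lv-\bv|\ge c_\de>0$. The mean value theorem gives
\[
Q(v|\bv)-Q(\lv|\bv)=\bigl(Q'(\xi)-Q'(\bv)\bigr)(v-\lv)
\]
for some $\xi$ between $\lv$ and $v$. Since $Q'=-p$, the factor equals $-(p(\xi)-p(\bv))$. It has the sign of $v-\lv$ and absolute value at least $|p(\lv)-p(\bv)|=\de$. Hence the difference is at least $\de|v-\lv|$, and we can take $C_\de=\de$.

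The main computation is \eqref{relative evaluation A.3}. We change variables to $z=p(w)$, so that $w=z^{-1/\ga}$. Writing $\bar z=p(\bv)$ and $z=p(\lv)$, we get
\[
Q(\lv|\bv)=F(z)-F(\bar z)-F'(\bar z)(z-\bar z),\qquad F(z)=\tfrac{1}{\ga-1}z^{\frac{\ga-1}{\ga}},\quad F'(z)=z^{-1/\ga}.
\]
We Taylor-expand this relative function to third order with integral remainder. The relevant derivatives are $F''(z)=-\frac1\ga z^{-1/\ga-1}$ and $F'''(z)=\frac{1+\ga}{\ga^2}z^{-1/\ga-2}$. Because of the sign conventions ($Q''>0$ in $v$), we first compute the relative function in the $z$ variable exactly, for instance by direct integration. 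The quadratic coefficient is $\frac{1}{2\ga}\bar z^{-1/\ga-1}$ and the cubic coefficient is $-\frac{1+\ga}{6\ga^2}\bar z^{-1/\ga-2}$. The remainder is quartic with a definite sign, up to a constant depending on $\de_0$.

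The stated bound has a cubic coefficient of $\frac{1+\ga}{3\ga^2}$, which is twice the sharp coefficient, and it is evaluated at $p(\lv)$ instead of $\bar z$. This slack is exactly what lets us absorb two errors, since $|z-\bar z|\le\de\le\de_0$ is small:
\begin{itemize}
\item the quartic remainder;
\item the discrepancy $|z^{-1/\ga-2}-\bar z^{-1/\ga-2}|\le C\de_0$.
\end{itemize}
Both are $O(|z-\bar z|^4)$ or $O(\de_0|z-\bar z|^3)$, and both are dominated by $\frac{1+\ga}{6\ga^2}\bar z^{-1/\ga-2}|z-\bar z|^3$. This is where the smallness of $\de_0$ gets fixed.

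The expected obstacle is the case where $z-\bar z$ has the sign that makes the cubic term favorable. There we instead need the quartic remainder to be nonnegative, or dominated by the quadratic term for small $\de_0$. That comparison also holds once $\de_0$ is small.
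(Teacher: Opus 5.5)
Your arguments for \eqref{relative evaluation Q lower bounds}, \eqref{relative evaluation Q} and \eqref{relative evaluation Q and ||} are fine: the chord/convexity bound for $w\ge 3v_*$, the monotonicity of $Q(\cdot|\bv)$ on each side of $\bv$, and the mean value argument with $C_\de=\de$. The paper states this lemma without proof; it is the standard Kang--Vasseur relative-function lemma.

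The gap is in \eqref{relative evaluation A.3}. Your identity $Q(\lv|\bv)=F(z)-F(\bar z)-F'(\bar z)(z-\bar z)$ is false, because relative functions are not invariant under the nonlinear substitution $z=p(w)$. In fact $F'(z)=\frac1\ga z^{-1/\ga}$, so $F$ is concave and that expression is nonpositive. The sign flip you invoke ``because of the sign conventions'' is therefore not justified. The correct formula is
\[
Q(w|\bv)=F(z)-F(\bar z)+\bar z\bigl(z^{-1/\ga}-\bar z^{-1/\ga}\bigr),\qquad \p_z Q(w|\bv)=\tfrac1\ga\, z^{-\frac1\ga-1}(z-\bar z).
\]
With the cross term included, the third $z$-derivative at $\bar z$ is $-\frac{2(1+\ga)}{\ga^2}\bar z^{-1/\ga-2}$. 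So the sharp cubic coefficient is $-\frac{1+\ga}{3\ga^2}\bar z^{-1/\ga-2}$, exactly the one in the statement. The factor-two slack your absorption argument relies on does not exist. Even with your coefficient the scheme would be inconsistent: a larger cubic coefficient helps only when $z>\bar z$. For $z<\bar z$ it produces a cubic-order deficit that neither a quartic remainder nor the quadratic terms (which coincide exactly) can absorb.

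Without slack, the inequality as printed is actually false, so no choice of $\de_0$ can rescue a perturbative proof. Evaluating the cubic coefficient at $z=p(\lv)$ instead of $\bar z$ raises the right-hand side by
\[
\frac{1+\ga}{3\ga^2}\bigl(\bar z^{-\frac1\ga-2}-z^{-\frac1\ga-2}\bigr)(z-\bar z)^3=\frac{(1+\ga)(1+2\ga)}{3\ga^3}\bar z^{-\frac1\ga-3}(z-\bar z)^4+O(|z-\bar z|^5).
\]
The quartic Taylor term of $Q(\lv|\bv)$, by contrast, is only $\frac{(1+\ga)(1+2\ga)}{8\ga^3}\bar z^{-\frac1\ga-3}(z-\bar z)^4$. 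Hence the left side minus the right side equals $-\frac{5(1+\ga)(1+2\ga)}{24\ga^3}\bar z^{-\frac1\ga-3}(z-\bar z)^4+O(|z-\bar z|^5)<0$ for all small $z\neq\bar z$. For example, with $\ga=2$ and $z=1.1\,\bar z$ one gets about $2.271\times10^{-3}$ against $2.303\times10^{-3}$, in units of $\bar z^{1/2}$.

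The factor $p(\lv)^{-\frac1\ga-2}$ must be read as $p(\bv)^{-\frac1\ga-2}$. That is the Kang--Vasseur form, and it is the one the paper actually uses when it bounds $-\bigl(G_{2\be}(\lu)-\mg_{2\be}(\lv)\bigr)$ by $|\bv-\sv_\be|$. That version holds for every $z>0$ with no smallness assumption. Write
\[
Q(\lv|\bv)=\frac1\ga\int_{\bar z}^{z}(\zeta-\bar z)\,\zeta^{-\frac1\ga-1}\,d\zeta .
\]
Then use the tangent-line inequality $\zeta^{-\frac1\ga-1}\ge\bar z^{-\frac1\ga-1}-\bigl(1+\frac1\ga\bigr)\bar z^{-\frac1\ga-2}(\zeta-\bar z)$ for this convex function. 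Since $(\zeta-\bar z)\,d\zeta\ge0$ along the path from $\bar z$ to $z$ whatever the sign of $z-\bar z$, integrating gives exactly $\frac{\bar z^{-1/\ga-1}}{2\ga}(z-\bar z)^2-\frac{1+\ga}{3\ga^2}\bar z^{-1/\ga-2}(z-\bar z)^3$.
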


The following lemma, corresponding to Lemma 10.2 in \cite{NStoE}, shows that the relative entropy localized near a shock $\su_\be$ is small provided that the associated quantity $(-1)^{i-1}Y_\be$ is small. The proof follows the same argument. In particular, the presence of the boundary does not affect the proof. For completeness, we include a short proof in Appendix \ref{section proof lemma smallness of localized relative entropy}.

Recall that $\de_*$ is used to define the weight function in Section \ref{subsec construction of weights}, and that it bounds the weighted total variation $\ml$ as in \eqref{weight relation between L and de*}.

\begin{lemm}\label{lemma smallness of localized relative entropy 1}
There exist $C,\de_0>0$, depending only on $U_*$, such that the following holds. 

Assume $\de_*<\de_0$. Then whenever $(-1)^{i-1}Y_\be \leq \s_\be^2$, 
\begin{align}\label{lemma smallness eq 1}
    \inb |(a_\be)_x| |h-\bh|^2 \dx + \inb |(a_\be)_x| Q(v|\bv) \dx \leq C |\s_\be| \de_* \, ,
\end{align}
and 
\begin{align}\label{lemma smallness eq 2}
    |Y_\be(U)| \leq C |\s_\be| \de_* \, .
\end{align}
\end{lemm}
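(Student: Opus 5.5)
The plan is to exploit the structure of $Y_\be$ in \eqref{relative entropy method terms}: its first term is $-\inb (a_\be)_x \eta(U|\bu)\dx$, which is sign-definite (for a $1$-shock $(a_\be)_x$ has one sign, for a $2$-shock the other, by \eqref{def of weight a_i} and the monotonicity of $\sv_\be$). The second term is linear in $U-\bu$ and is localized by $\p_x\su_\be$. The idea is that a bound $(-1)^{i-1}Y_\be\le \s_\be^2$ forces the sign-definite quadratic part to be controlled by the linear part. Absorbing the linear part via Young's inequality then closes the estimate.

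\textbf{Step 1: rewrite the inequality.} Write $(-1)^{i-1}Y_\be = \inb |(a_\be)_x|\,\eta(U|\bu)\dx + (-1)^{i-1}\inb a\nabla^2\eta(\bu)(U-\bu)\p_x\su_\be\dx$. The sign convention is the one for which the quadratic term appears positively in the ODE \eqref{ODE of Xi}. Using \eqref{useful computation}, $\nabla^2\eta(\bu)$ is diagonal with entries $-p'(\bv)$ and $1$, so the linear term is bounded by $C\inb |(\su_\be)_x|\,(|v-\bv|+|h-\bh|)\dx$. By \eqref{weight equivalence}, $|(\su_\be)_x|\le C\de_*|(a_\be)_x|$.

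\textbf{Step 2: bound the linear term by the quadratic one.} On $\{v<3v_*\}$, Young's inequality gives
\[
\de_*|(a_\be)_x|\,|v-\bv| \le \tfrac14|(a_\be)_x|\,Q(v|\bv) + C\de_*^2|(a_\be)_x|,
\]
using \eqref{relative evaluation Q lower bounds}. On $\{v\ge 3v_*\}$ we have $|v-\bv|\le CQ(v|\bv)$, so the product with $\de_*|(a_\be)_x|$ is absorbed for $\de_*$ small. The $h$-term is handled the same way: $\de_*|(a_\be)_x|\,|h-\bh|\le \tfrac14|(a_\be)_x|\,|h-\bh|^2+C\de_*^2|(a_\be)_x|$. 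Since $\int|(a_\be)_x|\dx\le C|\s_\be|/\de_*$, all remainders are $O(\de_*|\s_\be|)$.

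\textbf{Step 3: conclude.} From $(-1)^{i-1}Y_\be\le\s_\be^2$ and Step 2,
\[
\tfrac12\inb|(a_\be)_x|\bigl(Q(v|\bv)+\tfrac12|h-\bh|^2\bigr)\dx \le \s_\be^2 + C\de_*|\s_\be| .
\]
Since $|\s_\be|\le C\ml(0)\le C\de_*^3$ by \eqref{weight relation between L and de*}, we have $\s_\be^2\le C\de_*|\s_\be|$, which gives \eqref{lemma smallness eq 1}. For \eqref{lemma smallness eq 2}, bound $|Y_\be|$ by the quadratic term plus the linear term. The linear term is estimated by Cauchy--Schwarz against the weight $|(a_\be)_x|$: the factor $\sqrt{\int |(a_\be)_x|\eta}\le C\sqrt{|\s_\be|\de_*}$, and the other factor is $\de_*\sqrt{\int|(a_\be)_x|}\le C\sqrt{\de_*|\s_\be|}$. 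The only delicate point is the large-$v$ region, where $Q$ grows only linearly; there one uses $|v-\bv|\le CQ$ directly rather than the $L^2$ bound. Everything is supported where $(a_\be)_x\neq0$, which lies at distance at least $\rn^{\co}$ from the boundary, up to exponentially small tails, by Property~\ref{prop transition zone}; hence the half-line plays no role.

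The main obstacle is making sure that the sign of $(a_\be)_x$ matches $(-1)^{i-1}$, so that the quadratic part enters with the favorable sign. I would check this first from \eqref{def of weight a_i} and the monotonicity of $p(\sv_\be)$ for each family.
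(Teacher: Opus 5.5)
Your proposal is correct and follows essentially the same route as the paper's proof in Appendix~B. Multiply $Y_\be$ by the sign of $-(a_\be)_x$ so that $\int|(a_\be)_x|\eta(U|\bu)\dx$ appears with a favorable sign. Then bound the linear term via $|\p_x\su_\be|\le C\de_*|(a_\be)_x|$, \eqref{relative evaluation Q lower bounds}, and Young/Cauchy--Schwarz with $\int|(a_\be)_x|\dx\le C|\s_\be|/\de_*$, absorb, and use $\s_\be^2\le C\de_*|\s_\be|$ from \eqref{weight relation between L and de*}. The sign check you flag as the main obstacle is immediate: $p$ increases across $1$-shocks and decreases across $2$-shocks, so $(-1)^{i}(a_\be)_x=|(a_\be)_x|$.
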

The following lemma, corresponding to Lemma 10.3 in \cite{NStoE}, shows that $\my_\be^g(\lv_{\de})$ is small. We note that the constant $C_1$ in this lemma depends on the constant $C$ in Lemma \ref{lemma smallness of localized relative entropy 1}.

\begin{lemm}\label{lemma smallnss of Y_i}
There exist $C_1, \de_0 > 0$, depending only on $U_*$, such that the following is true.

Assume $\de_*< \de_0$. Then whenever $(-1)^{i-1}Y_\be \leq \s_\be^2$,
\begin{align}\label{lemma smallness of Y_i eq}
    |\my_\be^g(\lv_{\de})| \leq C_1 |\s_\be| \de_* \, , \quad \textup{for any } \de \leq \de_0 \, .
\end{align}
\end{lemm}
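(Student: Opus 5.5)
The plan is to bound each of the four terms of $\my_\be^g(\lv_\de)$ in \eqref{terms associated with shocks} separately, using only that $|p(\lv_\de)-p(\bv)|\leq\de$ pointwise and the localized smallness of Lemma~\ref{lemma smallness of localized relative entropy 1}. By construction of the truncation \eqref{def of truncation general}, $\lv_\de$ lies in a fixed compact neighborhood of $\bv$, hence of $v_*$, for $\de\leq\de_0$. On that set, $|\lv-\bv|\sim|p(\lv)-p(\bv)|$ and $Q(\lv|\bv)\sim|\lv-\bv|^2$, with constants depending only on $U_*$; this uses \eqref{relative evaluation Q lower bounds} and the $L^\infty$ closeness of $\bv$ to $v_*$ from Lemma~\ref{lemma L+kQ}.

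The quadratic terms are handled first. Using $0\leq\ts^2\leq 1$, $|(a_\be)_x|\sim|(\sv_\be)_x|/\de_*$ from \eqref{weight equivalence}, and \eqref{relative evaluation Q}, we get
\begin{align*}
\inb |(a_\be)_x|\ts^2\bigl(|p(\lv)-p(\bv)|^2+Q(\lv|\bv)\bigr)\dx\leq C\inb|(a_\be)_x|Q(v|\bv)\dx\leq C|\s_\be|\de_* ,
\end{align*}
where the last inequality is \eqref{lemma smallness eq 1}.

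The two linear terms, $\inb a\,\p_x p(\sv_\be)\ts(\lv-\bv)\dx$ and $\las_\be^{-1}\inb a(\sh_\be)_x\ts(p(\lv)-p(\bv))\dx$, are where the difficulty lies. A crude bound $\de\int|(\sv_\be)_x|\leq C\de|\s_\be|$ is too weak, since it gives $\de$ rather than $\de_*$. Instead, apply Cauchy--Schwarz with weight $|(\sv_\be)_x|$:
\begin{align*}
\inb|(\sv_\be)_x||\lv-\bv|\dx\leq\Bigl(\inb|(\sv_\be)_x|\dx\Bigr)^{1/2}\Bigl(\inb|(\sv_\be)_x||\lv-\bv|^2\dx\Bigr)^{1/2}.
\end{align*}
The first factor is $\leq C|\s_\be|^{1/2}$. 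For the second factor, $|(\sv_\be)_x|\leq C\de_*|(a_\be)_x|$, so by the quadratic bound it is $\leq C\de_*\cdot C|\s_\be|\de_*$. The product is therefore $\leq C|\s_\be|\de_*$. The $h$-shock term is treated identically, using $|(\sh_\be)_x|\sim|(\sv_\be)_x|$ from \eqref{equations - viscous shock 2}. Replacing $\bv$ by the shock's own states inside $\ts$ costs only $C_n\nu^n$ by Lemma~\ref{lemma estimates of interaction terms}, and this is absorbed since $|\s_\be|\geq\rn$.

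Summing the four bounds gives \eqref{lemma smallness of Y_i eq} with $C_1$ depending on the constant of Lemma~\ref{lemma smallness of localized relative entropy 1} and on $U_*$, uniformly in $\de\leq\de_0$. The main point to check is that $\de_0$ is small enough that $\lv_\de$ stays in the region where $Q\sim|\cdot|^2$ for every $\de\leq\de_0$; this is exactly what makes the bound independent of $\de$.
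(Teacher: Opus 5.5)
Your proposal is correct and is essentially the paper's proof. The quadratic terms are bounded via $|\lv_\de-\bv|^2\le CQ(\lv_\de|\bv)\le CQ(v|\bv)$ together with \eqref{lemma smallness eq 1}, and the linear terms via Cauchy--Schwarz with weight $|(\sv_\be)_x|$ combined with $|(\sv_\be)_x|\le C\de_*|(a_\be)_x|$, exactly as you describe. The step replacing $\bv$ by the shock's end states is unnecessary, because the estimate only ever involves $\lv_\de-\bv$ directly.
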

\begin{proof}
By \eqref{relative evaluation Q lower bounds} and \eqref{lemma smallness eq 2}, we have
\begin{align*}
    |\my_\be^g(\lv_{\de})| &\leq C\inb |(a_\be)_x| |\lv_{\de}-\bv|^2 \dx + C\inb |(\sv_\be)_x| |\lv_{\de}-\bv| \dx \\ 
    & \leq C\inb |(a_\be)_x| Q(\lv_{\de}|\bv) \dx + C\sqrt{\inb |(\sv_\be)_x|\dx} \sqrt{\inb |(\sv_\be)_x| Q(\lv_{\de}|\bv) \dx} \\
    &\leq C\inb |(a_\be)_x| Q(v|\bv) \dx + C \sqrt{|\s_\be|}\sqrt{\inb |(\sv_\be)_x| Q(v|\bv) \dx} \\
    &\leq C|\s_\be| \de_* \, .
\end{align*}
\end{proof}
The following lemma, corresponding to Lemma 10.1 in \cite{NStoE}, provides sharp estimates for the terms localized by a shock profile in terms of the truncation of $v$ near $\bv$. The proof, in particular the change of variables and the derivation of the Poincar\'e inequality, is essentially the same. The presence of the boundary leads to a slightly different final form of the Poincar\'e inequality. However, since the transition zones of all shocks lie strictly inside the domain, as implied by Property~\ref{prop transition zone}, this inequality can be reduced to the one in \cite{NStoE}. For completeness, we include the detailed proof in Appendix~\ref{section proof lemma inside truncation}.

\begin{lemm}\label{lemma inside truncation}
For any constant $C_1>0$, there exists $\de_0>0$, depending only on $C_1$ and $U_*$, such that the following is true.

Assume $\de_* < \de_0$ and let $\be \in \mathcal{S}$. For any function $v$ such that $\widetilde{\md}_\be(v) + \mg_{2\be}(v)$ is finite, if 
\begin{align}\label{lemma inside truncation assumption}
    |\my_\be^g(v)| \leq C_1 |\s_\be| \de_* \, , \quad \|\pd\|_{L^\infty} \leq \de_0 \, ,
\end{align}
then
\begin{equation}\label{lemma inside truncation conclusion}
    \begin{aligned}
        \mathcal{R}_{\de_*}^\be(v) = & (1+\bc\de_0)|\mi_{1\be}(v)| + (1+\de_0\de_*)|\mi_{2\be}(v)| - (1-\de_0\de_*) \mg_{2\be}(v) \\
        &- (1-\de_0) \widetilde{\md}_\be(v) -\frac{1}{|\s_\be|\de_0} \my_\be^g(v)^2 \leq 0 \, ,
    \end{aligned}
\end{equation}
where $\bc$ are defined in \eqref{def of bc}, $\mi_{1\be}, \mi_{2\be}, \mg_{2\be}, \widetilde{\md}_\be, \my_\be^g$ are defined in \eqref{terms associated with shocks}, and both $\mi_{1\be}, \mi_{2\be}$ are nonnegative.
\end{lemm}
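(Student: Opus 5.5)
The plan is to reduce Lemma~\ref{lemma inside truncation} to the whole-space estimate of \cite{NStoE} (Lemma 10.1 there), following the same route: a change of variables along the shock profile, an expansion of the functionals in powers of the small quantity $\pd$, and a sharp weighted Poincar\'e inequality. I would start by introducing, for the $i$-shock $\be$, the variable
\[
y=\frac{p(\sv_\be)-p(v_-)}{p(v_+)-p(v_-)}\in(0,1),
\]
which is monotone along the profile by Lemma~\ref{lemma estimate of waves}, together with
\[
w=\frac{\ts\bigl(p(v)-p(\bv)\bigr)}{|\s_\be|}.
\]
In these variables, using \eqref{def of weight a_i}, \eqref{equations - viscous shock 2}, and the fact that $\bv$ coincides with $\sv_\be$ up to $C_n\nu^n$ on $\overline{\ji}$, I expect the following identifications:
\begin{itemize}
\item $\mi_{1\be}$, $\mi_{2\be}$, and the quadratic part of $\mg_{2\be}$ become $|\s_\be|/\de_*$ times integrals $\int_0^1 w^2\,dy$, with the precise constants fixed by the shock speed $\las_\be$ and by $p(\sv_\be)$ evaluated at the end states;
\item the cubic part of $\mg_{2\be}$ becomes $\int_0^1 w^3\,dy$;
\item $\my_\be^g$ becomes $\int_0^1 w\,dy$ plus quadratic corrections;
\item $\widetilde{\md}_\be$ becomes $\int_0^1 y(1-y)|\p_y w|^2\,dy$, multiplied by a factor close to $|\s_\be|$.
\end{itemize}
This last identification uses the ODE \eqref{equations - unshifted viscous shock} of the viscous shock, which gives $\nu\,\mu(\sv_\be)\sv_\be^\ga\,|\p_x p(\sv_\be)|$ comparable to $|\s_\be|^2\,y(1-y)$. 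Assumption~\ref{property viscosity L}, together with $\|\pd\|_{L^\infty}\le\de_0$, allows $\mu_1(v)v^\ga$ to be replaced by $\mu_1(\sv_\be)\sv_\be^\ga$ at a relative cost of $O(\de_0)$.

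The boundary enters at one point only. If $\be$ is the leftmost wave, $\ji$ extends down to $y_b$. By Property~\ref{prop transition zone} and part 4) of Lemma~\ref{lemma estimate of waves}, however, the point $y_b$ sits at distance at least $\rn^{\co}$ from $y_\be$, so in the $y$-variable it corresponds to a value $y_0\le C_n\nu^n$. After the change of variables the domain is therefore $(y_0,1)$ instead of $(0,1)$. I would handle this by extending $w$ to $(0,y_0)$ by the constant $w(y_0)$. This adds nothing to $\int y(1-y)|\p_y w|^2$, and it changes the integrals $\int w^k\,dy$ by at most $y_0\|w\|_\infty^k$. Since $\|w\|_\infty\le\de_0/|\s_\be|\le\de_0\rn^{-1}$ and $y_0$ is superpolynomially small in $\nu$, this error is negligible. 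The whole-space Poincar\'e inequality
\[
\int_0^1\Bigl|w-\int_0^1 w\Bigr|^2dy\le\frac12\int_0^1 y(1-y)|\p_y w|^2\,dy
\]
then applies unchanged. The cutoff $\ts$ contributes $(\ts)_x$ terms only outside the main transition zone, where $(a_\be)_x$ and $(\sv_\be)_x$ are $O(\nu^n)$, so these terms are absorbed.

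With this reduction in place, I would follow the algebra of \cite{NStoE} essentially verbatim:
\begin{itemize}
\item split $\int w^2$ into the mean part $(\int w)^2$, controlled by $\my_\be^g(v)^2/(|\s_\be|\de_0)$ thanks to \eqref{lemma inside truncation assumption}, and the oscillation part, controlled by the Poincar\'e inequality;
\item control the cubic terms using the smallness $|\s_\be|\,\|w\|_\infty\le\de_0$;
\item use the relation between the constants, namely that the coefficient from the genuine nonlinearity exactly balances $\mi_{2\be}$ against $\mg_{2\be}$ once $\de_*$ is small, where the $\mi_{1\be}$ term is of order $|\s_\be|$ and hence small relative to the $1/\de_*$ terms.
\end{itemize}
This gives \eqref{lemma inside truncation conclusion} for $\de_0$ small depending on $C_1$ and $U_*$.

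I expect the main obstacle to be the bookkeeping of this reduction: checking that every error term, namely those from the boundary truncation $y_0$, from the cutoff $\ts$, from replacing $\bv$ by $\sv_\be$, and from the variable viscosity, is either $O(\de_0)$ times the good terms or superpolynomially small in $\nu$, uniformly in $|\s_\be|\ge\rn$. The sharp constant $\tfrac12$ in the Poincar\'e inequality leaves no room for anything else.
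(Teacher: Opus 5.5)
Your frame matches the paper: the change of variables along the profile, freezing $\mu_1(v)v^\ga$ at the shock at relative cost $O(\de_0)$, and using Property~\ref{prop transition zone} to keep the boundary away. But the core step has a genuine gap. You close with the \emph{linear} sharp Poincar\'e inequality and treat both the cubic term and the quadratic part of $\my_\be^g$ as perturbations, and this cannot work.

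The relevant variable is $W=\ts(\pd)/\de_*$, which is dictated by $(a_\be)_x=-\p_x p(\sv_\be)/\de_*$. In this variable, $\mi_{2\be}$ and the quadratic part of $\mg_{2\be}$ both equal $\bsi\de_*\frac{\al_{**}}{2}\int W^2$ to leading order and cancel. Everything that survives sits at the single order $\bsi\de_*^2$:
\begin{itemize}
\item $\mi_{1\be}\approx\bsi\de_*^2\frac{\al_*}{2}\int W^2$, so it is not negligible, contrary to your last step;
\item the cubic part of $-\mg_{2\be}$ is $\bsi\de_*^2\frac{\al_*}{3}\int W^3$;
\item $\widetilde{\md}_\be\approx\bsi\de_*^2\frac{\al_*}{2}\int y(1-y)|\p_yW|^2$;
\item $\my_\be^g\approx-\bsi\de_*\al_{**}^2\bigl(\int W^2+2\int W\bigr)$.
\end{itemize}
The hypothesis $\|\pd\|_{L^\infty}\le\de_0$ only gives $\|W\|_{L^\infty}\le\de_0/\de_*$, which is large because $\de_*<\de_0$. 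So neither the cubic term nor the $\int W^2$ inside $\my_\be^g$ is small relative to the terms it competes with.

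The constant $W\equiv-2$ (that is, $\pd\approx-2\de_*$) shows the failure concretely. It satisfies both hypotheses with $\my_\be^g\approx0$ and has no dissipation. Its mean is $-2$, so the mean is not controlled by $\my_\be^g$. Moreover $\int W^2=4>0$, and only $\frac23\int W^3=-\frac{16}{3}$ makes the functional negative.

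What is missing is the Kang--Vasseur nonlinear Poincar\'e-type inequality (Proposition~B.1 of \cite{twoshocks}). For $\de$ small depending on $C$, every $W$ with $\int_0^1W^2\le C$ satisfies
\[
-\frac{1}{\de}\Bigl(\int_0^1W^2+2\int_0^1W\Bigr)^2+(1+\de)\int_0^1W^2+\frac23\int_0^1W^3+\de\int_0^1|W|^3-(1-\de)\int_0^1y(1-y)|\p_yW|^2\le0 .
\]
The paper reduces $\mathcal{R}^\be_{\de_*}$ to exactly this functional times $\bsi\de_*^2\frac{\al_*}{2}$. It obtains the bound $\int W^2\le C$ from $|\my_\be^g|\le C_1|\s_\be|\de_*$ and H\"older, and this is where the dependence of $\de_0$ on $C_1$ comes from.

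A smaller point concerns the boundary. Extending $W$ by a constant past the boundary point leaves additive errors such as $y_0|W(y_0)|^3$ of indefinite sign, so it only yields $\le C_n\nu^n$, not $\le 0$. The paper instead rescales $\tilde y=y/c_b$ with $1-c_b\le C\nu$. This turns the weight into $\tilde y(\frac{1}{c_b}-\tilde y)\ge\tilde y(1-\tilde y)$ and the penalty coefficient into $c_b\ge\frac12$, so the exact sign is preserved.
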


\subsection{Estimates outside truncation}\label{subsection estimate outside the truncation}
We define the truncation $\lv$ of $v$ relative to $\bv$ by
\begin{align}\label{def of truncation}
    \lv = \lv_{\de_1} \, \, \quad \lu = (\lv, h) \, ,
\end{align}
where $\bv_{\de_1}$ is defined in \eqref{def of truncation general} with $\de_1$ chosen in Section \ref{subsec choice of e_0}.

The following proposition corresponds to Proposition 10.2 in \cite{NStoE} and serves as the main estimate outside the truncation in the proof of Proposition~\ref{proposition shock estimate}. The proof follows the same idea. For completeness, we go through the argument equation by equation and provide details for the parts that differ from those in \cite{NStoE}. The main technical ingredient is given later in Lemma~\ref{lemma technical}, which corresponds to Lemma 10.4 in \cite{NStoE}.

\begin{prop}\label{prop difference}
There exists $C>0$, depending on $U_*,\de_1$, such that the following holds.
For each $\be \in \ms$,
\begin{equation}\label{prop difference eq 1}
    \begin{aligned}
        &|B_{1\be}(U)-B_{1\be}(\lu)| \leq \cde \de_* \bigl(G_{2\be}(U) - G_{2\be}(\lu)\bigr) + \cde \de_*^{3/2} (\md_\be+\tg_\be) + \cde \nu^2 |\s_\be| (\md(U) + \tg) \, , \\
        &|B_{2\be}^+(U)-B_{2\be}^+(\lu)| \leq \cde \de_*^{1/2} \md_\be + \cde \nu^2 |\s_\be| \md \, , \\ 
        &|B_{2\be}^-(U)| \leq C \de_*^{1/2}  (\md_\be+\tg_\be) +  C \nu^2 |\s_\be| (\md(U)+\tg) + (\de_1 + \cde \de_*^{1/4}) G_{1\be}^-(U) + C \nu^2 |\s_\be|\, , \\
        &|B_3(U)| \leq C \de_*^4 \sum_{\be \in \ms} G_{2\be}(U) + C \de_*^{9/2}(\md(U)+\tg) \, ,
    \end{aligned}
\end{equation}
and 
\begin{align}\label{prop difference eq 2}
    |B_{1\be}(\lu)| + |B_{2\be}^+(\lu)| \leq C \de_*|\s_\be| \, ,
\end{align}
where $\lu$ is defined in \eqref{def of truncation}, $B_{1\be}, B_{2\be}^+, B_{2\be}^-, B_3, G_{2\be}, G_{1\be}^-$ are defined in \eqref{def of B and G}, $\md$ is defined in \eqref{def of D}, and
\begin{align}
    \tg(t) = \int |(\bv)_x| Q(v|\bv) \dx \, , \quad \tg_\be(t) = \int_{\ji} |(\bv)_x| Q(v|\bv) \dx \, , \label{def of tg and tg_i} \\
    \md_\be(U) = \nu \int_{\ji} a \mu_1(v)v^{\ga} \Bigl| \bigl(\pd\bigr)_x \Bigr|^2 \dx \, . \label{def of Di}
\end{align}
In addition, we have 
\begin{equation}\label{prop difference eq 3}
    \begin{aligned}
        &|Y_\be^g(\lu)-\mathcal{Y}_\be^g(\lu)|^2 \leq \cde |\s_\be| \de_*^{3/2} (\md_\be+\tg_\be) + C \nu^4 |\s_\be|^2 (\md+\tg) + C \nu^4|\s_\be|^2 \, , \\
        &|B_{2\be}^+(\lu)-\mi_{2\be}(\lv)| \leq \cde \de_*^{1/2} (\md_\be+\tg_\be) + C\nu^2 |\s_\be| (\md+\tg) + C \nu^2|\s_\be| \, , \\
        &|B_{1\be}(\lu)-(1+\bc \de_*)\mi_{1\be}(\lv)| \leq C \nu^2 |\s_\be| \, , \\
        &-\bigl(G_{2\be}(\lu)-\mg_{2\be}(\lv)\bigr) \leq C \nu^2 |\s_\be|\, , \\
        &-\md(\lu) \leq -\frac{1}{1+\de_*} \sum_{\be \in \ms} \widetilde{\md}_\be(\lv) + \frac{C\nu^{1-\cs(2\co+\cpr)}\ln \rn^{-1}}{\de_*}  \int a \eta(U|\bu) \dx \, ,
    \end{aligned}
\end{equation}
where $Y_\be^g$ is defined in \eqref{def of Yig, Yib, Yil, Yis}, and $\my_\be^g, \mi_{2\be}, \mi_{1\be}, \mg_{2\be},\widetilde{\md}_\be$ are defined in \eqref{terms associated with shocks}.

Furthermore, for each $\be \in\ms$, if 
\begin{align}\label{prop difference separation}
    \de_*^{1/2} \md_\be + \, \nu |\s_\be| \md(U) \leq |\s_\be| \de_* \, ,
\end{align}
then we have
\begin{equation}\label{prop difference eq 4}
    \begin{aligned}
        &|Y_\be^g(U) - Y_\be^g(\lu)|^2 + |Y_\be^b(U)|^2 + |Y_\be^l(U)|^2 + |Y_\be^s(U)|^2\\
        &\leq \cde \de_* |\s_\be| \Bigl[\de_*^{1/2}(\md_\be+\tg_\be) + \nu^2|\s_\be|(\md(U)+\tg) + G_{2\be}(U) - G_{2\be}(\lu) \\
        &\qquad + G_{1\be}^-(U) + \de_*^{-1/4}G_{1\be}^+(U) + \de_*^{1/4}G_{2\be}(\lu) 
        \Bigr] \, ,
    \end{aligned}
\end{equation}
where $Y_\be^b, Y_\be^l, Y_\be^s$ are defined in \eqref{def of Yig, Yib, Yil, Yis}, and $G_{1\be}^+$ is defined in \eqref{def of B and G}.
\end{prop}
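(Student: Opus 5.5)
Our plan follows Proposition 10.2 of \cite{NStoE} term by term. The only new inputs are Property~\ref{prop transition zone}, which keeps every shock transition zone at distance at least $\rn^{\co}$ from $y_b$, the cutoffs $\ts$ from \eqref{def of cutoff function}, and the estimate \eqref{properties of ts}. The common mechanism is that each difference $F(U)-F(\lu)$ is supported on $\{|p(v)-p(\bv)|>\de_1\}$. On that set we have $|p(v)-p(\lv)|\le |p(v)-p(\bv)|$, and \eqref{relative evaluation Q and ||} yields $Q(v|\bv)-Q(\lv|\bv)\ge C_{\de_1}|v-\lv|$. So every outside-truncation quantity that grows at most linearly in $|v-\lv|$ is absorbed by $G_{2\be}(U)-G_{2\be}(\lu)$, after multiplying by the factor $\de_*$ coming from $(a_\be)_x=-p(\sv_\be)_x/\de_*$.

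The superlinear pieces need a different tool, and this is the main technical ingredient, which we would state as a separate lemma in the spirit of Lemma 10.4 of \cite{NStoE}. These pieces are $|p(v)-p(\bv)|^2$ in $B_{2\be}^{\pm}$ and $p(v|\bv)$ for $v\to 0$ in $B_{1\be}$. The lemma is an $L^\infty$ bound on $(|p(v)-p(\bv)|-\de_1)_+$ on $\ji$, obtained from the fundamental theorem of calculus. We start from a point of $\ji$ where $Q(v|\bv)$ is controlled; such a point exists by averaging $\tg_\be$ against $|\bv_x|$, or against $|(\sv_\be)_x|$, which has mass $|\s_\be|$. We then integrate $\bigl(p(v)-p(\bv)\bigr)_x$ and use Cauchy--Schwarz with the weight $\mu_1(v)v^\ga$. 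This gives
\begin{align*}
\sup_{\ji}\bigl(|p(v)-p(\bv)|-\de_1\bigr)_+^2 \lesssim \frac{|\ji|}{\nu}\,\md_\be + \textup{lower order}.
\end{align*}
The support of $|(\sv_\be)_x|$ is effectively of size $\nu/|\s_\be|$. Multiplying the bound by $\int|(a_\be)_x|\sim|\s_\be|/\de_*$ therefore gives the $\de_*^{1/2}(\md_\be+\tg_\be)$ terms. The exponential tails of the shock outside $\widetilde I_\be$, handled by 4) of Lemma~\ref{lemma estimate of waves}, produce the global errors $\nu^2|\s_\be|(\md+\tg)$. For $B_3$ we would use \eqref{estimate of sum of shock L infinity} together with the factor $\nu/\de_*$, which gives $\de_*^4G_{2\be}+\de_*^{9/2}(\md+\tg)$.

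On $\om^c$ we bound $B_{2\be}^-$ by Young's inequality against $G_{1\be}^-$, giving the coefficient $\de_1+\cde\de_*^{1/4}$. We then use the same $L^\infty$ lemma for the $|p(v)-p(\bv)|^2$ factor.

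Next we turn to \eqref{prop difference eq 2} and \eqref{prop difference eq 3}. Since $|p(\lv)-p(\bv)|\le\de_1$, we have \eqref{prop difference eq 2} directly. The bounds comparing $Y^g_\be,B_{2\be}^+,B_{1\be},G_{2\be}$ with $\my^g_\be,\mi_{2\be},\mi_{1\be},\mg_{2\be}$ come from three facts. First, $\ts=1$ on $\ji$. Second, $\bv=\sv_\be$ up to $C_n\nu^n$ there, by \eqref{estimate inside Ii S}. Third, the shock tails on $\ji^c$ are $O(\nu^2|\s_\be|)$. We also use \eqref{relative evaluation A.3} for $\mg_{2\be}$. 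The last line of \eqref{prop difference eq 3} is the only genuinely new one. We expand
\begin{align*}
\bigl|\bigl(\ts(p(\lv)-p(\bv))\bigr)_x\bigr|^2 \le (1+\de_*)\,\ts^2\bigl|(p(\lv)-p(\bv))_x\bigr|^2 + C\de_*^{-1}|(\ts)_x|^2\,|p(\lv)-p(\bv)|^2 .
\end{align*}
The first term is bounded by $\md(\lu)\le\md(U)$, using $\sum\ts\le1$. The second uses $|(\ts)_x|^2\le C\nu^{-2\cs\co}$ and $|p(\lv)-p(\bv)|^2\lesssim Q(v|\bv)$. Summing over at most $C\rn^{-\cpr}\ln\rn^{-1}$ shocks gives the stated factor $\nu^{1-\cs(2\co+\cpr)}\ln\rn^{-1}/\de_*$.

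Finally, for \eqref{prop difference eq 4} the separation hypothesis \eqref{prop difference separation} makes the $L^\infty$ lemma give $\|(p(v)-p(\bv))\|_{L^\infty(\ji)}\lesssim 1$, so all the relevant relative functions are comparable. We would then apply Cauchy--Schwarz to each of $Y^b_\be,Y^l_\be,Y^s_\be$. For example,
\begin{align*}
|Y^l_\be|^2 \le \int|(\sh_\be)_x| \cdot \int|(\sh_\be)_x|\Bigl|h-\bh-\frac{p(v)-p(\bv)}{\las_\be}\Bigr|^2 \lesssim |\s_\be|\de_*\,G_{1\be}^+ ,
\end{align*}
and the factor $\de_*^{-1/4}$ absorbs the weight. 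For $|Y^g_\be(U)-Y^g_\be(\lu)|^2$ we use the linear-growth bound from the first paragraph, which produces $G_{2\be}(U)-G_{2\be}(\lu)$, and Lemma~\ref{lemma smallness of localized relative entropy 1} for the remaining factors. We expect the main obstacle to be the $L^\infty$ lemma: choosing the base point and handling $v\to0$, where $\mu_1(v)v^\ga\sim v^{\ga-\al}$ degenerates, with only the Assumption~\ref{property viscosity L} range $\ga-1\le\al\le\ga$. We would first try the change of variable $w=p(v)$, for which the weight becomes $w^{(\al-\ga)/\ga}\ge c$ on $\{v\le v_*\}$, which makes the Cauchy--Schwarz step uniform.
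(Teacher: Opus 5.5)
Your skeleton matches the paper: the differences live on $\{|\pd|>\de_1\}$, the linear-growth pieces are absorbed by $G_{2\be}(U)-G_{2\be}(\lu)$, and your cutoff/Young argument for the last line of \eqref{prop difference eq 3} is the paper's. The gap is your $L^\infty$ lemma: it cannot produce the positive powers of $\de_*$ that the statement claims.

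\textbf{Why the $L^\infty$ lemma fails.} Multiply $\sup_{\ji}(|\pd|-\de_1)_+^2\lesssim|\ji|\md_\be/\nu$ by $\int|(a_\be)_x|\sim|\s_\be|/\de_*$. Since $|\ji|\ge\rn^{\co}/2$ and $|\s_\be|\ge\nu^{\cs}$, you get at least $\tfrac12\nu^{(1+\co)\cs-1}\de_*^{-1}\md_\be$, which blows up by \eqref{rt transition S}. Suppose instead you keep the pointwise growth and integrate $|x-\xoi|$ against $|(a_\be)_x|$, whose first moment is $\sim\nu/\de_*$. You still only get $\md_\be/\de_*$. That negative power of $\de_*$ can never be absorbed by $\frac{\de_1}{2}\md$ in Proposition~\ref{proposition shock estimate}.

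\textbf{The missing idea.} The set where the truncation is active has small mass for the shock weight. On $\{|\pd|\ge\de_1\}$ one has $Q(v|\bv)\ge c(\de_1)$. So Lemma~\ref{lemma smallness of localized relative entropy 1}, under the standing assumption $(-1)^{i-1}Y_\be\le\s_\be^2$, gives $\int|(a_\be)_x|\mathds{1}_{\{|\pd|\ge\de_1\}}\lesssim|\s_\be|\de_*$ instead of $|\s_\be|/\de_*$. The paper keeps the pointwise bounds \eqref{truncation estimate vb}--\eqref{truncation estimate vs}, of size $\sqrt{|x-\xoi|}\sqrt{\md_\be/\nu}$. It integrates $|x-\xoi|^q$ against $|(a_\be)_x|\mathds{1}_{\{|\pd|\ge\de_1\}}$, splitting at $|x-y_\be|=\nu/(\theta|\s_\be|)$ with $\theta^q=\de_*^{1-p}$. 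The near part uses the small mass and the far part uses the exponential tails, giving $C\nu^q|\s_\be|^{1-q}\de_*^p$ (Lemma~\ref{lemma technical preliminary}). The case $q=1$, $p=1/2$ is exactly where $\de_*^{1/2}\md_\be$ comes from.

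\textbf{The base point.} This also dictates the base point. It must lie in $\uji$, so that $|x-\xoi|\le|x-y_\be|+\nu/|\s_\be|$. It must also satisfy $|\pd|(\xoi)\le\mathbf{C}\de_*<\de_1$, so that the truncated difference vanishes there. Both follow from Lemma~\ref{lemma smallness of localized relative entropy 1}. Averaging $\tg_\be$, which is an uncontrolled right-hand-side quantity, gives neither.

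\textbf{The $v\to0$ degeneracy.} Your proposed fix does not work. With $w=p(v)$, the weight $\mu_1(v)v^\ga\sim v^{\ga-\al}=w^{(\al-\ga)/\ga}$ is bounded \emph{above} on $\{v\le v_*\}$, and it tends to $0$ as $v\to0$ when $\al<\ga$. So Cauchy--Schwarz against $1/(\mu_1(v)v^\ga)$ is not uniform there. The paper instead applies the fundamental theorem of calculus to $v^{\frac{\ga-\al}{2}}(p(v)-p(\lv_s))$. Differentiating produces, besides $v^{\frac{\ga-\al}{2}}|(\pd)_x|$, the term $v^{\frac{\ga-\al}{2}}|\bv_x|$. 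With $|\bv_x|\le C/\nu$ and $v^{\ga-\al}\lesssim Q(v|\bv)$ on $\{\pd>\de_1\}$, this is precisely where $\tg_\be$ enters.

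\textbf{Returning to unweighted quantities.} For $B_{1\be}$, the paper uses $|p(v)-p(\lv_s)|\lesssim v^{\ga-\al}|p(v)-p(\lv_s)|^2$ on $\{\pd>2\de_1\}$. For $B_{2\be}^-$ it uses $|p(v)-p(\lv_s)|^2\lesssim\bigl(v^{\ga-\al}|p(v)-p(\lv_s)|^2\bigr)^q$ with $q=2\ga/(\ga+\al)\in[1,2)$. The coefficient $\cde\de_*^{1/4}$ in front of $G_{1\be}^-$ comes from combining Cauchy--Schwarz against $G_{1\be}^-$, Young's inequality, $G_{1\be}^-\lesssim|\s_\be|\de_*$, and the choice $p=1-q/4$. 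The $\de_1G_{1\be}^-$ part comes separately, from writing $\pd=(p(v)-p(\lv_s))+\de_1$ on $\om^c$. It does not come from one Young step plus an $L^\infty$ bound.

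\textbf{The separation hypothesis.} In \eqref{prop difference eq 4}, the hypothesis \eqref{prop difference separation} does not give $\|\pd\|_{L^\infty(\ji)}\lesssim1$. The pointwise bound only yields this for $|x-\xoi|\lesssim\nu/(|\s_\be|\de_*^{1/2})$. Its actual role is to linearize the squares that appear after applying Lemma~\ref{lemma technical}, e.g.\ $(\de_*^{1/2}\md_\be)^2\le|\s_\be|\de_*\cdot\de_*^{1/2}\md_\be$.
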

\begin{proof}
Note first that Corollary \ref{lemma strength of waves} indicates that 
\begin{align*}
    |\s_\be| \geq \rn = \nu^{\cs} \, .
\end{align*}
Following the same argument as in \cite{NStoE}, we can get \eqref{prop difference eq 1}, \eqref{prop difference eq 4}, and \eqref{prop difference eq 2} from Lemma \ref{lemma technical}.

\noindent
\textbf{Estimates for \eqref{prop difference eq 3}:}
Recall the definition of the cutoff function $\ts$ in \eqref{def of cutoff function}. By Lemma \ref{lemma technical}, we get
\begin{align*}
    &|Y_\be^g(\lu) - \my_\be^g(\lv)| \\
    &\leq \int_{\ji^c} |(a_\be)_x| \dx  + \cde |\s_\be|^{1/2} \de_*^{3/4} (\md_\be+\tg_\be)^{1/2} + C_n \nu^n (\md+\tg)^{1/2} \, , \\
    &|B_{2\be}^+(\lu)-\mi_{2\be}(\lv)| \leq \int_{\ji^c} |(a_\be)_x| \dx + \cde \de_*^{1/2} (\md_\be+\tg_\be) + C_n \nu^n (\md+\tg) \, , \\
    &|B_{1\be}(\lu)-(1+\bc \de_*)\mi_{1\be}(\lv)| \leq \int_{\ji^c} |(a_\be)_x| \dx \, .
\end{align*}
By \eqref{scale of Ji}, \eqref{def of weight a_i}, Lemma \ref{lemma estimate of waves}, 1) in Corollary \ref{lemma strength of waves}, and \eqref{rt transition S}, we get
\begin{align}\label{estimate of derivative of ai outside ji}
    \int_{\ji^c} |(a_\be)_x| \dx \leq \frac{C}{\de_*} \int_{|x-y_\be|\geq \frac{\rn^{\co}}{2}} \frac{\s_\be^2}{\nu}e^{-\frac{C|\s_\be||x-y_\be|}{\nu}} \dx \leq \frac{C|\s_\be|}{\de_*} e^{-C\nu^{-1+\cs(1+\co)}} \leq C_n \nu^n \, .
\end{align}
Hence, we get the first three inequalities.

By \eqref{relative evaluation A.3}, we have
\begin{align*}
    -\bigl(G_{2\be}(\lu)-\mg_{2\be}(\lv)\bigr) \leq C \int |(a_\be)_x| |\bv-\sv_\be| \dx + \int_{\ji^c} |(a_\be)_x | \, .
\end{align*}
By Lemma \ref{lemma estimates of interaction terms} and \eqref{estimate of derivative of ai outside ji}, we get the fourth inequality.

By \eqref{properties of ts}, we get
\begin{align*}
    \md(\lu) \geq \sum_{\be \in S} \nu\int a \ts^2 \mu_1(\lv) \lv^\ga \bigl(p(\lv)-p(\bv)\bigr)_x^2 \dx \, ,
\end{align*}
and
\begin{align*}
    \widetilde{\md}_\be(\lv) &\leq (1+\de_*) \nu \int a \mu_1(\lv) \lv^\ga \ts^2 \bigl(p(\lv)-p(\bv)\bigr)_x^2 \dx + \frac{C\nu}{\de_*} \int a \mu_1(\lv) \lv^\ga |(\ts)_x|^2 \bigl(p(\lv)-p(\bv)\bigr)^2 \\
    &\leq (1+\de_*) \nu\int a \mu_1(\lv) \lv^\ga \ts^2 \bigl(p(\lv)-p(\bv)\bigr)_x^2 \dx + \frac{C\nu^{1-2\cs\co}}{\de_*} \int a \mu_1(\lv) \lv^\ga \bigl(p(\lv)-p(\bv)\bigr)^2 \dx \, .
\end{align*}
By Lemma \ref{lemma number of waves and interactions}, we have
\begin{align*}
    -\md(\lu) &\leq -\frac{1}{1+\de_*} \sum_{\be \in \ms} \widetilde{\md}_\be(\lv) +   \frac{C\nu^{1-2\cs\co}}{\de_*} \sum_{\be \in \ms}\int a \eta(\lu|\bu) \dx \\
    &\leq -\frac{1}{1+\de_*} \sum_{\be \in \ms} \widetilde{\md}_\be(\lv) + \frac{C\nu^{1-2\cs\co-\cs\cpr}\ln \rn^{-1}}{\de_*} \int a \eta(\lu|\bu) \dx \\
    &\leq -\frac{1}{1+\de_*} \sum_{\be \in \ms} \widetilde{\md}_\be(\lv) + \frac{C\nu^{1-\cs(2\co+\cpr)}\ln \rn^{-1}}{\de_*} \int a \eta(U|\bu) \dx \, .
\end{align*}
\end{proof}

\subsubsection{Truncations for small and big values of $v$}
We truncate $v$ relative to $\bv$ and define the small value truncation $\lv_s$ and the large-value truncation $\lv_b$ by
\begin{equation}\label{def of vb vs}
    \begin{aligned}
        p(\lv_s) - p(\bv) &= \inf \bigl(\de_1, p(v) - p(\bv)\bigr) \, , \\
        p(\lv_b) - p(\bv) &= \sup \bigl(-\de_1, p(v) - p(\bv)\bigr) \, ,
    \end{aligned}
\end{equation}
where $\de_1$ is chosen in Section \ref{subsec choice of e_0}.

Recall the definition of $\om$ in \eqref{def of Omega}. We note that
\begin{equation}
    \lv = 
    \begin{cases}
        \lv_b \, , \quad \textup{on } \om \, , \\
        \lv_s \, , \quad \textup{on } \om^c \, .
    \end{cases}
\end{equation}
Moreover, we have
\begin{equation}\label{truncation estimate 1}
    \begin{aligned}
        p(v) - p(\lv_s) &= \bigl(p(v) - p(\bv) - \de_1\bigr)_+ \, , \\
        p(\lv_b) - p(v) &= \bigl(p(\bv) - p(v) - \de_1 \bigr)_+ \, , \\
        p(v) - p(\lv)  &= \bigl( |p(v) - p(\bv)| - \de_1 \bigr)_+ \, .
    \end{aligned}
\end{equation}

\subsubsection{Pointwise estimates}
We use the smallness condition \eqref{lemma smallness eq 1}, established in Lemma \ref{lemma smallness of localized relative entropy 1}, to obtain pointwise estimates for both $\pdb$ and $\pds$.

For any $\be \in \ms$, we define
\begin{align}
    \uji = [y_\be - \frac{\nu}{|\s_\be|},y_\be + \frac{\nu}{|\s_\be|}] \, .
\end{align}
We first notice that \eqref{rt transition S} implies
\begin{align*}
    \frac{\nu}{|\s_\be|} = \nu^{1-\cs} \ll \nu^{\co \cs}  \, .
\end{align*}
Recall \eqref{scale of Ji}. We have
\begin{align}\label{scale of underline ji compared with ji}
    \uji \subset \ji \, .
\end{align}
By \eqref{lemma smallness eq 1} in Lemma \ref{lemma smallness of localized relative entropy 1}, \eqref{weight equivalence}, and 2) in Lemma \ref{lemma estimate of waves}, we get
\begin{align*}
    |\s_\be| \int_{\uji} Q(v|\bv) \dx \leq C \de_*^2 \nu \, ,
\end{align*}
for some $C$ depending only on $U_*$. Hence, there exists some $\xoi \in \uji$ such that 
\begin{align*}
    \Bigl| \bigl(p(v) - p(\bv)\bigr) (\xoi) \Bigr| \leq \mathbf{C} \de_* \, ,
\end{align*}
for some $\mathbf{C}$ depending only on $U_*,\de_1$.
By taking
\begin{align}\label{relation delta 1}
    \mathbf{C}\de_* < \de_1 \, ,
\end{align}
we get
\begin{align}
    \bigl(p(v) - p(\lv)\bigr) (\xoi) = \bigl(p(v) - p(\lv_b)\bigr) (\xoi) = \bigl(p(v) - p(\lv_s)\bigr) (\xoi) = 0 \, .
\end{align}
Let $x \in \mj_{i'}$ for some $i'$. Following the same argument as for (4.53) and (4.63) in \cite{invent}, we obtain
\begin{equation}\label{truncation estimate vb}
    \Bigl|\bigl(\pdb)\bigr(x) \Bigr| \leq
    \begin{cases}
        C \sqrt{|x-\xoi|} \sqrt{\frac{\md_\be}{\nu}} \, , \quad \textup{if } i'=i \, , \\
        C \sqrt{|x-\xoi|} \sqrt{\frac{\md}{\nu}} \, , \quad \textup{if } i'\neq i \, ,
    \end{cases}
\end{equation}
and
\begin{equation}\label{truncation estimate vs}
    \Bigl|v^{\frac{\ga-\al}{2}} \bigl(\pds\bigr)\Bigr| \leq 
    \begin{cases}
        \cde \sqrt{|x-\xoi|} \bigl(\sqrt{\frac{\md_\be}{\nu}}+\sqrt{\frac{\tg_\be}{\nu}}\bigr) \, , \quad \textup{if } i'=i \, , \\
        \cde \sqrt{|x-\xoi|} (\sqrt{\frac{\md}{\nu}}+\sqrt{\frac{\tg}{\nu}}) \, , \quad \textup{if } i'\neq i \, ,
    \end{cases}
\end{equation}
where $C>0$ depends on $U_*,\de_1$, $\md$ is defined in \eqref{def of D}, $\tg, \tg_\be$ are defined in \eqref{def of tg and tg_i}, and $\md_\be$ is defined in \eqref{def of Di}. For completeness and to verify that the argument remains valid under our scaling of rarefactions and non-physical waves, we include the proof for \eqref{truncation estimate vb} and \eqref{truncation estimate vs} in Appendix \ref{section proof of ftoc}.

Finally, we notice that
\begin{align}\label{distance between x and x0i}
    |x-\xoi| \leq |x-y_\be| + |y_\be-\xoi| \leq |x-y_\be| + \frac{\nu}{|\s_\be|} \, .
\end{align}

\subsubsection{Useful estimates}
Before proving the technical lemma for Proposition \ref{prop difference}, we first establish several preliminary estimates.
\begin{lemm}\label{lemma integration of x^b e^-x}
Let $C>0$. Let $a\geq 1$ and $1/2\leq b\leq 2$. There exist $C_1,C_2 >0$, depending only on $C$, such that
\begin{align*}
    C_1 a^{b} e^{-Ca} \leq \int_a^\infty x^b e^{-Cx} \dx \leq C_2 a^b e^{-Ca} \, .
\end{align*}
\end{lemm}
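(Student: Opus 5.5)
The plan is to compare $x^b$ with $a^b$ on the range of integration, using the substitution $x = a + s$ with $s\geq 0$:
\begin{align*}
    \int_a^\infty x^b e^{-Cx}\dx = e^{-Ca}\int_0^\infty (a+s)^b e^{-Cs}\, ds \, .
\end{align*}
It then suffices to show that $\int_0^\infty (a+s)^b e^{-Cs}\, ds$ is comparable to $a^b$, with constants depending only on $C$, uniformly in $a\geq 1$ and $b\in[1/2,2]$.

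\textbf{Lower bound.} Since $(a+s)^b\geq a^b$ for $s\geq 0$ and $b>0$, we get $\int_0^\infty (a+s)^b e^{-Cs}\,ds \geq a^b\int_0^\infty e^{-Cs}\,ds = a^b/C$. Hence one may take $C_1 = 1/C$.

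\textbf{Upper bound.} Use $a\geq 1$ to write $(a+s)^b = a^b(1+s/a)^b\leq a^b(1+s)^b$. Since $b\leq 2$ and $1+s\geq 1$, this gives $(1+s)^b\leq (1+s)^2$. Therefore
\begin{align*}
    \int_0^\infty (a+s)^b e^{-Cs}\,ds \leq a^b \int_0^\infty (1+s)^2 e^{-Cs}\,ds = a^b\Bigl(\frac{1}{C}+\frac{2}{C^2}+\frac{2}{C^3}\Bigr) \, ,
\end{align*}
which is a constant depending only on $C$ times $a^b$. This yields $C_2$.

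There is no real obstacle here. The only point needing care is that the constants must be uniform in $a$ and $b$. This is exactly what the restrictions $a\geq 1$ (so that $s/a\leq s$) and $b\leq 2$ (so that $(1+s)^b\leq (1+s)^2$) provide; the lower bound $b\geq 1/2$ is not needed beyond $b>0$.
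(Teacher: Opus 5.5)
Your proof is correct, and your lower bound is exactly the paper's. The upper bound takes a slightly different route. The paper splits into the cases $1/2\le b\le 1$ and $1\le b\le 2$. It uses $x^{b-1}\le a^{b-1}$ (resp.\ $x^{b-2}\le a^{b-2}$) for $x\ge a$ to reduce to $\int_a^\infty x e^{-Cx}\,dx$ or $\int_a^\infty x^2 e^{-Cx}\,dx$, and evaluates these by parts. Your translation $x=a+s$ together with the single majorant $(1+s/a)^b\le (1+s)^2$ handles all $b\in[0,2]$ at once, with no case distinction.

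The paper's version keeps the exact $a$-dependence (e.g.\ $\frac{Ca+1}{C^2a}$ in the first case), which shows the ratio to $a^be^{-Ca}$ tends to $1/C$ for large $a$. That sharpness is never used, so nothing is lost with your cruder use of $a\ge 1$. Your constant $\frac1C+\frac2{C^2}+\frac2{C^3}$ coincides with what the paper's second case gives at $a=1$. The paper's displayed $\frac{C+2C+2}{C^3}$ is a typo for $\frac{C^2+2C+2}{C^3}$.
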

\begin{proof}
We note first 
\begin{align*}
    \int_a^\infty x^b e^{-Cx}\dx \geq a^b \int_{a}^\infty e^{-Cx} \dx \geq \frac{a^b}{C} e^{-ca} \, .
\end{align*}
If $1/2\leq b \leq 1$, we have
\begin{align*}
    \int_a^\infty x^b e^{-cx} \dx \leq a^{b-1} \int_{a}^\infty x e^{-Cx} \dx = \frac{C+\frac{1}{a}}{C^2} a^{b} e^{-Ca} \leq \frac{C+1}{C^2} a^{b}e^{-Ca}\, . 
\end{align*}
If $1\leq b \leq 2$, we have
\begin{align*}
    \int_a^\infty x^b e^{-cx} \dx \leq a^{b-2} \int_{a}^\infty x^2 e^{-Cx} \dx = \frac{C^2+\frac{2C}{a} + \frac{2}{a^2}}{C^3} a^{b} e^{-Ca} \leq \frac{C+2C+2}{C^3} a^{b}e^{-Ca}\, . 
\end{align*}
\end{proof}

\begin{lemm}\label{lemma technical preliminary}
Let $1/2\leq q\leq 2$ and $0<p<1$.
There exist $C>0$, depending on $U_*, \de_1, q, p$, and $C_n>0$, depending on $U_*, n, q$, such that for any $\be \in\ms$,
\begin{align}
    \int_{\ji} |(a_\be)_x| |x-\xoi|^q \mathds{1}_{\{|\pd|\geq \de_1\}}\dx &\leq C\nu^q|\s_\be|^{1-q} \de_*^{p} \, , \label{lemma tech pre 1}\\
    \int_{\ji^c} |(a_\be)_x| |x-\xoi|^q \dx &\leq C_n \nu^n \, , \label{lemma tech pre 2}
\end{align}
where $\ji$ is defined in \eqref{def of ji}.
\end{lemm}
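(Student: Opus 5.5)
The plan is to use the exponential decay of the shock profile together with the smallness of the localized relative entropy. By \eqref{weight equivalence} and 1) of Lemma~\ref{lemma estimate of waves}, we have
\begin{align*}
|(a_\be)_x| \le \frac{C}{\de_*}\frac{\s_\be^2}{\nu}\exp\Bigl(-\frac{\widetilde{C}|\s_\be||x-y_\be|}{\nu}\Bigr) .
\end{align*}
For \eqref{lemma tech pre 2}, note that on $\ji^c$ we have $|x-y_\be|\ge \rn^{\co}/4$ by \eqref{scale of Ji}. By \eqref{distance between x and x0i}, $|x-\xoi|\le |x-y_\be|+\nu/|\s_\be|\le 2|x-y_\be|$ there. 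Substituting $z=|\s_\be||x-y_\be|/\nu$ turns the integral into
\begin{align*}
\frac{C}{\de_*}|\s_\be|\Bigl(\frac{\nu}{|\s_\be|}\Bigr)^q\int_{a}^\infty z^q e^{-\widetilde{C}z}\,dz, \qquad a=\frac{|\s_\be|\rn^{\co}}{4\nu}\ge c\,\nu^{(1+\co)\cs-1}\to\infty,
\end{align*}
using $|\s_\be|\ge\rn$ and \eqref{rt transition S}. Lemma~\ref{lemma integration of x^b e^-x} bounds the integral by $C a^q e^{-\widetilde{C}a}$. This decays faster than any power of $\nu$, and $\de_*$ is a fixed constant, so the bound $C_n\nu^n$ follows.

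For \eqref{lemma tech pre 1}, I would split $\ji$ into a near region $\{|x-y_\be|\le K\nu/|\s_\be|\}$ and a far region, with $K$ to be fixed in terms of $\de_*$.

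In the far region, the same substitution and Lemma~\ref{lemma integration of x^b e^-x} give a bound
\begin{align*}
\frac{C}{\de_*}\,\nu^q|\s_\be|^{1-q}K^q e^{-\widetilde{C}K}.
\end{align*}
Choosing $K=C'\ln(1/\de_*)$ makes this at most $C\nu^q|\s_\be|^{1-q}\de_*^{p}$.

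In the near region, $|x-\xoi|^q\le (K+1)^q(\nu/|\s_\be|)^q$ and $|(a_\be)_x|\le C\s_\be^2/(\nu\de_*)$. So it suffices to bound the measure of the set $\{|p(v)-p(\bv)|\ge\de_1\}$ inside the near region.

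On that set, $Q(v|\bv)\ge c(\de_1)>0$, by \eqref{relative evaluation Q lower bounds} together with the closeness of $\bv$ to $v_*$. Also, on the near region $|(a_\be)_x|\ge c\,\s_\be^2 e^{-\widetilde{C}K}/(\nu\de_*)$ by the lower bound in 1) of Lemma~\ref{lemma estimate of waves}. Hence \eqref{lemma smallness eq 1} gives
\begin{align*}
\bigl|\{|p(v)-p(\bv)|\ge\de_1\}\cap\{|x-y_\be|\le K\nu/|\s_\be|\}\bigr| \le C e^{\widetilde{C}K}\,\frac{\nu\,\de_*^2}{|\s_\be|}.
\end{align*}
Multiplying the pieces, the near-region contribution is bounded by
\begin{align*}
C(K+1)^q e^{\widetilde{C}K}\,\de_*\,\nu^q|\s_\be|^{1-q}.
\end{align*}

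The main obstacle is balancing the two regions: the far region needs $K$ large, while the near region loses the factor $e^{\widetilde{C}K}$. I would take $K=\theta\ln(1/\de_*)$ and choose $\theta$ so that $\theta\widetilde{C}<1-p$ and also $\theta\widetilde{C}-1\ge p$. These two conditions are incompatible as stated, so the choice must be adjusted: the far-region prefactor $1/\de_*$ is in fact absorbed because the weight already carries only $\s_\be^2/\nu$, and $\de_*$ enters through $a_\be$. I would therefore redo the far estimate using \eqref{weight equivalence} carefully. If this still fails, I would instead use \eqref{lemma smallness eq 1} on the whole of $\ji$: on the indicator set, $|(a_\be)_x|\le C(\de_1)|(a_\be)_x|Q(v|\bv)$. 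Then apply H\"older with exponents tuned to $p<1$, pairing $|x-\xoi|^{q/(1-p)}$ against the exponential weight. This interpolation is the step I expect to require the most care, with the constant depending on $p$ and $q$ as the statement allows.
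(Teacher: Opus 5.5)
Your argument for \eqref{lemma tech pre 2} is correct and is the same as the paper's. The gap is in \eqref{lemma tech pre 1}.

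You rightly find that your two constraints on $K$ are incompatible. However, the culprit is your near-region estimate, not the far region. Your proposed repair of the far region is false: $(a_\be)_x=-p(\sv_\be)_x/\de_*$ by \eqref{def of weight a_i}, so the factor $1/\de_*$ is genuinely there.

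The loss comes from converting the near-region integral into a measure bound through the pointwise lower bound on $|(a_\be)_x|$. This costs the ratio of the supremum to the infimum of $|(a_\be)_x|$ on $\{|x-y_\be|\le K\nu/|\s_\be|\}$, i.e.\ $e^{\widetilde CK}$. The product of your near bound $(K+1)^qe^{\widetilde CK}\de_*$ and your far bound $\de_*^{-1}K^qe^{-\widetilde CK}$ is then at least $K^{2q}\ge 1$. So no $K\ge 1$ makes both $O(\de_*^p)$.

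Your H\"older fallback, as written, also misses. With exponents $1/p$ and $1/(1-p)$, pairing $|x-\xoi|^{q/(1-p)}$ with the weight, it gives $(|\s_\be|\de_*)^p\bigl(\de_*^{-1}|\s_\be|(\nu/|\s_\be|)^{q/(1-p)}\bigr)^{1-p}=\de_*^{2p-1}\nu^q|\s_\be|^{1-q}$, which is the wrong power.

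The missing step is to keep the weight inside the near-region integral and bound only $|x-\xoi|^q$ by its supremum there. Since $Q(v|\bv)\ge c(\de_1)>0$ on $\{|\pd|\ge\de_1\}$, \eqref{distance between x and x0i} and \eqref{lemma smallness eq 1} give directly
\begin{align*}
\int_{\ji\cap\{|x-y_\be|\le K\nu/|\s_\be|\}}|(a_\be)_x||x-\xoi|^q\mathds{1}_{\{|\pd|\ge\de_1\}}\dx
&\le C(K+1)^q\Bigl(\frac{\nu}{|\s_\be|}\Bigr)^q\int|(a_\be)_x|Q(v|\bv)\dx\\
&\le C(K+1)^q\de_*\,\nu^q|\s_\be|^{1-q}.
\end{align*}
So the near region costs only a polynomial factor in $K$. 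This lets $K$ be a negative power of $\de_*$ rather than a logarithm.

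With $K=\de_*^{-(1-p)/q}$, the near part is $C\de_*^{p}\nu^q|\s_\be|^{1-q}$. Your far-region computation then gives $C\de_*^{p-2}e^{-\widetilde C\de_*^{-(1-p)/q}}\nu^q|\s_\be|^{1-q}\le C_{p,q}\de_*^{p}\nu^q|\s_\be|^{1-q}$, because the exponential beats every power of $\de_*$. This is exactly the paper's proof: its cutoff $\nu/(\mathbf{C}|\s_\be|)$ with $\mathbf{C}^q=\de_*^{1-p}$ is your $K\nu/|\s_\be|$.

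Alternatively, your H\"older idea does close if the exponent on the indicator factor is $s=(1+p)/2$ rather than $p$. With $r=2q/(1-p)$, the bound $(C|\s_\be|\de_*)^{s}\bigl(C_r\de_*^{-1}|\s_\be|(\nu/|\s_\be|)^{r}\bigr)^{1-s}$ equals $C\de_*^{2s-1}\nu^q|\s_\be|^{1-q}=C\de_*^{p}\nu^q|\s_\be|^{1-q}$. This is a one-line proof with no splitting. You would need to compute $\int|(a_\be)_x||x-\xoi|^{r}\dx\le C_r\de_*^{-1}|\s_\be|(\nu/|\s_\be|)^{r}$ directly, since $r$ can exceed the range $b\le 2$ of Lemma~\ref{lemma integration of x^b e^-x}.
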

\begin{proof}
We prove \eqref{lemma tech pre 1} by further decomposing $\ji$. Let $0<\tc<1$ be a constant, independent of $\nu$, to be chosen later. Recall \eqref{scale of underline ji compared with ji}. Since the scale $\{|x-y_\be|\le \frac{\nu}{\tc|\s_\be|}\}$ is comparable to that of $\uji$, we split the integral as
\begin{align*}
    L&:=\int_{\ji} |(a_\be)_x| |x-\xoi|^q \mathds{1}_{\{|\pd|\geq \de_1\}} dx \\
    &= \int_{\{|x-y_\be|\leq \frac{\nu}{\tc |\s_\be|}\}} |(a_\be)_x| |x-\xoi|^q \mathds{1}_{\{|\pd|\geq \de_1\}} \dx \\
    &\qquad + 
    \int_{\{|x-y_\be|> \frac{\nu}{\tc |\s_\be|}\} \cap \ji} |(a_\be)_x| |x-\xoi|^q \mathds{1}_{\{|\pd|\geq \de_1\}} \dx \\
    &=: L_{1} + L_{2} \, .
\end{align*}
On $\{|\pd|\geq \de_1\}$, we have
\begin{align*}
    Q(v|\bv) \geq C \, .
\end{align*}
By \eqref{distance between x and x0i} and Lemma \ref{lemma smallness of localized relative entropy 1}, we take $\tc^q = \de_*^{1-p}$ and get
\begin{align}\label{inequality ki1}
    L_{1} \leq \frac{\cde \nu^q}{\tc^q |\s_\be|^q} \int |(a_\be)_x| Q(v|\bv) \dx 
    \leq \cde \nu^q { |\s_\be|^{1-q}}{\de_*^p }
    \, .
\end{align}
On the other hand, \eqref{distance between x and x0i}  gives
\begin{align*}
    L_{2} 
    &\leq C \int_{\{|x-y_\be|> \frac{\nu}{\tc |\s_\be|}\} } |(a_\be)_x| |x-y_\be|^q \mathds{1}_{\{|\pd|\geq \de_1\}} \dx \\
    &\qquad +  \frac{\cde\nu^q}{|\s_\be|^q} \int |(a_\be)_x| Q(v|\bv) \dx \\
    &=: L_{21} + L_{22} \, . 
\end{align*}
By Lemma \ref{lemma smallness of localized relative entropy 1}, we get
\begin{align*}
    L_{22} \leq \cde \nu^{q} |\s_\be|^{1-q} \de_*\, .
\end{align*}
By Lemma \ref{lemma estimate of waves} and Lemma \ref{lemma integration of x^b e^-x}, we get
\begin{align*}
    L_{21} &\leq \frac{C}{\de_*} \int_{\{|z|> \frac{\nu}{\tc |\s_\be|}\} } \frac{\s_\be^2}{\nu} e^{-\frac{C|\s_\be||z|}{\nu}} |z|^q \, dz \\
    &= C \frac{\nu^q|\s_\be|^{1-q}}{\de_*} \int_{\{|\tilde{z}|> \frac{1}{\tc}\} } e^{-C|\tilde{z}|} |\tilde{z}|^q \, d\tilde{z} \\
    &\leq C \frac{\nu^q|\s_\be|^{1-q}}{\tc^q \de_*} e^{-C \tc} \\
    &\leq C \nu^q|\s_\be|^{1-q}\de_*^p\, .
\end{align*}
In all, we conclude 
\begin{align*}
    L \leq \cde \nu^q |\s_\be|^{1-q} \de_*^p \, .
\end{align*}
On the other hand, \eqref{scale of Ji} and \eqref{distance between x and x0i} imply
\begin{align*}
    L^c : = \int_{\ji^c} |(a_\be)_x| |x-\xoi|^q \dx 
    \leq \frac{C}{\de_*} \int_{|x-y_\be|\geq \frac{\rn^{\co}}{2}} \frac{\s_\be^2}{\nu} e^{-\frac{C|\s_\be||x-y_\be|}{\nu}} |x-y_\be|^q \dx \, .
\end{align*}
By 1) in Corollary \ref{lemma strength of waves} and \eqref{rt transition S}, we have
\begin{align*}
    L^c \leq C_n \nu^n \, .
\end{align*}
\end{proof}

\subsubsection{Technical lemma for Proposition \ref{prop difference}}
The following lemma corresponds to Lemma 10.4 in \cite{NStoE} and is the main technical lemma used to prove Proposition~\ref{prop difference}. The proof follows the same idea. However, since we modify the scalings associated with rarefactions and non-physical waves, we go through the full argument in Appendix~\ref{section proof of lemma technical}. We also take this opportunity to isolate in Lemma~\ref{lemma technical preliminary} some estimates that are used repeatedly.
\begin{lemm}\label{lemma technical}
There exists $C>0$, depending on $U_*, \de_1$, and $C_n>0$, depending on $U_*, \de_1, n$, such that for any $\be \in \ms$,
\begin{equation}\label{lemma technical 3}
    \int |(a_\be)_x|\Bigl(\bigl|Q(v|\bv)-Q(\lv|\bv)\bigr| + |v-\lv|\Bigr) \dx \leq \cde \bigl(G_{2\be}(U) - G_{2\be}(\lu)\bigr)  \, ,
\end{equation}
\begin{equation}\label{lemma technical 1}
    \begin{aligned}
        &\int_\om |(a_\be)_x|\, |p(v)-p(\lv)| \dx \leq \cde \de_*^{1/2} \md_\be + \cde_n \nu^n \md \, , \\
        &\int_\om |(a_\be)_x|\, \Bigl| |p(v)-p(\bv)|^2 - |p(\lv)-p(\bv)|^2 \Bigr| \dx \leq \cde \de_*^{1/2} \md_\be + \cde_n \nu^n \md \, ,
    \end{aligned}
\end{equation}
\begin{equation}\label{lemma technical 2}
    \begin{aligned}
        &\int |(a_\be)_x| v^{\ga-\al} |p(v)-p(\lv)|^2 \dx \\
        &\leq \cde \bigl(G_{2\be}(U) - G_{2\be}(\lu)\bigr) + \cde \de_*^{1/2} (\md_\be+\tg_\be) + \cde_n \nu^n (\md + \tg) \, , \\
        &\int |(a_\be)_x|\, \Bigl| v^{\ga-\al}|p(v)-p(\bv)|^2 - \lv^{\ga-\al}|p(\lv)-p(\bv)|^2 \Bigr| \dx \\
        &\leq \cde \bigl(G_{2\be}(U) - G_{2\be}(\lu)\bigr) + \cde \de_*^{1/2} (\md_\be+\tg_\be) + \cde_n \nu^n (\md + \tg) \, ,
    \end{aligned}
\end{equation}
\begin{equation}\label{lemma technical 4}
    \begin{aligned}
        &\int |(a_\be)_x| \bigl|p(v|\bv)-p(\lv|\bv)\bigr| \dx \\
        &\leq \cde \bigl(G_{2\be}(U) - G_{2\be}(\lu)\bigr) + \cde \de_*^{1/2} (\md_\be+\tg_\be) + \cde_n \nu^n (\md + \tg) \, ,
    \end{aligned}
\end{equation}
\begin{equation}\label{lemma technical 5}
    \begin{aligned}
        &\int_{\om^c} |(a_\be)_x|\bigl|p(v)-p(\bv)\bigr||h-\bh| \dx \\
        &\leq C \de_*^{1/2}  (\md_\be+\tg_\be) +  C_n \nu^n (\md+\tg) + (\de_1 + \cde \de_*^{1/4}) G_{1\be}^-(U) + C_n \nu^n \, ,
    \end{aligned}
\end{equation}
\begin{equation}\label{lemma technical 6}
    \begin{aligned}
        \int_{\om^c} |(a_\be)_x| \Bigl(Q(\lv|\bv) + |\lv-\bv|\Bigr) \dx &\leq \cde \de_*^{1/2} (\md_\be+\tg_\be) + C_n \nu^n (\md+\tg)\, , \\
        \int_{\om^c} |(a_\be)_x| \Bigl(Q(\lv|\bv) + |\lv-\bv|\Bigr) \dx &\leq \cde |\s_\be|^{1/2} \de_*^{3/4} (\md_\be+\tg_\be)^{1/2} + C_n \nu^n (\md+\tg)^{1/2} \, ,
    \end{aligned}
\end{equation}
where $\md_\be$ is defined in \eqref{def of Di}, $\md$ is defined in \eqref{def of D}, $\tg, \tg_\be$ are defined in \eqref{def of tg and tg_i}, $G_{2\be},G_{1\be}^-$ are defined in \eqref{def of B and G}, and $\lu$ is defined in \eqref{def of truncation}.
\end{lemm}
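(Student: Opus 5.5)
The approach is to reduce each estimate to pointwise control of $v-\lv$ outside the truncation, and to the localization of $|(a_\be)_x|$ near $y_\be$. Since $\ji$ and its complement behave differently, every integral is split as $\int_{\ji}+\int_{\ji^c}$. On $\ji^c$, the bound \eqref{lemma tech pre 2} of Lemma~\ref{lemma technical preliminary} and the exponential decay in Lemma~\ref{lemma estimate of waves} make the contribution $C_n\nu^n$. Multiplying by the relevant global quantities ($\md$, $\tg$, or bounds on $Q$) produces the $C_n\nu^n(\md+\tg)$ error terms. It therefore suffices to work on $\ji$, where the truncation is anchored at the point $\xoi\in\uji$. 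There \eqref{truncation estimate 1} gives $p(v)-p(\lv)=(|\pd|-\de_1)_+$, so all differences vanish unless $|\pd|\ge\de_1$.

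\textbf{Step 1: the bound \eqref{lemma technical 3}.} This is purely algebraic. By \eqref{relative evaluation Q} and \eqref{relative evaluation Q and ||}, $|v-\lv|\le C(Q(v|\bv)-Q(\lv|\bv))$, and the difference of $Q$'s is nonnegative. Since $\las_\be(a_\be)_x$ has a fixed sign, $G_{2\be}(U)-G_{2\be}(\lu)=\int |\las_\be||(a_\be)_x|(Q(v|\bv)-Q(\lv|\bv))$, which gives \eqref{lemma technical 3} directly. The same observation gives \eqref{lemma technical 4} on the region where $v$ is large. There $p(v|\bv)$ grows at most linearly in $v$, so it is controlled by $Q(v|\bv)-Q(\lv|\bv)$.

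\textbf{Step 2: the bounds using pointwise control.} For \eqref{lemma technical 1}, on $\om\cap\ji$ we have $\lv=\lv_b$. Using \eqref{truncation estimate vb} with \eqref{distance between x and x0i}, $|p(v)-p(\lv)|\le C|x-\xoi|^{1/2}(\md_\be/\nu)^{1/2}\mathds 1_{\{|\pd|\ge\de_1\}}$. For the square, $|p(v)-p(\bv)|^2-|p(\lv)-p(\bv)|^2$ is bounded by $C|p(v)-p(\lv)|+C|p(v)-p(\lv)|^2$; note that $p$ is bounded on $\om$ because $v$ is bounded below there. We then integrate against $|(a_\be)_x|$ using \eqref{lemma tech pre 1}: with $q=1/2$ we get $\nu^{1/2}|\s_\be|^{1/2}\de_*^{p}$, and with $q=1$ we get $\nu\de_*^p$. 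Young's inequality then yields $\de_*^{1/2}\md_\be$ plus a remainder; the remainder is absorbed by choosing $p$ close to $1$ and using $|\s_\be|\le C\de_*^3$ from \eqref{weight relation between L and de*}.

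For \eqref{lemma technical 2} and the small-$v$ part of \eqref{lemma technical 4}, on $\om^c$ we have $\lv=\lv_s$ and use \eqref{truncation estimate vs}. The weight $v^{\ga-\al}$ appears exactly in that estimate, so $v^{\ga-\al}|p(v)-p(\lv)|^2\le C|x-\xoi|(\md_\be+\tg_\be)/\nu$. Integrating with $q=1$ in \eqref{lemma tech pre 1} gives $C\de_*^{p}(\md_\be+\tg_\be)$ with $p\ge1/2$. For large $v$ we bound $v^{\ga-\al}|p(v)-p(\bv)|^2\le C(Q(v|\bv)-Q(\lv|\bv))$, which falls under Step~1.

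\textbf{Step 3: the terms on $\om^c$.} For \eqref{lemma technical 6}, note that on $\om^c$ we have $|\lv-\bv|\sim\de_1$ and $Q(\lv|\bv)\le C$. So each integrand is bounded by $C\mathds 1_{\{|\pd|\ge\de_1\}}$. We write this indicator as $\mathds 1\le\de_1^{-1}|p(v)-p(\lv_s)|^{?}$, which fails at the threshold itself. Instead we use that $\xoi$ lies in $\om$ and that $p(v)-p(\bv)$ must cross from $\le\mathbf C\de_*$ to $\ge\de_1$. By \eqref{truncation estimate vs} applied with truncation level $\de_1/2$, every $x\in\om^c\cap\ji$ satisfies $\de_1\le C|x-\xoi|^{1/2}((\md_\be+\tg_\be)/\nu)^{1/2}$. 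Hence $\mathds 1\le C|x-\xoi|(\md_\be+\tg_\be)/\nu$, and \eqref{lemma tech pre 1} gives the first line of \eqref{lemma technical 6}. For the second line we instead use the square-root version, $\mathds 1\le C|x-\xoi|^{1/2}((\md_\be+\tg_\be)/\nu)^{1/2}$, with $q=1/2$, which gives $|\s_\be|^{1/2}\de_*^{3/4}$ after taking $p=3/4$.

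\textbf{Step 4: \eqref{lemma technical 5}, the main obstacle.} I expect this to be the main obstacle because $p(v)-p(\bv)$ is unbounded as $v\to0$ and is multiplied by $|h-\bh|$. The plan is to apply Young's inequality: the integrand is at most $\e'|(a_\be)_x||h-\bh|^2+\e'^{-1}|(a_\be)_x||\pd|^2$ on $\om^c$. The first piece is $C\e' G^-_{1\be}(U)$, and we take $\e'=\de_1+\de_*^{1/4}$. For the second we split $|\pd|^2\le2\de_1^2\mathds 1+2|p(v)-p(\lv_s)|^2$. The indicator part is handled as in Step~3, with an extra factor $\de_*^{1/2}/\e'$ that the $\de_*^{p}$ gain absorbs. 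The unbounded part needs the weight $v^{\ga-\al}$, which is missing here. To handle it, I would try to use that $v$ small forces $Q(v|\bv)\gtrsim v^{1-\ga}$ together with the $\nu$-dependent $L^\infty$ control \eqref{bound on v^-gamma}. If that proves too lossy, I would keep the $h$-weighted quadratic form and complete the square against $G^-_{1\be}$ only on the set where $|\pd|\le\de_*^{-1/8}$, treating its complement through the $Q$-bound of Lemma~\ref{lemma smallness of localized relative entropy 1}.
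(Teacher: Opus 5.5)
Your overall architecture is the paper's: you split into $\ji$ and $\ji^c$, use the pointwise bounds \eqref{truncation estimate vb}--\eqref{truncation estimate vs} anchored at $\xoi$, and integrate them with Lemma~\ref{lemma technical preliminary}. Steps 1 and 3 are essentially the paper's proofs of \eqref{lemma technical 3} and \eqref{lemma technical 6}. There are, however, two genuine gaps.

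\textbf{First-power terms.} For the first line of \eqref{lemma technical 1} you bound $\int_{\om}|(a_\be)_x||p(v)-p(\lv)|\dx$ by $C\md_\be^{1/2}|\s_\be|^{1/2}\de_*^{p}$ and then apply Young. This leaves $\de_*^{1/2}\md_\be+C\de_*^{2p-1/2}|\s_\be|$.
\begin{itemize}
\item The right-hand side of \eqref{lemma technical 1} is homogeneous of degree one in $(\md_\be,\md)$ and has no additive term, so there is nothing to absorb $C\de_*^{2p-1/2}|\s_\be|$ into. The bound $|\s_\be|\le C\de_*^3$ only makes it a small constant.
\item This is not cosmetic. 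Summed over shocks, it becomes a $\nu$-independent source of size a power of $\de_*$ times $\ml(0)$, which is incompatible with the $C\nu^2$ error in Proposition~\ref{proposition shock estimate}.
\item The missing idea is the one you already use in Step 3. Wherever $(|\pd|-\de_1)_+\neq 0$, the level-$\de_1/2$ truncation satisfies $|p(v)-p(\lv_{\de_1/2})|\ge \de_1/2$. Hence $(|y|-\de_1)_+\le \frac{2}{\de_1}(|y|-\frac{\de_1}{2})_+^2$, so the first power is dominated by a square. Its integral is then linear in $\md_\be$, using exponent $1$ in \eqref{lemma tech pre 1}.
\item The same device is needed for the cross term $2\de_1 v^{\ga-\al}|p(v)-p(\lv)|$ in the second line of \eqref{lemma technical 2}.
\item For the unweighted $|p(v)-p(\lv_s)|$ in \eqref{lemma technical 4} on $\{\pd>\de_1\}$, \eqref{truncation estimate vs} does not apply directly. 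The paper splits at $2\de_1$. On $\{\de_1<\pd\le 2\de_1\}$ one has $|p(v)-p(\lv)|\le C|v-\lv|$, so Step 1 applies. On $\{\pd>2\de_1\}$ one has $|p(v)-p(\lv_s)|\le Cv^{\ga-\al}|p(v)-p(\lv_s)|^2$.
\end{itemize}

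\textbf{The estimate \eqref{lemma technical 5}.} Your Young split with a fixed $\e'$ leaves $\e'^{-1}\int_{\om^c}|(a_\be)_x||p(v)-p(\lv_s)|^2\dx$.
\begin{itemize}
\item When $\al<\ga$ (forced if $\ga\le 5/3$), this cannot be bounded linearly by $\md_\be+\tg_\be$. As $v\to0$, the controlled quantity $v^{\ga-\al}|p(v)-p(\lv_s)|^2$ behaves like $v^{-\ga-\al}$, while $|p(v)-p(\lv_s)|^2$ behaves like $v^{-2\ga}$.
\item The sharp comparison is therefore $|p(v)-p(\lv_s)|^2\le C\bigl(v^{\ga-\al}|p(v)-p(\lv_s)|^2\bigr)^{q}$ with $q=\frac{2\ga}{\ga+\al}\in[1,2)$, which is superlinear.
\item Both of your fallbacks leave the admissible class. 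The bound \eqref{bound on v^-gamma} brings in $\mathcal{D}_v$. Lemma~\ref{lemma smallness of localized relative entropy 1} applied on a complementary set produces an additive $C|\s_\be|\de_*$.
\end{itemize}
The paper's route avoids a fixed Young parameter:
\begin{itemize}
\item Apply Cauchy--Schwarz: $L_1\le C L_{11}^{1/2}G_{1\be}^-(U)^{1/2}$, where $L_{11}=\int_{\{\pd>2\de_1\}}|(a_\be)_x||p(v)-p(\lv_s)|^2\dx$.
\item Bound $L_{11}$ using the $q$-comparison, \eqref{truncation estimate vs} raised to the power $q$, and \eqref{lemma tech pre 1} with exponent $q$. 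This gives $L_{11}\le C|\s_\be|^{1-q}\de_*^{p}(\md_\be+\tg_\be)^q+C_n\nu^n(\md+\tg)^q$.
\item Young's inequality with exponents $2/q$ and $2/(2-q)$ then gives $\de_*^{1/2}(\md_\be+\tg_\be)+C|\s_\be|^{\frac{1-q}{2-q}}\de_*^{\frac{2p-q}{2(2-q)}}G_{1\be}^-(U)^{\frac{1}{2-q}}$.
\item The decisive step is Lemma~\ref{lemma smallness of localized relative entropy 1}, i.e. $G_{1\be}^-(U)\le C|\s_\be|\de_*$. It exactly cancels the negative power of $|\s_\be|$ and, with $p=1-q/4$, leaves $C\de_*^{1/4}G_{1\be}^-(U)$.
\end{itemize}
The bounded part $\de_1|h-\bh|$ is then handled as you propose, via $\de_1 G_{1\be}^-$ plus the indicator argument of your Step 3.
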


\section{Estimate of the weighted norm at stopping times}\label{sec estimates at stopping time}
In this section, we evaluate the change in the weighted relative entropy across events in preparation for the global-in-time estimates in Section~\ref{subsec global-in-time estimates}. For interactions, passings, and partitions, the changes in the strengths of shocks and rarefactions are well controlled by Lemma~\ref{lemma L + q + omega}, thanks to the construction of the algorithm and the decay of the total variation, which yield a sharper estimate. On the other hand, merging and overtaking events do not enjoy such control and therefore require a different estimate.

For simplicity, when considering a given event, we use the subscripts $-$ and $+$ to denote quantities before and after the event, respectively. More precisely, we denote the approximate solution and the weight before and after the event by $\bu_\pm$ and $a_\pm$, respectively. We denote the sets of incoming waves, shocks, rarefactions, and non-physical waves by $\ma^-$, $\ms^-$, $\mr^-$, and $\mnp^-$, respectively, and the corresponding sets of outgoing waves by $\ma^+$, $\ms^+$, $\mr^+$, and $\mnp^+$. We study
\begin{align*}
    I = \int a_+ \eta(U|\bu_+) \dx - \int a_- \eta(U|\bu_-) \dx \, ,
\end{align*}

\subsection{Estimate at interactions, passings, and partitions}\label{subsec weighted norm stopping time with decay a}
Assume that an interaction, a passing event, or a partition occurs. 
\subsubsection{Control of wave-strength changes}\label{subsubsec decay of the weight}
The following lemma shows that the decay of $\ml$ at an interaction, a passing, or a partition compensates for the errors caused by the corresponding changes in the strengths of shocks and rarefactions. The estimates for passing and partition events follow directly from the construction, while the estimate for interactions follows the same spirit as the proof of Proposition~11.1 in \cite{NStoE}. The details are given in Appendix~\ref{sec decay of weight lemma proof}.

\begin{lemm}\label{lemma L + q + omega}
For any $U_*\in \om_{sub}^-$ as defined in \eqref{def subsonic L}, there exist constants $\e_0, C_w, C_0>0$ such that the following is true.

\noindent
Assume $\ml(0)\leq \e_0$. When an interaction, a passing, or a partition occurs, we have, on $(y_b,\infty)$,
\begin{align*}
    C_w \Delta \ml + \Delta g^S \leq C_0 \rn \, , \quad C_w \Delta \ml - \Delta g^R \leq C_0 \rn \, ,
\end{align*}
where $\Delta g^W = g^W_+ - g^W_-$, and
\begin{align*}
    g^S_\pm(x) = \int_\infty^x \sum_{\be \in \ms^\pm} \p_x \bigl(-p(\sv_\be)\bigr)  \, dy  \, dy\, , \quad
    g^R_\pm(x) = \int_\infty^x \sum_{\be \in \mr^\pm} \p_x \bigl(-p(\rv_\be)\bigr)  \, dy\, .
\end{align*}

\end{lemm}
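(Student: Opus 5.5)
The plan is to reduce both inequalities to pointwise statements about jumps of $g^S$ and $g^R$, which are piecewise constant up to $C_n\nu^n$ errors away from the wave locations. Between events, $g^S_\pm(x)$ is, up to exponentially small tails, minus the sum of the pressure jumps $p(\tilde v_-)-p(\tilde v_+)$ of all shocks lying to the right of $x$ (with the sign convention of the integral from $\infty$), and similarly for $g^R_\pm$. Since $p$ is monotone in $v$ and along wave curves $\Delta p$ is comparable to the velocity strength $\s$ (shocks give $\s<0$ with $-\Delta p<0$, rarefactions $\s>0$), we have $|\Delta g^S(x)|\lesssim$ the change in the total shock strength to the right of $x$, and analogously for rarefactions. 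Since $g^W_\pm$ are changed only near the event location, it suffices to treat $x$ left of, inside, and right of the event region.

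For passings and partitions the incoming and outgoing waves are rarefactions with identical total strength and identical end states, and $\ml$ is unchanged (tracked strengths and generation orders are preserved, so $g_b$ is preserved). Hence $\Delta\ml=0$ and $\Delta g^S=0$. The change in $g^R$ is supported only between the old and new positions of the moved pieces, where $-\Delta g^R$ has the favorable sign or is bounded by the rarefaction pieces moved in the direction of propagation. I would check directly from the placement rules ($\be'$ placed at $y_{\be_{\mathrm{adj}}}$ or shifted by $d_j$ in the direction of propagation) that $-\Delta g^R\le C\rn$. If the sign is wrong, I would fall back on the fact that the contribution of $e^{-C\nu^{-\cdot}}$ tails is absorbed into $C_0\rn$.

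For interactions, Lemma~\ref{lemma L+kQ} gives $\Delta\ml\le -C|\s||\widetilde\s|$. On the other side, Lemma~\ref{lemma interaction wave strength estimates} (and Lemma~\ref{lemma interaction wave strength estimates NP} for non-physical partners) bounds the change in shock and rarefaction strength by $C|\s\widetilde\s|$. I then split into cases. In the different-families case, outgoing waves have the same types and strengths perturbed by $O(|\s\widetilde\s|)$, so $|\Delta g^S|+|\Delta g^R|\le C|\s\widetilde\s|$ pointwise, which is absorbed by choosing $C_w$ large. In the same-family case, $S+S$ produces a stronger shock and a new rarefaction. The key sign observations are these: merging two shocks changes the shock total by $O(|\s\widetilde\s|(|\s|+|\widetilde\s|))$, and the newly created rarefaction enters $\Delta g^R$ with the favorable sign for the inequality $C_w\Delta\ml-\Delta g^R\le\cdot$, because $-\Delta g^R\le0$ when rarefaction pressure drops are added. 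Similarly, $S+R$ creates a new shock of strength $O(|\s\widetilde\s|)$ and shrinks one wave, which is bounded by the same product. The $2\rn^{\cz}$ displacement of outgoing shocks matters here. Pointwise, between the old and new shock location, $g^S$ differs by the full shock strength rather than by its change. I would handle this by noting that the displacement is always outward (left for 1-shocks, right for 2-shocks), which makes the pointwise change of $g^S$ on that small interval have a sign compatible with $C_w\Delta\ml+\Delta g^S\le0$. Recall that $g^S$ is built from $-p(\sv)$ and that shocks increase $p$ across in the appropriate direction. Exponentially small profile tails contribute $C_n\nu^n\le C_0\rn$. Interactions with non-physical waves leave physical strengths unchanged and change the non-physical strength by $C|\s\widetilde\s|$, so they contribute nothing to $g^S$ or $g^R$ beyond tails.

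The main obstacle is this sign bookkeeping at the displaced shock and at the new rarefaction in the same-family shock–shock case: one must verify, for each family, that the location shift and the created wave give pointwise errors of the right sign. Everything else is only of size $O(|\s\widetilde\s|)$ or $O(\rn)$. I would do this by writing $g^S_+-g^S_-$ explicitly as a step function for a 2-shock interaction and then using symmetry for the 1-family.
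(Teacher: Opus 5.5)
Your overall architecture matches the paper's proof. You reduce $g^S_\pm$ to functions that are constant outside the main transition zones up to $C_n\nu^n$ (the paper's $\omega^\beta$), use the outward $2\rho_\nu^{c_0}$ displacement to get the sign at the displaced shock, check passings and partitions by monotonicity along rarefaction curves, and absorb the rest with $C_w\Delta\mathcal{L}$.

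\textbf{The gap: same-family shock--rarefaction interaction.} You assert that the surviving wave ``shrinks \ldots{} by the same product''. It does not. Lemma~\ref{lemma interaction wave strength estimates} gives $\sigma'=\sigma+\widetilde\sigma+O(|\sigma\widetilde\sigma|(|\sigma|+|\widetilde\sigma|))$ with $\sigma,\widetilde\sigma$ of opposite signs. So the surviving wave changes to \emph{first} order, and the cancelled wave drops out of $g^S$ or $g^R$ altogether.

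For example, take a $2$-shock and a $2$-rarefaction, in either order, with the shock surviving. At every $x$ to the left of the event, both $\Delta g^S(x)$ and $-\Delta g^R(x)$ equal, up to $C_n\nu^n$, the pressure increase across the incoming $2$-rarefaction. This is a positive quantity comparable to $|\widetilde\sigma|$. Against it you only have $C_w\Delta\mathcal{L}\le -C_wC|\sigma||\widetilde\sigma|$ with $|\sigma|\lesssim\mathcal{L}(0)$ small. No $C_w$ depending only on $U_*$ can compensate; indeed $C_w\mathcal{L}/\delta_*$ must stay small for the weight to stay near $1$.

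The missing idea is that cancellation makes $\mathcal{L}$ itself drop to first order. The elements of the cancelled wave are inherited by no outgoing wave. Case~2.1 in the proof of Lemma~\ref{lemma L+kQ}, see \eqref{proof of well-definedness delta L 2}, then gives $\Delta\mathcal{L}\le-\tfrac12|\widetilde\sigma|$, where $\widetilde\sigma$ is the strength of the incoming wave whose type does not survive. Every first-order change of $g^S$ and $g^R$ in this case is bounded by $C|\widetilde\sigma|$, plus $C\rho_\nu$ if an outgoing wave becomes non-physical. So a large $C_w$ closes both inequalities. The product bound in the statement of Lemma~\ref{lemma L+kQ}, which is also what Appendix~\ref{sec decay of weight lemma proof} quotes, is not enough by itself; you need the stronger decrease from its proof.

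\textbf{Other inaccurate pointwise claims, repairable by signs.}
\begin{itemize}
\item \emph{Different families.} The outgoing $1$-wave is placed at $y_{\beta_l}$, where the incoming $2$-wave was, and the outgoing $2$-wave at $y_{\beta_r}$; see \eqref{placement of outgoing waves interaction i=1}--\eqref{placement of outgoing waves interaction i=2}. So on the whole interval between the two positions, $\Delta g^S$ and $\Delta g^R$ are of first order, not $O(|\sigma\widetilde\sigma|)$.
\item \emph{Physical wave crossing a non-physical one.} The same happens here, because the physical wave is relocated across that interval, so it is not true that nothing changes beyond tails.
\end{itemize}
In both cases, what saves you is that $p$ increases across $1$-shocks and $2$-rarefactions and decreases across $2$-shocks and $1$-rarefactions. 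This gives $\Delta g^S\le0$ and $\Delta g^R\ge0$ between the positions, and one-sided $O(|\sigma\widetilde\sigma|)$ bounds inside the transition zones. You need this one-sided bookkeeping there, not a two-sided bound.

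\emph{Shock--shock, $2$-family.} The new wave is a $1$-rarefaction, across which $p$ decreases. So $-\Delta g^R>0$ to its left, contrary to your sign claim. This is harmless only because $|\sigma''|\le C|\sigma\widetilde\sigma|(|\sigma|+|\widetilde\sigma|)$.

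\emph{Passings and partitions.} The direct monotonicity check does give $-\Delta g^R\le0$, or $\le C\rho_\nu$ if a piece becomes non-physical, as in the paper. Your fallback to exponentially small tails would not help: rarefaction profiles are compactly supported, so a wrong sign would cost the full moved strength.
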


\subsubsection{Estimate of the weighted relative entropy}
We have
\begin{align*}
    I &= \int a_+ \, \eta(U|\bu_+) \dx - \int a_- \, \eta(U|\bu_-) \dx \\
    &= \int (a_+ - a_-) \, \eta(U|\bu_-) \dx + \int a_+ \bigl( \eta(U|\bu_+) - \eta(U|\bu_-) \bigr)\dx \\
    &=: I_1 + I_2 \, .
\end{align*}
By \eqref{def of weight} and Lemma \ref{lemma L + q + omega}, we have
\begin{align}\label{decay of a global in time}
    a_+ - a_- = \frac{1}{\de_*} \bigl[ C_w \Delta \ml + \Delta g^S(x) \bigr] \leq \frac{1}{\de_*} f + C \rn \, , 
\end{align}
where 
\begin{align}\label{property of f}
    f = C_w \Delta \ml + \Delta g^S - C_0 \rn \leq 0 \, ,
\end{align}
with $C_w, C_0>0$ chosen in Lemma \ref{lemma L + q + omega}. Hence, we get
\begin{align*}
    I_1 \leq \frac{1}{\de_*}\int f \,  \eta(U|\bu_-) \dx + C\rn \int \eta(U|\bu_-) \dx \, .
\end{align*}
On the other hand, \eqref{useful computation} implies
\begin{equation}\label{computation diff of relative entropy}
    \begin{aligned}
        &\eta(U|\bu_+) - \eta(U|\bu_-) \\
        &= \eta(\bu_-) - \eta(\bu_+) - \nabla\eta(\bu_-) \bu_- + \nabla\eta(\bu_+) \bu_+ + \bigl( \nabla\eta(\bu_-) - \nabla\eta(\bu_+) \bigr) U \\
        &\leq C|\bu_- - \bu_+| + \bigl( -p(\bv_-) + p(\bv_+) \bigr) v + (\bh_- - \bh_+) h \, .
    \end{aligned}
\end{equation}
Together with \eqref{weight a close to 1}, we have
\begin{equation*}
    \begin{aligned}
        I_2 &\leq \int a_+ \bigl( -p(\bv_-) + p(\bv_+) \bigr) v \mathds{1}_{\{v\geq 2 v_*\}} \dx + \int a_+ (\bh_- - \bh_+) (h-\bh_-) \dx \\
        &\quad + \|\bu_+ - \bu_-\|_{L^1} \, .
    \end{aligned}
\end{equation*}
By \eqref{property of f}, we conclude
\begin{align}\label{eq of I global estimate}
    I \leq I_3 + I_4 + C\rn \int \eta(U|\bu_-) \dx + C \|\bu_+ - \bu_-\|_{L^1} \, ,
\end{align}
where
\begin{equation}\label{estimate on I3, I4, def, global estimate}
    \begin{aligned}
        I_3 &= \frac{1}{\de_*}\int f \,  Q(v|\bv_-) \dx  + \int a_+ \bigl( -p(\bv_-) + p(\bv_+) \bigr) v \mathds{1}_{\{v\geq 2 v_*\}} \dx \, , \\
        I_4 &= \int a_+ (\bh_- - \bh_+) ( h - \bh_- ) \dx \, .
    \end{aligned}
\end{equation}

\noindent
\textbf{Estimate of $-p(\bv_-) + p(\bv_+)$.}
We have
\begin{align*}
    p(\bv_\pm)(x) &= p(v_*) + \int_\infty^x \p_x \bigl(p(\bv_\pm)\bigr) \, dy \\
    &= p(v_*) + A_1^\pm + A_2^\pm \, ,
\end{align*}
where
\begin{align*}
    A_1^\pm &= \sum_{\be \in \ms^\pm} \int_\infty^x \p_x  \bigl(p(\sv_\be)\bigr) \, dy
    + \sum_{\be \in \mr^\pm} \int_\infty^x \p_x  \bigl(p(\rv_\be)\bigr) \, dy
    + \sum_{\be \in \mnp^\pm} \int_\infty^x \p_x  \bigl(p(\nv_\be)\bigr) \, dy \, , \\
    A_2^\pm &= \sum_{\be \in \ms^\pm} \int_\infty^x \bigl( p'(\bv_\pm) - p'(\sv_\be) \bigr) \p_x \sv_\be \, dy + \sum_{\be \in \mr^\pm} \int_\infty^x \bigl( p'(\bv_\pm) - p'(\rv_\be) \bigr) \p_x \rv_\be \, dy \\
    &\qquad + \sum_{\be \in \mnp^\pm} \int_\infty^x \bigl( p'(\bv_\pm) - p'(\nv_\be) \bigr) \p_x \nv_\be \, dy \, .
\end{align*}
We have
\begin{align*}
    \bigl(-p(\bv_-) + p(\bv_+)\bigr) (x) = (-A_1^- + A_1^+) + (-A_2^- + A_2^+) \, .
\end{align*}
By Corollary \ref{lemma strength of waves} and Lemma \ref{lemma number of waves and interactions}, we get
\begin{align*}
    -A_1^- + A_1^+ \leq -\Delta g^S - \Delta g^R + C \rn^{1-\cpr} \ln \rn^{-1}\, .
\end{align*}
Lemma \ref{lemma estimates of interaction terms}, Corollary \ref{lemma strength of waves}, and Lemma \ref{lemma number of waves and interactions} give
\begin{align*}
    -A_2^- + A_2^+ \leq C \rn^{1-\cpr} \ln \rn^{-1} \, .
\end{align*}
Together with Lemma \ref{lemma L + q + omega}, we get
\begin{equation}\label{estimate p(bar v) difference}
    \begin{aligned}
        \bigl(-p(\bv_-) + p(\bv_+)\bigr) (x) &\leq -\Delta g^S - \Delta g^R + C \rn^{1-\cpr} \ln \rn^{-1} \\
        &\leq C_w \Delta \ml - \Delta g^R - (C_w \Delta \ml + \Delta g^S) + C \rn^{1-\cpr} \ln \rn^{-1} \\ 
        &\leq - (C_w \Delta \ml + \Delta g^S) + C \rn^{1-\cpr} \ln \rn^{-1} \, .
    \end{aligned}
\end{equation}

\noindent
\textbf{Estimate of $I_3$.} There exists $\mathbf{C}_Q>0$, depending only on $U_*$, such that
\begin{equation}\label{property of Q global estimate}
    \begin{aligned}
        v \mathds{1}_{\{v\geq 2 v_*\}} \leq \mathbf{C}_Q Q(v|\bv_\pm)  \, .
    \end{aligned}
\end{equation}
By \eqref{property of f} and \eqref{estimate p(bar v) difference}, we have
\begin{align*}
    I_3 &\leq \int \frac{1}{\mathbf{C}_Q\de_*} f v \mathds{1}_{\{v\geq 2 v_*\}} \dx + \int a_+ \bigl( -p(\bv_-) + p(\bv_+) \bigr) v \mathds{1}_{\{v\geq 2 v_*\}} \dx \\
    &\leq \int A v \mathds{1}_{\{v\geq 2 v_*\}} \dx \, ,
\end{align*}
where
\begin{align*}
    A = \frac{1}{\mathbf{C}_Q\de_*} \bigl(C_w \Delta \ml + \Delta g^S - C_0 \rn \bigr)  - a_+ \bigl(C_w \Delta \ml + \Delta g^S  \bigr) + C\rn^{1-\cpr} \ln \rn^{-1} \, .
\end{align*}
By \eqref{weight a close to 1}, we can take
\begin{align}\label{relation delta 9}
    \de_* \leq \frac{1}{2\mathbf{C}_Q} \, ,
\end{align}
and get
\begin{align*}
    A \leq C\rn^{1-\cpr} \ln \rn^{-1} \, .
\end{align*}
By \eqref{property of Q global estimate}, we get
\begin{align}\label{eq of I3 global estimate}
    I_3 \leq C\rn^{1-\cpr} \ln \rn^{-1} \int a_- Q(v|\bv_-) \dx  \, .
\end{align}

\noindent
\textbf{Estimate of $I_4$.} 
Recall the definition of $I_4$ in \eqref{estimate on I3, I4, def, global estimate}. By \eqref{property of f} and \eqref{weight a close to 1}, Young's inequality gives
\begin{equation}\label{eq of I4 global estimate}
    \begin{aligned}
        I_4 &\leq \|\bu_+ - \bu_-\|_{L^2} \bigl(\int a_- (h-\bh_-)^2 \dx\bigr)^{1/2} \\
        &\leq \|\bu_+ - \bu_-\|_{L^2} \int a_- (h-\bh_-)^2 \dx + C \|\bu_+ - \bu_-\|_{L^2} \, .
    \end{aligned}
\end{equation}

\noindent
\textbf{Conclusion.} By \eqref{eq of I global estimate}, \eqref{eq of I3 global estimate}, and \eqref{eq of I4 global estimate}, we get
\begin{equation}\label{energy estimate with decay of weight intermediate}
    \begin{aligned}
        &\int a_+ \, \eta(U|\bu_+) \dx - \int a_- \, \eta(U|\bu_-) \dx \\
        &\leq  C \max \{\rn^{1-\cpr} \ln \rn^{-1},\,\|\bu_+ - \bu_-\|_{L^2} \}  \int a_- \, \eta(U|\bu_-) \dx \\
        &\qquad + C\|\bu_+ - \bu_-\|_{L^2} + C\|\bu_+ - \bu_-\|_{L^1} \, .
    \end{aligned}
\end{equation}

\subsubsection{Change of the approximate solution}
The construction in Section \ref{subsec mft} provides the following estimates.

\begin{lemm}\label{lemma bar U change at stopping time}
There exists $C>0$, depending on $U_*$, such that the following is true.

\noindent
If an interaction, a passing, or a partition occurs, then 
\begin{align*}
    \|\bu_\pm\|_{L^\infty} \leq C \, , \quad \|\bu_+ - \bu_-\|_{L^1} \leq C \rn^{\co} \Sigma \, , \quad \|\bu_+ - \bu_-\|_{L^2}^2 \leq C \rn^{\co} \Sigma^2 \, ,
\end{align*}
where $\Sigma$ denotes the sum of the absolute values of the strengths of all incoming waves.

\end{lemm}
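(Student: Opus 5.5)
The plan is to prove Lemma~\ref{lemma bar U change at stopping time} directly from the representation \eqref{def approximate sol at time t_j}, writing $\bu_\pm = \ust - \int_x^\infty \sum \p_x(\text{wave profiles})\,dy$. The key point is that an event changes only the incoming waves, replacing them by the outgoing ones. All other waves keep their positions, states and profiles: their left and right states are unchanged, because the event is a local Riemann-type replacement connecting the same outer states $U_L$ and $U_R$.

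\textbf{Uniform bound.} The $L^\infty$ bound follows because $\bu_\pm$ takes values within $O(\ml(0))$ of $U_*$. Shock profiles lie on monotone curves between their end states, rarefactions lie on integral curves, and non-physical waves are convex combinations. Lemma~\ref{lemma L+kQ} then gives $|\bu_\pm - U_*|\leq C\e_0$. The interaction terms between adjacent profiles are exponentially small by Lemma~\ref{lemma estimate of waves}.

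\textbf{Localization.} Next I localize $\bu_+-\bu_-$. Both $\bu_+$ and $\bu_-$ equal $U_R$ to the right of the event region (up to $C_n\nu^n$) and $U_L$ to the left. The incoming and outgoing waves all sit within the union of a few transition zones $I_\be$, of size $O(\rn^{\co})$, placed around the event location.
\begin{itemize}
\item For an interaction, outgoing waves are placed at $y_{\be_l}$ or $y_{\be_r}$, shifted by at most $2\rn^{\cz}$.
\item For a partition, the shift is $d_j\leq 3\rn^{\co}$.
\item For a passing, the waves are placed at $y_{\be_{\rm adj}}$ and $y_\be$, with separation at most $2d_j$.
\end{itemize}
So the set where $\bu_+\neq\bu_-$ (up to exponentially small shock tails) is contained in an interval $J$ of length $C\rn^{\co}$. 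On $J$, both $\bu_+$ and $\bu_-$ stay within $C\Sigma$ of $U_L$. This is because the total variation of each of $\bu_\pm$ across the event region is bounded by the incoming strengths. For outgoing strengths this uses Lemma~\ref{lemma interaction wave strength estimates}, Lemma~\ref{lemma interaction wave strength estimates NP}, and the fact that passing and partition preserve tracked strengths. Hence $|\bu_+-\bu_-|\leq C\Sigma$ on $J$.

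\textbf{Integration.} Integrating over $J$ gives $\|\bu_+-\bu_-\|_{L^1}\leq C\rn^{\co}\Sigma$ and $\|\bu_+-\bu_-\|_{L^2}^2\leq C\rn^{\co}\Sigma^2$.

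\textbf{Main obstacle: shock tails.} The step needing care is that viscous shocks are not compactly supported, so outside $J$ the difference of shifted shock profiles does not vanish exactly. I would handle the tails with Lemma~\ref{lemma estimate of waves}~1) and~4). For $|x-y_\be|\geq\rn^{\co}$, each profile differs from its end state by at most $C|\s_\be|\exp(-\widetilde C\rn^{1+\co}/\nu)$. Integrating, and using \eqref{rt transition S} together with $|\s_\be|\geq\rn$, the tail contribution to both norms is at most $C_n\nu^n|\s_\be|$, which is absorbed into $C\rn^{\co}\Sigma$.

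A second subtlety is a shock shifted by $2\rn^{\cz}$. Its contribution to $\bu_+-\bu_-$ is again supported, up to tails, in a region of length $O(\rn^{\co})$ with amplitude $\leq C\Sigma$, so the same bound applies.
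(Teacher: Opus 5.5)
Your proposal is correct and follows essentially the same route as the paper: localize $\bu_+-\bu_-$ to an interval $J$ of length $O(\rn^{\co})$ on which it is $O(\Sigma)$, and control the exponentially small viscous-shock tails on $J^c$ using Lemma~\ref{lemma estimate of waves}, 1) in Corollary~\ref{lemma strength of waves}, and \eqref{rt transition S}. You add detail the paper omits, namely the $L^\infty$ bound and the case-by-case placement showing $|J|=O(\rn^{\co})$; absorbing the tail terms into $C\rn^{\co}\Sigma$ and $C\rn^{\co}\Sigma^2$ is legitimate because each such event has a physical incoming wave, so $\Sigma\geq\rn=\nu^{\cs}$ while the tails are super-polynomially small in $\nu$.
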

\begin{proof}
The main change in the approximate solution occurs inside an interval $J$ of scale $\rn^{\co}$. Let $q=1,2$. We get
\begin{align*}
    \|\bu_+-\bu_-\|_{L^q}^q &\leq \|\bu_+-\bu_-\|_{L^q(J)}^q + \|\bu_+-\bu_-\|_{L^q(J^c)}^q \\
    &\leq C \rn^{\co} \Sigma^q + \|\bu_+-\bu_-\|_{L^q(J^c)}^q \, .
\end{align*}
Lemma \ref{lemma estimate of waves}, 1) in Corollary \ref{lemma strength of waves}, and \eqref{rt transition S} imply
\begin{align*}
    \|\bu_+-\bu_-\|_{L^q(J^c)}^q &\leq \sum_{\be \in \ms^+ \cup \ms^-} \int_{|x-y_\be|\geq \rn^{\co}} (\frac{\s_\be^2}{\nu})^q e^{-\frac{C|\s_\be||x-y_\be|}{\nu}} \dx \\
    &\leq C \frac{e^{-C\nu^{(1+\co)\cs-1}}}{\nu^{q-1}} \sum_{\be \in \ms^+ \cup \ms^-} |\s_\be|^{2q-1} \\
    &\leq C_n \nu^n \, .
\end{align*}
\end{proof}
Together with \eqref{energy estimate with decay of weight intermediate}, we get that, if an interaction, a passing event, or a partition event occurs, then
\begin{equation}\label{energy estimate with decay of weight}
    \begin{aligned}
        &\int a_+ \, \eta(U|\bu_+) \dx - \int a_- \, \eta(U|\bu_-) \dx \\
        &\leq  C \max \{\rn^{1-\cpr} \ln \rn^{-1}, \rn^{\co/2} \Sigma \}  \int a_- \, \eta(U|\bu_-) \dx  + C\rn^{\co/2} \Sigma \, .
    \end{aligned}
\end{equation}

\subsection{Estimate of the weighted norm when a merging or an overtaking occurs} 
Assume that a merging or an overtaking event occurs.

\subsubsection{Estimate of the weighted norm}
We have
\begin{align*}
    I &= \int a_+ \bigl( \eta(U|\bu_+) - \eta(U|\bu_-) \bigr) \dx + \int (a_+-a_-) \, \eta(U|\bu_-) \\
    &=: I_1 + I_2 \, .
\end{align*}
Recall \eqref{computation diff of relative entropy}. We get
\begin{align*}
    I_1 \leq C \|\bu_+ - \bu_-\|_{L^1} + \int a_+ \bigl( -p(\bv_-) + p(\bv_+) \bigr) v \dx + \int a_+ (\bh_- - \bh_+) \, h \dx \, .
\end{align*}
By \eqref{property of Q global estimate}, we have
\begin{align*}
    &\int a_+ \bigl( -p(\bv_-) + p(\bv_+) \bigr) v \dx \\
    &\leq C \int \bigl( -p(\bv_-) + p(\bv_+) \bigr) v \, \mathds{1}_{\{v\geq 2v_*\} } \dx + C \|\bu_+ - \bu_-\|_{L^1} \\
    &\leq C \|\bu_- - \bu_+\|_{L^\infty} \int \, Q(v|\bv_-) \dx + C \|\bu_+ - \bu_-\|_{L^1} \, .
\end{align*}
Similarly, since there exists $C>0$, depending only on $U_*$, such that
\begin{align*}
    h \, \bigl(\mathds{1}_{\{h < \bh_-/2\} } + \mathds{1}_{\{h > 2\bh_-\} } \bigr) \leq C |h - \bh_-|^2 \, ,
\end{align*}
we get
\begin{align*}
    &\int a_+ (\bh_- - \bh_+) \, h \dx \\
    &\leq C \int (\bh_- - \bh_+) \, h \, \bigl(\mathds{1}_{\{ h < \bh_-/2\} } + \mathds{1}_{\{h > 2\bh_-\} } \bigr) \dx + C \|\bu_+ - \bu_-\|_{L^1} \\
    &\leq C \|\bu_- - \bu_+\|_{L^\infty} \int |h - \bh_-|^2 \dx + C \|\bu_+ - \bu_-\|_{L^1} \, .
\end{align*}
By \eqref{decay of a global in time},
On the other hand, recall the definition of the weight $a$ in \eqref{def of weight}. If an overtaking occurs, then
\begin{align*}
    a_+ - a_- \leq 0 \, .
\end{align*}
If a merging occurs, the construction in Section \ref{subsub merging} implies
\begin{align*}
    a_+ - a_- \leq C \Sigma \, ,
\end{align*}
where $\Sigma$ denotes the sum of the absolute values of the strengths of all incoming waves.

Hence, if an overtaking occurs, then
\begin{equation}\label{energy estimate without weight decay intermediate o}
    \begin{aligned}
        &\int a_+ \, \eta(U|\bu_+) \dx - \int a_- \, \eta(U|\bu_-) \dx \\
        &\leq C \|\bu_- - \bu_+\|_{L^\infty} \int a_- \, \eta(U|\bu_-) \dx + C \|\bu_+ - \bu_-\|_{L^1} \, ,
    \end{aligned}
\end{equation}
and if a merging occurs, then
\begin{equation}\label{energy estimate without weight decay intermediate}
    \begin{aligned}
        &\int a_+ \, \eta(U|\bu_+) \dx - \int a_- \, \eta(U|\bu_-) \dx \\
        &\leq C \max \bigl\{\|\bu_- - \bu_+\|_{L^\infty}, \Sigma \bigr\}\int a_- \, \eta(U|\bu_-) \dx + C \|\bu_+ - \bu_-\|_{L^1} \, .
    \end{aligned}
\end{equation}

\subsubsection{Change of the approximate solution}

The proof follows the same strategy as that of Lemma~\ref{lemma bar U change at stopping time}. 

\begin{lemm}\label{lemma bar U change at merging}
There exist $C>0$, depending only on $U_*$, and $C_n>0$, depending only on $U_*,n$, such that the following is true.

\noindent
If an overtaking occurs, then
\begin{align*}
    \|\bu_\pm\|_{L^\infty} \leq C \, , \quad \|\bu_+ - \bu_-\|_{L^\infty} \leq C \rn \, , \quad
    \|\bu_+ - \bu_-\|_{L^1} \leq C \rn^{1+\co} \, .
\end{align*} 
If a merging occurs, then
\begin{align*}
    \|\bu_\pm\|_{L^\infty} \leq C \, , \quad \|\bu_+ - \bu_-\|_{L^\infty} \leq C \Sigma \, , \quad
    \|\bu_+ - \bu_-\|_{L^1} \leq C \mev \Sigma \, .
\end{align*}
where $\Sigma$ denotes the sum of the absolute values of the strengths of all incoming waves.

\end{lemm}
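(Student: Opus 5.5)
The plan follows the proof of Lemma~\ref{lemma bar U change at stopping time}. We split $\bu_+-\bu_-$ into a part supported on a region $J$ where the event actually changes the profile, and a tail on $J^c$. On $J^c$ only viscous shock tails differ, and these are $O(C_n\nu^n)$ by Lemma~\ref{lemma estimate of waves}. The uniform bound $\|\bu_\pm\|_{L^\infty}\le C$ holds in both cases: $\bu_\pm$ is built from $U_*$ by integrating derivatives of waves whose total strength is at most $C\ml(0)$ (Lemma~\ref{lemma L+kQ}), so $|\bu_\pm-U_*|\le C\e_0$.

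\textbf{Overtaking.} A single non-physical wave $\be$ at distance $d_j$ from the boundary is removed, and the solution is set equal to its right state on its former transition zone. Hence $\bu_+-\bu_-$ vanishes to the right of $y_\be+\rn^{\co}$. To the left, the difference is the constant jump of $\be$ restricted to the interval between the boundary and $y_\be$. By Property~\ref{prop transition zone}, this interval has length $d_j+\rn^{\co}\le C\rn^{\co}$: since overtaking triggers the stopping time, the wave is at distance $d_j\le 3\rn^{\co}$, and no other wave lies between it and the boundary. On the transition zone itself the difference is at most $|\s_\be|$. Using 3) in Corollary~\ref{lemma strength of waves}, which gives $|\s_\be|\le C\rn$, we obtain $\|\bu_+-\bu_-\|_{L^\infty}\le C\rn$. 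Since the support has length $C\rn^{\co}$, we also get $\|\bu_+-\bu_-\|_{L^1}\le C\rn^{1+\co}$. No shock moves, so there is no tail term.

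\textbf{Merging.} All modified waves lie in $(y_b,\widetilde y(t_j-))$. The outgoing waves are placed at positions $\le \max\{\widetilde y-3\rn^{\co},y_{\be^{(M)}}\}<\widetilde y$ by \eqref{def of merging rightmost}. To the right of $\widetilde y+\rn^{\co}$ the right state $U_R$ is unchanged, so there $\bu_+=\bu_-$ up to $C_n\nu^n$ shock tails. On $J=(y_b,\widetilde y+\rn^{\co})$, both $\bu_\pm$ are obtained from $U_R$ by integrating derivatives of waves of total strength $\le C\Sigma$. For the outgoing waves this uses $\sum|{\s^{(i)}}'|\le|\s'|\le|\s|\le\Sigma$. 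Hence $|\bu_+-\bu_-|\le C\Sigma$ pointwise, giving the $L^\infty$ bound.

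By \eqref{def of tilde y merging}, $|J|\le 3\mev+\rn^{\co}\le C\mev$, since $\mev\ge \rn^{\cfi}\gg\rn^{\co}$. This gives $\|\bu_+-\bu_-\|_{L^1}\le C\mev\Sigma+C_n\nu^n$. The exponentially small shock tails outside $J$ are bounded exactly as in Lemma~\ref{lemma bar U change at stopping time}, using \eqref{rt transition S} and the lower bound $|\s_\be|\ge\rn$. They are absorbed into the main term because $\mev\Sigma\ge\rn^{\cfi}\cdot\rn$ dominates $C_n\nu^n$. If $\Sigma=0$, no waves are merged and the two sides coincide.

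The main obstacle I expect is the merging case. We must check that the shrinking and relocating of waves does not change $\bu$ outside $J$. This requires checking that the states to the right of $\widetilde y$, and the wave at $\widetilde y$ itself if any, are untouched. It also requires that shocks which are relocated contribute tails controlled by $C_n\nu^n$ even when their new positions differ from the old ones.
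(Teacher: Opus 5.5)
Your proposal is correct and is essentially the paper's argument: the paper only says the proof follows that of Lemma~\ref{lemma bar U change at stopping time}, i.e.\ the change of $\bu$ is concentrated on an interval $J$ of length $O(\rn^{\co})$ for an overtaking and $O(\mev)$ for a merging (by \eqref{def of tilde y merging}), with only exponentially small shock tails outside. Two small corrections. First, the upper bound $d_j\le 3\rn^{\co}$ comes from \eqref{def of dj}, not from Property~\ref{prop transition zone}. Second, absorbing the tail into $C\mev\Sigma$ needs $\Sigma\ge\rn$; your remark about $\Sigma=0$ does not supply this. It does hold: any shock contributing a tail, incoming or outgoing, has strength between $\rn$ and $\Sigma$, and every merging is triggered by a physical wave anyway.
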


Together with \eqref{energy estimate without weight decay intermediate o} and \eqref{energy estimate without weight decay intermediate}, we get that, if an overtaking occurs, then
\begin{equation}\label{energy estimate without weight decay o}
    \begin{aligned}
        &\int a_+ \, \eta(U|\bu_+) \dx - \int a_- \, \eta(U|\bu_-) \dx \\
        &\leq C \rn \int a_- \, \eta(U|\bu_-) \dx + C \rn^{1+\co} \, ,
    \end{aligned}
\end{equation}
and if a merging occurs, then
\begin{equation}\label{energy estimate without weight decay m}
    \begin{aligned}
        &\int a_+ \, \eta(U|\bu_+) \dx - \int a_- \, \eta(U|\bu_-) \dx \\
        &\leq C \Sigma \int a_- \, \eta(U|\bu_-) \dx + C \mev \Sigma \, .
    \end{aligned}
\end{equation}

\section{Proof of Proposition \ref{prop E approximate sol}}\label{sec global in time estimates}
In this section, we derive the global-in-time estimates of the weighted relative entropy by combining the estimates between stopping times, in particular Proposition~\ref{proposition energy estimate}, with the estimates at stopping times obtained in Section~\ref{sec estimates at stopping time}. We first establish a technical lemma in Section~\ref{subsub uni technical lemma}.

To control the total increase, we carefully analyze the contribution of each type of event in Section~\ref{subsec global-in-time estimates}. In particular, for interactions, we divide the events into different classes according to the strengths of the incoming waves. Interactions involving small incoming waves result in smaller increases, while the total number of interactions involving large waves is smaller, as shown in Lemma~\ref{lemma number of big interactions}.

For mergings, we distinguish between those involving a previously merged wave and those that do not. The increase caused by the latter is easily controlled, since each element can be involved in such mergings at most twice. For the former, we show that the decrease produced by the boundary contribution between successive mergings involving a previously merged wave is sufficiently large to compensate for the increase at each such merging.

For passings and partitions, we first estimate the increase during each discretization process, and then bound the total increase caused throughout the evolution using the uniform control, given in Lemma~\ref{lemma total number of R introduced big fan}, on the total number of rarefactions created that trigger a discretization process.

In Section~\ref{sub sec proof of prop limit of approximate solution}, using the notion of elements developed in Section~\ref{sec mft} and the global-in-time estimates derived in Section~\ref{subsec global-in-time estimates}, we prove that any limit of the approximate solution $\bu_\nu$, after reparametrizing the domain $(y_b^\nu,\infty)\times\mathbb{R}_+$ onto $\mathbb{R}_+\times\mathbb{R}_+$ with respect to the boundary $y_b^\nu$, satisfies the outflow boundary condition \eqref{outflow boundary condition}.

\subsection{Technical lemma}\label{subsub uni technical lemma} 
We begin with a technical lemma establishing the Gronwall inequality that will be applied directly in Section~\ref{subsec global-in-time estimates}. As a brief preview, the special set of events consists of mergings involving a previously merged wave. The increase caused by these mergings is compensated by the boundary contribution, in particular the term $-l$, with $l$ defined in \eqref{def of D_v, G}, accumulated between successive such mergings. The total increase caused by all other events throughout the evolution is bounded by $M$. In Section~\ref{subsec global-in-time estimates}, we estimate the increase associated with each type of event separately.

\begin{lemm}\label{lemma global in time estimates}
Let $\mathbf{C}>0$ and $\widetilde{\mathbf{C}}>1$. Then there exists $\e_0>0$ such that the following is true.

Let $T,C_1,C_2,e,M>0$, and let $f,g,h,l:[0,T]\to\mathbb{R}_{\ge0}$ be locally integrable nonnegative functions, with $f$ in addition absolutely continuous, such that the following conditions hold.

Assume that, on $[0,T]$, finitely many events occur at stopping times, with possibly multiple events occurring at the same stopping time. We denote by $\mathcal{E}$ the set of all events occurring on $[0,T]$. Let $\mathcal{E}_s\subset\mathcal{E}$ be a special set of events such that at most one event in $\mathcal{E}_s$ occurs at each stopping time. We denote the stopping times at which an event in $\mathcal{E}_s$ occurs, together with the left endpoint $0$, by
\begin{align*}
    t_0 := 0 < t_1 < \cdots < t_N \leq T \, .
\end{align*}
We assume that in between stopping times, 
\begin{align*}
    f'(t) + g(t) \leq C_1 f(t) + e^2 h(t) - l(t)\, .
\end{align*}
Moreover, we assume
\begin{align*}
    \int_0^T h(t) \, dt \leq C_2 \, , \qquad
    \int_{t_{j-1}}^{t_{j}} l(t) \geq \mathbf{C} e \quad \textup{for any } 1 \leq j \leq N \, .
\end{align*}
For any event $E\in \mathcal{E}$, we assume that there exists a constant $m_E>0$, depending on the event, such that
\begin{align*}
    f_+ - f_- \leq m_E f_- \, 
\end{align*}
where $f_-$ and $f_+$ denote the values of $f$ immediately before and after the event, respectively. We assume 
\begin{align*}
    m_E \leq \e_0 \, , \; \textup{if } E \in \mathcal{E}_s \, , \qquad 
    \sum_{E\in \mathcal{E}\backslash \mathcal{E}_s} m_E \leq M \, .
\end{align*}
We also assume
\begin{align*}
    e\leq f|_{[0,T)} \leq \widetilde{\mathbf{C}} e \, , \qquad  \lim_{t\rightarrow 0} f(t) = f(0) \, .
\end{align*}
Then, for all $t\in [0,T)$,
\begin{align*}
    f(t) &\leq f(0) e^{C_1 T + C_2 e + M} \, , \\
    \int_0^T g(t) \, dt &\leq f(0) \bigl( C_1 T +1 + M)e^{C_1 T + C_2 e + M} + C_2 e^2 \, .
\end{align*}

\end{lemm}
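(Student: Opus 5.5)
The strategy is a piecewise Gronwall argument in which the factor $e^{C_1 t}$ is carried along as a multiplier and the special events are treated separately from the others. Set $F(t)=e^{-C_1 t}f(t)$. Between stopping times, the differential inequality gives
\[
F'(t)+e^{-C_1t}g(t)\le e^{-C_1 t}\bigl(e^2h(t)-l(t)\bigr).
\]
At an event $E$ we have $F_+\le(1+m_E)F_-$. Let $P(t)=\prod_{E\in\mathcal{E}\setminus\mathcal{E}_s,\ t_E\le t}(1+m_E)$; then $P\le e^{M}$. The non-special events are absorbed by working with $\widetilde F=F/P$, which satisfies the same differential inequality with a factor of at most $1$ in front of $e^2h-l$, up to the harmless bound $P^{-1}\ge e^{-M}$, and does not jump at non-special events.

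The special events $\mathcal{E}_s$ are handled interval by interval on $(t_{j-1},t_j]$. Integrate the differential inequality from $t_{j-1}$ to $t_j^-$, drop $g\ge0$, and use $e^{-C_1t}\ge e^{-C_1T}$ on the $l$ term. This gives
\[
\widetilde F(t_j^-)\le \widetilde F(t_{j-1}^+)+e^2\!\int_{t_{j-1}}^{t_j}\! h\,dt-e^{-C_1T-M}\mathbf{C}e.
\]
At $t_j$ the special jump adds at most $\e_0 \widetilde F(t_j^-)$. Since $f\le\widetilde{\mathbf{C}}e$, this increment is at most $\e_0\widetilde{\mathbf{C}}e$ (times harmless factors $\le 1$). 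Choosing $\e_0$ small, namely $\e_0\widetilde{\mathbf{C}}\le \mathbf{C}e^{-C_1T-M}$, makes the jump absorbed by the accumulated boundary decrease $\int l\ge\mathbf{C}e$.

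This is the main obstacle: as stated, $\e_0$ may only depend on $\mathbf{C},\widetilde{\mathbf{C}}$ and not on $T,M,C_1$. So I would instead perform the comparison directly on $f$ rather than on the rescaled quantity. The plan is to write $f(t_j^-)-f(t_{j-1}^+)\le C_1\int f+e^2\int h-\mathbf{C}e$ and compare the special jump $\e_0 f(t_j^-)\le\e_0\widetilde{\mathbf C}e$ with $\mathbf{C}e$, requiring $\e_0\le \mathbf{C}/\widetilde{\mathbf{C}}$. The net effect is that each special interval contributes no more than the non-special evolution.

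Summing over the intervals then gives $\sup f\le f(0)+C_1\int f+e^2C_2+\sum_{E\notin\mathcal{E}_s} m_E f_-$. The additive $e^2C_2$ is converted to a multiplicative factor using $e\le f$, via $e^2h\le e f h$, so that it enters Gronwall as $C_2e$ in the exponent. A discrete/continuous Gronwall then yields $f(t)\le f(0)e^{C_1T+C_2e+M}$. For $g$, integrate the full inequality over $[0,T]$ while keeping $g$. The jumps contribute at most $M\sup f$ (non-special) and are nonpositive net (special, by the absorption above), $C_1\int f\le C_1T\sup f$, and $e^2\int h\le C_2e^2$. Adding $f(0)$ for the starting value and inserting the bound on $\sup f$ gives the stated estimate for $\int_0^T g\,dt$.
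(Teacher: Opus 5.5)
Your proof is correct, but it reaches the bound on $f$ by a different mechanism than the paper.

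\textbf{The paper's route.} It works with $J=\ln f$. Jumps become additive, $J_+-J_-\le\ln(1+m_E)\le m_E$. The two-sided bound $e\le f\le\widetilde{\mathbf C}e$ turns the differential inequality into $J'\le C_1+eh-l/(\widetilde{\mathbf C}e)$. One summation then gives $J(t)-J(0)\le C_1T+C_2e+M+\tilde N(t)\bigl(\e_0-\mathbf C/\widetilde{\mathbf C}\bigr)$, where $\tilde N(t)$ is the number of special times up to $t$. No Gronwall lemma is needed.

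\textbf{Your route.} You keep additive bookkeeping on $f$ and cancel each special jump $\e_0f(t_j^-)\le\e_0\widetilde{\mathbf C}e$ against $\int_{t_{j-1}}^{t_j}l\ge\mathbf Ce$. This requires the same smallness condition $\e_0\le\mathbf C/\widetilde{\mathbf C}$. You then need a Gronwall inequality for the measure $(C_1+eh)\,dt+\sum_{E\notin\mathcal E_s}m_E\delta_{t_E}$. Your treatment of $\int_0^Tg$ is the same as the paper's.

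\textbf{What to tighten.} First, drop the abandoned rescaling by $e^{-C_1t}$ and $P$. Second, state the summed inequality pointwise in time rather than with $\sup f$ on the left, since Gronwall cannot be applied to a bound with full integrals over $[0,T]$. The correct form is $f(t)\le f(0)+\int_0^t(C_1+eh)f\,ds+\sum_{E\notin\mathcal E_s,\,t_E\le t}m_Ef(t_E^-)$ for each $t$, where you have used $e^2h\le ehf$ inside the integral. Call this right-hand side $R(t)$. It is nondecreasing and dominates $f$. It satisfies $R'\le(C_1+eh)R$ between stopping times, $R_+\le(1+m_E)R_-$ at non-special events, and does not jump at special ones. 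This yields $R\le f(0)e^{C_1T+C_2e+M}$.

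\textbf{Comparison.} The logarithm gives the paper a one-line treatment of both the multiplicative jumps and the division by $f$. In the $\sup f$ estimate it uses $f\le\widetilde{\mathbf C}e$ only between stopping times. Your version uses that bound at the left limits $f(t_j^-)$, which is harmless since these are limits of values on $[0,T)$. In exchange, your additive accounting carries over verbatim to the estimate for $\int_0^Tg$.
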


\begin{proof}
Let 
\begin{align*}
    J(t) = \ln f(t) \, .
\end{align*}
For any time $t$ between stopping times, we have
\begin{align*}
    J'(t) \leq \frac{C_1 f(t) + e^2 h(t) - l(t)}{f(t)} \leq C_1 + e  h(t) - \frac{l(t)}{\widetilde{\mathbf{C}} e}\, .
\end{align*}
At an event $E$, we have
\begin{align*}
    J_+ - J_- = \ln(\frac{f_+}{f_-}) 
    \leq \ln(1+m_E) \leq m_E \, .
\end{align*}
We let $\mathcal{E}(t)$ denote the set of events occurring on $[0,t]$.
Since we have
\begin{align*}
    J(t+) - J(0) \leq \int_0^t J'(s)\, ds + \sum_{\mathcal{E}(t)} J_+ - J_-\, ,
\end{align*}
we get
\begin{align*}
    J(t+) - J(0) &\leq C_1 T + e \int_0^T h(t) \, dt - \tilde{N}(t) \frac{\mathbf{C}}{\widetilde{\mathbf{C}}} + M + \tilde{N}(t) \e_0  \, ,
\end{align*}
where 
\begin{align*}
    \tilde{N}(t) = \max \{j: t_{j} \leq t\} \, .
\end{align*}
We choose $\e_0$ so that
\begin{align}\label{proof of lemma uni in time e star}
    \frac{\mathbf{C}}{\widetilde{\mathbf{C}}} \geq \e_0 \, , 
\end{align}
and get
\begin{align*}
    J(t+) - J(0) \leq C_1 T + C_2 e + M \, ,
\end{align*}
which implies
\begin{align*}
    f|_{[0,T]} \leq f(0) e^{C_1 T + C_2 e + M} =: \mathbf{C}_f\, .
\end{align*}

\smallskip

We denote the stopping times on $(0,T)$, together with the endpoints $0$ and $T$, by
\begin{align*}
    \widetilde{t}_0:= 0 < \widetilde{t}_1 < ... < \widetilde{t}_{\widetilde{N}-1} < T = : \widetilde{t}_{\widetilde{N}} \, .
\end{align*}
We also notice that
\begin{align*}
    &\int_0^T g(t) \, dt \\
    &= \sum_{1\leq j\leq \widetilde{N}} \int_{\widetilde{t}_{j-1}}^{\widetilde{t}_{j}} g(t) \, dt \\
    &=  \sum_{1\leq j\leq \widetilde{N}} \int_{\widetilde{t}_{j-1}}^{\widetilde{t}_{j}} g(t) + f'(t) \, dt  - \sum_{1\leq j\leq \widetilde{N}} f(\widetilde{t}_{j}-) - f(\widetilde{t}_{j-1}+) \\
    &\leq C_1 \int_0^T f(t) \, dt + e^2 \int_0^T h(t) \, dt - N {\mathbf{C} e} + f(0) - f(T-) + \sum_{\mathcal{E}} f_+ - f_- \\
    &\leq C_1 \int_0^T f(t) \, dt + e^2 \int_0^T h(t) \, dt - N {\mathbf{C} e} + f(0) + \sum_{E\in\mathcal{E}} m_E f_- \\
    &\leq \mathbf{C}_f C_1 T + C_2 e^2 - N e {\mathbf{C}} + f(0) + \mathbf{C}_f M + N e \widetilde{\mathbf{C}} \e_0   \, .
\end{align*}
By \eqref{proof of lemma uni in time e star}, we get
\begin{align*}
    \int_0^T g(t) \, dt \leq \mathbf{C}_f \bigl( C_1 T + 1 + M \bigr) + C_2 e^2 \, .
\end{align*}

\end{proof}

\subsection{Global-in-time uniform estimates}\label{subsec global-in-time estimates}
We define
\begin{align}
    f(t) = \int a \, \eta(U|\bu) \dx + \mev \, , \quad g(t) = G(U) \, ,
\end{align}
where $G(U)$ is defined in \eqref{def of D_v, G}. Recall Property \ref{prop at most one merging each stopping time}. We let $\mathcal{E}_s$ denote the set of mergings involving a previously merged wave, and denote by
\begin{align*}
    t_0:= 0 < t_1 < ... < t_N < T 
\end{align*}
the left endpoint $0$ together with the stopping times at which such mergings occur.

\subsubsection{Increase between $t_{j-1}$ and $t_j$}
Proposition \ref{proposition energy estimate} and \eqref{rt uni in time between stopping times} imply that in between stopping times, we have
\begin{align}\label{uni estimate between stopping times}
    f'(t) + g(t) \leq C_1 f(t) + \mev^2 h(t) - l(t)\, ,
\end{align}
where $C_1>0$ is a constant depending only on $U_*, \de_1$, and $h,l\geq 0$ are defined in \eqref{def of D_v, G}. Note that \eqref{lemma uniform bounds D v} implies that there exists $C_2>0$ such that 
\begin{align*}
    \int_0^T h(t) \, dt \leq C_2 \, .
\end{align*}
In addition, Lemma \ref{lemma estimates on relative functions 2} and \eqref{merging lower bound of y_b} imply that
\begin{equation*}
    \begin{aligned}
        \int_{t_{j-1}}^{t_j} l(t) \, dt 
        &\geq C\int_{t_{j-1}}^{t_j} y_b'(t) Q(v| \bv)\bigl|_{y_b(t)} \, dt \\
        &\geq C\int_{t_{j-1}}^{t_j} y_b'(t) \mathds{1}_{\{y_b'(t) \geq \la_{\inf}^{(2)}(t)\}} \, dt \\
        &\geq \mathbf{C} \mev \, ,
    \end{aligned}
\end{equation*}
where $\la_{\inf}^{(2)}(t)$, defined in \eqref{def of the minimal speed of the physical 2-wave}, denotes the smallest speed among all $2$-waves in the approximate solution at time $t$. Here $\mathbf{C}$ depends only on $U_*$ and on $\La_0$, which is chosen in Lemma~\ref{lemma estimates on relative functions 2}.

\medskip
We restrict our attention to the time interval during which the integration of the weighted relative entropy is bounded by $\mev$ up to a constant. More precisely, we restrict to the interval $[0,\widetilde T]$, where
\begin{align}\label{def of tilde T}
    \widetilde T := \sup\{0\leq t\leq T : \int a \eta(U|\bu) \leq 2 ( 2\mathbf{C}_0 + 1) e^{C_1 T + 1} \mev\} \, ,
\end{align}
with $\mathbf{C}_0$ is defined in \eqref{decay rate of difference between initial and app initial}, and $C_1$ is defined in \eqref{uni estimate between stopping times}. Note that in our case, we apply Lemma~\ref{lemma technical} with
\begin{align*}
    \tilde{\mathbf{C}} = 2 (2 \mathbf{C}_0 + 1) e^{C_1 T + 1} \, .
\end{align*}
Here $\tilde{\mathbf{C}}$ depends on $U_*$ and $\de_1$.

We then estimate the increase at events throughout the evolution. Following the notation $M$ in Lemma~\ref{lemma technical}, we denote by $M_I$, $M_P$, $M_M$, and $M_O$ the sums of $m_E$ over all events $E$ corresponding to interactions, passings and partitions, mergings involving no previously merged wave, and overtakings, respectively. Note that we have
\begin{align}\label{global in time estimates M def}
    M = M_I + M_P + M_M + M_O \, .
\end{align}

\subsubsection{Increase due to interactions}
Let $0<\kappa<1$. We classify waves into three types, big, medium, and small, according to their strength. We call a wave big if its strength is greater than or equal to $\rn^{\kappa\cpr}$. We call a wave medium if its strength is greater than or equal to $\rn^{\frac{1+\kappa}{2}\cpr}$ but smaller than $\rn^{\kappa\cpr}$. We call a wave small otherwise.

We call an interaction big if at least one of the incoming waves is big. We call an interaction medium if neither incoming wave is big, but at least one incoming wave is medium. Finally, we call an interaction small otherwise. We denote the total numbers of big, medium, and small interactions by $N_b$, $N_m$, and $N_s$, respectively. Lemma~\ref{lemma number of big interactions} and Lemma~\ref{lemma number of waves and interactions} imply that
\begin{align*}
    N_b \leq C \rn^{-(1+\kappa) \cpr} \ln \rn^{-1} \, , \quad N_m \leq C \rn^{-\frac{3+\kappa}{2} {\cpr}} \ln \rn^{-1} \, , \quad N_s \leq \rn^{-2\cpr} ( \ln \rn^{-1} )^2\, .
\end{align*}
At the same time, the sum $\Sigma$ of the absolute values of the strengths of all incoming waves is bounded by
\begin{align*}
    \Sigma = 
    \begin{cases}
        C \ml(0)  \, , \quad \textup{if it is a big interaction,} \\
        C \rn^{\kappa \cpr} \, , \quad \textup{if it is a medium interaction,} \\
        C \rn^{\frac{1+\kappa}{2} \cpr} \, , \quad \textup{if it is a small interaction.}
    \end{cases}
\end{align*}
Together with  the last inequality in \eqref{rt uni in time at stopping times}, for any interaction, \eqref{energy estimate with decay of weight} becomes
\begin{align*}
    \int a_+ \, \eta(U|\bu_+) \dx - \int a_- \, \eta(U|\bu_-) \dx
    \leq  C \rn^{\co/2} \Sigma \int a_- \, \eta(U|\bu_-) \dx  + C\rn^{\co/2} \Sigma \, .
\end{align*}
Therefore, when an interaction occurs,
\begin{align*}
    f_+ - f_- \leq C m f_- \, ,
\end{align*}
where
\begin{equation*}
    m = 
    \begin{cases}
        \rn^{\frac{\co}{2}-\cfi}  \, , \quad \textup{if it is a big interaction,} \\
        \rn^{\frac{\co}{2}+\kappa \cpr-\cfi} \, , \quad \textup{if it is a medium interaction,} \\
        \rn^{\frac{\co}{2}+\frac{1+\kappa}{2}{\cpr}-\cfi} \, , \quad \textup{if it is a small interaction.}
    \end{cases}
\end{equation*}
We denote by $m_b$, $m_m$, and $m_s$ the values of $m$ corresponding to big, medium, and small interactions, respectively.

We have
\begin{align*}
    M_{I} &\leq N_b m_b + N_m m_m + N_s m_s  \\
    &\leq C {\rn}^{\frac{\co}{2}-\cfi-(1+\kappa)\cpr} + C {\rn}^{\frac{\co}{2}-\cfi + \frac{\kappa-3}{2}\cpr}  \, ,
\end{align*}
Taking $\kappa=\frac{1}{3}$, the first inequality in \eqref{rt uni in time at stopping times} gives
\begin{align}\label{global in time estimates interaction}
    M_I \leq C \rn^{\frac{\co}{2} - \cfi - \frac{4}{3} \cpr} ( \ln \rn^{-1} )^2
    \rightarrow 0 \, .
\end{align}

\subsubsection{Increase due to partition and passing events}
Assume that an interaction between two shocks of the same family generates a rarefaction $\be''$ of the other family whose associated ordered set contains $p>1$ elements. We consider a partition or passing event occurring during the subsequent discretization process. By Property~\ref{prop strength R discretization} and the last inequality in \eqref{rt uni in time at stopping times}, \eqref{energy estimate with decay of weight} becomes
\begin{align*}
    \int a_+ \, \eta(U|\bu_+) \dx - \int a_- \, \eta(U|\bu_-) \dx
    \leq  C p\rn^{\frac{\co}{2}+\cpr} \int a_- \, \eta(U|\bu_-) \dx  + C p \rn^{\frac{\co}{2}+\cpr} \, ,
\end{align*}
which implies
\begin{align*}
    f_+ - f_- \leq C p \rn^{\frac{\co}{2}+\cpr - \cfi} f_-\, .
\end{align*}
Also, during the subsequent discretization process, both the number of partition events and the number of passing events are bounded by the total number of elements. By Lemma~\ref{lemma number of waves and interactions}, the sum of $m_E$ over these events is bounded above by
\begin{align*}
    C p \rn^{\frac{\co}{2} - \cfi} \ln \rn^{-1} \, .
\end{align*}
Lemma \ref{lemma total number of R introduced big fan} implies that the sum of all $p$ throughout the evolution is bounded above by $\rn^{-\cpr}$. Hence, we have
\begin{align*}
    M_P \leq C \rn^{\frac{\co}{2}-\cfi-\cpr} \ln \rn^{-1} \, .
\end{align*}
By the first inequality in \eqref{rt uni in time at stopping times}, it follows that, as $\nu \rightarrow 0$,
\begin{align}\label{global in time estimates partition and passing}
    M_P \rightarrow 0 \, .
\end{align}

\subsubsection{Increase due to mergings}
We estimate separately the increase caused by mergings involving a previously merged wave and the increase caused by mergings involving no previously merged wave.

Recall \eqref{energy estimate without weight decay m} and Lemma~\ref{lemma L+kQ}. When a merging that merges a previously merged wave occurs, we have
\begin{align*}
    f_+ - f_- \leq C\ml(0) f_- \, .
\end{align*}
On the other hand, Property \ref{prop merging any element merged at most twice} and Lemma \ref{lemma L+kQ} imply that
\begin{align}\label{global in time estimates merging}
    M_{M} \leq C \ml_{total} \leq C \ml(0) \, . 
\end{align}

\subsubsection{Increase due to overtakings}
By \eqref{energy estimate without weight decay o}, we have
\begin{align*}
    f_+ - f_- \leq C \rn^{1-\cfi} f_- \, .
\end{align*}
Together with Lemma \ref{lemma number of waves and interactions},
\begin{align*}
    M_O \leq \textup{number of overtakings} \times C \rn^{1-\cfi} \leq C \rn^{1-\cfi-\cpr} \ln \rn^{-1} \, .
\end{align*}
By \eqref{rt uni in time at stopping times}, as $\nu \rightarrow 0$,
\begin{align}\label{global in time estimates overtaking}
    M_O \rightarrow 0 \, .
\end{align}

\medskip

Applying Lemma \ref{lemma global in time estimates}, we get that on $[0, \widetilde T]$,
\begin{align*}
    f(t) \leq f(0) e^{C_1 T + C_2 \mev + M} \, , \quad \int_0^{\widetilde T} g(t) \, dt \leq  f(0) ( C_1 T + 1 + M) e^{C_1 T + C_2 \mev + M} + C_2 \mev^2 \, ,
\end{align*}
where $M$ is defined in \eqref{global in time estimates M def}. By \eqref{global in time estimates interaction}, \eqref{global in time estimates partition and passing}, \eqref{global in time estimates merging}, \eqref{global in time estimates overtaking}, \eqref{decay rate of difference between initial and app initial}, \eqref{decay rate of difference between initial and app initial}, and \eqref{weight a close to 1}, we get that on $[0, \widetilde T]$, 
\begin{align*}
    f(t) \leq f(0) e^{C_1 T + 1} \leq (2\mathbf{C}_0 + 1) e^{C_1 T + 1} \mev \, .
\end{align*}
Recall \eqref{def of tilde T}. We obtain that $\widetilde T = T$. In all, we get
\begin{align}\label{convergence of F and G}
    \int a \eta(U|\bu) \dx \leq C \mev \rightarrow 0 \, , \; \textup{for any }t \in [0,T] \, , \quad \int_0^T G(U) \, dt \leq C \mev \rightarrow 0 \, .
\end{align}

\subsection{Properties of the approximate solutions}\label{sub sec proof of prop limit of approximate solution}
Lemma \ref{lemma L+kQ} ensures \eqref{thm lagrangian small bv}. It remains to show that 
\begin{align*}
    \bar{U}^\nu(\ef,t) = \bu_\nu(\ef+y_b^\nu,t)
\end{align*}
converges, up to a subsequence, in $L_{\mathrm{loc}}^1(\mathbb{R}_+\times \mathbb{R}_+)$, and that any subsequential limit satisfies the outflow boundary condition \eqref{outflow boundary condition}.

\subsubsection{Construction of the translated approximate solution $\bar{V}^\nu$}
We first note that $\bar{U}^\nu$ is the reparametrization of the front tracking approximate solution $\bu_\nu$ with respect to the boundary. In particular, the definition of $\bu_\nu$ in \eqref{def approximate sol at time t_j} gives
\begin{align*}
    \bar{U}^\nu(\ef,t) =  \ust - \int_{\ef + y_b^\nu(t)}^\infty \sum_{\sse} \p_x {\usn}(y,t) + \sum_{\rse} \p_x \urn(y,t) + \sum_{\npse} \p_x \unn(y) \, dy \, ,
\end{align*}
where $\sse$, $\rse$, and $\npse$ denote the sets of shocks, rarefactions, and non-physical waves, respectively, in $\bu_\nu$.

Since we do not have $\nu$-uniform $L^\infty$ control on the Navier-Stokes solutions, the speed of a $2$-wave may be smaller than the speed of the boundary. In other words, after reparametrization, $2$-waves in $\bar{U}^\nu$ may travel to the left. We construct a translated approximate solution $\bar V^\nu$ that omits the non-physical waves, preserves the same configuration of physical waves as $\bar U^\nu$, and retains the property that $2$-waves travel to the right and $1$-waves travel to the left. More precisely, we define
\begin{align*}
    \bar{V}^\nu(\ef,t) =  \ust - \int_{\ef+\eft(t)}^\infty \sum_{\sse} \p_x {\usn}(y,t) + \sum_{\rse} \p_x \urn(y,t) \, dy \, ,
\end{align*}
where $\sse$ and $\rse$ are the sets of shocks, rarefactions, and non-physical waves, respectively, in $\bu_\nu$, and
\begin{align}\label{proof of boundary condition translation}
    \eft(t) = \int_0^t (y_b^\nu)'(s) + \min \{\la_{\inf}^{(2)}(s) - (y_b^\nu)'(s) - \La_0, 0\} \, ds \, .
\end{align}
Here $\la_{\inf}^{(2)}$ denotes the minimum speed among the $2$-waves in $\bu_\nu$, as defined in \eqref{def of the minimal speed of the physical 2-wave}, and $\La_0>0$ is the constant chosen in Lemma \ref{lemma estimates on relative functions 2}. 

In other words, we artificially decrease the speed of the boundary so that it is always at least $\La_0$ smaller than the speed of any $2$-wave. Whenever such a decrease is applied, the resulting change in the speed of the boundary is
\begin{align}\label{velocity difference between shifted and unshifted}
    \la_{\inf}^{(2)}(s)-(y_b^\nu)'(s)-\La_0 \leq 0 \, .
\end{align}
Hence, the speed of a $1$-wave $\bar V^\nu(\ef,t)$ is
\begin{align*}
    \la_1-(y_b^\nu)'-\bigl(\la_{\inf}^{(2)}(s)-(y_b^\nu)'(s)-\La_0\bigr) \mathds{1}_{\{\la_{\inf}^{(2)}(s)-(y_b^\nu)'(s) - \La_0 \leq 0\} } \, ,
\end{align*}
where $\la_1$ denotes the speed of the corresponding $1$-wave in $\bu_\nu$.

Lemma~\ref{lemma estimates on relative functions 2} shows that the absolute value of the speed of any wave in $\bu_\nu$ is bounded below by $2\La_0$. Hence, if $\la_{\inf}^{(2)}(s) - (y_b^\nu)'(s) - \La_0 > 0$, then the speed of the $1$-wave is bounded by 
\begin{align*}
    \la_1-(y_b^\nu)' \leq \la_1 \leq -2\La_0 \, ,
\end{align*}
and if $\la_{\inf}^{(2)}(s) - (y_b^\nu)'(s) - \La_0 < 0$, then the speed of the $1$-wave is bounded by 
\begin{align*}
    \la_1-(y_b^\nu)'(s)-\bigl(\la_{\inf}^{(2)}(s)-(y_b^\nu)'(s)-\La_0\bigr)
    =\la_1 -\la_{\inf}^{(2)} + \La_0
    \leq -3\La_0 \, .
\end{align*}
We conclude that in $\bar V^\nu(\ef,t)$, the speed of $1$-waves are bounded above by $-2\La_0$.

We fix $x_*\in \mathbb{R}_+$. We note that in $\bar V^\nu(\ef,t)$, $1$-waves travel to the left, $2$-waves travel to the right, and the absolute values of the speeds of all waves are bounded below by $\La_0$. Together with Lemma~\ref{lemma L+kQ}, we get 
\begin{align*}
    \|\bar{V}^\nu(x_*, t)\|_{BV([0,T])} &\leq 4\sum_{b} |\s_b(t_0^b)| \leq C \ml(0) \, , \\
    \|\partial_x \bar{V}^\nu(x_*, t)\|_{\mathcal{M}([0,T])} &\leq \frac{4}{\La_0}\sum_{b} |\s_b(t_0^b)| \leq \frac{C}{\La_0}\ml(0)\, ,
\end{align*}
The Aubin-Lions lemma then implies that for any $M>0$, up to a subsequence, $\bar{V}^\nu(\ef,t)$ converges in $C_x([0,M];L_t^1([0,T]))$.

Since $\eft\leq y_b^\nu$, the distance of each physical wave from the boundary in $\bar V^\nu$ is greater than or equal to the corresponding distance in $\bu^\nu$. The boundary condition may be violated due to the omission of non-physical waves, overtakings, and the viscous effects of shocks. Hence, at any time $t$, Corollary~\ref{lemma strength of waves}, Lemma~\ref{lemma estimate of waves}, and Lemma~\ref{lemma number of waves and interactions} imply that the deviation of the boundary velocity of the translated approximate solution $\bar V^\nu$ from the prescribed value $u_B$ is bounded by
\begin{align*}
    \bigl(C\rn+C_n\nu^n\bigr)\rn^{-\cpr}\ln\rn^{-1}
    \leq C\rn^{1-\cpr}\ln\rn^{-1}.
\end{align*}
By \eqref{threshold mft}, the upper bound converges to $0$. Hence, any subsequential limit of the translated approximate solutions satisfies the outflow boundary condition \eqref{outflow boundary condition}.

\subsubsection{Difference between the approximate solution and the translated approximate solution}\label{subsub difference betwen translated and original approximate solution}
The spatial translation of $\bar{V}^\nu$ from $\bar{U}^\nu$ at time $t$ is
\begin{align*}
    y_b^\nu(t) - \eft(t) = \int_0^t -\bigl( \la_{\inf}^{(2)}(s) - (y_b^\nu)'(s) - \La_0 \bigr)  \mathds{1}_{\{\la_{\inf}^{(2)}(s)-(y_b^\nu)'(s) - \La_0 \leq 0\} } \, dt \, .
\end{align*}
By Lemma \ref{lemma estimates on relative functions 2} and \eqref{convergence of F and G}, we get that on $[0,T]$,
\begin{align*}
    y_b^\nu(t) - \eft(t) \leq \int_0^T  y_b^{\nu\,\prime}(t) \mathds{1}_{\{y_b^{\nu\,\prime}(t) + \La_0 \geq \la_{\inf}^{(2)}(t)\}} \, dt \leq C \int_0^T y_b^{\nu\,\prime} Q(v_b^\nu|\bv_\nu) \, dt \leq C \mev \, .
\end{align*}
Together with Lemma \ref{lemma L+kQ}, Corollary \ref{lemma strength of waves}, and Lemma \ref{lemma number of waves and interactions}, we have that for any $0\leq t\leq T$,
\begin{align}\label{differnce between approximate solutions V and U}
    \|\bar{V}^\nu(\ef, t) - \bu^\nu(\ef, t)\|_{L^1(0,M)} \leq  C\mev \ml(0) + C M \rn^{1-\cpr} \ln \rn^{-1} \rightarrow 0 \, .
\end{align}

\section{Proof of Proposition \ref{prop E approximate sol}}\label{sec from Lagrange to Euler}

We recall the definition of the Lagrangian mass variable $y^\nu$ from \eqref{def of y^nu}.
As in Section~12 of \cite{NStoE}, we define $U^0(y^\nu,t)$ from $U^0(x,t)$ by
\begin{align*}
    v^0(y^\nu,t) = \frac{1}{\rho^0(x,t)} \, , \quad h^0(y^\nu,t) = u^0(x,t) \, , 
\end{align*}
obtain 
\begin{align*}
    \bu_\nu(y^\nu,t)=\bigl(\bvn(y^\nu, t),\bhn(y^\nu, t)\bigr)
\end{align*}
from Theorem \ref{thm v,h}, and define $\bu_\nu(x,t)$ through the relations
\begin{align*}
    \brn(x,t) = \frac{1}{\bvn(y^\nu(x,t), t)} \, , \quad \bun(x,t) = \bhn(y^\nu(x,t), t) \, .
\end{align*}
Following the same argument as in Section 12 of \cite{NStoE}, Theorem~\ref{thm v,h} proves all the statements in Proposition~\ref{prop E approximate sol}, except that any subsequential limit of $\bu_\nu(x,t)$ satisfies the outflow boundary condition \eqref{outflow boundary condition}. 

In particular, Proposition~\ref{prop E compensated compactness} and \eqref{thm lagrangian small bv}, we have that, up to a subsequence, there exists $U^E=(\rho^E, \rho^E u^E)$ such that
\begin{align*}
    U^\nu(x,t) \to U^E(x,t) \text{ in } L^1_{\mathrm{loc}}(\mathbb{R}_+ \times \mathbb{R}_+)\, , \quad
    \bar U_\nu (x,t) \overset{*}{\rightharpoonup} U^E (x,t) \text{ in } L^\infty(\mathbb{R}_+ \times \mathbb{R}_+) \, ,
\end{align*}
and
\begin{align}\label{from e to l prelim convergence}
    \bun(x,t) \overset{*}{\rightharpoonup} u^E (x,t) \text{ in } L^\infty(\mathbb{R}_+ \times \mathbb{R}_+) \, .
\end{align}

We recall that the boundary in Lagrangian coordinates is denoted by $y_b^\nu$, as defined in \eqref{def of y_b^nu}. Theorem~\ref{thm v,h} implies that there exists a further subsequence that
\begin{align*}
    \bar{u}^\nu(\ef,t):=\bhn(\ef+y_b^\nu, t) \to u^L(\ef,t)
    \quad\text{in }L_{\mathrm{loc}}^1(\mathbb{R}_+\times\mathbb{R}_+) \, ,
\end{align*}
to some function $u^L(\ef,t)$ satisfying the outflow boundary condition \eqref{outflow boundary condition}.

We note that $\bar{u}^\nu(\ef,t)$ and $\bun(x,t)$ are different reparametrizations of $\bhn(y^\nu, t)$ from $(y_b^\nu,\infty)\times\mathbb R_+$ to $\mathbb R_+\times\mathbb R_+$. Therefore, the fact that $u^L(\ef,t)$ satisfies the outflow boundary condition \eqref{outflow boundary condition} does not directly imply that $u^E(x,t)$ satisfies the same boundary condition. In the following, we study the map between $\ef$ and $x$.

Let $T>0$. By \eqref{thm lagrangian decomposition}, we get that
\begin{align*}
    \int_0^T (y_b^\nu)'(t) Q\Bigl(\vn\bigl(y_b^\nu(t),t\bigr)|\bv_\nu\bigl(y_b^\nu(t),t\bigr)\Bigr) \, dt \rightarrow 0 \, .
\end{align*}
It is easy to see that there exists $C>0$, depending only on $U_*$, such that
\begin{align*}
    &(y_b^\nu)'(t) Q\Bigl(\vn\bigl(y_b^\nu(t),t\bigr)|\bv_\nu\bigl(y_b^\nu(t),t\bigr)\Bigr) \\
    &\geq C \Bigl( |\frac{1}{\vn\bigl(y_b^\nu(t),t\bigr)} - \frac{1}{\bv_\nu\bigl(y_b^\nu(t),t\bigr)}| \mathds{1}_{\{|\vn(y_b^\nu(t),t) - v_*| > {v_*}\}} \\
    &\qquad + |\frac{1}{\vn\bigl(y_b^\nu(t),t\bigr)} - \frac{1}{\bv_\nu\bigl(y_b^\nu(t),t\bigr)}|^2 \mathds{1}_{\{|\vn(y_b^\nu(t),t) - v_*| \leq {v_*}\}} \Bigr) \\
    &\geq C\Bigl( |(y_b^\nu)'(t)-\frac{u_B}{\bvn\bigl(y_b^\nu(t),t\bigr)}| \mathds{1}_{\{|\vn(y_b^\nu(t),t) - v_*| > {v_*}\}} \\
    &\qquad + |(y_b^\nu)'(t)-\frac{u_B}{\bvn\bigl(y_b^\nu(t),t\bigr)}|^2 \mathds{1}_{\{|\vn(y_b^\nu(t),t) - v_*| \leq {v_*}\}} \Bigr) \, .
\end{align*}
Hence, we get that
\begin{align*}
    \int_0^T |(y_b^\nu)'(t)-\frac{u_B}{\bvn\bigl(y_b^\nu(t),t\bigr)}| \, dt \rightarrow 0 \, .
\end{align*}
On the other hand, we see that, up to a subsequence,
\begin{align*}
    \frac{u_*}{\bar v^\nu\bigl(y_b^\nu(t),t\bigr)} \overset{*}{\rightharpoonup} g \, , 
    \qquad \textup{in } L^\infty(0,T) \, .
\end{align*}
Hence, up to a subsequence, 
\begin{align}\label{from e to l eq 1}
    \|y_b^\nu(t) -  y_b(t)\|_{L^\infty(0,T)} \rightarrow 0 \, .
\end{align}
where
\begin{align*}
    y_b(t) = \int_0^t g(s) \,  ds \, .
\end{align*}
Let 
\begin{align}\label{from e to l def of tilde y}
    \tilde{y}(x,t) = \int_0^x \rho^E(z,t) \, dz + y_b(t) \, .
\end{align}
We have
\begin{align*}
    y^\nu(x,t) - \tilde{y}(x,t) &= \int_0^x  \rhn(z,t) - \rho^E(z,t) \, dz - 
    \int_0^t \rhn (0,s) \un(0,s)  \, ds - y_b(t) \\
    &= \int_0^x  \rhn(z,t) - \rho^E(z,t) \, dz + \bigl( (y_b^\nu)(t) - y_b(t) \bigr) \, .
\end{align*}
Let $M>0$. For a fixed $0\leq t\leq T$, we have
\begin{align}\label{from lagrangian to euler convergence of y}
    \bigl|y^\nu(\cdot,t) - \tilde{y}(\cdot,t)\bigr|_{[0,M]} \leq \int_0^M \bigl|\rhn(z,t) - \rho^E(z,t)\bigr| \, dz +\bigl| (y_b^\nu)(t) - y_b(t) \bigr| =: C_M(t) \, .
\end{align}
We reparametrize $\bar{u}^\nu(\ef,t)$ and $u^L(\ef,t)$ by setting
\begin{align*}
    \ef(x,t)=\tilde y(x,t)-y_b(t) \, .
\end{align*}
In other words, we define
\begin{align*}
    \bar{\textbf{u}}^\nu(x,t) = \bar{u}^\nu\bigl(\ef(x,t),t\bigr) \, , \quad
    \textbf{u}^L(x,t) = u^L\bigl(\ef(x,t),t\bigr) \, .
\end{align*}
Since $U^E$ is a small BV solution, there exist constants $0<C_1<C_2$ such that, for each fixed $t \ge 0$, the map $x \mapsto \ef(x,t)$ is increasing and satisfies
\begin{align*}
    C_1 \leq \frac{d}{dx} \ef(x,t) \leq C_2 \, .
\end{align*}
Hence, up to a subsequence,
\begin{align}\label{from l to e eq 4}
    \bar{\textbf{u}}^\nu(x,t) \rightarrow \textbf{u}^L(x,t) \, , \quad \textup{ in } L^1_\mathrm{loc}(\mathbb{R}_+ \times \mathbb{R}_+) \, ,
\end{align}
and the trace of $\textbf{u}^L(x,t)$ at $x=0$ coincides with the trace of $u^L(\ef, t)$ at $\ef=0$. Since $u^L(\ef, t)$ satisfies the outflow boundary condition \eqref{outflow boundary condition}, we conclude that $\textbf{u}^L(x,t)$ also satisfies \eqref{outflow boundary condition}.

Finally, we show that $\textbf{u}^L(x,t)=u^E(x,t)$. To this end, \eqref{from e to l prelim convergence} and \eqref{from l to e eq 4} imply that it suffices to compare $\bun(x,t)$ and $\bar{\textbf{u}}^\nu(x,t)$ in $L_{\mathrm{loc}}^1(\mathbb{R}_+\times \mathbb{R})$. By \eqref{from lagrangian to euler convergence of y}, \eqref{from e to l def of tilde y}, and \eqref{thm lagrangian small bv}, we have, up to a subsequence,
\begin{align*}
    I&=\int_0^T \int_0^{M}|\bun(x,t) - \bar{\textbf{u}}^\nu(x,t)| \dx \, dt 
    \\
    & \leq \int_0^T \int_0^{M} |\bhn(y^\nu(x,t),t) - \bhn(\tilde{y}(x,t)-y_b(t)+y_b^\nu(t),t)| \dx \, dt \\
    & \leq \int_0^T \int_{y_b^\nu(t)}^{\infty} |\p_z \bhn(z,t)| |\{x\in [0,M]:|z-\tilde{y}(x,t)|\leq 2C_M(t)\}|\, dz \, dt \\
    & \leq C \int_0^T C_M(t) \|\bu_\nu(\cdot, t)\|_{\textup{BV}(y_b^\nu,\infty)} \, dt \\
    &\rightarrow 0 \, .
\end{align*}

\appendix
\section{Proof of Lemma \ref{lemma L+kQ}}\label{sec proof of lemma L+kQ}
We fix $C_g>0$ sufficiently large so that 
\begin{align}\label{proof of well-definedness of mft: constant}
   C_g>11C_0 \, , 
\end{align}
where $C_0$ denotes the maximum of the constants $C$ appearing in Lemma~\ref{lemma tracked strength at an event}, Lemma~\ref{lemma interaction wave strength estimates} and Lemma \ref{lemma interaction wave strength estimates NP}. 

By Properties~\ref{prop introduction of new elements} and~\ref{prop non-increasing passing, partition, merging}, $\ml$ is non-increasing across all events except interactions. It remains to show that $\ml$ is also non-increasing across interactions.

\smallskip
We first focus on the time interval $[0,T_*)$, where 
\begin{equation}\label{proof of well-definedness T*}
    \begin{aligned}
        T_* = \sup\{t: \textup{on } s\in[0,t) \, , & \;|\s_b(s)|\leq 2 |\s_b(t_0^b)| \textup{ for every } b \, , \\ 
        &\ml (s) \leq 2 \ml(0) = 2\e\, , \;
        \textup{and } \widetilde{\ml}_{total}(s) \leq 3\ml(0) = 3\e \} \, .
    \end{aligned}
\end{equation}
Here $\widetilde{\ml}_{total}(t)$ denotes the total initial strength of all elements generated before time $t$. More precisely, 
\begin{align*}
    \widetilde{\ml}_{total}(t) = \sum_{\{b: 0\leq t_0^b<t\}} |\s_b(t_0^b)| \, .
\end{align*} 
The goal is to prove that $T_*=\infty$.

By Remark \ref{remark interaction at most two times}, on $t\in[0,T_*)$, by taking $\e_0$ small enough, we have
\begin{align}\label{proof of well-definedness of mft: equivalence g}
    |\s_b(t)| \geq g_b(t) \geq |\s_b(t)| e^{-2C_g\widetilde{\ml}_{total}(t)} \geq |\s_b(t)| e^{-6C_g\e}\geq \frac{|\s_b(t)|}{2} \, ,
\end{align}
which implies 
\begin{align}\label{proof of well-definedness of mft: equivalence}
    \sum |\s| \leq 2\ml(t) \leq 4 \ml(0) = 4 \e \, ,
\end{align}
where the sum is taken over the strengths of all waves that exist at time $t$.

\medskip

Assume that an $i$-wave $\be=\{b_1,\ldots,b_m\}$ of strength $\s$ interacts with a $j$-wave $\widetilde{\be}=\{b_{m+1},\ldots,b_n\}$ of strength $\widetilde{\s}$ at some time $t\in[0,T_*]$. 

Suppose $b_l$ is in the associated ordered set of an incoming wave. If $b_l$ is also in the associated ordered set of an outgoing wave, then the definition of $T_*$ in \eqref{proof of well-definedness T*}, Lemma \ref{lemma tracked strength at an event}, and the inequality $1+x\leq e^x$ imply that 
\begin{equation}\label{proof of well-definedness change of g}
    \begin{aligned}
        g_{b_l}(t-) - g_{b_l}(t+) &\geq e^{-2C_g\widetilde{\ml}_{total}(t+)} \bigl( |\s_{b_l}(t-)| e^{C_g|{\bar{\s}}|}-|\s_{b_l}(t+)|\bigr) \\
        &\geq  e^{-6C_g\e} |\s_{b_l}(t-)| \Bigl(e^{C_g|\bar{\s}|}-\bigl(1+C_0|\bar{\s}|\bigr)\Bigr) \\
        &\geq e^{-6C_g\e} |\s_{b_l}(t-)| |\bar{\s}| (C_g-C_0) \, ,
    \end{aligned}
\end{equation}
where $\bar{\s}$ is the strength of the wave that interacts with the wave whose associated ordered set contains $b_l$. In particular,
\begin{align*}
    \bar{\s} = 
    \begin{cases}
        \widetilde{\s} \, , \quad \textup{if } 1\leq l\leq m \, , \\
        \s \, , \quad \textup{if } m+1 \leq l \leq n \, .
    \end{cases}
\end{align*}
If $b_l$ is not in the associated ordered set of any outgoing wave, then the definition of $T_*$ in \eqref{proof of well-definedness T*} implies that
\begin{equation}\label{proof of well-definedness change of g simpler}
    g_{b_l}(t-) \geq e^{-6C_g\e} |\s_{b_l}(t-)|  \, .
\end{equation}

\smallskip
\noindent
\textbf{Case 1: $i\neq j$.} By Property \ref{prop generation order non-increasing} and \eqref{proof of well-definedness change of g}, we can take $\e_0$ small enough so that \eqref{proof of well-definedness of mft: constant} implies
\begin{align}\label{proof of well-definedness delta L 1}
    \Delta {\ml}(t) \leq -2e^{-6C_g\e}|\s||\widetilde{\s}|(C_g-C_0) \leq -10 C_0|\s||\widetilde{\s}| \, .
\end{align}

\noindent
\textbf{Case 2: $i=j$.} We denote the outgoing $i$-wave by $\be'$ and the outgoing wave in the other family $i'$ by $\be''$. We denote their strengths by $\s'$ and $\s''$, respectively. 

Lemma \ref{lemma interaction wave strength estimates}, \eqref{proof of well-definedness of mft: equivalence}, and \eqref{proof of well-definedness T*} imply that
\begin{align}\label{proof of well-definedness same class different type}
    |\s''|\leq C_0 |\s||\widetilde{\s}|\bigl(|\s|+|\widetilde{\s}|\bigr) \leq 2C_0 |\s||\widetilde{\s}| {\ml}(t-) \leq 4 C_0 \e |\s||\widetilde{\s}| \, .
\end{align}

\noindent
\textit{Case 2.1: $i=j$, and the incoming waves are of different types.} Recall Section~\ref{subsub interaction two physical in same family}. Without loss of generality, we assume that $\be'$ is of the same type as $\be$. Let $\widetilde{k}$ denote the minimum generation order among the elements in $\widetilde{\be}$. Then the generation order of the element in the associated ordered set of $\be''$ is $\widetilde{k}+1$.

By \eqref{proof of well-definedness change of g}, Property \ref{prop generation order non-increasing}, \eqref{proof of well-definedness change of g simpler}, \eqref{proof of well-definedness same class different type}, and \eqref{proof of well-definedness of mft: equivalence}, we can take $\e_0 $ small enough such that
\begin{equation}\label{proof of well-definedness delta L 2}
    \begin{aligned}
        \Delta {\ml}(t) &\leq - \e^{-\widetilde{k} + 1} e^{-6C_g\e} |\widetilde{\s}| + \e^{-\widetilde{k}}  4 C_0 \e |\s||\widetilde{\s}| \\
        &\leq \e^{-\widetilde{k} + 1} |\widetilde{\s}| (-e^{-6C_g\e} + 4C_0 |\s|) \\
        &\leq \e^{-\widetilde{k} + 1} |\widetilde{\s}| (-e^{-6C_g\e} + 16C_0 \e) \\
        &\leq -\frac{1}{2}|\widetilde{\s}| \, .
    \end{aligned}
\end{equation}
By \eqref{proof of well-definedness same class different type} and \eqref{proof of well-definedness of mft: equivalence}, we can take $\e_0 $ small enough such that
\begin{align}\label{proof of well-definedness of mft: ineq 1}
    |\s''| \leq 16 C_0 \e^2 |\widetilde{\s}| \leq |\Delta {\ml}(t)| \, .
\end{align}

\noindent
\textit{Case 2.2: $i=j$, and the incoming waves are both shocks.} 
Recall Section~\ref{subsub interaction two physical in same family}. We know $\be'$ is an $i$-shock and  $\be'=\{b_1,...,b_n\}$. Let $k$ and $\widetilde{k}$ denote the generation orders of the elements in $\be$ and $\widetilde{\be}$, respectively. Without loss of generality, we assume that $k\leq \widetilde{k}$. The generation orders of the elements in $\be'$ and $\be''$ are $k$ and $\widetilde{k}+1$, respectively.

By \eqref{proof of well-definedness change of g}, \eqref{proof of well-definedness same class different type}, \eqref{proof of well-definedness of mft: equivalence} and \eqref{proof of well-definedness of mft: constant}, we can take $\e_0$ small enough such that
\begin{equation}\label{proof of well-definedness delta L 3}
    \begin{aligned}
        \Delta {\ml}(t) &\leq - \e^{-\widetilde{k} + 1} e^{-6C_g\e} |\s||\widetilde{\s}| (C_g-C_0) + \e^{-\widetilde{k}}  4 C_0 \e |\s||\widetilde{\s}| \\
        &\leq \e^{-\widetilde{k} + 1} |\s| |\widetilde{\s}| (-e^{-6C_g\e} (C_g-C_0) + 4C_0) \\
        &\leq \e^{-\widetilde{k} + 1} |\s| |\widetilde{\s}| (-\frac{C_g-C_0}{2} + 4C_0) \\
        &\leq -C_0|\s||\widetilde{\s}| \, .
    \end{aligned}
\end{equation}
By \eqref{proof of well-definedness same class different type}, we can take $\e_0 $ small enough such that
\begin{align}\label{proof of well-definedness of mft: ineq 2}
    |\s''| \leq |\Delta {\ml}(t)| \, .
\end{align}

\medskip
In all, on $[0,T_*)$, \eqref{proof of well-definedness delta L 1}, \eqref{proof of well-definedness delta L 2}, and \eqref{proof of well-definedness delta L 3} imply that ${\ml}$ is non-increasing, and at each interaction, we have
\begin{align}\label{proof of well-definedness delta L}
    \Delta {\ml}(t) \leq -C|\s||\widetilde{\s}| \, .
\end{align}
Property \ref{prop introduction of new elements}, \eqref{proof of well-definedness of mft: ineq 1}, \eqref{proof of well-definedness of mft: ineq 2} imply that on $[0,T_*]$,
\begin{align*}
    \widetilde{\ml}_{total} &\leq \sum_{\{b: 0\leq t_0^b = 0\}} |\s_b(t_0^b)| + \sum_{\{b: 0< t_0^b \leq T_* \}} |\s_b(t_0^b)| \\
    &\leq {\ml}(0) + \sum_{\{0\leq t\leq T_*: t \textup{ interaction times}\}} - \Delta {\ml}(t) \\
    &\leq 2\e \, .
\end{align*}
Moreover, for any element $b$ generated on $[0,T_*)$, $g_b$ is non-increasing, which gives
\begin{align*}
    |\s_b(t)| \leq e^{2C_g\widetilde{\ml}_{total}(t)} |\s_b(t_0^b)| \, .
\end{align*}
Together with Remark \ref{remark interaction at most two times}, we can take $\e_0$ small enough so that on $[0,T_*]$,
\begin{align*}
    |\s_b(t)| \leq e^{8C_g\e} |\s_b(t_0^b)| \leq \frac{3}{2} |\s_b(t_0^b)| \, .
\end{align*}
Hence, we conclude that $T_*=\infty$. Property~\ref{prop introduction of new elements} implies that only interactions between two physical waves can introduce new elements. Moreover, each such interaction introduces only finitely many new elements. Since the strength of each physical wave is bounded from below by $\rn$, as defined in \eqref{def of threshold P and NP}, together with \eqref{proof of well-definedness delta L}, the number of elements is finite. Hence, the number of waves is finite, and so is the number of events. We conclude that the approximate solution is globally well-defined.

\section{Proof of Lemma \ref{lemma smallness of localized relative entropy 1}}\label{section proof lemma smallness of localized relative entropy}
By \eqref{relative evaluation Q lower bounds}, there exist $C_1, C_2>0$ depending only on $U_*$ such that
\begin{equation}\label{proof of smallness of localized relative entropy 1}
    \begin{aligned}
        &\inb |(a_\be)_x| \eta(U|\bu) \dx \\
        &\geq \inb |(a_\be)_x| \frac{|h-\bh|^2}{2} + 1/2\inb |(a_\be)_x| Q(v|\bv) \dx \\
        &\quad + C_1 \inb \mathds{1}_{\{v\leq 3v_*\}} |(a_\be)_x| |v-\bv|^2 \dx + C_2 \inb \mathds{1}_{\{v> 3v_*\}} |(a_\be)_x| |v-\bv| \dx \, .
    \end{aligned}
\end{equation}
On the other hand, we recall the definition of $Y_\be$ in \eqref{relative entropy method terms}. We have
\begin{align}\label{proof of smallness of localized relative entropy 2}
    -\inb (a_\be)_x \eta(U|\bu) \dx = Y_\be - \inb a \nabla^2 \eta(\bu)(U-\bu) \p_x \su_\be \dx \, .
\end{align}
We multiply the equation by the sign of $-(a_\be)_x$. Recall that $a_\be$ decreases if $i=1$ and increases if $i=2$. By \eqref{useful computation}, \eqref{weight equivalence}, and the assumption $(-1)^{k_\be-1}Y_\be \leq \s_\be^2$, we get
\begin{align*}
    \inb |(a_\be)_x| \eta(U|\bu) \dx &\leq C|\s_\be|\de_* + \Bigl|\inb a \nabla^2 \eta(\bu)(U-\bu) \p_x \su_\be \dx \Bigr| \, ,
\end{align*}
where
\begin{equation}\label{proof of smallness of localized relative entropy 3}
    \begin{aligned}
        &\Bigl|\inb a \nabla^2 \eta(\bu)(U-\bu) \p_x \su_\be \dx \Bigr| \\
        &\leq C \de_* \inb |(a_\be)_x||v-\bv| \mathds{1}_{\{v> 3v_*\}} \dx  +  C \de_* \inb |(a_\be)_x||v-\bv| \mathds{1}_{\{v\leq 3v_*\}} \dx \\
        &\quad + C \de_* \Bigl( \inb |(a_\be)_x| |h-\bh|^2 \dx \Bigr)^{1/2} \Bigl( \inb |(a_\be)_x| \dx \Bigr)^{1/2} \\
        &\leq C \de_* \inb |(a_\be)_x||v-\bv| \mathds{1}_{\{v> 3v_*\}} \dx + C_1 \inb \mathds{1}_{\{v\leq 3v_*\}} |(a_\be)_x| |v-\bv|^2 \dx \\
        &\quad + \inb |(a_\be)_x| \frac{|h-\bh|^2}{4} + C|\s_\be|\de_* \, .
    \end{aligned}
\end{equation}
Note that \eqref{proof of smallness of localized relative entropy 1}, \eqref{proof of smallness of localized relative entropy 2}, and \eqref{proof of smallness of localized relative entropy 3} give the first inequality \eqref{lemma smallness eq 1}.

Also, \eqref{proof of smallness of localized relative entropy 3} gives
\begin{align*}
    |\inb a \nabla^2 \eta(\bu)(U-\bu) \p_x \su_\be \dx| \leq \inb |(a_\be)_x| \eta(U|\bu) \dx + C|\s_\be| \de_* \, ,
\end{align*}
which implies the second inequality \eqref{lemma smallness eq 2}.

\section{Proof of Lemma \ref{lemma inside truncation}}\label{section proof lemma inside truncation}
We prove the case of a $2$-shock; the case of a $1$-shock follows analogously.

We rewrite the functionals $\mi_{1\be}, \mi_{2\be}, \mg_{2\be}, \md_\be, \my_\be^g$ in terms of the following variables
\begin{align*}
    W = \frac{\pd}{\de_*} \ts \, , \quad y = \frac{p(\sv_\be)-p(v_r)}{\bsi} \, ,
\end{align*}
where $v_l$ denotes the left state and $v_r$ denotes the right state of $\sv_\be$, and
\begin{align*}
    \bsi = p(v_l)-p(v_r)>0 \, .
\end{align*}
Recall \eqref{def of weight a_i}, \eqref{weight relation between L and de*}, and \eqref{def of weight}. We have
\begin{align}\label{proof of lemma inside truncation a property}
    (a_\be)_x = - \frac{\p_x p(\sv_\be)}{\de_*} \, , \quad \|a-1\|_{L^\infty} \leq C \de_*^2 \, .
\end{align}
The range of $y$ is $[0,c_b(t)]$, where $0<c_b<1$.
Recall that the transition zone of the $2$-shock does not intersect the boundary, as shown in \eqref{dj infinity}. By Corollary \ref{lemma strength of waves}, Lemma \ref{lemma estimate of waves} and \eqref{rt transition S}, we have 
\begin{equation}\label{property of c_b}
    \begin{aligned}
        1-c_b(t) = \frac{p(v_l) - p(\sv_\be)|_{x=y_b(t)}}{\bsi} \leq C \frac{\sv_\be|_{x=y_b(t)}-v_l}{\bsi} \leq C \exp \bigl( -\frac{C\rn^{1+\co}}{\nu} \bigr) \leq C \nu \, .
    \end{aligned}
\end{equation}
We also rewrite the shock-related constants in terms of $U_*$. In particular, since $\las_\be = \sqrt{-\frac{p(v_r)-p(v_l)}{v_r-v_l}}$, we define 
\begin{align*}
    \la_* = \sqrt{-p'(v_*)} \, .
\end{align*}
By \eqref{weight relation between L and de*}, we have
\begin{align*}
    |\las_\be - \la_*| \leq C L(0) \leq C \de_*^3 \, .
\end{align*}
Let us first rewrite $\mi_{1\be}$.
By Taylor expansion, we have
\begin{align*}
    \Bigl| p(v|\bv) - 1/2 \frac{p''(\vs)}{p'(\vs)^2} \bigl( \pd \bigr)^2 \Bigr| \leq C \de_0  \bigl( \pd \bigr)^2 \, .
\end{align*}
Hence, we get
\begin{equation}\label{lemma inside truncation eq I1i}
    \begin{aligned}
        \mi_{1\be} &\leq \frac{\la_*}{p'(v_*)} 1/2 \frac{p''(\vs)}{p'(\vs)^2} (1 + C \de_0) \inb \p_x p(\sv_\be) \bigl(\pd\bigr)^2 \ts^2 \dx \\
        & = \bsi \de_*^2 (1 + C \de_0) \frac{\al_*}{2} \int_0^{c_b} W^2 \,dy \, ,
    \end{aligned}
\end{equation}
where 
\begin{align*}
    \al_* = \frac{\ga+1}{\ga^{\frac{3}{2}}} \vs^{\frac{3\ga + 1}{2}} \, .
\end{align*}
Similarly, by Taylor expansion, we have
\begin{align}\label{lemma inside truncation eq I2i}
    \mi_{2\be} = \bsi \de_* (1 + C \de_*^3) \frac{\al_{**}}{2} \int_0^{c_b} W^2 \,dy \, ,
\end{align}
and
\begin{equation}\label{lemma inside truncation eq G2i}
    \begin{aligned}
        -\mg_{2\be} \leq& -\bsi \de_* (1 - C\de_*^3) \frac{\al_{**}}{2} \int_0^{c_b} W^2 \,dy + \bsi \de_*^2 \frac{\al_*}{3} \int_0^{c_b} W^3 \,dy \\
        & + C \bsi \de_*^5 \int_0^{c_b} |W|^3 \,dy \, ,
    \end{aligned}
\end{equation}
where
\begin{align*}
    \al_{**} = \frac{\vs^{\frac{\ga+1}{2}}}{\ga^{1/2}} \, .
\end{align*}
We analyze $\mathcal{D}_\be$. First, we observe
\begin{align*}
    \md_\be = -\de_*^2 \int_0^{c_b} a |\p_y W|^2 \nu \mu_1(v) v^\ga \frac{\p y}{\p x} \, dy \, .
\end{align*}
Similarly to the proof of Proposition~6.1 in Appendix B of \cite{twoshocks}, we have
\begin{align*}
    \Bigl| -\nu\frac{\mu_1(v)v^\ga}{y(1-y)} \frac{\p y}{\p x} - \frac{\bsi \al_*}{2}\Bigr| \leq C \bsi \de_*^3 \, .
\end{align*}
Together with \eqref{proof of lemma inside truncation a property}, we get
\begin{equation}\label{lemma inside truncation eq Di}
    \begin{aligned}
        -\md_\be \leq - \bsi \de_*^2 (1-C\de_*^2)  \frac{\al_*}{2} \int_0^{c_b} |\p_y W|^2 y(1-y) \, dy \, .
    \end{aligned}
\end{equation}
Finally, we evaluate $\my_\be^g$. By Taylor expansion, we have
\begin{align*}
    &\Bigl| \my_\be^g + \bsi \de_* \al_{**}^2 \int_0^{c_b} W^2 \, dy 
    + \bsi \de_* 2\al_{**}^2 \int_0^{c_b} W \, dy \Bigr| \\
    &\leq C \bsi \de_* \de_0 \Bigl( \int_0^{c_b} W^2 \, dy + \int_0^{c_b} |W| \, dy \Bigr) \, .
\end{align*}
By \eqref{lemma inside truncation assumption} and H\"older's inequality, we have
\begin{align*}
    \int_0^{c_b} W^2 \, dy \leq C  \, .
\end{align*}
Hence, we get
\begin{equation}\label{lemma inside truncation eq Yi}
    \begin{aligned}
        -\frac{1}{\bsi \de_0} |\my_\be^g|^2 \leq & - \frac{\bsi \de_*^2}{\de_0}\al_{**}^2 \Bigl( \int_0^{c_b} W^2 \, dy + 2 \int_0^{c_b} W \, dy \Bigr)^2 \\ 
        &+ C \bsi \de_*^2 \de_0 \Bigl( \int_0^{c_b} W^2 \, dy + \int_0^{c_b} |W| \, dy \Bigr)^2 \\
        \leq & - \frac{\bsi \de_*^2}{\de_0}\al_{**}^2 \Bigl( \int_0^{c_b} W^2 \, dy + 2 \int_0^{c_b} W \, dy \Bigr)^2 + C \bsi \de_*^2 \de_0 \int_0^{c_b} W^2 \, dy \, .
    \end{aligned}
\end{equation}
By \eqref{lemma inside truncation eq I1i}, \eqref{lemma inside truncation eq I2i}, \eqref{lemma inside truncation eq G2i}, \eqref{lemma inside truncation eq Di}, and \eqref{lemma inside truncation eq Yi}, we get
\begin{align*}
    R_{\de_*}^\be \leq&\, \bsi \de_*^2 \frac{\al_*}{2} \tilde{R}_{\de_*}^\be \, ,
\end{align*}
where
\begin{align*}
    \tilde{R}_{\de_*}^\be =& \, (1+\bfc_1 \de_0) \int_0^{c_b} W^2 \, dy + \frac{2}{3} \int_0^{c_b} W^3 \, dy + \bfc_2 \de_0 \int_0^{c_b} |W|^3 \, dy \\
    &- (1-\bfc_3 \de_0)\int_0^{c_b} |\p_y W|^2 y(1-y) \, dy - \frac{\bfc_4}{\de_0} \bigl( \int_0^{c_b} W^2 \, dy + 2 \int_0^{c_b} W \, dy \bigr)^2 \, ,
\end{align*}
where $\bfc_1, \bfc_2, \bfc_3, \bfc_4>0$ are constants that depend only on $U_*,C_2, \bc$.

We set $\ty=\frac{y}{c_b}$ and change variables from $y$ to $\ty$. By \eqref{property of c_b}, we have
\begin{align*}
    \frac{\tilde{R}_{\de_*}^\be}{c_b} =& \, (1+\bfc_1 \de_0) \int_0^{1} W^2 \, d\ty + \frac{2}{3} \int_0^{1} W^3 \, d\ty + \bfc_2 \de_0 \int_0^{1} |W|^3 \, d\ty \\
    &- (1-\bfc_3 \de_0)\int_0^{1} |\p_{\ty} W|^2 \ty(\frac{1}{c_b}-\ty) \, d\ty - \frac{\bfc_4}{\de_0} c_b\bigl( \int_0^{1} W^2 \, d\ty + 2 \int_0^{1} W \, d\ty \bigr)^2  \\
    \leq& \, (1+\bfc_1 \de_0) \int_0^{1} W^2 \, d\ty + \frac{2}{3} \int_0^{1} W^3 \, d\ty + \bfc_2 \de_0 \int_0^{1} |W|^3 \, d\ty \\
    &- (1-\bfc_3 \de_0)\int_0^{1} |\p_{\ty} W|^2 \ty(1-\ty) \, d\ty - \frac{\bfc_4}{2\de_0} \bigl( \int_0^{1} W^2 \, d\ty + 2 \int_0^{1} W \, d\ty \bigr)^2 \, .
\end{align*}
By Proposition B.1 in \cite{twoshocks}, we can take $\de_0$ sufficiently small and conclude the proof.

\section{Proof of \eqref{truncation estimate vb} and \eqref{truncation estimate vs}}\label{section proof of ftoc}
By \eqref{truncation estimate 1} and the fact that $0\leq \ga-\al\leq 1$, we have
\begin{align*}
    \Bigl|\bigl(\pdb)\bigr(x) \Bigr| &\leq \int_{\xoi}^x \Bigl| \bigl(\pd\bigr)_x \Bigr| \mathds{1}_{\pdl} \dx\\
    &\leq C \int_{\xoi}^x \Bigl| \bigl(\pd\bigr)_x \Bigr| v^{\frac{\ga-\al}{2}} \dx \, .
\end{align*}
Hence, we get \eqref{truncation estimate vb}.

On the other hand, we notice that $\lv_s$ differs from $v$ when $v$ is small. We evaluate instead 
\begin{align*}
    v^{\frac{\ga-\al}{2}} \bigl(p(v)-p(\lv_s)\bigr) \, .
\end{align*}
By \eqref{truncation estimate 1}, we get
\begin{align*}
    \Bigl|v^{\frac{\ga-\al}{2}} \bigl(\pds\bigr)\Bigr| &= \int_{\xoi}^x |A| \dx \, ,
\end{align*}
where
\begin{align*}
    A = \Bigl(v^{\frac{\ga-\al}{2}} \bigl(\pd\bigr)\Bigr)_x \mathds{1}_{\pdu} \, .
\end{align*}
We have
\begin{align*}
    A 
    &= \Bigl[v^{\frac{\ga-\al}{2}} \bigl(\pd\bigr)_x - \frac{\ga-\al}{2\ga}\frac{\pd}{p(v)} v^{\frac{\ga-\al}{2}} p(v)_x \Bigr] \mathds{1}_{\pdu} \\
    &= \Bigl[\bigl(1-\frac{\ga-\al}{2\ga}\frac{\pd}{p(v)}\bigr) v^{\frac{\ga-\al}{2}} \bigl(\pd\bigr)_x \\
    &\qquad - \frac{\ga-\al}{2\ga}\frac{\pd}{p(v)} v^{\frac{\ga-\al}{2}} p(\bv)_x \Bigr] \mathds{1}_{\pdu} \, .
\end{align*}
On $\pdu$, we get
\begin{align*}
    0\leq \frac{\pd}{p(v)} \leq 1 \, .
\end{align*}
Hence, we have 
\begin{align*}
    \Bigl|v^{\frac{\ga-\al}{2}} \bigl(\pds\bigr)\Bigr| &\leq \int_{\xoi}^x v^{\frac{\ga-\al}{2}} \Bigl|\bigl(\pd\bigr)_x \Bigr| \dx + \int_{\xoi}^x v^{\frac{\ga-\al}{2}} |(\bv)_x| \dx \, .
\end{align*}
By Lemma \ref{lemma estimate of waves} and Corollary \ref{lemma strength of waves}, \eqref{rt mft total number}, we get
\begin{align*}
    |(\bv)_x| &\leq C \max\{\frac{1}{\nu}, \rn^{\cpr-\co}\} + C_n \nu^n \\
    &\leq \frac{C}{\nu} + C \nu^{(\cpr-\co)\cs} \\
    &\leq \frac{C}{\nu} \, .
\end{align*}
On $\pdu$, we have
\begin{align*}
    \frac{v^{\ga-\al}}{|v-\bv|^2} \leq \cde \, , \quad |v-\bv|^2\leq C Q(v|\bv) 
    \, .
\end{align*}
Hence, we get \eqref{truncation estimate vs}.

\section{Proof of Lemma \ref{lemma technical}}\label{section proof of lemma technical}

\noindent
\textbf{Estimate for \eqref{lemma technical 3}:} 
By \eqref{relative evaluation Q and ||}, we have
\begin{equation}\label{lemma tech pre 3}
    \begin{aligned}
        &\int |(a_\be)_x| |v-\lv| \dx + \int |(a_\be)_x| \bigl|Q(v|\bv)-Q(\lv|\bv)\bigr| \dx \\
        &=\int_{\{|\pd|>\de_1\}} |(a_\be)_x|\Bigl[ |v-\lv| + \bigl|Q(v|\bv)-Q(\lv|\bv)\bigr|\Bigr]\dx\, \\
        &\leq \cde \inb |(a_\be)_x| \bigl(Q(v|\bv)-Q(\lv|\bv)\bigr) \dx \\
        &\leq \cde \bigl(G_{2\be}(U) - G_{2\be}(\lu) \bigr) \, .
    \end{aligned}
\end{equation}

\noindent
\textbf{Estimate for \eqref{lemma technical 1}:} 
Recall \eqref{def of Omega}. We have
\begin{align*}
    L_1:= \int_\om |(a_\be)_x|\, |p(v)-p(\lv)|^2 \dx = \int_{\{\pd\leq-\de_1\}} |(a_\be)_x|\, |p(v)-p(\lv_b)|^2 \dx \, .
\end{align*}
By \eqref{truncation estimate vb} and Lemma \ref{lemma technical preliminary}, we get
\begin{equation}\label{lemma technical 11}
    \begin{aligned}
        L_1 &\leq C \frac{\md_\be}{\nu} \int_{\ji} |(a_\be)_x|\, |x-\xoi| \mathds{1}_{\{\pd\leq -\de_1\}}\dx + C \frac{\md}{\nu} \int_{\ji^c} |(a_\be)_x|\, |x-\xoi| \dx \\
        &\leq C \de_*^{1/2} \md_\be + C_n \nu^n \md \, .
    \end{aligned}
\end{equation}
On the other hand, we have that for any $y\in \mathbb{R}$,
\begin{align}\label{lemma technical trick exponent}
    \bigl(|y| - \de_1\bigr)_+ \leq \bigl(|y| - \frac{\de_1}{2} \bigr)_+ \mathds{1}_{\{|y|\geq \de_1\}} \leq \bigl(|y| - \frac{\de_1}{2} \bigr)_+ \frac{\bigl(|y| - \frac{\de_1}{2} \bigr)_+}{\frac{\de_1}{2}} \leq \frac{2}{\de_1} \bigl(|y| - \frac{\de_1}{2} \bigr)_+^2 \, .
\end{align}
Hence, we get
\begin{equation}\label{lemma technical 12}
    \begin{aligned}
        \int_\om |(a_\be)_x|\, |p(v)-p(\lv)| \dx &\leq \frac{2}{\de_1}\int_\om |(a_\be)_x|\, |p(v)-p(\lv_{\de_1/2})|^2 \dx \\
        &\leq \frac{2}{\de_1}\int_{ \{\pd \leq -\de_1/2\} } |(a_\be)_x|\, |p(v)-p(\lv_{\de_1/2})|^2 \dx\, ,
    \end{aligned}
\end{equation}
where $\lv_{\de_1/2}$ is defined in \eqref{def of truncation general}. We can get a similar estimate \eqref{truncation estimate vs} for $v_{\de_1/2}$ as for $v_{\de_1}$, so \eqref{lemma technical 12} and \eqref{lemma technical 11} imply 
\begin{align}\label{lemma technical 13}
    \int_\om |(a_\be)_x|\, |p(v)-p(\lv)| \dx \leq \cde\de_*^{1/2} \md_\be + \cde_n \nu^n \md \, .
\end{align}
For the second inequality, we observe that
\begin{align*}
    L_2&:=\int_\om |(a_\be)_x|\, \Bigl| |p(v)-p(\bv)|^2 - |p(\lv)-p(\bv)|^2 \Bigr| \dx \\
    &\leq \int_\om |(a_\be)_x|\, |p(v)-p(\lv)| \Bigl( |p(v)-p(\lv)| + 2\de_1 \Bigr) \dx \, .
\end{align*}
By \eqref{lemma technical 11} and \eqref{lemma technical 13}, we get
\begin{align*}
    L_2 \leq \cde\de_*^{1/2} \md_\be + \cde_n \nu^n \md \, .
\end{align*}
\noindent
\textbf{Estimate for \eqref{lemma technical 2}:} We have 
\begin{align*}
    L_1 := \int |(a_\be)_x| v^{\ga-\al} |p(v)-p(\lv)|^2 \dx = L_{11} + L_{21} \, , 
\end{align*}
where
\begin{align*}
    L_{11} &= \int_{\{\pd<-\de_1\}} |(a_\be)_x| v^{\ga-\al} |p(v)-p(\lv_b)|^2 \dx \, , \\
    L_{12} &= \int_{\{\pd>\de_1\}} |(a_\be)_x| v^{\ga-\al} |p(v)-p(\lv_s)|^2 \dx \, .
\end{align*}
Recall that $0\leq \ga-\al\leq 1$.
On $\{\pd<-\de_1\}$, we can write $v = c\lv$ for some $c>1$ and get
\begin{align*}
    \frac{v^{\ga-\al} |p(v)-p(\lv_b)|^2 }{|v-\lv|} = \frac{v^{\ga-\al} |p(v)-p(\lv)|^2 }{|v-\lv|} = \lv^{-\ga-\al-1} \frac{c^{\ga-\al}(c^{-\ga}-1)^2}{c-1} \leq C \, .
\end{align*}
Together with \eqref{lemma tech pre 3}, we get
\begin{equation}\label{lemma technical 21}
    \begin{aligned}
        L_{11} \leq \cde \bigl(G_{2\be}(U) - G_{2\be}(\lu) \bigr) \, .
    \end{aligned}
\end{equation}
By \eqref{truncation estimate vs} and Lemma \ref{lemma technical preliminary}, we get
\begin{equation}\label{lemma technical 22}
    \begin{aligned}
        L_{12} &\leq \cde \frac{\md_\be+\tg_\be}{\nu} \int_{\ji} |(a_\be)_x|\, |x-\xoi| \mathds{1}_{\{\pd>\de_1\}}\dx \\
        &\qquad + \cde \frac{\md+\tg}{\nu} \int_{\ji^c} |(a_\be)_x|\, |x-\xoi| \dx \\
        &\leq \cde \de_*^{1/2} (\md_\be+\tg_\be) + C_n \nu^n (\md+\tg) \, .
    \end{aligned}
\end{equation}
From \eqref{lemma technical 21} and \eqref{lemma technical 22}, we obtain 
\begin{equation}\label{lemma technical 23}
    L_1 \leq \cde \bigl(G_{2\be}(U) - G_{2\be}(\lu) \bigr) + \cde \de_*^{1/2} (\md_\be+\tg_\be) + C_n \nu^n (\md+\tg) \, .
\end{equation}
For the second inequality, we have that
\begin{align*}
    L_2 &:=  \int |(a_\be)_x|\, v^{\ga-\al} \Bigl| |p(v)-p(\bv)|^2 - |p(\lv)-p(\bv)|^2 \Bigr| \dx \\
    & \qquad + \int |(a_\be)_x| |v^{\ga-\al} - \lv^{\ga-\al} | \bigl|p(\lv)-p(\bv)\bigr|^2 \dx \\
    &=: L_{21} + L_{22} \, .
\end{align*}
We observe that
\begin{align*}
    L_{21} \leq \int |(a_\be)_x|\, v^{\ga-\al} |p(v)-p(\lv)| \Bigl( |p(v)-p(\lv)| + 2\de_1 \Bigr) \dx \, .
\end{align*}
By \eqref{lemma technical trick exponent}, we have 
\begin{align}\label{lemma technical 24}
    \int |(a_\be)_x|\, v^{\ga-\al} |p(v)-p(\lv)| \dx \leq \frac{2}{\de_1}\int |(a_\be)_x|\, v^{\ga-\al} |p(v)-p(\lv_{\de_1/2})|^2 \dx \, ,
\end{align}
where $\lv_{\de_1/2}$ is defined in \eqref{def of truncation general}. We can get similar estimates \eqref{truncation estimate vb} and \eqref{truncation estimate vs} for $v_{\de_1/2}$ as for $v_{\de_1}$, so \eqref{lemma technical 24} and \eqref{lemma technical 23} imply 
\begin{align}\label{lemma technical 2 L21}
    L_{21} \leq \cde \bigl(G_{2\be}(U) - G_{2\be}(\lu) \bigr) + \cde \de_*^{1/2} (\md_\be+\tg_\be) + C_n \nu^n (\md+\tg) \, .
\end{align}
On $\{|\pd|>\de_1\}$, we can write $v=c\lv$ for some $0<c\neq 1$ and get
\begin{align*}
    \frac{|v^{\ga-\al}-\lv^{\ga-\al}|}{|v-\lv|} = \lv^{\ga-\al-1}\frac{c^{\ga-\al}-1}{c-1} \leq C \, .
\end{align*}
Together with \eqref{lemma tech pre 3}, we get
\begin{equation}\label{lemma technical 2 L22}
    \begin{aligned}
        L_{22} \leq \cde \bigl(G_{2\be}(U) - G_{2\be}(\lu) \bigr) \, .
    \end{aligned}
\end{equation}
By \eqref{lemma technical 2 L21} and \eqref{lemma technical 2 L22}, we have
\begin{align*}
    L_2 \leq \cde \bigl(G_{2\be}(U) - G_{2\be}(\lu) \bigr) + \cde \de_*^{1/2} (\md_\be+\tg_\be) + C_n \nu^n (\md+\tg) \, .
\end{align*}

\noindent
\textbf{Estimate for \eqref{lemma technical 4}:} By \eqref{lemma tech pre 3}, we have
\begin{align*}
    L &:= \int |(a_\be)_x| \bigl|p(v|\bv)-p(\lv|\bv)\bigr| \dx  \\
    &= \int |(a_\be)_x| |p(v)-p(\lv)| \dx + \int |(a_\be)_x| |v-\lv| \dx \\
    &= L_{1} + L_{2} + L_{3} + \cde \bigl(G_{2\be}(U) - G_{2\be}(\lu) \bigr) \, ,
\end{align*}
where 
\begin{align*}
    L_{1} &= \int_{\om} |(a_\be)_x| |p(v)-p(\lv)| \dx \, , \\
    L_{2} &= \int_{\{\de_1< \pd\leq 2\de_1\}} |(a_\be)_x| |p(v)-p(\lv)| \dx \, , \\
    L_{3} &= \int_{\{\pd > 2\de_1\}} |(a_\be)_x| |p(v)-p(\lv_s)| \dx \, .
\end{align*}
By \eqref{lemma technical 1}, we have
\begin{align}\label{lemma technical 4 L11}
    L_{1} \leq \cde \de_*^{1/2} \md_\be + \cde_n \nu^n \md \, .
\end{align}
By \eqref{lemma technical 3}, we get
\begin{align}\label{lemma technical 4 L12}
    L_{2} \leq \cde \int |(a_\be)_x||v-\lv|\dx \leq \cde \bigl(G_{2\be}(U) - G_{2\be}(\lu) \bigr) \, .
\end{align}
On $\{\pd> 2\de_1\}$, we can write $v=c\lv_s$ for some $c<1-C\de_1$ and get
\begin{align*}
    \frac{|p(v)-p(\lv_s)|}{v^{\ga-\al}\bigl|p(v)-p(\lv_s)\bigr|^2} = \lv^{\al}\frac{c^{-\ga}-1}{c^{\ga-\al}(c^{-\ga}-1)^2} = \frac{\lv^{\al}}{c^{-\al}(1-c^{\ga})} \leq \cde \, .
\end{align*}
Together with \eqref{truncation estimate vs} and Lemma \ref{lemma technical preliminary}, we get
\begin{equation}\label{lemma technical 4 L13}
    \begin{aligned}
        L_{3} &\leq \cde \int_{\{\pd > 2\de_1\}} |(a_\be)_x| v^{\ga-\al}\bigl|p(v)-p(\lv_s)\bigr|^2 \dx \\
        &\leq \cde \frac{\md_\be+\tg_\be}{\nu} \int_{\ji} |(a_\be)_x|\, |x-\xoi|\mathds{1}_{\{\pd>2\de_1\}}\dx \\
        &\qquad + \cde \frac{\md+\tg}{\nu} \int_{\ji^c} |(a_\be)_x|\, |x-\xoi|\dx \\
        &\leq \cde \de_*^{1/2} (\md_\be+\tg_\be) + C_n \nu^n (\md+\tg) \, .
    \end{aligned}
\end{equation}
By \eqref{lemma technical 4 L11}, \eqref{lemma technical 4 L12}, and \eqref{lemma technical 4 L13}, we get
\begin{align*}
    L \leq \cde \bigl(G_{2\be}(U) - G_{2\be}(\lu) \bigr) + \cde \sqrt{\de_*} \md_\be + \cde_n \nu^n \md \, .
\end{align*}

\noindent
\textbf{Estimate for \eqref{lemma technical 5}:} We have
\begin{align*}
    L&:=\int_{\om^c} |(a_\be)_x|\, \bigl|p(v)-p(\bv)\bigr|\, |h-\bh| \dx \\
    &\leq \int_{\{\pd>\de_1\}} |(a_\be)_x|\, \bigl|p(v)-p(\lv_s)\bigr|\, |h-\bh| \dx \\
    &\qquad + C\de_1 \int_{\{\pd>\de_1\}}|(a_\be)_x|\, |h-\bh| \dx \\
    &\leq \int_{\{\pd>2\de_1\}} |(a_\be)_x|\, \bigl|p(v)-p(\lv_s)\bigr|\, |h-\bh| \dx \\
    &\qquad + C\de_1 \int_{\{\pd>\de_1\}}|(a_\be)_x|\, |h-\bh| \dx \\
    &=: L_{1} + L_{2} \, .
\end{align*}
Let 
\begin{align*}
    1\leq q = \frac{2\ga}{\ga+\al} < 2.
\end{align*}
On $\{\pd> 2\de_1\}$, we can write $v=c\lv_s$ for some $c<1-C\de_1$ and get
\begin{align}\label{lemma technical trick exponent 2}
    \frac{|p(v)-p(\lv_s)|}{\Bigl|v^{\ga-\al}\bigl|p(v)-p(\lv_s)\bigr|^2 \Bigr| ^{q/2}} = \frac{c^{-\ga}-1}{c^\frac{\ga(\ga-\al)}{\ga+\al}(c^{-\ga}-1)^\frac{2\ga}{\ga+\al}} = \frac{1}{(1-c^{\ga})^{\frac{\ga-\al}{\ga+\al}}} \leq \cde \, .
\end{align}
Together with \eqref{truncation estimate vs} and Lemma \ref{lemma technical preliminary}, we get
\begin{align*}
    L_{1} \leq C L_{11}^{\frac{1}{2}} G_{1\be}^-(U)^{\frac{1}{2}} \, ,
\end{align*}
where
\begin{equation}\label{lemma technical 5 L111}
    \begin{aligned}
        L_{11} &= \int_{\{\pd>2\de_1\}} |(a_\be)_x|\, \bigl|p(v)-p(\lv_s)\bigr|^2 \dx \\
        &\leq \cde \int_{\{\pd > 2\de_1\}} |(a_\be)_x| \Bigl|v^{\ga-\al}\bigl|p(v)-p(\lv_s)\bigr|^2 \Bigr|^q \dx \\
        &\leq \cde \frac{(\md_\be+\tg_\be)^q}{\nu^q} \int_{\ji} |(a_\be)_x|\, |x-\xoi|^q \mathds{1}_{\{\pd>2\de_1\}}\dx \\
        &\qquad + \frac{(\md+\tg)^q}{\nu^q} \int_{\ji^c} |(a_\be)_x|\, |x-\xoi|^q \dx \\
        &\leq \cde |\s_\be|^{1-q} \de_*^{p} (\md_\be+\tg_\be)^q  + C_n \nu^n (\md+\tg)^{q} \, ,
    \end{aligned}
\end{equation}
for some $0<p<1$ is chosen later.
By Young's inequality and Lemma \ref{lemma smallness of localized relative entropy 1}, we have
\begin{align*}
    A_1&:= \cde \Bigl(|\s_\be|^{1-q} \de_*^{p} (\md_\be+\tg_\be)^{q} \Bigl)^{1/2}G_{1\be}^-(U)^{1/2} \\
    &\leq \cde \de_*^{q/4} (\md_\be+\tg_\be)^{q/2} |\s_\be|^{\frac{1-q}{2}} \de_*^{\frac{p}{2}-\frac{q}{4}}  G_{1\be}^-(U)^{1/2} \\
    &\leq \de_*^{1/2}  (\md_\be+\tg_\be) + \cde |\s_\be|^{\frac{1-q}{2-q}} \de_*^{\frac{2p-q}{2(2-q)}} G_{1\be}^-(U)^{\frac{1}{2-q}} \\
    &\leq \de_*^{1/2}  (\md_\be+\tg_\be) + \cde \de_*^{\frac{2p+q-2}{2(2-q)}}   G_{1\be}^-(U) \, ,
\end{align*}
and
\begin{align*}
    &C_n \nu^n (\md+\tg)^{q/2} (|\s_\be|\de_*)^{1/2} \\
    &\leq C_n \nu^n (\md+\tg) + C_n \nu^n \, .
\end{align*}
We let $p = 1-\frac{q}{4}$ and get 
\begin{align*}
    \frac{2p+q-2}{2(2-q)} = \frac{q}{4(2-q)} \geq \frac{1}{4} \, .
\end{align*}
In all, we have
\begin{align}\label{lemma technical 5 L11}
    L_{1}\leq  \de_*^{1/2}  (\md_\be+\tg_\be) +  C_n \nu^n (\md+\tg) + \cde \de_*^{1/4} G_{1\be}^-(U) + C_n \nu^n \, .
\end{align}
On the other hand, H\"older's inequality gives
\begin{align*}
    L_{2} \leq \de_1 G_{1\be}^-(U) + \cde L_{121} \, , 
\end{align*}
where 
\begin{align*}
    L_{21} &= \int_{\{\pd>\de_1\}} |(a_\be)_x| \dx \, .
\end{align*}
On $\{\pd>\de_1\}$, we have that
\begin{align}\label{lemma technical trick exponent 3}
    |p(v)-p(\lv_{\de_1/2})| = \bigl(p(v)-p(\bv) - \frac{\de_1}{2}\bigr)_+ \geq 
    \frac{\de_1}{2} \, ,
\end{align}
where $\lv_{\de_1/2}$ is defined in \eqref{def of truncation general}. We can get similar estimates \eqref{lemma technical trick exponent 2} and \eqref{truncation estimate vs} for $\lv_{\de_1/2}$ as for $\lv_{\de_1}$, so \eqref{lemma technical 4 L13} implies
\begin{equation}\label{lemma technical 5 L21}
    \begin{aligned}
        L_{21} 
        &\leq \cde \int_{\{\pd>\de_1\}} |(a_\be)_x|\, \bigl|p(v)-p(\lv_{\de_1/2})\bigr|^{2/q} \dx \\
        &\leq \cde \int_{\{\pd>\de_1\}} |(a_\be)_x| v^{\ga-\al}\bigl|p(v)-p(\lv_{\de_1/2})\bigr|^2  \dx \\
        &\leq \cde \de_*^{1/2} (\md_\be+\tg_\be) + C_n \nu^n (\md+\tg) \, .
    \end{aligned}
\end{equation}
We get 
\begin{align}\label{lemma technical 5 L12}
    L_{2} \leq \cde \de_*^{1/2} (\md_\be+\tg_\be) + C_n \nu^n (\md+\tg) + \de_1 G_{1\be}^-(U) \, .
\end{align}
By \eqref{lemma technical 5 L11} and \eqref{lemma technical 5 L12}, we get
\begin{align*}
    L\leq \de_*^{1/2}  (\md_\be+\tg_\be) +  C_n \nu^n (\md+\tg) + (\de_1 + \cde \de_*^{1/4}) G_{1\be}^-(U) + C_n \nu^n \, .
\end{align*}

\noindent
\textbf{Estimate for \eqref{lemma technical 6}:} 
We have 
\begin{align*}
    L &:= \int_{\om^c} |(a_\be)_x| \Bigl(Q(\lv|\bv) + |\lv-\bv|\Bigr) \dx \\
    &\leq C\int_{\{\pd>\de_1\}} |(a_\be)_x| \dx \, .
\end{align*}
By \eqref{lemma technical 5 L21}, we get the first inequality.

On the other hand, \eqref{lemma technical trick exponent 3} implies
\begin{align*}
    L \leq  \cde \int_{\{\pd>\de_1\}} |(a_\be)_x|\, \bigl|p(v)-p(\lv_{\de_1/2})\bigr|^{1/q} \dx \, ,
\end{align*}
where $\lv_{\de_1/2}$ is defined in \eqref{def of truncation general}. We can get similar estimates \eqref{lemma technical trick exponent 2} and \eqref{truncation estimate vs} for $\lv_{\de_1/2}$ as for $\lv_{\de_1}$, so Lemma \ref{lemma technical preliminary} implies
\begin{align*}
    L &\leq \cde \int_{\{\pd>\de_1\}} |(a_\be)_x|\, v^{\frac{\ga-\al}{2}}\bigl|p(v)-p(\lv_{\de_1/2})\bigr| \dx \\
    &\leq \cde  \sqrt{\frac{\md_\be+\tg_\be}{\nu}} \int_{\ji} |(a_\be)_x|\, |x-\xoi|^{1/2}\mathds{1}_{\{\pd>\de_1\}}\dx \\
    &\qquad + \cde \sqrt{\frac{\md+\tg}{\nu}} \int_{\ji^c} |(a_\be)_x|\, |x-\xoi|^{1/2} \dx \\
    &\leq \cde |\s_\be|^{1/2} \de_*^{3/4} (\md_\be+\tg_\be)^{1/2} + C_n \nu^n (\md+\tg)^{1/2} \, .
\end{align*}

\section{Proof of Proposition \ref{proposition shock estimate}}\label{section proof of prop shock estimate}
By \eqref{def of R(U), B, G}, \eqref{X_i Y_i estimates}, and \eqref{def of B and G}, we have
\begin{align*}
    &\mr(U) + \de_* \int |(\sv_\be)_x| p(v|\bv) \dx \\
    &= \sum_{\be\in \ms} \dot{X}_\be Y_\be(U) + B_{\de_1}(U) - G_{\de_1}(U) - (1-C\de_*) \md(U) \\
    &\leq \sum_{\be \in \ms} G_{Y_\be}(U) + \sum_{\be \in \ms} \bigl( |B_{1\be}(U)| + |B_{2\be}^+(U)| + |B_{2\be}^-(U) | \bigl) + |B_3(U)| \\
    &\qquad - \sum_{\be \in \ms} \bigl(G_{1\be}^+(U) + G_{1\be}^-(U) + G_{2\be}(U) \bigr) - (1-C\de_*) \md(U) \, ,
\end{align*}
where 
\begin{align*}
    G_{Y_\be}(U) &= -\frac{1}{\s_\be^4} |Y_\be(U)|^2 \mathds{1}_{\{0\leq (-1)^{k_\be-1}Y_\be(U)\leq \s_\be^2\}} - C_X\frac{|\las_\be|}{\s_\be^2} |Y_\be(U)|^2 \mathds{1}_{\{-\s_\be^2\leq (-1)^{k_\be-1}Y_\be(U)\leq 0\}} 
    \\&\qquad - C_X{|\las_\be|} |Y_\be(U)| \mathds{1}_{\{(-1)^{k_\be-1}Y_\be(U)\leq -\s_\be^2\}} \, .
\end{align*}
Note that $G_{Y_\be}\leq 0$.

To separate the shocks using the property \eqref{prop difference separation} stated in Proposition \ref{prop difference}, we define
\begin{align}\label{def of I_*}
    I_* = \{\be \in \ms \, | \, \de_*^{1/2}\md_\be + \nu|\s_\be|\md(U) \leq |\s_\be|\de_* \} \, .
\end{align}
We get
\begin{align}
    \mr(U) + \de_* \int |(\sv_\be)_x| p(v|\bv) \dx \leq \mr_*(U) + \mr_*^c(U) \, ,
\end{align}
where
\begin{align*}
    \mr_*(U) &\leq \sum_{\be \in I_*} G_{Y_\be}(U) + \sum_{\be \in I_*} \bigl( |B_{1\be}(U)| + |B_{2\be}^+(U)| + |B_{2\be}^-(U) | \bigl) + |B_3(U)| \\
    &\qquad - \sum_{\be \in I_*} \bigl(G_{1\be}^+(U) + G_{1\be}^-(U) + G_{2\be}(U) \bigr) - (1-C\de_*) \md(U) \, , \\
    \mr_*^c(U) &\leq \sum_{\be \in I_*^c} \bigl( |B_{1\be}(U)| + |B_{2\be}^+(U)| + |B_{2\be}^-(U) | \bigl) + |B_3(U)| \\
    &\qquad - \sum_{\be \in I_*^c} \bigl(G_{1\be}^+(U) + G_{1\be}^-(U) + G_{2\be}(U) \bigr) \, .
\end{align*}

\noindent
\textbf{Estimate for $\mr_*^c$.} We have
\begin{align*}
    \mr_*^c(U) \leq I_B + \bar{I}_B - \sum_{\be \in I_*^c} \bigl(G_{1\be}^+(U) + G_{1\be}^-(U) + G_{2\be}(U) \bigr) - (1-C\de_*) \md(U) \, ,
\end{align*}
where
\begin{align*}
    I_B &= \sum_{\be \in I_*^c} \bigl( |B_{1\be}(U) - B_{1\be}(\lu)| + |B_{2\be}^+(U) - B_{2\be}^+(\lu)| + |B_{2\be}^-(U)| \bigr) + |B_3(U)| \, , \\
    \bar{I}_B &= \sum_{\be \in I_*^c} \bigl( B_{1\be}(\lu) + B_{2\be}^+(\lu) \bigr) \, .
\end{align*}
Recall the definitions of $\md_\be$ in \eqref{def of Di}, $\md$ in \eqref{def of D}, and $\tg_\be,\tg$ in \eqref{def of tg and tg_i}. We have
\begin{align}\label{relation D, D_i, tg, tg_i}
    \sum_{\be \in \ms} \md_\be \leq C \md\, , \quad \sum_{\be \in \ms} \tg_\be \leq C \tg \, .
\end{align}
By \eqref{prop difference eq 1} and \eqref{prop difference eq 2} in Proposition \ref{prop difference} and \eqref{def of I_*}, we have
\begin{align*}
    I_B &\leq \mathbf{C} \de_* \sum_{i \in I_*^c} \bigl( G_{2\be}(U) - G_{2\be}(\lu) \bigr) + \mathbf{C}\de_*^4 \sum_{i \in \ms} G_{2\be}(U) \\
    &\qquad + C \de_*^{1/2}(\md + \tg) + (\de_1 + \mathbf{C}\de_*^{1/4}) \sum_{i \in I_*^c} G_{1\be}^-(U) + C\nu^2 \, , \\
    \bar{I}_B &\leq C \de_*^{1/2} \md \, ,
\end{align*}
for some $\mathbf{C},C>0$ depending only on $U_*, \de_1$.

We take $\de_*$ small enough such that
\begin{align}\label{relation delta 2}
    \mathbf{C}(\de_*+\de_*^4) < \frac{1}{2} \, , \quad \de_1 + \mathbf{C} \de_*^{1/4} < 1 \, .
\end{align}
Then \eqref{relative evaluation Q} gives
\begin{align}\label{estimate R_*^c}
    \mr_*^c(U) \leq C \de_*^4 \sum_{\be \in I_*} G_{2\be}(U) - \frac{1}{2} \sum_{i \in I_*^c} G_{2\be}(U) + C \de_*^{1/2} \tg + C \de_*^{1/2} \md + C\nu^2 \, .
\end{align}

\noindent
\textbf{Estimate for $\mr_*$.}
We have
\begin{align*}
    -C_X{|\las_\be|} |Y_\be(U)| \mathds{1}_{\{(-1)^{k_\be-1}Y_\be(U)\leq -\s_\be^2\}} &\leq - C_X{|\las_\be|} |Y_\be(U)| \mathds{1}_{\{-|\s_\be|\de_*\leq (-1)^{k_\be-1}Y_\be(U)\leq -\s_\be^2\}} \\
    &\leq - C_X\frac{|\las_\be|}{|\s_\be|\de_*} |Y_\be(U)|^2 \, .
\end{align*}
By Lemma \ref{lemma L+kQ}, we can take $\de_*$ small enough such that
\begin{align}\label{relation delta 3}
    \frac{6}{C_X|\las_\be|}\de_* \leq \frac{C}{C_X}\de_* \leq \de_1 \, ,
\end{align}
and get
\begin{align*}
    G_{Y_\be}(U)\leq -\frac{6}{|\s_\be|\de_1}|Y_\be(U)|^2 \, .
\end{align*}
Recall \eqref{def of Yig, Yib, Yil, Yis}. We have
\begin{align*}
    Y_\be(U) = \my_\be^g(\lv) + \bigl( Y_\be^g(U)-Y_\be^g(\lu) \bigr) + \bigl( Y_\be^g(\lu) - \my_\be^g(\lv) \bigr) + Y_\be^b(U) + Y_\be^l(U) + Y_\be^s(U) \, ,
\end{align*}
which implies
\begin{align*}
    |\my_\be^g(\lv)|^2 &\leq 6 \Bigl(|Y_\be(U)|^2 + \bigl| Y_\be^g(U)-Y_\be^g(\lu) \bigr|^2 + \bigl| Y_\be^g(\lu) - \my_\be^g(\lv) \bigr| \\
    &\qquad+ |Y_\be^b(U)|^2 + |Y_\be^l(U)|^2 +  |Y_\be^s(U)|^2 \Bigl) \, .
\end{align*}
Hence, we get
\begin{align*}
    G_{Y_\be}(U) &\leq - \frac{|\my_\be^g(\lv)|^2}{|\s_\be|\de_1} + \frac{6}{|\s_\be|\de_1} \Bigl(\bigl| Y_\be^g(U)-Y_\be^g(\lu) \bigr|^2 + \bigl| Y_\be^g(\lu) - \my_\be^g(\lv) \bigr|^2 \\
    &\qquad+ |Y_\be^b(U)|^2 + |Y_\be^l(U)|^2 +  |Y_\be^s(U)|^2 \Bigl) \, .
\end{align*}
Recall the definition of $\md$ in \eqref{def of D}. We have
\begin{align*}
    \md(U) &= \nu\int a \mu_1(v) v^{\ga} \Bigr|\bigl(p(v)-p(\bv)\bigr)_x \Bigr|^2 \dx \\
    &\geq \nu\int_{\{|\pd|\leq\de_1\}} a \mu_1(v) v^{\ga} \Bigr|\bigl(p(v)-p(\bv)\bigr)_x \Bigr|^2 \dx \\
    &= \md(\lu) \, .
\end{align*}
Together \eqref{prop difference eq 3} in Proposition \ref{prop difference}, we can take $\de_* > 0$ small enough such that
\begin{align}\label{relation delta 4}
    \frac{1-\frac{\de_1}{2}}{1+\de_*} \geq 1-\de_1 \, , \quad \textup{ that is } \quad \de_* \leq \frac{\de_1}{2(1-\de_1)} \, ,
\end{align}
and get
\begin{align*}
    -(1-\cde \de_*) \md(U) &\leq -(1-\de_1) \sum_{\be \in \ms} \widetilde{\md}_\be(\lv) + \frac{C\nu^{1-\cs(2\co+\cpr)}\ln \rn^{-1}}{\de_*}  \int a \eta(U|\bu) \dx \\
    &\quad - (\frac{\de_1}{2}-C\de_*) \md(U)\, .
\end{align*}
Also, we have 
\begin{align*}
    -G_{2\be}(U) &= - (1-\de_1\de_*) \mg_{2\be}(\lv) - (1-\de_1\de_*) \bigl(G_{2\be}(\lu) - \mg_{2\be}(\lv) \bigr) \\
    &\qquad - \de_1\de_* G_{2\be}(\lu) - \bigl(G_{2\be}(U) - G_{2\be}(\lu)\bigr) \, .
\end{align*}
In all, we get
\begin{align*}
    \mr_*(U) &\leq \sum_{\be \in I_*} \mr_{\de_*}^\be(\lv) + J_Y + J_B - \sum_{\be \in I_*} \Bigl( G_{1\be}^+(U) + G_{1\be}^-(U) + \bigl(G_{2\be}(U) - G_{2\be}(\bu)\bigr)\Bigr)
    \\
    &\qquad - \de_1\de_*\sum_{\be \in I_*} G_{2\be}(\lu) - (\frac{\de_1}{2} - \cde \de_*) \md(U) \, ,
\end{align*}
where 
\begin{align*}
    \mr_{\de_*}(\lv) &= (1+\bc\de_1) |\mi_{1\be}(\lv)| + |\mi_{2\be}(\lv)| - (1-\de_1\de_*)\mg_{2\be}(\lv) \\ 
    &\qquad - (1-\de_1)\widetilde{\md}_\be(\lv) - \frac{|\my_\be^g(\lv)|^2}{|\s_\be|\de_1} \, , \\
    J_Y &= \sum_{\be \in I_*} \frac{6}{|\s_\be|\de_1} \Bigl(\bigl| Y_\be^g(U)-Y_\be^g(\lu) \bigr|^2 + \bigl| Y_\be^g(\lu) - \my_\be^g(\lv) \bigr|^2 \\
    &\qquad+ |Y_\be^b(U)|^2 + |Y_\be^l(U)|^2 +  |Y_\be^s(U)|^2 \Bigl) \, , \\
    J_B &= \sum_{\be \in I_*} \Bigl(\bigl|B_{1\be}(U) - B_{1\be}(\lu)\bigr| + \bigl|B_{1\be}(\lu) - (1+\bc\de_*)\mi_{1\be}(\lv)\bigr| \\
    &\qquad + \bigl|B_{2\be}^+(U)-B_{2\be}^+(\lu)\bigr| + \bigl|B_{2\be}^+(\lu)-\mi_{2\be}(\lv)\bigr| + |B_{2\be}^-(U)|\Bigr) + |B_3(U)| \\
    &\qquad - (1-\de_1\de_*)\sum_{\be \in I_*}\bigl(G_{2\be}(\lu) - \mg_{2\be}(\lv) \bigr) + \frac{C\nu^{1-\cs(2\co+\cpr)}\ln \rn^{-1}}{\de_*} \int a \eta(U|\bu) \dx \, .
\end{align*}
Lemma \ref{lemma inside truncation} implies
\begin{align*}
    \mr_{\de_*}(\lv) \leq 0 \, .
\end{align*}
By \eqref{prop difference eq 3} and \eqref{prop difference eq 4} in Proposition \ref{prop difference} and \eqref{relation D, D_i, tg, tg_i}, we have
\begin{align*}
    J_Y &\leq \cde \de_* \sum_{\be \in I_*} \Bigl( \bigl( G_{2\be}(U) - G_{2\be}(\lu) \bigr) + \de_*^{-1/4}G_{1\be}^+(U) + G_{1\be}^-(U) + \de_*^{1/4}G_{2\be}(\lu) \Bigr) \\
    &\qquad + C \de_*^{3/2} (\md + \tg) + C \nu^4 \, .
\end{align*}
By \eqref{prop difference eq 1} and \eqref{prop difference eq 3} in Proposition \ref{prop difference} and \eqref{relation D, D_i, tg, tg_i}, we get
\begin{align*}
    J_B &\leq C \de_* \sum_{i \in I_*} \bigl(G_{2\be}(U) - G_{2\be}(\lu)\bigr) + C\de_*^4 \sum_{i \in \ms} G_{2\be}(U) \\
    &\qquad + C \de_*^{1/2}(\md + \tg) + (\de_1 + C\de_*^{1/4}) \sum_{i \in I_*} G_{1\be}^-(U) + C\nu^2 + \frac{C\nu^{1-\cs(2\co+\cpr)}\ln \rn^{-1}}{\de_*}  \int a \eta(U|\bu) \dx \, .
\end{align*}
In all, we get
\begin{equation*}
    \begin{aligned}
        \mr_*(U) &\leq -(1-C_G \de_*) \sum_{\be \in I_*} \bigl( G_{2\be}(U) - G_{2\be}(\lu) \bigr) - \de_*(\de_1 - C_G \de_*^{1/4}) \sum_{\be \in I_*} G_{2\be}(\lu) \\
        &\qquad + C_G \de_*^4 \sum_{\be \in I_*^c} G_{2\be}(U) - (\frac{\de_1}{2} - \cde \de_*^{1/2}) \md(U) + C \de_*^{1/2}\tg \\
        &\qquad + C \nu^2 + \frac{C\nu^{1-\cs(2\co+\cpr)}\ln \rn^{-1}}{\de_*}  \int a \eta(U|\bu) \dx \\
        &\qquad - (1-C_G\de_*^{3/4})\sum_{\be \in I_*} G_{1\be}^+(U) - \bigl(1-(\de_1+C_G\de_*^{1/4})\bigr) \sum_{\be \in I_*} G_{1\be}^-(U) \, ,
    \end{aligned}
\end{equation*}
for some $C_G,C>0$ depending on $U_*,\de_1$.

By taking $\de_*$ small enough such that
\begin{align}\label{relation delta 6}
    0<\de_*(\de_1-C_G\de_*^{1/4}) < 1 - C_G \de_* \, , \quad
    C_G\de_*^{3/4} < 1 \, , \quad \de_1+C_G\de_*^{1/4} < 1 \, ,
\end{align}
we get
\begin{equation}\label{estimate R_*}
    \begin{aligned}
        \mr_*(U) &\leq -\de_*(\de_1-C \de_*^{1/4}) \sum_{\be \in I_*} G_{2\be}(U) 
        + C \de_*^4 \sum_{\be \in I_*^c} G_{2\be}(U) - (\frac{\de_1}{2} - \cde \de_*^{1/2}) \md(U)\\
        &\qquad + C \de_*^{1/2}\tg + C \nu^2 + \frac{C\nu^{1-\cs(2\co+\cpr)}\ln \rn^{-1}}{\de_*}  \int a \eta(U|\bu) \dx  \, .
    \end{aligned}
\end{equation}

\noindent
\textbf{Conclusion.} 
Since there exists $C>0$, depending only on $U_*$, such that for any $v>0$,
\begin{align*}
    Q(v|\bv) \leq C p(v|\bv) \, ,
\end{align*}
we have
\begin{align*}
    \tg \leq C \de_* \sum_{\be \in \ms} G_{2\be}(U) + C \sum_{\be \in \mr} \int |(\rv_\be)_x| p(v|\bv) \dx + C \sum_{\be \in \mnp} \int |(\nv_\be)_x| Q(v|\bv) \dx \, ,
\end{align*}
Together with \eqref{estimate R_*^c} and \eqref{estimate R_*}, we get
\begin{align*}
    &\mr(U) + \de_* \int |(\sv_\be)_x| p(v|\bv) \dx \\
    &\leq - \de_*(\de_1 - C \de_*^{1/4}) \sum_{\be \in I_*} G_{2\be}(U) - (\frac{1}{2}-C \de_*^4) \sum_{\be \in I_*^c} G_{2\be}(U) - (\frac{\de_1}{2}-C\de_*^{1/2}) \md(U)\\
    &\qquad + C \de_*^{1/2} \tg + C\nu^2 + \frac{C\nu^{1-\cs(2\co+\cpr)}\ln \rn^{-1}}{\de_*}  \int a \eta(U|\bu) \dx \, , \\
    &\leq C \de_*^{1/2} \sum_{\be \in \mr} \int |(\rv_\be)_x| p(v|\bv) \dx + C \de_*^{1/2} \sum_{\be \in \mnp} \int |(\nv_\be)_x| Q(v|\bv) \dx \\
    &\qquad + C\nu^2 + \frac{C\nu^{1-\cs(2\co+\cpr)}\ln \rn^{-1}}{\de_*}  \int a \eta(U|\bu) \dx \\
    &\qquad - \de_*(\de_1 - \widetilde{C}_G \de_*^{1/4}) \sum_{\be \in I_*} G_{2\be}(U) - (\frac{1}{2}-\widetilde{C}_G \de_*^{3/2}) \sum_{\be \in I_*^c} G_{2\be}(U) - (\frac{\de_1}{2}-C_D\de_*^{1/2}) \md(U)
\end{align*}
for some constants $\widetilde{C}_G,C_D,C>0$ depending on $U_*, \de_1$.

By taking $\de_*$ small enough such that
\begin{align}\label{relation delta 7}
    \widetilde{C}_G \de_*^{1/4} < \de_1 \, , \quad \widetilde{C}_G \de_*^4 < \frac{1}{2} \, , \quad C_D\de_*^{1/2} < \frac{\de_1}{4} \, ,
\end{align}
we complete the proof.

\section{Proof of Lemma \ref{lemma L + q + omega}}\label{sec decay of weight lemma proof}
We first note that $p$ increases across $1$-shocks and $2$-rarefactions, and decreases across $2$-shocks and $1$-rarefactions. 

Suppose that, at time $t$, a passing or partition occurs. By \eqref{def of rarefaction nu} and the constructions of passing in Section~\ref{subsub passing} and partition in Section~\ref{subsub partition}, we have $\Delta g^S(t) = 0$ and
\begin{align*}
    -\Delta g^R(t) \leq 
    \begin{cases}
        0 \, , \quad \textup{if none of the outgoing waves is a non-physical wave,} \\
        C\rn \, , \quad \textup{otherwise.}
    \end{cases}
\end{align*}
Lemma \ref{lemma L+kQ} implies that
\begin{align*}
    \Delta \ml(t) \leq 0 \, .
\end{align*}
Therefore, we show the inequalities for passings and partitions.

We assume that at time $t$, an interaction occurs. We denote the strength of the incoming waves as $\s$ and $\widetilde{\s}$. The proof follows the same spirit as that of Proposition~11.1 in \cite{NStoE}. For a shock $\be$ located at $y_\be$, the main change of its profile occurs inside $[y_\be - \rn^{\cz}, y_\be + \rn^{\cz}]$. Hence, we define
\begin{equation}\label{def of approximate sol to g i}
    \om^{\be} = 
    \begin{cases}
        \int_\infty^{-\infty} \p_x \bigl(-p(\td v_\be)\bigr) \dx \, , \quad \textup{on } (y_b, y_\be - \rn^{\cz}) \, , \\
        \int^x_{y_\be + \rn^{\cz}} \p_x \bigl(-p(\td v_\be)\bigr) \dx \, , \quad \textup{on } [y_\be-\rn^{\cz},y_\be + \rn^{\cz}] \, , \\
        0 \, , \quad \textup{on } (y_\be + \rn^{\cz}, \infty) \, .
    \end{cases}
\end{equation}
We let

and approximate $g_\pm^S$ by 
\begin{align}\label{def of approximate sol to g}
    \om_\pm^S = \sum_{\be \in \ms^\pm \cap \ma^\pm}  \om^{\be} \, .
\end{align}
Lemma \ref{lemma estimate of waves}, 1) in Corollary \ref{lemma strength of waves}, Lemma \ref{lemma L+kQ}, and \eqref{rt transition S} imply that for any $n \in \mathbb{N}_*$, there exists a constant $C_n>0$, depending only on $U_*$ and $n$, such that
\begin{equation}\label{difference between g and omega}
    \begin{aligned}
        |\om_\pm^S - g_\pm^S| &\leq C \sum_{\be\in \ms^\pm \cap \ma^\pm} \int_{|x-y_\be|\geq \rn^{\cz}} \frac{\s_\be^2}{\nu} e^{-\frac{C|\s_\be||x-y_\be|}{\nu}} \dx \\
        &\leq C e^{-C \nu^{-1+\cs(1+\cz)}}  \sum_{\be \in \ms^\pm \cap \ma^\pm} |\s_\be|\\ 
        &\leq C_n \nu^n \, .
    \end{aligned}
\end{equation}
By the constructions in \eqref{placement of outgoing waves interaction i=1} and \eqref{placement of outgoing waves interaction i=2}, Lemma \ref{lemma interaction wave strength estimates} and Lemma \ref{lemma interaction wave strength estimates NP} imply that
\begin{align*}
    \om_+^S - \om_-^S \leq 
    \begin{cases}
        C|\s||\widetilde{\s}| \, , \quad \textup{if none of the outgoing waves is a non-physical wave,} \\
        C|\s||\widetilde{\s}| + C\rn \, , \quad \textup{otherwise,}
    \end{cases}
\end{align*}
and 
\begin{align*}
    -\Delta g^R \leq 
    \begin{cases}
        C|\s||\widetilde{\s}| \, , \quad \textup{if none of the outgoing waves is a non-physical wave,} \\
        C|\s||\widetilde{\s}| + C\rn \, , \quad \textup{otherwise.}
    \end{cases}
\end{align*}
On the other hand, Lemma \ref{lemma L+kQ} implies
\begin{align*}
    \Delta \ml(t) \leq - C|\s||\widetilde{\s}| \, .
\end{align*}
In all, we prove the inequalities for interactions.

\bibliographystyle{abbrv}
\bibliography{refs}

@misc{NStoE,
  author = {Chen, Geng and Kang, Moon-Jin and Vasseur, Alexis F.},
  title = {From {Navier--Stokes} to {BV} solutions of the barotropic {Euler} equations},
  year = {2024},
  howpublished = {arXiv preprint arXiv:2401.09305},
  url = {https://arxiv.org/abs/2401.09305},
  eprint = {2401.09305},
  archivePrefix = {arXiv},
  primaryClass = {math.AP},
}

@book{bressanbook,
  author = {Bressan, Alberto},
  title = {Hyperbolic systems of conservation laws},
  series = {Oxford Lecture Series in Mathematics and its Applications},
  volume = {20},
  publisher = {Oxford University Press, Oxford},
  year = {2000},
  pages = {xii+250},
  note = {The one-dimensional Cauchy problem},
  doi = {10.1093/oso/9780198507000.001.0001},
  url = {https://doi.org/10.1093/oso/9780198507000.001.0001},
  isbn = {0-19-850700-3},
  mrclass = {35-02 (35B35 35L65)},
  mrnumber = {1816648},
  mrreviewer = {Denis\ Serre},
}

@article{MR4195742,
  author = {Kang, Moon-Jin and Vasseur, Alexis F.},
  title = {Contraction property for large perturbations of shocks of the barotropic {N}avier-{S}tokes system},
  journal = {J. Eur. Math. Soc. (JEMS)},
  fjournal = {Journal of the European Mathematical Society (JEMS)},
  volume = {23},
  number = {2},
  year = {2021},
  pages = {585--638},
  doi = {10.4171/jems/1018},
  url = {https://doi.org/10.4171/jems/1018},
  issn = {1435-9855,1435-9863},
  mrclass = {76N15 (35Q30)},
  mrnumber = {4195742},
  mrreviewer = {Ming\ Mei},
}

@article{twoshocks,
  author = {Kang, Moon-Jin and Vasseur, Alexis F.},
  title = {Well-posedness of the {R}iemann problem with two shocks for the isentropic {E}uler system in a class of vanishing physical viscosity limits},
  journal = {J. Differential Equations},
  fjournal = {Journal of Differential Equations},
  volume = {338},
  year = {2022},
  pages = {128--226},
  doi = {10.1016/j.jde.2022.07.034},
  url = {https://doi.org/10.1016/j.jde.2022.07.034},
  issn = {0022-0396,1090-2732},
  mrclass = {76N15 (35B35 35Q30)},
  mrnumber = {4469142},
}

@article{invent,
  author = {Kang, Moon-Jin and Vasseur, Alexis F.},
  title = {Uniqueness and stability of entropy shocks to the isentropic {E}uler system in a class of inviscid limits from a large family of {N}avier-{S}tokes systems},
  journal = {Invent. Math.},
  fjournal = {Inventiones Mathematicae},
  volume = {224},
  number = {1},
  year = {2021},
  pages = {55--146},
  doi = {10.1007/s00222-020-01004-2},
  url = {https://doi.org/10.1007/s00222-020-01004-2},
  issn = {0020-9910,1432-1297},
  mrclass = {76N15 (35B35 35Q30)},
  mrnumber = {4228501},
  mrreviewer = {Ivan\ Dra\v zi\'c},
}

@article{compensated,
  author = {Chen, Gui-Qiang and Perepelitsa, Mikhail},
  title = {Vanishing viscosity limit of the {N}avier-{S}tokes equations to the {E}uler equations for compressible fluid flow},
  journal = {Comm. Pure Appl. Math.},
  fjournal = {Communications on Pure and Applied Mathematics},
  volume = {63},
  number = {11},
  year = {2010},
  pages = {1469--1504},
  doi = {10.1002/cpa.20332},
  url = {https://doi.org/10.1002/cpa.20332},
  issn = {0010-3640,1097-0312},
  mrclass = {35Q35 (35B25 35L65 76N10)},
  mrnumber = {2683391},
  mrreviewer = {Makram\ Hamouda},
}

@article{vanishingnondegenerate,
  author = {Bianchini, Stefano and Bressan, Alberto},
  title = {Vanishing viscosity solutions of nonlinear hyperbolic systems},
  journal = {Ann. of Math. (2)},
  fjournal = {Annals of Mathematics. Second Series},
  volume = {161},
  number = {1},
  year = {2005},
  pages = {223--342},
  doi = {10.4007/annals.2005.161.223},
  url = {https://doi.org/10.4007/annals.2005.161.223},
  issn = {0003-486X,1939-8980},
  mrclass = {35L65 (35B25 35L45)},
  mrnumber = {2150387},
  mrreviewer = {Marta\ Lewicka},
}

@article{BDentropy,
  author = {Bresch, Didier and Desjardins, Beno\^it},
  title = {Existence of global weak solutions for a 2{D} viscous shallow water equations and convergence to the quasi-geostrophic model},
  journal = {Comm. Math. Phys.},
  fjournal = {Communications in Mathematical Physics},
  volume = {238},
  number = {1-2},
  year = {2003},
  pages = {211--223},
  doi = {10.1007/s00220-003-0859-8},
  url = {https://doi.org/10.1007/s00220-003-0859-8},
  issn = {0010-3616,1432-0916},
  mrclass = {76D03 (35D05 35Q35 76U05 86A05)},
  mrnumber = {1989675},
  mrreviewer = {Nader\ Masmoudi},
}

@article{BJ,
  author = {Bresch, Didier and Jabin, Pierre-Emmanuel},
  title = {Global existence of weak solutions for compressible {N}avier-{S}tokes equations: thermodynamically unstable pressure and anisotropic viscous stress tensor},
  journal = {Ann. of Math. (2)},
  fjournal = {Annals of Mathematics. Second Series},
  volume = {188},
  number = {2},
  year = {2018},
  pages = {577--684},
  doi = {10.4007/annals.2018.188.2.4},
  url = {https://doi.org/10.4007/annals.2018.188.2.4},
  issn = {0003-486X,1939-8980},
  mrclass = {35Q35 (35D30 35Q86 42B37 54D30 92C35)},
  mrnumber = {3862947},
  mrreviewer = {Gabriela\ Planas},
}

@article{BC,
  author = {Bressan, Alberto and Colombo, Rinaldo M.},
  title = {Unique solutions of {$2\times 2$} conservation laws with large data},
  journal = {Indiana Univ. Math. J.},
  fjournal = {Indiana University Mathematics Journal},
  volume = {44},
  number = {3},
  year = {1995},
  pages = {677--725},
  doi = {10.1512/iumj.1995.44.2004},
  url = {https://doi.org/10.1512/iumj.1995.44.2004},
  issn = {0022-2518,1943-5258},
  mrclass = {35L65},
  mrnumber = {1375345},
  mrreviewer = {Alexander\ Doktor},
}

@article{BCP,
  author = {Bressan, Alberto and Crasta, Graziano and Piccoli, Benedetto},
  title = {Well-posedness of the {C}auchy problem for {$n\times n$} systems of conservation laws},
  journal = {Mem. Amer. Math. Soc.},
  fjournal = {Memoirs of the American Mathematical Society},
  volume = {146},
  number = {694},
  year = {2000},
  pages = {viii+134},
  doi = {10.1090/memo/0694},
  url = {https://doi.org/10.1090/memo/0694},
  issn = {0065-9266,1947-6221},
  mrclass = {35L65 (47H20)},
  mrnumber = {1686652},
  mrreviewer = {Hermano\ Frid},
}

@article{BG,
  author = {Bressan, Alberto and Goatin, Paola},
  title = {Oleinik type estimates and uniqueness for {$n\times n$} conservation laws},
  journal = {J. Differential Equations},
  fjournal = {Journal of Differential Equations},
  volume = {156},
  number = {1},
  year = {1999},
  pages = {26--49},
  doi = {10.1006/jdeq.1998.3606},
  url = {https://doi.org/10.1006/jdeq.1998.3606},
  issn = {0022-0396,1090-2732},
  mrclass = {35L65},
  mrnumber = {1701818},
}

@article{BGu,
  author = {Bressan, Alberto and Guerra, Graziano},
  title = {Unique solutions to hyperbolic conservation laws with a strictly convex entropy},
  journal = {J. Differential Equations},
  fjournal = {Journal of Differential Equations},
  volume = {387},
  year = {2024},
  pages = {432--447},
  doi = {10.1016/j.jde.2024.01.005},
  url = {https://doi.org/10.1016/j.jde.2024.01.005},
  issn = {0022-0396,1090-2732},
  mrclass = {35L65 (35A02)},
  mrnumber = {4690554},
}

@article{BL,
  author = {Bressan, Alberto and LeFloch, Philippe},
  title = {Uniqueness of weak solutions to systems of conservation laws},
  journal = {Arch. Ration. Mech. Anal.},
  fjournal = {Archive for Rational Mechanics and Analysis},
  volume = {140},
  number = {4},
  year = {1997},
  pages = {301--317},
  doi = {10.1007/s002050050068},
  url = {https://doi.org/10.1007/s002050050068},
  issn = {0003-9527},
  mrclass = {35L65 (35D05)},
  mrnumber = {1489317},
  mrreviewer = {Vyacheslav\ V.\ Chistyakov},
}

@article{BDL,
  author = {Bressan, Alberto and De Lellis, Camillo},
  title = {A remark on the uniqueness of solutions to hyperbolic conservation laws},
  journal = {Arch. Ration. Mech. Anal.},
  fjournal = {Archive for Rational Mechanics and Analysis},
  volume = {247},
  number = {6},
  year = {2023},
  pages = {Paper No. 106, 12},
  doi = {10.1007/s00205-023-01936-y},
  url = {https://doi.org/10.1007/s00205-023-01936-y},
  issn = {0003-9527,1432-0673},
  mrclass = {35L65},
  mrnumber = {4661213},
  mrreviewer = {Qin\ Zhao},
}

@article{BLe,
  author = {Bressan, Alberto and Lewicka, Marta},
  title = {A uniqueness condition for hyperbolic systems of conservation laws},
  journal = {Discrete Contin. Dyn. Syst.},
  fjournal = {Discrete and Continuous Dynamical Systems},
  volume = {6},
  number = {3},
  year = {2000},
  pages = {673--682},
  doi = {10.3934/dcds.2000.6.673},
  url = {https://doi.org/10.3934/dcds.2000.6.673},
  issn = {1078-0947,1553-5231},
  mrclass = {35L65 (35L45)},
  mrnumber = {1757395},
  mrreviewer = {Yu.\ G.\ Rykov},
}

@article{BLY,
  author = {Bressan, Alberto and Liu, Tai-Ping and Yang, Tong},
  title = {{$L^1$} stability estimates for {$n\times n$} conservation laws},
  journal = {Arch. Ration. Mech. Anal.},
  fjournal = {Archive for Rational Mechanics and Analysis},
  volume = {149},
  number = {1},
  year = {1999},
  pages = {1--22},
  doi = {10.1007/s002050050165},
  url = {https://doi.org/10.1007/s002050050165},
  issn = {0003-9527,1432-0673},
  mrclass = {35L65 (35B35 35B45)},
  mrnumber = {1723032},
}

@article{BV,
  author = {Buckmaster, Tristan and Vicol, Vlad},
  title = {Nonuniqueness of weak solutions to the {N}avier-{S}tokes equation},
  journal = {Ann. of Math. (2)},
  fjournal = {Annals of Mathematics. Second Series},
  volume = {189},
  number = {1},
  year = {2019},
  pages = {101--144},
  doi = {10.4007/annals.2019.189.1.3},
  url = {https://doi.org/10.4007/annals.2019.189.1.3},
  issn = {0003-486X,1939-8980},
  mrclass = {35Q30 (35Q31 35Q35 76D05 76F02)},
  mrnumber = {3898708},
  mrreviewer = {Isabelle\ Gruais},
}

@article{CKV,
  author = {Chen, Geng and Krupa, Sam G. and Vasseur, Alexis F.},
  title = {Uniqueness and weak-{BV} stability for {$2\times 2$} conservation laws},
  journal = {Arch. Ration. Mech. Anal.},
  fjournal = {Archive for Rational Mechanics and Analysis},
  volume = {246},
  number = {1},
  year = {2022},
  pages = {299--332},
  doi = {10.1007/s00205-022-01813-0},
  url = {https://doi.org/10.1007/s00205-022-01813-0},
  issn = {0003-9527,1432-0673},
  mrclass = {35L65},
  mrnumber = {4487515},
}

@incollection{Dafermos,
  author = {Dafermos, C. M.},
  title = {Entropy and the stability of classical solutions of hyperbolic systems of conservation laws},
  booktitle = {Recent mathematical methods in nonlinear wave propagation ({M}ontecatini {T}erme, 1994)},
  series = {Lecture Notes in Math.},
  volume = {1640},
  publisher = {Springer, Berlin},
  year = {1996},
  pages = {48--69},
  doi = {10.1007/BFb0093706},
  url = {https://doi.org/10.1007/BFb0093706},
  isbn = {3-540-61907-0},
  mrclass = {35L65 (35B35)},
  mrnumber = {1600904},
  mrreviewer = {Hermano\ Frid},
}

@book{Dafermos2,
  author = {Dafermos, Constantine M.},
  title = {Hyperbolic {C}onservation {L}aws in {C}ontinuum {P}hysics},
  series = {Grundlehren der mathematischen Wissenschaften [Fundamental Principles of Mathematical Sciences]},
  volume = {325},
  edition = {Fifth},
  publisher = {Springer, Berlin},
  year = {2026},
  pages = {xlii+1045},
  doi = {10.1007/978-3-662-72445-3},
  url = {https://doi.org/10.1007/978-3-662-72445-3},
  isbn = {978-3-662-72444-6; 978-3-662-72445-3},
  mrclass = {99-06},
  mrnumber = {5070576},
}

@article{DiPerna,
  author = {DiPerna, Ronald J.},
  title = {Uniqueness of solutions to hyperbolic conservation laws},
  journal = {Indiana Univ. Math. J.},
  fjournal = {Indiana University Mathematics Journal},
  volume = {28},
  number = {1},
  year = {1979},
  pages = {137--188},
  doi = {10.1512/iumj.1979.28.28011},
  url = {https://doi.org/10.1512/iumj.1979.28.28011},
  issn = {0022-2518,1943-5258},
  mrclass = {35L65},
  mrnumber = {523630},
  mrreviewer = {V.\ \v Durikovi\v c},
}

@article{FNP,
  author = {Feireisl, Eduard and Novotn\'y, Anton\'in and Petzeltov\'a, Hana},
  title = {On the existence of globally defined weak solutions to the {N}avier-{S}tokes equations},
  journal = {J. Math. Fluid Mech.},
  fjournal = {Journal of Mathematical Fluid Mechanics},
  volume = {3},
  number = {4},
  year = {2001},
  pages = {358--392},
  doi = {10.1007/PL00000976},
  url = {https://doi.org/10.1007/PL00000976},
  issn = {1422-6928,1422-6952},
  mrclass = {35Q30 (76N10)},
  mrnumber = {1867887},
  mrreviewer = {Cun-Zheng\ Wang},
}

@article{Glimm,
  author = {Glimm, James},
  title = {Solutions in the large for nonlinear hyperbolic systems of equations},
  journal = {Comm. Pure Appl. Math.},
  fjournal = {Communications on Pure and Applied Mathematics},
  volume = {18},
  number = {4},
  year = {1965},
  pages = {697--715},
  doi = {10.1002/cpa.3160180408},
  url = {https://doi.org/10.1002/cpa.3160180408},
  issn = {0010-3640,1097-0312},
  mrclass = {35.57},
  mrnumber = {194770},
  mrreviewer = {A.\ K.\ Aziz},
}

@book{HR,
  author = {Holden, Helge and Risebro, Nils Henrik},
  title = {Front tracking for hyperbolic conservation laws},
  series = {Applied Mathematical Sciences},
  volume = {152},
  edition = {Second},
  publisher = {Springer, Heidelberg},
  year = {2015},
  pages = {xiv+515},
  doi = {10.1007/978-3-662-47507-2},
  url = {https://doi.org/10.1007/978-3-662-47507-2},
  isbn = {978-3-662-47506-5; 978-3-662-47507-2},
  mrclass = {35-02 (35B30 35L45 35L65 35Q53 65-02)},
  mrnumber = {3443431},
}

@article{weightaintroduction,
  author = {Kang, Moon-Jin and Vasseur, Alexis F.},
  title = {Criteria on contractions for entropic discontinuities of systems of conservation laws},
  journal = {Arch. Ration. Mech. Anal.},
  fjournal = {Archive for Rational Mechanics and Analysis},
  volume = {222},
  number = {1},
  year = {2016},
  pages = {343--391},
  doi = {10.1007/s00205-016-1003-1},
  url = {https://doi.org/10.1007/s00205-016-1003-1},
  issn = {0003-9527,1432-0673},
  mrclass = {35L65 (35L45)},
  mrnumber = {3519973},
  mrreviewer = {Ramon\ G.\ Plaza},
}

@article{KVW,
  author = {Kang, Moon-Jin and Vasseur, Alexis F. and Wang, Yi},
  title = {Uniqueness of a planar contact discontinuity for 3{D} compressible {E}uler system in a class of zero dissipation limits from {N}avier-{S}tokes-{F}ourier system},
  journal = {Comm. Math. Phys.},
  fjournal = {Communications in Mathematical Physics},
  volume = {384},
  number = {3},
  year = {2021},
  pages = {1751--1782},
  doi = {10.1007/s00220-021-04100-3},
  url = {https://doi.org/10.1007/s00220-021-04100-3},
  issn = {0010-3616,1432-0916},
  mrclass = {35Q31},
  mrnumber = {4268832},
}

@article{KVWtime,
  author = {Kang, Moon-Jin and Vasseur, Alexis F. and Wang, Yi},
  title = {Time-asymptotic stability of composite waves of viscous shock and rarefaction for barotropic {N}avier-{S}tokes equations},
  journal = {Adv. Math.},
  fjournal = {Advances in Mathematics},
  volume = {419},
  year = {2023},
  pages = {Paper No. 108963, 66},
  doi = {10.1016/j.aim.2023.108963},
  url = {https://doi.org/10.1016/j.aim.2023.108963},
  issn = {0001-8708,1090-2082},
  mrclass = {35Q30 (76N06 76P05)},
  mrnumber = {4560999},
  mrreviewer = {Ond\v rej\ Kreml},
}

@article{Leger,
  author = {Leger, Nicholas},
  title = {{$L^2$} stability estimates for shock solutions of scalar conservation laws using the relative entropy method},
  journal = {Arch. Ration. Mech. Anal.},
  fjournal = {Archive for Rational Mechanics and Analysis},
  volume = {199},
  number = {3},
  year = {2011},
  pages = {761--778},
  doi = {10.1007/s00205-010-0341-7},
  url = {https://doi.org/10.1007/s00205-010-0341-7},
  issn = {0003-9527,1432-0673},
  mrclass = {35L65 (35B35 35L67 74J40 76L05)},
  mrnumber = {2771666},
  mrreviewer = {Marta\ Lewicka},
}

@book{Lions,
  author = {Lions, Pierre-Louis},
  title = {Mathematical topics in fluid mechanics. {V}ol. 2},
  series = {Oxford Lecture Series in Mathematics and its Applications},
  volume = {10},
  publisher = {The Clarendon Press, Oxford University Press, New York},
  year = {1998},
  pages = {xiv+348},
  note = {Compressible models, Oxford Science Publications},
  isbn = {0-19-851488-3},
  mrclass = {76-02 (35-02 35Q30 76N10)},
  mrnumber = {1637634},
  mrreviewer = {Denis\ Serre},
}

@article{LY,
  author = {Liu, Tai-Ping and Yang, Tong},
  title = {{$L_1$} stability for {$2\times 2$} systems of hyperbolic conservation laws},
  journal = {J. Amer. Math. Soc.},
  fjournal = {Journal of the American Mathematical Society},
  volume = {12},
  number = {3},
  year = {1999},
  pages = {729--774},
  doi = {10.1090/S0894-0347-99-00292-1},
  url = {https://doi.org/10.1090/S0894-0347-99-00292-1},
  issn = {0894-0347,1088-6834},
  mrclass = {35L65 (35B35)},
  mrnumber = {1646841},
  mrreviewer = {Yu.\ G.\ Rykov},
}

@article{LYu,
  author = {Liu, Tai-Ping and Yu, Shih-Hsien},
  title = {Navier-{S}tokes equations in gas dynamics: {G}reen's function, singularity, and well-posedness},
  journal = {Comm. Pure Appl. Math.},
  fjournal = {Communications on Pure and Applied Mathematics},
  volume = {75},
  number = {2},
  year = {2022},
  pages = {223--348},
  doi = {10.1002/cpa.22030},
  url = {https://doi.org/10.1002/cpa.22030},
  issn = {0010-3640,1097-0312},
  mrclass = {76N06},
  mrnumber = {4373171},
  mrreviewer = {Jianlin\ Zhang},
}

@article{SV,
  author = {Serre, Denis and Vasseur, Alexis F.},
  title = {{$L^2$}-type contraction for systems of conservation laws},
  journal = {J. \'Ec. polytech. Math.},
  fjournal = {Journal de l'\'Ecole polytechnique. Math\'ematiques},
  volume = {1},
  year = {2014},
  pages = {1--28},
  doi = {10.5802/jep.1},
  url = {https://doi.org/10.5802/jep.1},
  issn = {2429-7100,2270-518X},
  mrclass = {35K65 (35L45 35L67)},
  mrnumber = {3322780},
  mrreviewer = {Rodica\ Luca},
}

@article{SVintermediate,
  author = {Serre, Denis and Vasseur, Alexis F.},
  title = {The relative entropy method for the stability of intermediate shock waves; the rich case},
  journal = {Discrete Contin. Dyn. Syst.},
  fjournal = {Discrete and Continuous Dynamical Systems},
  volume = {36},
  number = {8},
  year = {2016},
  pages = {4569--4577},
  doi = {10.3934/dcds.2016.36.4569},
  url = {https://doi.org/10.3934/dcds.2016.36.4569},
  issn = {1078-0947,1553-5231},
  mrclass = {35L65 (35B35 35L67)},
  mrnumber = {3479527},
  mrreviewer = {Srboljub\ S.\ Simi\'c},
}

@incollection{Tartar,
  author = {Tartar, Luc},
  title = {Some remarks on separately convex functions},
  booktitle = {Microstructure and phase transition},
  series = {IMA Vol. Math. Appl.},
  volume = {54},
  publisher = {Springer, New York},
  year = {1993},
  pages = {191--204},
  doi = {10.1007/978-1-4613-8360-4_12},
  url = {https://doi.org/10.1007/978-1-4613-8360-4_12},
  isbn = {0-387-94112-6},
  mrclass = {49N60 (49J10 73C50 73V25)},
  mrnumber = {1320538},
  mrreviewer = {John\ M.\ Ball},
}

@misc{CKF,
  author = {Chen, Geng and Faile, Cooper and Krupa, Sam G.},
  title = {Solutions to conservation laws are {H\"older}-stable in {$L^2$} in the weak-{BV} setting},
  year = {2025},
  howpublished = {arXiv preprint arXiv:2507.23645},
  url = {https://arxiv.org/abs/2507.23645},
  eprint = {2507.23645},
  archivePrefix = {arXiv},
  primaryClass = {math.AP},
}

@article{asymptoticNSinflow,
  author = {Huang, Xushan and Kang, Moon-Jin and Kim, Jeongho and Lee, Hobin},
  title = {Asymptotic behavior toward viscous shock for impermeable wall and inflow problems of barotropic {Navier--Stokes} equations},
  journal = {J. Math. Anal. Appl.},
  fjournal = {Journal of Mathematical Analysis and Applications},
  volume = {552},
  number = {2},
  year = {2025},
  pages = {129803},
  doi = {10.1016/j.jmaa.2025.129803},
  url = {https://doi.org/10.1016/j.jmaa.2025.129803},
  eprint = {2405.03214},
  archivePrefix = {arXiv},
  primaryClass = {math.AP},
}

@article{asymptoticNSoutflow,
  author = {Kang, Moon-Jin and Oh, HyeonSeop and Wang, Yi},
  title = {Asymptotic behavior toward viscous shock for the outflow problem of barotropic {Navier--Stokes} equations},
  journal = {Nonlinearity},
  fjournal = {Nonlinearity},
  volume = {39},
  number = {7},
  year = {2026},
  pages = {075006},
  doi = {10.1088/1361-6544/ae7ec2},
  url = {https://doi.org/10.1088/1361-6544/ae7ec2},
  eprint = {2505.08171},
  archivePrefix = {arXiv},
  primaryClass = {math.AP},
}

@article{asymptoticNSboundarylayer,
  author = {Han, Sungho and Kang, Moon-Jin and Kim, Jeongho and Kim, Nayeon and Oh, HyeonSeop},
  title = {Convergence to superposition of boundary layer, rarefaction and shock for the inflow problem of the {1D} {Navier--Stokes} equations},
  journal = {Comm. Math. Phys.},
  fjournal = {Communications in Mathematical Physics},
  volume = {407},
  number = {4},
  year = {2026},
  pages = {79},
  doi = {10.1007/s00220-026-05572-x},
  url = {https://doi.org/10.1007/s00220-026-05572-x},
  eprint = {2502.08126},
  archivePrefix = {arXiv},
  primaryClass = {math.AP},
}

@misc{meng,
  author = {Meng, Jiayun},
  title = {Time-asymptotic stability of composite weak planar waves for a general {$n\times n$} multi-{D} viscous system},
  year = {2025},
  howpublished = {arXiv preprint arXiv:2501.14188},
  url = {https://arxiv.org/abs/2501.14188},
  eprint = {2501.14188},
  archivePrefix = {arXiv},
  primaryClass = {math.AP},
}

@article{Euler,
  author = {Euler, Leonhard},
  title = {Principes g{\'e}n{\'e}raux du mouvement des fluides},
  journal = {M{\'e}moires de l'Acad{\'e}mie Royale des Sciences et des Belles-Lettres de Berlin},
  volume = {11},
  year = {1757},
  pages = {274--315},
  url = {https://scholarlycommons.pacific.edu/euler-works/226/},
}

@article{JLe99,
  author = {Joseph, K. T. and LeFloch, Philippe G.},
  title = {Boundary layers in weak solutions of hyperbolic conservation laws},
  journal = {Arch. Ration. Mech. Anal.},
  fjournal = {Archive for Rational Mechanics and Analysis},
  volume = {147},
  number = {1},
  year = {1999},
  pages = {47--88},
  doi = {10.1007/s002050050145},
  url = {https://doi.org/10.1007/s002050050145},
}

@misc{AMS24,
  author = {Ancona, Fabio and Marson, Andrea and Spinolo, Laura V.},
  title = {Existence of vanishing physical viscosity solutions of characteristic initial-boundary value problems for systems of conservation laws},
  year = {2024},
  howpublished = {arXiv preprint arXiv:2401.14865},
  url = {https://arxiv.org/abs/2401.14865},
  eprint = {2401.14865},
  archivePrefix = {arXiv},
  primaryClass = {math.AP},
}

@misc{prepweakbv,
      title={Weak-{BV} stability of inflow and outflow problems for the isentropic {E}uler system}, 
      author={Moon-Jin Kang and Jiayun Meng and HyeonSeop Oh and Alexis F. Vasseur},
      year={2026},
      eprint={2610.03236},
      archivePrefix={arXiv},
      primaryClass={math.AP},
      url={https://arxiv.org/abs/2610.03236}, 
}

@article{nspresse,
  author = {Mellet, A. and Vasseur, A.},
  title = {Existence and uniqueness of global strong solutions for one-dimensional compressible {N}avier-{S}tokes equations},
  journal = {SIAM J. Math. Anal.},
  fjournal = {SIAM Journal on Mathematical Analysis},
  volume = {39},
  number = {4},
  year = {2008},
  pages = {1344--1365},
  doi = {10.1137/060658199},
  url = {https://doi.org/10.1137/060658199},
  issn = {0036-1410,1095-7154},
  mrclass = {76N10 (35Q30)},
  mrnumber = {2368905},
  mrreviewer = {Vladimir\ V.\ Shelukhin},
}

@article{existence,
  author = {Kang, Moon-Jin and Vasseur, Alexis F.},
  title = {Global smooth solutions for 1{D} barotropic {N}avier-{S}tokes equations with a large class of degenerate viscosities},
  journal = {J. Nonlinear Sci.},
  fjournal = {Journal of Nonlinear Science},
  volume = {30},
  number = {4},
  year = {2020},
  pages = {1703--1721},
  doi = {10.1007/s00332-020-09622-z},
  url = {https://doi.org/10.1007/s00332-020-09622-z},
  issn = {0938-8974,1432-1467},
  mrclass = {35Q35 (76N10)},
  mrnumber = {4113341},
  mrreviewer = {Luc\ Paquet},
}

@article{impermeable,
  author = {Meng, Jiayun and Vasseur, Alexis F.},
  title = {Weak-{BV} stability for the impermeable wall problem of the isentropic {E}uler and its inviscid limit from {N}avier-{S}tokes},
  journal = {Discrete Contin. Dyn. Syst.},
  fjournal = {Discrete and Continuous Dynamical Systems},
  volume = {59},
  year = {2027},
  pages = {29--44},
  doi = {10.3934/dcds.2026159},
  url = {https://doi.org/10.3934/dcds.2026159},
  issn = {1078-0947,1553-5231},
  mrclass = {35 (76)},
  mrnumber = {5130618},
}

@book{smoller,
    AUTHOR = {Smoller, Joel},
     TITLE = {Shock waves and reaction-diffusion equations},
    SERIES = {Grundlehren der mathematischen Wissenschaften [Fundamental
              Principles of Mathematical Sciences]},
    VOLUME = {258},
   EDITION = {Second},
 PUBLISHER = {Springer-Verlag, New York},
      YEAR = {1994},
     PAGES = {xxiv+632},
      ISBN = {0-387-94259-9},
   MRCLASS = {35-02 (35K57 35L67 58E07)},
  MRNUMBER = {1301779},
       DOI = {10.1007/978-1-4612-0873-0},
       URL = {https://doi-org.ezproxy.lib.utexas.edu/10.1007/978-1-4612-0873-0},
}

\end{document}